\newcommand*\Cdot{\mathpalette\Cdot@{.5}}
\newcommand*\Cdot@[2]{\mathbin{\vcenter{\hbox{\scalebox{#2}{$\m@th#1\bullet$}}}}}
\DeclareMathAlphabet{\mathpzc}{OT1}{pzc}{m}{it}
\providecommand*{\twoheadrightarrowfill@}{%
  \arrowfill@\relbar\relbar\twoheadrightarrow
}
\providecommand*{\twoheadleftarrowfill@}{%
  \arrowfill@\twoheadleftarrow\relbar\relbar
}
\providecommand*{\xtwoheadrightarrow}[2][]{%
  \ext@arrow 0579\twoheadrightarrowfill@{#1}{#2}%
}
\providecommand*{\xtwoheadleftarrow}[2][]{%
  \ext@arrow 5097\twoheadleftarrowfill@{#1}{#2}%
}
\newcommand*{\relrelbarsep}{.386ex}
\newcommand*{\relrelbar}{%
  \mathrel{%
    \mathpalette\@relrelbar\relrelbarsep
  }%
}
\newcommand*{\@relrelbar}[2]{%
  \raise#2\hbox to 0pt{$\m@th#1\relbar$\hss}%
  \lower#2\hbox{$\m@th#1\relbar$}%
}
\providecommand*{\rightrightarrowsfill@}{%
  \arrowfill@\relrelbar\relrelbar\rightrightarrows
}
\providecommand*{\leftleftarrowsfill@}{%
  \arrowfill@\leftleftarrows\relrelbar\relrelbar
}
\providecommand*{\xrightrightarrows}[2][]{%
  \ext@arrow 0359\rightrightarrowsfill@{#1}{#2}%
}
\providecommand*{\xleftleftarrows}[2][]{%
  \ext@arrow 3095\leftleftarrowsfill@{#1}{#2}%
}
\newcommand{\colim@}[2]{%
  \vtop{\m@th\ialign{##\cr
    \hfil$#1\operator@font colim$\hfil\cr
    \noalign{\nointerlineskip\kern1.5\ex@}#2\cr
    \noalign{\nointerlineskip\kern-\ex@}\cr}}%
}
\newcommand{\colim}{%
  \mathop{\mathpalette\colim@{\rightarrowfill@\scriptscriptstyle}}\nmlimits@
}
\renewcommand{\varprojlim}{%
  \mathop{\mathpalette\varlim@{\leftarrowfill@\scriptscriptstyle}}\nmlimits@
}
\renewcommand{\varinjlim}{%
  \mathop{\mathpalette\varlim@{\rightarrowfill@\scriptscriptstyle}}\nmlimits@
}
\DeclareSymbolFont{cyrletters}{OT2}{wncyr}{m}{n}
\DeclareMathSymbol{\Sh}{\mathalpha}{cyrletters}{"58}
\newcommand*\bigcdot{\mathpalette\bigcdot@{.5}}
\newcommand*\bigcdot@[2]{\mathbin{\vcenter{\hbox{\scalebox{#2}{$\m@th#1\bullet$}}}}}
\tikzset{near start abs/.style={xshift=1cm}}
\DeclareSymbolFont{symbolsC}{U}{txsyc}{m}{n}
\DeclareMathSymbol{\Searrow}{\mathrel}{symbolsC}{117}
\DeclareSymbolFont{extraup}{U}{zavm}{m}{n}
\DeclareMathSymbol{\varheart}{\mathalpha}{extraup}{86}
\DeclareMathSymbol{\vardiamond}{\mathalpha}{extraup}{87}
\theoremstyle{definition}
\newtheorem{thm}{Theorem}[section]
\newtheorem{cor}{Corollary}[thm]
\newtheorem{lem}[thm]{Lemma}
\newtheorem{prop}[thm]{Proposition}
\theoremstyle{definition}
\newtheorem{remark}{Remark}[section]
\newcommand{\gG}{\Gamma}
\newcommand{\ga}{\alpha}
\newcommand{\gd}{\delta}
\newcommand{\gb}{\beta}
\newcommand{\gf}{\varphi}
\newcommand{\go}{\omega}
\newcommand{\bN}{\mathbb{N}}
\newcommand{\bZ}{\mathbb{Z}}
\newcommand{\cC}{\CMcal{C}}
\newcommand{\cP}{\CMcal{P}}
\newcommand{\cF}{\CMcal{F}}
\newcommand{\cR}{\CMcal{R}}
\newcommand{\cG}{\CMcal{G}}
\newcommand{\cW}{\CMcal{W}}
\newcommand{\cI}{\CMcal{I}}
\newcommand{\cB}{\CMcal{B}}
\newcommand{\cS}{\CMcal{S}}
\newcommand{\id}{\operatorname{id}}
\newcommand{\inv}{\operatorname{inv}}
\newcommand{\FG}{\operatorname{FG}}
\newcommand{\Aut}{\operatorname{Aut}}
\newcommand{\la}{\langle}
\newcommand{\ra}{\rangle}
\newcommand{\wt}{\widetilde}
\newcommand{\til}{\tilde}
\definecolor{Red}{rgb}{0.8,0,0.2}
\newcommand{\GG}[1]{}
\def\@footnotecolor{red}
\def\@footnotemark{%
    \leavevmode
    \ifhmode\edef\@x@sf{\the\spacefactor}\nobreak\fi
    \stepcounter{Hfootnote}%
    \global\let\Hy@saved@currentHref\@currentHref
    \hyper@makecurrent{Hfootnote}%
    \global\let\Hy@footnote@currentHref\@currentHref
    \global\let\@currentHref\Hy@saved@currentHref
    \hyper@linkstart{footnote}{\Hy@footnote@currentHref}%
    \@makefnmark
    \hyper@linkend
    \ifhmode\spacefactor\@x@sf\fi
    \relax
  }%
\title[Quotients of Buildings by Groups Acting Freely on Chambers]{Quotients of Buildings by Groups Acting\\ Freely on Chambers}
\author{William Norledge}
\address[William Norledge]{Pennsylvania State University}
\email{wxn39@psu.edu}
\begin{document}

% % % % % % % % % % % % % % % % % % % % % % % % %
% Autoref names
\renewcommand{\chapterautorefname}{Chapter}
\renewcommand{\sectionautorefname}{Section}
\renewcommand{\subsectionautorefname}{Section}
% % % % % % % % % % % % % % % % % % % % % % % % % 

\begin{abstract}
We introduce certain directed multigraphs with extra structure, called Weyl graphs, which model quotients of Tits buildings by type-preserving chamber-free group actions. Their advantage over complexes of groups, which are often used for the CAT(0) Davis realization of buildings, is that Weyl graphs exploit the ultimate combinatorial W-metric structure of buildings. Weyl graphs generalize Tits's chambers systems of type M by allowing 2-residues to be quotients of generalized polygons by flag-free group actions, and Weyl graphs are easily constructed by amalgamating such quotients. We develop covering theory of Weyl graphs, which can be used to construct buildings as universal covers. We describe a method of obtaining a group presentation of the fundamental group of a Weyl graph, which acts chamber-freely on the covering building. The theory developed here is part of a fully general theory which deals with not necessarily chamber-free actions and the stacky version of buildings.
\end{abstract}

\maketitle

% % % % % % % % % % % % % % % % % % % % % % % % % % % % % 
\setcounter{tocdepth}{1} % what appears
\hypertarget{foo}{ }
\tableofcontents
%\setlength{\cftbeforesecskip}{1pt} % spacing
% % % % % % % % % % % % % % % % % % % % % % % % % % % % % 

%%%%%%%%%%%%%%%%%%%%%%%%%%%%%%%%%%%%%%%%%%%%%%%%%%%%%%%%%%%%%%%%%%%%%%%%
%%%%%%%%%%%%%%%%%%%%%%%%%%%%%%%%%%%%%%%%%%%%%%%%%%%%%%%%%%%%%%%%%%%%%%%%
\section*{Introduction}\label{intro}
%%%%%%%%%%%%%%%%%%%%%%%%%%%%%%%%%%%%%%%%%%%%%%%%%%%%%%%%%%%%%%%%%%%%%%%%
%%%%%%%%%%%%%%%%%%%%%%%%%%%%%%%%%%%%%%%%%%%%%%%%%%%%%%%%%%%%%%%%%%%%%%%%

The theory of lattices in Lie groups was extended by Bruhat-Tits, Ihara, Serre, and others to algebraic groups over discretely valued fields by equipping these groups with actions on associated Bruhat-Tits buildings \cite{MR185016}, \cite{MR327923}. Similarly, lattices in Kac-Moody groups have been studied by constructing actions on Kac-Moody buildings \cite{carbone2003existence}, \cite{remy2006topological}. Therefore, in the study of (lattices in) locally compact groups, automorphism groups of buildings are natural examples to study after Lie groups and algebraic groups. See \cite{anneprobsonpolycomp}. Indeed, many aspects of lattice theory for algebraic groups have been extended to locally finite trees by Bass-Lubotzky \cite{BL}, which are the non-classical buildings of type $W=\widetilde{A}_1$ generalizing the Bruhat-Tits building (tree) for $\text{SL}_2$. One would like to generalize tree lattices to higher rank buildings, which are naturally CAT(0) cell complex by taking the Davis realization \cite{davis94buildingscat0}, \cite[Section 12.4]{ab08}. Crucial to the study of tree lattices is the (covering) theory of graphs of groups, which are the stacky\footnote{ stacky in the sense of stacks, see e.g. \cite{noohi05}} quotients trees \cite{serre1980trees}, \cite{Bass}. However, higher dimensional analogs of graphs of groups, called complexes of groups \cite{Hae}, \cite[Chapter III.$\cC$]{BH}, \cite{MR2400734}, do not take full advantage of the fundamental combinatorial aspect of buildings. 

In \cite{tits81local}, Tits developed such combinatorial covering theory of buildings in the restricted setting of type-preserving group actions which are free on the set of $2$-residues. See also \cite[ Chapter 4]{ronanlectures}, \cite{kantorscabs}. In this paper we go beyond $2$-residues, and develop covering theory which accounts for all chamber-free actions. In this more general setting, we require the generalization of chamber systems obtained by replacing equivalence relations with groupoids. Also, extra data we call `suites' are needed, which tell you when a cycle is covered by an apartment in a $2$-residue. Equivalently, they tell you what strict homotopies of galleries are permitted. The resulting structure is a directed multigraph with edges labeled over the generators of a Coxeter group, such that edges of the same label have the structure of a groupoid, equipped with a collection of distinguished cycles- the generating suites. 

Previously in \cite{nor1} we introduced $W$-groupoids, which model quotients of buildings by generic \hbox{type-preserving} group actions. Thus, $W$-groupoids are the full stacky generalization of buildings, e.g. they are to buildings what orbifolds are to manifolds. They generalize Bruhat decompositions by allowing non chamber-transitive actions \cite[Proposition 7.1]{nor1}. We call a $W$-groupoid strict if it corresponds to a chamber-free action. In \autoref{sec:wgroup}, we show that Weyl graphs are essentially presentations\footnote{ in the sense of group presentations} of strict $W$-groupoids.

A general strategy which uses the theory of Weyl graphs to construct chamber-free actions of groups on buildings is the following. Take a class of quotients of generalized polygons and amalgamate them along common $1$-residues to form higher rank $2$-Weyl graphs $\cW$. Associate `gluing data' to each choice of amalgamation (examples of gluing data include the four diagrams in \cite[p.~48]{ronanlectures} which determine the quotients of the four chamber-regular lattices of type $\wt{A}_2$ and order $2$, and the `based difference matrices' in \cite{essert2013geometric} which determine the quotients of the so-called Singer lattices of type $\wt{A}_2$). Then, by first obtaining presentations of the fundamental groups of the quotient generalized polygons, one uses the gluing data to encode a presentation of the fundamental group of $\cW$. We use this strategy in \cite{nor3} to construct the Singer cyclic lattices of type $M=(m_{st})_{s,t\in S}$, where $m_{st}\in \{1,2,3,\infty\}$, which generalizes a construction in \cite{essert2013geometric} (done there using complexes of groups for the Davis realization). 

%%%%%%%%%%%%%%%%%%%%%%%%%%%%%%%%%%%%%%%%%%%%%%%%%%%%%%%%%%%%%%%%%%%%%%%%
\subsection*{Structure} 
%%%%%%%%%%%%%%%%%%%%%%%%%%%%%%%%%%%%%%%%%%%%%%%%%%%%%%%%%%%%%%%%%%%%%%%%

We begin in \autoref{sec:prelim} by recalling some basic aspects of Coxeter groups. In \autoref{sec:welydata} we define Weyl data, which is the data of a Weyl graph that does not satisfy any axioms. We describe several important notions in this general setting. In \autoref{sec:2weylgraphs} we define $2$-Weyl graphs, which is Weyl data that is `almost' the quotient of a building. Precisely, $2$-Weyl graphs are quotients of chamber systems of type $M$ by chamber-free group actions. In \autoref{sec:coverofweyl} we describe covering theory of $2$-Weyl graphs, which borrows heavily from covering theory of groupoids, and establish a Galois correspondence. (We give a self-contained exposition of covering theory of groupoids in \autoref{apen}.) In \autoref{sec:weylgraphs} we define Weyl graphs, and show that a $2$-Weyl graph is a Weyl graph if and only if its universal cover is a building. Then, by a generalization of the local-to-global result of Tits \cite{tits81local}, a $2$-Weyl graph is a Weyl graph if and only if its residues of type $C_3$ and $H_3$ are Weyl graphs. We show that our theory provides a simple construction of $2$-Weyl graphs by amalgamating quotients of generalized polygons along groupoids (the thick analog of constructing Coxeter groups by amalgamating dihedral groups along $\bZ/2\bZ$). Finally, in \autoref{sec:groupres}, we give a canonical presentation of the fundamental group of a Weyl graph, modulo the choice of a spanning tree. 

%%%%%%%%%%%%%%%%%%%%%%%%%%%%%%%%%%%%%%%%%%%%%%%%%%%%%%%%%%%%%%%%%%%%%%%%
\subsection*{Acknowledgments} 
%%%%%%%%%%%%%%%%%%%%%%%%%%%%%%%%%%%%%%%%%%%%%%%%%%%%%%%%%%%%%%%%%%%%%%%%

We thank Alina Vdovina, Corneliu Hoffman, and David Stewart for useful suggestions. We thank Anne Thomas for useful discussions during an early stage of this project. This research was partly supported by a grant from Templeton Religion Trust as part of the mathematical picture language project at Harvard University. We also thank Newcastle University for their support. 
 
%%%%%%%%%%%%%%%%%%%%%%%%%%%%%%%%%%%%%%%%%%%%%%%%%%%%%%%%%%%%%%%%%%%%%%%%
%%%%%%%%%%%%%%%%%%%%%%%%%%%%%%%%%%%%%%%%%%%%%%%%%%%%%%%%%%%%%%%%%%%%%%%%
\section{Preliminaries} \label{sec:prelim} 
%%%%%%%%%%%%%%%%%%%%%%%%%%%%%%%%%%%%%%%%%%%%%%%%%%%%%%%%%%%%%%%%%%%%%%%%
%%%%%%%%%%%%%%%%%%%%%%%%%%%%%%%%%%%%%%%%%%%%%%%%%%%%%%%%%%%%%%%%%%%%%%%%

In this section, we fix notation and terminology, and recall basic results on Coxeter groups. Most of this material is standard, see e.g. \cite{bourbaki2008lie}, \cite{bjorner06combinatorics}, \cite{davisbook}, although our terminology concerning homotopy of words is slightly different (\autoref{section:homotopyofwords}). %We also give an exposition of covering theory of groupoids, from which covering theory of Weyl graphs will borrow heavily. %We change the classical theory slightly by making use of `outer isomorphisms', which allow for a basepoint-free definition of the fundamental group. We give a full proof of the Galois correspondence for groupoids in this basepoint-free setting.
 
%%%%%%%%%%%%%%%%%%%%%%%%%%%%%%%%%%%%%%%%%%%%%%%%%%%%%%%%%%%%%%%%%%%%%%%%
\subsection{Graphs and Galleries} \label{sec:defofgraph} \index{graph}
%%%%%%%%%%%%%%%%%%%%%%%%%%%%%%%%%%%%%%%%%%%%%%%%%%%%%%%%%%%%%%%%%%%%%%%%

Let a \emph{graph} $\cW=(\cW_0,\cW_1)$ be a set of \emph{vertices} $\cW_0$ and a set of \emph{edges} $\cW_1$ which is equipped with a function
 \[     
\cW_1\to \cW_0\times \cW_0
,\qquad  
i\mapsto (\iota(i),\tau(i)) 
.\footnote{ so our notion of a graph is that of a directed graph with loops and multiple edges}\] 
Then $\iota(i)$ and $\tau(i)$ are called the \emph{initial} and \emph{terminal} vertices (or `extremities') of $i$ respectively. Provided there is no ambiguity, we let $\cW$ denote both $\cW_0$ and $\cW_1$. A \emph{morphism} of graphs $\go=(\go_0,\go_1)$ consists of a function of the vertices $\go_0$ and a function of the edges $\go_1$ which preserves extremities. Provided there is no ambiguity, we let $\go$ denote both $\go_0$ and $\go_1$. 

Let $S$ be a set of labels. A \emph{graph labeled over $S$} is a graph $\cW=(\cW_0,\cW_1)$ equipped with a \emph{type function} $\upsilon$ on its edges into $S$,
\[
\upsilon:\cW_1\to S
,\qquad   
i\mapsto \upsilon(i) 
.\]   
The label $\upsilon(i)\in S$ is called the \emph{type} of $i$. A \emph{morphism} of graphs labeled over $S$ is a morphism of the underlying graphs which additionally preserves types of edges.  
 
For $a,b\in \bZ$ with $a\leq b$, let $\lfloor a,b \rfloor$ denote the graph whose set of vertices is the interval $[a,b]\subset \bZ$, with a single edge $i_k$ traveling from $k-1$ to $k$ for each $k\in \{a+1,\dots,b\}$, thus
\[
\lfloor a,b \rfloor \ = \ a \xrightarrow{i_{a+1}}  a+1 \xrightarrow{i_{a+2}} \quad  \cdots \quad   \xrightarrow{i_{b-1}} b-1  \xrightarrow{i_{b}} b  
.\]
Let $\cW$ be a graph labeled over $S$, and let $\lfloor a,b \rfloor$ also be labeled over $S$. A \emph{gallery} $\gb$ in $\cW$ is a (labeled graph) morphism
\[
\gb:\lfloor a,b \rfloor \to \cW
.\] 
Then $\iota(\gb):=\gb(a)$ and $\tau(\gb):=\gb(b)$ are called the \emph{initial} and \emph{terminal} vertices of $\gb$ respectively. The \emph{length} $|\gb|$ of $\gb$ is the number of edges of $\lfloor a,b \rfloor$. Putting $s_k=\upsilon(i_k)$ for $k\in \{a+1,\dots,b\}$, then the \emph{type} $\gb_S$ of $\gb$ is the sequence of labels
\[
\gb_S:=s_{a+1},\dots, s_{b}
.\] 
A \emph{trivial} gallery is a gallery $\gb$ with $|\gb|=0$, in which case $\gb_S$ is empty. A \emph{cycle} is a gallery $\gb$ with $\iota(\gb)=\tau(\gb)$. A \emph{minimal gallery} is a gallery $\gb$ whose length is minimal amongst all the galleries from $\iota(\gb)$ to $\tau(\gb)$. The \emph{sequence of edges of} $\gb$ is the sequence of edges
\[
\gb(i_{a+1}),\dots, \gb(i_{b})
.\]
Conversely, a finite sequence of adjacent edges of $\cW$ determines a gallery. A \emph{subgallery} of $\gb$ is a gallery whose sequence of edges is a consecutive subsequence of the sequence of edges of $\gb$. Let $\gb$ and $\gb'$ be galleries in $\cW$ such that $\tau(\gb)=\iota(\gb')$. The \emph{concatenation} \index{gallery!concatenation} $\gb \gb'$ of $\gb$ with $\gb'$ is the gallery whose sequence of edges is the sequence of edges of $\gb$ followed by the sequence of edges of $\gb'$.    

%%%%%%%%%%%%%%%%%%%%%%%%%%%%%%%%%%%%%%%%%%%%%%%%%%%%%%%%%%%%%%%%%%%%%%%%
\subsection{Coxeter Groups}
%%%%%%%%%%%%%%%%%%%%%%%%%%%%%%%%%%%%%%%%%%%%%%%%%%%%%%%%%%%%%%%%%%%%%%%%

A \emph{marked group} $(G,S)$ is a group $G$ equipped with a finite generating set $S\subseteq G$. A \emph{group generated by involutions} $(W,S)$ is a marked group whose generators $S\subseteq W$ are involutions $s^2=1$. We let $W$ denote both $(W,S)$ and the underlying group of $(W,S)$, since the meaning will always be clear from the context. For $(W,S)$ a group generated by involutions and $s,t\in S$, let $m_{st}$ denote the order\footnote{ smallest positive power which equals the identity; we may have $m_{st}=\infty$} of $st\in W$. Notice that $m_{st}=m_{ts}$.  

Let $W=(W,S)$ be a group generated by involutions, and let $f=s_1\dots s_n$ be a word over $S$ (i.e. a sequence of elements, `letters', of $S$). We denote the length of $f$ by $|f|=n$, and we let $f^{-1}:=s_n\dots s_1$. A \emph{subword} of $f$ is a consecutive subsequence of $f$. A \emph{substring} of $f$ is a (perhaps non-consecutive) subsequence of $f$. For example, if $f=sttustu$, then $sttus$ and $tust$ are subwords, whereas $stsu$ is the substring obtained by skipping every second letter. 

Let $w(f)$ denote the element of $W$ obtained by treating $f$ as a product of generators of $W$. If $w=w(f)$, then $w^{-1}=w(f^{-1})$. We say two words $f$, $f'$ are \emph{equivalent} (or `homotopic', see below) if $w(f)=w(f')$. Provided there is no ambiguity, we sometimes let $f$ denote $w(f)$. The word $f$ is called a \emph{decomposition} of $w(f)$. We call a word $f$ \emph{reduced} if there are no words equivalent to $f$ which have a length strictly less than $f$. If $f$ is reduced, then $f$ is called a \emph{reduced decomposition} of $w(f)$. For $w\in W$, the \emph{word length} $|w|:=|f|$ of $w$ is the length of the reduced decomposition(s) $f$ of $w$.   

A \emph{Coxeter group} $W=(W,S)$ of \emph{rank} $|S|$ is a group generated by involutions such that for any group $G$ and any function $F:S\to G$ such that $(F(s)F(t))^{m_{st}}=1$ for all $s,t\in S$, we have that $F$ extends to a unique homomorphism $W\to G$. It follows that $W$ has the following \emph{canonical presentation},
\[
W=\big \la S\ |\ (st)^{m_{st}}=1 : s,t\in S  \big  \ra 
.\footnote{ the data of a Coxeter group together with its choice of generators is sometimes called a Coxeter system, but this term is redundant for us since by Coxeter group we mean a certain marked group}\]
For $J\subseteq S$ we denote by $W_J=(W_J,J)$ the \emph{standard subgroup} of $W$ which is generated by $J$. Standard subgroups are themselves Coxeter groups in the obvious way. A subset $J\subseteq S$ is called \emph{spherical} if $W_J$ is a finite group.  

A \emph{Coxeter matrix} $M$ on a set $S$ is a symmetric matrix
\[
M:S\times S\to \bZ_{\geq 1}\cup \{\infty\}
,\qquad
(s,t)\mapsto M_{st}
\] 
such that $M_{st}=1$ if and only if $s=t$. For $J\subseteq S$, we denote by $M_J$ the Coxeter matrix which is the restriction of $M$ to $J\times J$. A Coxeter group $W$ determines a Coxeter matrix $M=M(W)$ by putting $M_{st}=m_{st}$. Conversely, for any Coxeter matrix $M$, the \emph{Coxeter group of type $M$} is the group
\[W(M)=\big \la S\ |\ (st)^{M_{st}}=1 : s,t\in S \big  \ra.  \]
By constructing a faithful linear representation (see e.g. \cite[Section 2.5]{ab08}), one can prove that the order of $st \in W(M)$ is equal to $M_{st}$. Therefore we also denote $M_{st}$ by $m_{st}$. %This shows that Coxeter groups are essentially in bijection with Coxeter matrices (up to suitable notions of equivalence), however inequivalent Coxeter groups may have isomorphic underlying groups.  

A \emph{simplicial graph} $L=(V(L),E(L))$ is an undirected graph without loops or multiple edges in which the \emph{edges} $E(L)$ are modeled as $2$-element subsets of the \emph{vertices} $V(L)$. \index{Coxeter group!defining graph} Let $M$ be a Coxeter matrix on $S$. The \emph{defining graph} $L=L(M)$ of $M$ is the edge labeled simplicial graph with
\[V(L)=S,\qquad E(L)=\big\{\{s,t\}: s,t\in S,\  s\neq t,\ m_{st}< \infty \big\} \]
where the edge $\{s,t\}\in E(L)$ is labeled by $m_{st}$. Note that $L$ is defined differently to Coxeter-Dynkin diagrams. 

%%%%%%%%%%%%%%%%%%%%%%%%%%%%%%%%%%%%%%%%%%%%%%%%%%%%%%%%%%%%%%%%%%%%%%%%
\subsection{Homotopy of Words} \label{section:homotopyofwords} \index{words!homotopy}
%%%%%%%%%%%%%%%%%%%%%%%%%%%%%%%%%%%%%%%%%%%%%%%%%%%%%%%%%%%%%%%%%%%%%%%%

Let $W=(W,S)$ be a Coxeter group with Coxeter matrix $M$, let $f$, $f'$, $f''$ be words over $S$, and let $s,t\in S$ with $s\neq t$. Let an \emph{$(s,t)$-word}, or \emph{alternating word}, be a word over $\{s,t\}$ which begins with the letter $s$ and contains no consecutive letters. For $m\in \bN$, we denote by $p_m(s,t)$ the unique $(s,t)$-word which has length $m$ ($m$ may be odd). If $m_{st}< \infty$, we denote by $p(s,t)$ the word $p_{m_{st}}(s,t)$. Thus
\[   
p_m(s,t):=\underbrace{stst\dots\, }_{\text{$m$ letters}}  \qquad \text{and} \qquad p(s,t):=\underbrace{stst\dots\, }_{\text{$m_{st}$ letters}}   
.\]
We denote by $p^{-1}(s,t)$ the word obtained from $p(s,t)$ by reversing the order of letters. %If $m_{st}$ is odd, then $p(s,t)=p^{-1}(t,s)$. 

A \emph{contraction} is an alteration from a word of the form $f ss f'$$\, $\footnote{ juxtaposition of words denotes their concatenation} to the word $f f'$. An \emph{expansion} is an alteration from a word of the form $f f'$ to the word $f ss f'$. An \emph{elementary strict homotopy} is an alteration from a word of the form $f p(s,t) f'$ to the word $f p(t,s) f'$. A \emph{strict homotopy} is an alternation of a word which is a composition of elementary strict homotopies. If a word $f$ can be altered via a strict homotopy to give the word $\hat {f}$, we say $f$ is \emph{strictly homotopic} to $\hat {f}$, and we write $f\simeq \hat {f}$. The relation `$\simeq$' is an equivalence relation on words over $S$.\footnote{ our notion of strict homotopy of words is what Tits and Ronan call homotopy of words \cite{tits81local}, \cite{ronanlectures}.} %The reason for this change in terminology will become clear when we introduce homotopy of galleries.  

Let a \emph{homotopy} of words be any composition of contractions, expansions, and elementary strict homotopies. If a word $f$ can be altered via a homotopy to give the word $\hat{f}$, we say $f$ is \emph{homotopic} to $\hat{f}$, and write $f\sim \hat{f}$. The relation `$\sim$' is an equivalence relation on words over $S$. Clearly, if two words are strictly homotopic, then they are homotopic. An important property of Coxeter groups is that a partial converse to this holds (\autoref{thm:funthmofcox} (MT2)). Clearly, two words are equivalent if and only if they are homotopic. 

We say a word $f$ is \emph{$M$-reduced} if there are no words strictly homotopic to $f$ which are of the form $f' ssf''$. Clearly, reduced implies $M$-reduced, with the converse also holding (\autoref{thm:funthmofcox} (MT1)). 

\begin{prop} \label{prop:+-1}
Let $W=(W,S)$ be a Coxeter group. For all $w\in W$ and $s\in S$, we have the following dichotomies,
\[|ws|=|w|+1\qquad   \text{or}\qquad  |ws|=|w|-1\] 
and 
\[|sw|=|w|+1\qquad   \text{or}\qquad   |sw|=|w|-1.\]
\end{prop}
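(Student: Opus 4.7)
The plan is to first show that $|ws|$ differs from $|w|$ by at most $1$, and then rule out equality by a parity argument based on \autoref{remrem}. The argument for $|sw|$ is completely symmetric, so I will only sketch the right-multiplication case.

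For the bound $|ws|\leq |w|+1$: if $f$ is a reduced decomposition of $w$, then $fs$ is a (not necessarily reduced) decomposition of $ws$ of length $|w|+1$, so $|ws|\leq |w|+1$. Applying this inequality with $w$ replaced by $ws$ and using $(ws)s=w$, I get $|w|=|(ws)s|\leq |ws|+1$, i.e.\ $|ws|\geq |w|-1$. Therefore $|ws|\in\{|w|-1,\,|w|,\,|w|+1\}$.

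The main step is to exclude $|ws|=|w|$. Let $f$ be any reduced decomposition of $w$ and $g$ any reduced decomposition of $ws$. Then $fs$ and $g$ are both decompositions of $ws$, hence equivalent, and by \autoref{remrem} they are homotopic. Since contractions and expansions change word length by $\pm 2$ while elementary strict homotopies preserve length, homotopy preserves length modulo $2$. Thus $|fs|\equiv|g|\pmod 2$, i.e.\ $|w|+1\equiv |ws|\pmod 2$, so $|w|$ and $|ws|$ have opposite parities. In particular $|ws|\neq |w|$, which combined with the trichotomy above gives the dichotomy $|ws|=|w|+1$ or $|ws|=|w|-1$.

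The only nontrivial ingredient is the parity observation, and this is already provided by \autoref{remrem}; everything else is bookkeeping with the definition of word length. The argument for $|sw|$ is identical, replacing $fs$ by $sf$ throughout.
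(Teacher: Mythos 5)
Your proposal is correct and follows essentially the same route as the paper's proof: the two-sided bound $|w|-1\leq|ws|\leq|w|+1$ from the triangle inequality of word length, followed by ruling out $|ws|=|w|$ via the parity observation of \autoref{remrem}. You simply spell out the parity step in more detail than the paper does.
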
  
\begin{proof}
This follows by the triangle inequality and the fact that homotopic words have the same length modulo $2$.
\end{proof}

These dichotomies may be viewed in terms of the Bruhat order, see \autoref{remondicot}. 

\begin{prop} \label{prop:endins}
Let $W=(W,S)$ be a Coxeter group. For all $w\in W$ and $s\in S$, if $|ws|=|w|-1$ (alternatively $|sw|=|w|-1$), then there exists a reduced decomposition $f$ of $w$ which ends (alternatively starts) with $s$. 
\end{prop}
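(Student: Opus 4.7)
The plan is to reduce the statement directly to the definition of word length, using only the fact that $s$ is an involution.

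Suppose $|ws| = |w| - 1$. I would set $w' = ws$, so that $|w'| = |w| - 1$. Then I would pick any reduced decomposition $g$ of $w'$, which has length $|w'| = |w|-1$. Since $s$ is an involution, $w's = wss = w$, so appending $s$ to $g$ gives a decomposition $f := gs$ of $w$, of length $|g| + 1 = |w|$. Because $|w|$ is by definition the minimum length among all decompositions of $w$, $f$ must itself be reduced. By construction $f$ ends with $s$, which is exactly what is required.

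For the alternative assertion, the argument is symmetric: set $w' = sw$, take a reduced decomposition $g$ of $w'$, and observe that $sg$ is a decomposition of $sw' = s(sw) = w$ of length $|w|$, hence reduced, and starts with $s$.

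The only non-routine input is that right (resp.\ left) concatenation of the generator $s$ to a decomposition corresponds to right (resp.\ left) multiplication by $s$ in $W$, which is immediate from how $w(\cdot)$ is defined on words. There is no real obstacle; the argument is a one-line manipulation and does not require Proposition \ref{prop:+-1} itself, only the triangle inequality for word length (used implicitly to ensure that $gs$ cannot be strictly longer than $|w|$, i.e.\ that $|w| \leq |w'| + 1$).
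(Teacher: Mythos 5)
Your proof is correct and is essentially identical to the paper's: both take a reduced decomposition of $ws$ (of length $|w|-1$), append $s$ to obtain a decomposition of $w$ of length $|w|$, and conclude it is reduced because $|w|$ is the minimal length of any decomposition of $w$. The only difference is cosmetic — you spell out the minimality step that the paper leaves implicit.
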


\begin{proof}
	Let $f'$ be a reduced decomposition of $ws$. Then $|f'|=|ws|=|w|-1$. Put $f=f's$. Firstly, $f$ is a decomposition of $w$ since
	\[w=(ws)s=w(f')s=w(f's)=w(f).\] 
	\noindent Secondly, $f$ is reduced since 
	\[|f|=|f's|=|f'|+1=|w|.\] 
	\noindent The result for when $|sw|=|w|-1$ follows by a symmetric argument. 
\end{proof}

The following is a key result from \cite{tits1969wordproblem}, which gives a solution to the word problem in Coxeter groups.

\begin{thm} \label{thm:funthmofcox} \index{Coxeter group!Main Theorem}
For any Coxeter group $W$, we have the following:
	
\begin{enumerate}  [itemindent=0cm, leftmargin=1.9cm]
\item [\textbf{(MT1)}]
$M$-reduced words are reduced
\item [\textbf{(MT2)}]
homotopic reduced words are strictly homotopic. 
\end{enumerate}
\end{thm}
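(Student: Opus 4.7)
The plan is to prove (MT2) first and then deduce (MT1) from it, with both resting on the \emph{Exchange Condition}: if $s_1 \cdots s_n$ is a reduced decomposition of $w$ and $|ws|=|w|-1$ for some $s\in S$, then $ws = s_1 \cdots \widehat{s_i} \cdots s_n$ for some $i$. I would establish this via the Tits faithful linear representation of $W$ on $\bR^S$, interpreting $|w|$ as the number of positive roots sent negative by $w$; the index $i$ then arises from tracking the root whose sign is flipped back to positive when one multiplies by $s$. With this in hand, an inductive ``peeling'' argument alternating applications of Prop.~\ref{prop:endins} and the Exchange Condition yields the following \emph{dihedral lemma}: if $s\neq t$ and both $|ws|=|w|-1$ and $|wt|=|w|-1$, then $m_{st}<\infty$ and $w$ admits reduced decompositions of the forms $h\,p(s,t)$ and $h\,p(t,s)$ with $|h| = |w| - m_{st}$.

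Proof of (MT2) then proceeds by induction on the common length $n$ of two reduced decompositions $f = f_0 s$ and $f' = f_0' t$ of the same element $w$. The base case $n \leq 1$ is trivial. If $s = t$, then $f_0$ and $f_0'$ are reduced decompositions of $ws$ and the induction hypothesis gives $f_0 \simeq f_0'$, hence $f \simeq f'$. If $s \neq t$, the dihedral lemma applies; let $\pi_s, \pi_t \in \{p(s,t), p(t,s)\}$ be the two alternating words ending respectively in $s$ and $t$, and let $h$ be a reduced decomposition of the accompanying prefix. Applying the induction hypothesis at length $n-1$ to compare $f_0$ with the length-$(n-1)$ prefix of $h\,\pi_s$ (a reduced decomposition of $ws$) gives $f \simeq h\,\pi_s$; symmetrically $f' \simeq h\,\pi_t$. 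Finally $h\,\pi_s$ and $h\,\pi_t$ are related by a single elementary strict homotopy on their last $m_{st}$ letters, closing the chain.

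To deduce (MT1), I would suppose $f = s_1 \cdots s_n$ is $M$-reduced but not reduced. The sequence $|w(s_1 \cdots s_k)|$ starts at $0$ and fails to be strictly increasing at some point, so by Prop.~\ref{prop:+-1} there is a first index $k$ with $|w(s_1 \cdots s_{k-1})| = k-1$ and $|w(s_1 \cdots s_k)| = k-2$. Prop.~\ref{prop:endins} then furnishes a reduced decomposition $g s_k$ of $w(s_1 \cdots s_{k-1})$, and the already-proved (MT2) gives $s_1 \cdots s_{k-1} \simeq g s_k$. Performing this strict homotopy inside $f$ on its first $k-1$ letters produces a word strictly homotopic to $f$ which contains the consecutive subword $s_k s_k$, contradicting $M$-reducedness.

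The main obstacle is the Exchange Condition itself: it cannot be proved by the word-length/triangle-inequality manipulations that suffice for Props.~\ref{prop:+-1} and~\ref{prop:endins}, and genuinely requires the geometric (root-system) representation of $W$ as external input. Once it is secured, the dihedral lemma and the two inductions are delicate combinatorial bookkeeping rather than conceptual obstacles.
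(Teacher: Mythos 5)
The paper does not prove this theorem at all: it is imported verbatim from \cite{tits1969wordproblem}, so there is no in-paper argument to compare against. Your proposal is a correct reconstruction of the standard Tits--Matsumoto proof, and its architecture is sound: the induction for (MT2) on the common length $n$, splitting on whether the two reduced words end in the same letter and invoking the dihedral lemma when they do not, closes correctly because every appeal to the inductive hypothesis is at length $n-1$ and the two words $h\,\pi_s$, $h\,\pi_t$ differ by exactly one elementary strict homotopy; and the deduction of (MT1) from (MT2) via the first descent of $k\mapsto|w(s_1\cdots s_k)|$ (using \autoref{prop:+-1} and \autoref{prop:endins}) cleanly produces a word strictly homotopic to $f$ containing $s_ks_k$. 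Two remarks. First, your logical order is the reverse of the paper's: the paper presents the Exchange Condition as a \emph{corollary} of this theorem, whereas you use it as input; this is not circular since you secure it independently from the geometric representation, but it is worth flagging that you are re-founding the development rather than working inside the paper's. Second, the one genuinely under-specified step is the dihedral lemma. The peeling argument does work, but the key move deserves to be spelled out: given a reduced decomposition $gs$ of $w$ and $|wt|=|w|-1$, the Exchange Condition deletes a letter of $gs$, and deleting the terminal $s$ would force $s=t$; hence the deletion happens inside $g$ and one obtains a reduced decomposition of $w$ ending in $st$, and iterating grows an alternating suffix whose length is bounded by $|w|$, forcing $m_{st}<\infty$ and terminating at the longest element of $W_{\{s,t\}}$, whose only reduced decompositions are $p(s,t)$ and $p(t,s)$. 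With that paragraph filled in, the proof is complete.
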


We also have the \emph{deletion condition} and the \emph{exchange condition} of Coxeter groups, which are easily proven as consequences of \autoref{thm:funthmofcox}. 

\begin{cor}[Deletion Condition]  \index{Coxeter group!Deletion condition}
	If a word $f$ over $S$ is not reduced, then there exists a substring of $f$ obtained by deleting two letters which is homotopic to $f$.   
\end{cor}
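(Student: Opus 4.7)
The plan is to reduce the Deletion Condition to the Exchange Condition: \emph{if $w$ admits a reduced decomposition $s_1 \cdots s_n$ and $|ws| < |w|$ for some $s \in S$, then $ws = s_1 \cdots \hat{s}_i \cdots s_n$ in $W$ for some $i \leq n$}. Both statements are consequences of Theorem \ref{thm:funthmofcox}, but Exchange is where the real work lies and is the main obstacle.

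Granting Exchange, the Deletion argument is short. Let $f = s_1 \cdots s_n$ be non-reduced and let $j$ be the smallest index for which $s_1 \cdots s_j$ is non-reduced. By minimality, $w := s_1 \cdots s_{j-1}$ is reduced of length $j-1$; non-reducedness of $s_1 \cdots s_j$ combined with the dichotomy of Proposition \ref{prop:+-1} forces $|ws_j| = j - 2$. Applying Exchange to $w$ and $s_j$ produces an index $i < j$ with $ws_j = s_1 \cdots \hat{s}_i \cdots s_{j-1}$ in $W$, and right-multiplying by $s_{j+1} \cdots s_n$ yields
\[ f \;=\; s_1 \cdots \hat{s}_i \cdots \hat{s}_j \cdots s_n \quad \text{in } W.\]
Remark \ref{remrem} then promotes this equality to a homotopy, delivering the desired substring.

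For the Exchange Condition itself, Proposition \ref{prop:endins} supplies a reduced decomposition $u_1 \cdots u_{n-1}\,s$ of $w$ ending in $s$. Since $s_1 \cdots s_n$ and $u_1 \cdots u_{n-1}\,s$ are both reduced decompositions of the same element, Remark \ref{remrem} together with (MT2) of Theorem \ref{thm:funthmofcox} promotes their equivalence to a strict homotopy $s_1 \cdots s_n \simeq u_1 \cdots u_{n-1}\,s$. The residual task is to convert this strict homotopy into a single substring deletion of $s_1 \cdots s_n$ representing $ws$. I would do this by induction on the number of elementary strict homotopies, distinguishing cases by whether the final elementary strict homotopy fixes the trailing position (in which case the inductive hypothesis applies to the penultimate word and the index is transported back unchanged) or disturbs it (in which case the alternating structure of $p(a,b)$ and $p(b,a)$ at the boundary of the replaced range pins down $i$). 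The chief subtlety is that $p(s,t)$ and $p(t,s)$ can carry different multisets of letters when $m_{st}$ is odd, so a naive letter-by-letter trace through the strict homotopy fails; one must instead track adjacent position-pairs together with the specific way the trailing letter is produced at each elementary strict homotopy, then propagate the resulting index back to the original word $s_1 \cdots s_n$.
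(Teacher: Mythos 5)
The paper does not actually prove this corollary (nor the Exchange Condition): both are asserted without argument as standard consequences of \autoref{thm:funthmofcox}, so there is no proof of the paper's to compare yours against. Your first half --- deducing Deletion from Exchange via the minimal non-reduced prefix, the dichotomy of \autoref{prop:+-1}, and the promotion of equivalence to homotopy by \autoref{remrem} --- is complete, correct, and the standard reduction.

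The gap is in your proof of Exchange. After invoking \autoref{prop:endins} and (MT2) to obtain a strict homotopy $s_1\cdots s_n\simeq u_1\cdots u_{n-1}s$, the entire content of the Exchange Condition sits in your ``residual task'' of converting that strict homotopy into a single deletion, and your description of the induction is too vague to verify: the invariant being propagated is never stated, and ``the alternating structure \dots pins down $i$'' does not explain why a deletion \emph{inside} the replaced range can be pushed back through an elementary strict homotopy, given that (as you yourself note) $p(a,b)$ and $p(b,a)$ need not contain the same letters. The observation that closes this: propagate backwards along the chain the statement ``some one-letter deletion of $f_j$ is a decomposition of $ws$''. Since $|ws|=n-1$, any $(n-1)$-letter decomposition of $ws$ is reduced; hence if the deleted position lies inside the substituted occurrence of $p(b,a)$ it must be one of the two endpoints of that range (an interior deletion creates an adjacent repeated letter, contradicting reducedness), and endpoint deletions transfer across the substitution because $p(a,b)$ with its first (resp.\ last) letter deleted is literally the word $p_{m-1}(b,a)$ (resp.\ $p_{m-1}(a,b)$), which is $p(b,a)$ with its last (resp.\ first) letter deleted; deletions outside the range transfer trivially. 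Alternatively you can bypass the induction entirely: $u_1\cdots u_{n-1}$ is a decomposition of $ws$ that is a substring of the reduced decomposition $u_1\cdots u_{n-1}s$ of $w$, so by \autoref{prop:bracket} the word $f$ contains a substring which is a decomposition of $ws$; such a substring has at least $n-1$ letters and cannot be all of $f$, so it is a one-letter deletion --- though one should check that the source cited for \autoref{prop:bracket} does not itself derive that proposition from the exchange condition.
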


\begin{cor}[Exchange Condition] \index{Coxeter group!Exchange condition}
	Let $f$ be a reduced word. If $fs$ (alternatively $sf$) is not reduced, then there exists a substring $f'$ of $f$, obtained by deleting one letter, which is homotopic to $fs$ (alternatively $sf$).  
\end{cor}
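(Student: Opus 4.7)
The plan is to deduce the Exchange Condition from the Deletion Condition (the preceding corollary). Write $f = s_1 s_2 \dots s_n$ reduced, and suppose $fs$ is not reduced. Treating $fs = s_1 \dots s_n s$ as a word of length $n+1$, the Deletion Condition gives positions $1 \leq i < j \leq n+1$ such that the substring
\[
h = s_1 \dots \hat{s_i} \dots \hat{s_j} \dots s_n s
\]
of $fs$ obtained by deleting the letters at positions $i$ and $j$ is homotopic to $fs$.

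The whole argument then reduces to a two-case analysis on whether the final letter $s$ is among those deleted. In the favorable case $j = n+1$, the second deleted letter is precisely the appended $s$, and $h = s_1 \dots \hat{s_i} \dots s_n$ is a substring of $f$ obtained by deleting a single letter. Setting $f' := h$ gives exactly the word the statement demands.

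The remaining case $j \leq n$ I expect to rule out by a length contradiction. Here both deleted positions lie inside $f$, so $h = f'' s$ where $f'' := s_1 \dots \hat{s_i} \dots \hat{s_j} \dots s_n$ has length $n-2$. By hypothesis $f'' s \sim fs$, and Remark \ref{remrem} identifies homotopy with equivalence in $W$, hence $w(f'')\,s = w(f)\,s$ and so $w(f'') = w(f)$, i.e.\ $f'' \sim f$. But then $f''$ is a decomposition of $w(f)$ of length $n-2$, contradicting $|w(f)| = n$ (since $f$ is reduced). So this case cannot occur, and $f'$ from Case~1 is the desired substring.

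Finally, the parenthetical statement about $sf$ follows from the statement for $fs$ applied to the reverse word $f^{-1}$, since reversal carries reduced words to reduced words and homotopy to homotopy, and $(sf)^{-1} = f^{-1}s$. I do not anticipate any real obstacle here; the only delicate point is verifying that Case~2 genuinely collapses to a length contradiction, which uses both the equivalence/homotopy identification of Remark \ref{remrem} and the fact that the parities line up so that cancellation of the trailing $s$ is legitimate.
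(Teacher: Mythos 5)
Your argument is correct. Note that the paper itself offers no proof of this corollary: it merely remarks that the deletion and exchange conditions ``can be proven as consequences of \autoref{thm:funthmofcox}'', so there is nothing to compare against line by line. Your route---apply the Deletion Condition to $fs$ and split on whether the appended letter $s$ is one of the two deleted positions---is the standard derivation and it goes through. In the case where both deleted letters lie inside $f$, the contradiction is immediate: $h=f''s\sim fs$ gives $w(f'')=w(f)$ by cancellation in the group (no parity considerations are needed here, so your worry on that point is unfounded), and then $f''$ is a word of length $n-2$ equivalent to the reduced word $f$ of length $n$, contradicting the definition of reduced. The reduction of the $sf$ case to the $fs$ case via reversal is also fine, since by \autoref{remrem} homotopy coincides with equivalence in $W$, and $w(g^{-1})=w(g)^{-1}$ shows equivalence is preserved under reversal, while reversal clearly sends reduced words to reduced words and one-letter-deleted substrings to one-letter-deleted substrings.
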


We also have the following easy result.

\begin{cor} \label{cor:cancalation}
	Let $f$ be a word, and let $f'$ and $f''$ be reduced words. If $f' f\sim f'' f$ (or $f f' \sim f f'' $), then $f'\simeq f''$.
\end{cor}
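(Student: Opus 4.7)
The plan is to translate the homotopy hypothesis into a group-theoretic equality, cancel in $W$, and then invoke (MT2) from \autoref{thm:funthmofcox} to upgrade homotopy of reduced words to strict homotopy.

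First I would use \autoref{remrem}: since $f'f \sim f''f$ (homotopy of words), the two words represent the same element of $W$, that is, $w(f'f) = w(f''f)$. Because $w$ is a monoid homomorphism on words (concatenation goes to product), this reads $w(f')\, w(f) = w(f'')\, w(f)$ in $W$. Cancelling $w(f)$ on the right in the group $W$ gives $w(f') = w(f'')$, i.e., $f'$ and $f''$ are equivalent. Invoking \autoref{remrem} in the other direction, equivalent words are homotopic, so $f' \sim f''$.

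Now the hypothesis kicks in: both $f'$ and $f''$ are reduced. By (MT2) of \autoref{thm:funthmofcox}, homotopic reduced words are strictly homotopic, so $f' \simeq f''$ as required. The symmetric case $ff' \sim ff''$ is handled identically by cancelling $w(f)$ on the left instead of the right.

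There is no real obstacle: the proof is essentially a one-line consequence of (MT2) together with the observation that the premise, once passed through \autoref{remrem}, reduces to an equality in the group where we have cancellation for free. The only mildly subtle point is remembering that homotopy of words and equivalence of words are the same relation (\autoref{remrem}), which is exactly what lets us shuttle between the combinatorial statement $f' \sim f''$ and the group-theoretic statement $w(f') = w(f'')$.
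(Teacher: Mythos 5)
Your proposal is correct and follows exactly the same route as the paper's own proof: pass to $w(f'f)=w(f''f)$, cancel $w(f)$ in the group to get $w(f')=w(f'')$, conclude $f'\sim f''$, and then apply (MT2) to the reduced words $f'$ and $f''$. No gaps.
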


\begin{proof}
	We have $w(f' f)=w(f'' f)$, so $w(f') w(f)=w(f'') w(f)$, which implies that $w(f')=w(f'')$. Thus $f'\sim f''$, and so $f'\simeq f''$ by (MT$2$). The result for when $f f' \sim f f'' $ follows by a symmetric argument. 
\end{proof}

\begin{prop} \label{prop:subgroupofcox}
	Let $W=(W,S)$ be a Coxeter group with Coxeter matrix $M$, and let $J\subseteq S$. The extension $W(M_J)\to W$ of the embedding $J\hookrightarrow S$ is an embedding of groups. In particular, the standard subgroup $W_J\leq W$ is naturally the Coxeter group $W(M_J)$.
\end{prop}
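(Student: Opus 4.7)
\bigskip

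\noindent\textbf{Proof proposal.} The plan is to produce the homomorphism $\varphi:W(M_J)\to W$ via the universal property of Coxeter groups, and then to derive its injectivity from the solution to the word problem (\autoref{thm:funthmofcox}), in particular (MT1).

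For existence, observe that the generators $J\subseteq S\subseteq W$ satisfy $(st)^{(M_J)_{st}}=(st)^{m_{st}}=1$ in $W$ for all $s,t\in J$, since $M_J$ is by definition the restriction of $M$. By the universal property of $W(M_J)$ as a Coxeter group, the set map $J\hookrightarrow W$ extends uniquely to a group homomorphism $\varphi:W(M_J)\to W$; this is the claimed extension.

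For injectivity, suppose $w\in W(M_J)$ is in $\ker\varphi$, and let $f$ be a reduced decomposition of $w$ in $W(M_J)$. Note that $f$ is a word over $J\subseteq S$, and $\varphi(w)=w(f)$ computed in $W$. The key observation is that ``$M$-reduced'' and ``$M_J$-reduced'' coincide for words over $J$: an elementary strict homotopy alters $f$ by replacing a subword $p(s,t)$ with $p(t,s)$, and this operation uses only letters already present in $f$, while the values of $m_{st}$ agree with $(M_J)_{st}$ for $s,t\in J$. Hence every strict homotopy of $f$ in $W$ is also a strict homotopy of $f$ in $W(M_J)$ (and vice versa), so any word strictly homotopic to $f$ in $W$ that contains a factor $ss$ would already exist as a strict homotopy in $W(M_J)$.

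Now $f$ is reduced in $W(M_J)$, hence in particular $M_J$-reduced (otherwise $f$ would be strictly homotopic, thus equivalent, to some $f'ssf''$, and thence equivalent to the strictly shorter $f'f''$). By the observation above, $f$ is also $M$-reduced in $W$, so by (MT1) applied to $W$, $f$ is reduced in $W$. But $w(f)=\varphi(w)=1$ forces $|f|=0$, since the only reduced decomposition of $1\in W$ is the empty word. Therefore $w=w(f)=1$ in $W(M_J)$, and $\varphi$ is injective. The final assertion that the standard subgroup $W_J\leq W$ is naturally $W(M_J)$ is then immediate from the fact that $\varphi$ is a homomorphism whose image is generated by $J$, i.e., equals $W_J$ by definition. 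The main subtlety is the symmetric comparison of the ``$M$-reduced'' and ``$M_J$-reduced'' conditions for words over $J$; once that is in hand, (MT1) does the rest.
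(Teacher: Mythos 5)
Your proof is correct, and it takes a mildly different route from the paper's. Both arguments turn on the same key observation, which you state explicitly: for a word over $J$, every elementary strict homotopy (and every contraction) stays within words over $J$ and is simultaneously a strict homotopy with respect to $M$ and with respect to $M_J$, so the notions of $M$-reduced and $M_J$-reduced coincide on such words. From there you diverge: you prove injectivity by showing the kernel is trivial --- a reduced decomposition $f$ of $w\in\ker\varphi$ in $W(M_J)$ is $M_J$-reduced, hence $M$-reduced, hence reduced in $W$ by (MT1), and a reduced decomposition of $1$ must be empty. The paper instead proves the stronger statement that any two words over $J$ equivalent in $W$ are already homotopic with respect to $M_J$: it uses (MT1) to homotope both to reduced words via strict homotopies and contractions (which stay over $J$), and then invokes (MT2) to connect the two reduced words by a strict homotopy (which also stays over $J$). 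Your version is slightly more economical in that it needs only (MT1) and the fact that $|1|=0$; the paper's version buys the sharper conclusion that the word-problem data of $W_J$ is literally the restriction of that of $W$, not merely that the map is injective. Both are complete proofs of the stated proposition.
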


\begin{proof}
	Let $f$ and $f'$ be words over $J$ which are homotopic with respect to $M$. It suffices to show that this homotopy is a composition of contractions, expansions, and elementary strict homotopies between words over $J$, since this shows that $f$ and $f'$ are also homotopic with respect to $M_J$. Indeed, (MT$1$) tells us we can homotope $f$ and $f'$ to reduced words, say $\hat{f}$ and $\hat{f'}$ respectively, using only strict homotopies and contractions. Then, by (MT$2$), we have $\hat{f}\simeq \hat{f'}$. 
\end{proof}

Let us briefly describe the Bruhat order, which is a way of ordering the elements of a Coxeter group. To give a quick definition of the Bruhat order we need the following result, which is proved in \cite{bjorner06combinatorics}.

\begin{prop}\label{prop:bracket}
	Let $W$ be a Coxeter group, and let $w,w'\in W$. If a decomposition of $w'$ is a substring of a reduced decomposition of $w$, then every (reduced) decomposition of $w$ contains a substring which is a decomposition of $w'$.
\end{prop}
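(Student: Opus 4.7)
The plan is to verify that the property $\Pi(f)$ — namely, that the word $f$ contains a substring which is a decomposition of $w'$ — is preserved under each of the elementary moves generating homotopy of words, and then transport $\Pi$ from the hypothesized reduced decomposition $f_0$ of $w$ to an arbitrary decomposition $g$ of $w$ along any homotopy between them. Concretely, given $g$, I would first reduce it via contractions and elementary strict homotopies (possible by (MT$1$)) to some reduced decomposition $g_0$, then invoke (MT$2$) applied to the two reduced decompositions $f_0,g_0$ of $w$ to obtain a strict homotopy $f_0 \simeq g_0$. Concatenating with the reverse of the reduction yields a passage from $f_0$ to $g$ using only elementary strict homotopies and expansions, so it suffices to check preservation of $\Pi$ under these two moves.

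Preservation under an expansion $h h' \rightsquigarrow h\,ss\,h'$ is immediate, since any witness substring of $h h'$ remains a witness substring of $h\,ss\,h'$ at the same positions, ignoring the two inserted letters. For an elementary strict homotopy $h_1 p(s,t) h_2 \rightsquigarrow h_1 p(t,s) h_2$, a witness substring $\sigma$ of the left-hand word decomposes uniquely as $\sigma = \sigma_1 \cdot \sigma_{st} \cdot \sigma_2$, with $\sigma_1, \sigma_2$ substrings of $h_1, h_2$ and $\sigma_{st}$ a substring of the central alternating subword $p(s,t)$. Writing $v \in W_{\{s,t\}}$ for the element decomposed by $\sigma_{st}$, the problem reduces to producing a substring $\sigma_{ts}$ of $p(t,s)$ that also decomposes $v$; then $\sigma_1 \cdot \sigma_{ts} \cdot \sigma_2$ is a witness substring for $h_1 p(t,s) h_2$.

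The crux is therefore the following dihedral sublemma: whenever $m := m_{st}$ is finite, every element of the standard dihedral subgroup $W_{\{s,t\}}$ is decomposed by some substring of $p(s,t)$ (and, by the symmetric roles of $s$ and $t$, also by some substring of $p(t,s)$). I would prove this by direct enumeration, using \autoref{prop:subgroupofcox} to identify $W_{\{s,t\}}$ with the dihedral group of order $2m$: its $2m$ elements are decomposed by the alternating words of length at most $m$, and each such alternating word appears as a substring of $p(s,t)$. Explicitly, the $(s,t)$-word of length $k \le m$ is the length-$k$ prefix of $p(s,t)$; the $(t,s)$-word of length $k$ with $k \le m-1$ sits at positions $2,3,\dots,k+1$ of $p(s,t)$; and the $(t,s)$-word of length $m$ decomposes the longest element $w_0 = w(p(s,t))$, which is realized by the full word $p(s,t)$ itself.

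The main obstacle is the dihedral sublemma: once it is in hand the rest is bookkeeping under the swap of $p(s,t)$ with $p(t,s)$. The sublemma is combinatorially transparent but requires some care with the parity of positions and with the boundary case $k=m$. An alternative route would go via the Strong Exchange Condition or the subword characterization of Bruhat order, but since the present proposition is typically used to \emph{establish} those results, the direct combinatorial enumeration above is cleaner and self-contained in our setting.
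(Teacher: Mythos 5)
Your argument is correct, but note that the paper itself does not prove \autoref{prop:bracket}: the statement is deferred without proof to \cite{bjorner06combinatorics}, so there is no internal argument to compare against. Your route --- transporting the property ``contains a substring decomposing $w'$'' from the given reduced decomposition $f_0$ to an arbitrary decomposition $g$ along a path consisting only of expansions and elementary strict homotopies, with (MT1) supplying the reduction of $g$ to a reduced $g_0$ and (MT2) supplying the braid-move path $f_0\simeq g_0$ --- is the word-combinatorial proof via Tits' solution to the word problem, and it has the virtue of being entirely self-contained in the paper's toolkit (\autoref{thm:funthmofcox} and \autoref{prop:subgroupofcox}, both already available at this point). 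The proof in \cite{bjorner06combinatorics} instead derives the subword property from the Strong Exchange Condition by induction on length; that is shorter but imports machinery the paper has not developed, so your version arguably fits the surrounding text better. The dihedral sublemma is the right crux and your enumeration establishes it; the only slip is the intermediate assertion that ``each such alternating word appears as a substring of $p(s,t)$'', which is false for the $(t,s)$-word of length $m$ (e.g.\ $ts$ is not a subsequence of $st$ when $m_{st}=2$) --- but your explicit case analysis already repairs this by observing that the element it decomposes is the longest element $w_0$, which is also decomposed by the full substring $p(s,t)$ itself, so the sublemma as actually needed (every element of $W_{\{s,t\}}$ is decomposed by \emph{some} substring of $p(s,t)$) holds and the proof goes through.
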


The \emph{Bruhat order} of a Coxeter group $W$ is the binary relation `$\leq$' on the elements of $W$ such that $w'\leq w$ if a reduced decomposition of $w$ contains a substring which is a decomposition of $w'$. By \autoref{prop:bracket}, we have $w'\leq w$ if and only if every reduced decomposition of $w$ contains a substring which is a decomposition of $w'$. It follows that $\leq$' is a partial ordering of the elements of $W$.

\begin{remark} \label{remondicot}
	Let $w\in W$ and $s\in S$. Notice that $ws>w$ if and only if $|ws|=|w|+1$, and, by the exchange condition, $ws<w$ if and only if $|ws|=|w|-1$. Similarly, $sw>w$ if and only if $|sw|=|w|+1$, and $sw<w$ if and only if $|sw|=|w|-1$.
\end{remark}  

%%%%%%%%%%%%%%%%%%%%%%%%%%%%%%%%%%%%%%%%%%%%%%%%%%%%%%%%%%%%%%%%%%%%%%%%
%%%%%%%%%%%%%%%%%%%%%%%%%%%%%%%%%%%%%%%%%%%%%%%%%%%%%%%%%%%%%%%%%%%%%%%%
\section{Weyl Data} \label{sec:welydata}
%%%%%%%%%%%%%%%%%%%%%%%%%%%%%%%%%%%%%%%%%%%%%%%%%%%%%%%%%%%%%%%%%%%%%%%%
%%%%%%%%%%%%%%%%%%%%%%%%%%%%%%%%%%%%%%%%%%%%%%%%%%%%%%%%%%%%%%%%%%%%%%%%

We introduce Weyl data and related notions, in particular homotopy of galleries, and fundamental groupoids. Throughout this paper, we shall point to \autoref{apen} for relevant details on groupoids. See also \cite{brown06topology}.

%%%%%%%%%%%%%%%%%%%%%%%%%%%%%%%%%%%%%%%%%%%%%%%%%%%%%%%%%%%%%%%%%%%%%%%%
\subsection{Graphs of Type $M$} \label{section:Graphs of Type $M$}
%%%%%%%%%%%%%%%%%%%%%%%%%%%%%%%%%%%%%%%%%%%%%%%%%%%%%%%%%%%%%%%%%%%%%%%%

Let $M=(m_{st})$ be a Coxeter matrix on $S$ with associated Coxeter group $W=W(M)$. Let us call a graph $\cW$ labeled over $S$ a \emph{graph of type $M$}. From now on, we say \emph{chambers} instead of vertices. For $\gb$ a gallery in a graph of type $M$, recall that $\gb_S$ denotes the word over $S$ which is the sequence of types of edges of $\gb$. The element 
\[
\gb_W:=w(\gb_S)\in W
\] 
for which $\gb_S$ is a decomposition is called the \emph{$W$-length} of $\gb$. Let a \emph{geodesic} $\gamma$ (often called a gallery of reduced type) be a gallery whose type $\gamma_S$ is a reduced word. 

Let $s,t\in S$ with $s\neq t$ and $m_{st}< \infty$. An \emph{$(s,t)$-geodesic}, or \emph{alternating geodesic}, is a geodesic with type an alternating word of the form
\[ 
p_m(s,t)= \underbrace{stst\dots}_{\text{$m$ letters}} \qquad \qquad  \text{(it follows that $m\leq m_{st}$)}
.\] 
A \emph{maximal $(s,t)$-geodesic}, or \emph{maximal alternating geodesic}, is a gallery with type 
\[p(s,t)=\underbrace{stst\dots}_{\text{$m_{st}$ letters}}.\] 
An \emph{$(s,t)$-cycle} is cycle with type $p_{2m_{st}}(s,t)$.

For $J\subseteq S$, the \emph{$J$-restriction} $\cW_J$ of $\cW$ is the graph of type $M_J$ with chambers $\cW_0$, and edges
\[(\cW_J)_1=\big\{i\in \cW_1:\upsilon(i)\in J\big \}\subseteq \cW_1.\] 
\noindent The extremities and type function of $\cW_J$ are the restrictions to $(\cW_J)_1$ of the corresponding functions of $\cW$. For $J=\{s\}$, we abbreviate $\cW_s:=\cW_{\{s\}}$.

%%%%%%%%%%%%%%%%%%%%%%%%%%%%%%%%%%%%%%%%%%%%%%%%%%%%%%%%%%%%%%%%%%%%%%%%
\subsection{Generalized Chamber Systems and Weyl Data} \label{section:Generalized Chamber Systems and Weyl Data}
%%%%%%%%%%%%%%%%%%%%%%%%%%%%%%%%%%%%%%%%%%%%%%%%%%%%%%%%%%%%%%%%%%%%%%%%

In the following, see \autoref{groupoids} for our definition of a groupoid. Let a \emph{generalized chamber system} $\cW=(\cW_0,\cW_1,\cW_s)$ of type $M$ be a graph $\cW=(\cW_0,\cW_1)$ of type $M$, with additional data
\begin{enumerate} [label=(\arabic*),start=1]
	\item
	for each $s\in S$, a (small) groupoid whose vertices and non-trivial edges are those of $\cW_s$, called the \emph{panel groupoid} of type $s$.
\end{enumerate}
If in addition $\cW$ has data
\begin{enumerate} [resume*]
	\item
	for each pair $(s,t)\in S\times S$ such that $s\neq t$ and $m_{st}< \infty$, a set $\cW(s,t)$ of $(s,t)$-cycles in $\cW$ called \emph{defining $(s,t)$-suites}, or \emph{defining suites}\footnote{ the defining suites may be viewed as groupoid-theoretic analogs of defining relators for groups; they will tell us what strict homotopies of galleries are permitted}
\end{enumerate}
then $\cW=(\cW_0,\cW_1,\cW_s,\cW(s,t))$ is called \emph{Weyl data}. Weyl data is called \emph{simple} if every \hbox{$(s,t)$-cycle} of the underlying graph of $\cW$ is a defining suite. An \emph{$s$-panel} is a connected component of the panel groupoid of type $s$, and we let \emph{panel} refer to an $s$-panel for some $s\in S$. From now on, we let $\cW_s$ denote the panel groupoid of type $s$, i.e. $\cW_s$ has an extra set of trivial edges which are not part of the graph $\cW$.   

A generalized chamber system of type $M$ is equivalent to a set of chambers $\cW_0$ which is equipped with an indexed family of groupoids $(\cW_s)_{s\in S}$, where the set of chambers of each groupoid $\cW_s$ is $\cW_0$. If all the indexed groupoids $\cW_s$ are setoids (equivalent to equivalence relations), then we recover Tits's notion of a chamber system \cite{tits81local}. 

%The \emph{rank} of a generalized chamber system is the cardinality of $S$. A generalized chamber system is called \emph{locally finite} \index{Weyl data!locally finite} if each of its panels is a finite groupoid. 

Given an edge $i\in \cW$ in a generalized chamber system with $\upsilon(i)=s$, we denote by $i^{-1}$ the inverse of $i$ in the panel groupoid $\cW_{s}$. For edges $i,i'\in \cW$ with $\upsilon(i)=\upsilon(i')=s$ and $\tau(i)=\iota(i')$, we denote by $i;i'$ their composition in $\cW_s$ (if $i'\neq i^{-1}$, then $i;i'$ is an edge of $\cW$). We let $ii'$ denote the gallery which is the concatenation of $i$ with $i'$. Let a \emph{backtrack} $\gb$ be a gallery which consists of an edge followed by its inverse; $\gb=i i^{-1}$. Let a \emph{detour} be a gallery $\gb$ which consists of two edges of the same type which are not mutually inverse; $\gb=ii'$, where $\upsilon(i)=\upsilon(i')$ and $i'\neq i ^{-1}$. Given a gallery $\gb$ in a generalized chamber system $\cW$, with sequence of edges $i_1,\dots,i_n$, let the \emph{inverse} $\gb^{-1}$ of $\gb$ be the gallery in $\cW$ whose sequence of edges is $i_n^{-1},\dots,i_1^{-1}$. 

A \emph{morphism} $\go:\cW\to \cW'$ of generalized chamber systems of type $M$ is a labeled graph morphism which satisfies the following two properties,
\begin{enumerate}[label=(\roman*)]
	\item
	for all edges $i\in \cW$, we have 
	\[\go(i^{-1})=(\go(i))^{-1}\]
	\item
	for all detours $ii'$ in $\cW$, we have 
	\[\go(i;i')=\go(i);\go(i').\]
\end{enumerate}
If we think of a generalized chamber system as a collection of groupoids indexed over $S$, then a morphism of generalized chamber systems is equivalently a collection of groupoid morphisms indexed over $S$, each of which has a trivial kernel,\footnote{ i.e. for each edge $g\in \cW_s$, $\omega(g)=1$ implies that $g$ is trivial} and consists of the same function on the chambers. 

\begin{remark} \label{remark:preservationofss}
	The composition of a backtrack with a morphism is a backtrack, and the composition of a detour with a morphism is a detour. %The first statement follows from the first property of morphisms. To see the second statement, suppose that the composition of a detour $ii'$ with a morphism $\go$ is a backtrack. Then $\go(i')=\go(i)^{-1}$. But $\go(i;i')=\go(i);\go(i')$ must be non-trivial, a contradiction.       
\end{remark} 

%%%%%%%%%%%%%%%%%%%%%%%%%%%%%%%%%%%%%%%%%%%%%%%%%%%%%%%%%%%%%%%%%%%%%%%%
\subsection{Homotopy of Galleries} \label{sec:homoofgal} 
%%%%%%%%%%%%%%%%%%%%%%%%%%%%%%%%%%%%%%%%%%%%%%%%%%%%%%%%%%%%%%%%%%%%%%%%

Let $\cW=(\cW_0,\cW_1,\cW_s,\cW(s,t))$ be Weyl data. Let $i\in \cW$ be any edge. Let $j,j'\in \cW$ be edges such that $jj'$ is a detour, and put $k=j;j'$. Let $\theta(s,t)$ be a defining suite of $\cW$. Then a \emph{contraction} of a gallery in $\cW$ is any of the following:

\begin{enumerate} [label=(\roman*)]
	\item
	delete a backtrack; an alternation from a gallery of the form $\gb ii^{-1} \gb'$ to the gallery $\gb \gb'$ 
	\item
	take a shortcut; an alternation from a gallery of the form $\gb jj' \gb'$ to the gallery $\gb k \gb'$ 
	\item
	delete a defining suite; an alternation from a gallery of the form $\gb\theta(s,t) \gb'$ to the gallery $\gb \gb'$.
\end{enumerate}

\noindent An \emph{expansion} is an alteration of a gallery which is the inverse of a contraction. An \emph{elementary homotopy} is an expansion or a contraction. A \emph{$1$-elementary homotopy} is an elementary homotopy of type (i) or (ii). A \emph{$2$-elementary homotopy} is an elementary homotopy of type (iii). Notice that $1$-elementary homotopies of type (i) and $2$-elementary homotopies preserve $W$-length, whereas $1$-elementary homotopies of type (ii) do not.

%\begin{remark} \label{remark:nopreservation}
%	A $1$-elementary homotopy of type (i) does not preserve length, but it does preserve $W$-length. An $1$-elementary homotopy of type (ii) does not preserve length or $W$-length. A $1$-elementary homotopy of either kind won't preserve the property of being a minimal gallery or a geodesic. A $2$-elementary homotopy does not preserve length, but it does preserve $W$-length. A $2$-elementary homotopy also won't preserve the property of being a minimal gallery or a geodesic.
%\end{remark}

A \emph{homotopy} is an alteration of a gallery which is a composition of elementary homotopies. If a gallery $\gb$ can be altered via a homotopy to give the gallery $\hat{\gb}$, we say $\gb$ is \emph{homotopic} to $\hat{\gb}$, and write $\gb \sim \hat{\gb}$. Then `$\sim$' is an equivalence relation on galleries. We denote by $[\gb]$ the homotopy equivalence class of the gallery $\gb$. We say a gallery $\gb$ is \emph{null-homotopic} if $\gb$ is homotopic to a trivial gallery (a gallery of length $0$). 

%\begin{remark} \label{lemma:invofgal}
%	Let $\gb$ be a gallery in Weyl data $\cW$. Then the concatenations $\gb \gb^{-1}$ and $\gb^{-1} \gb$ are null-homotopic since these galleries can be altered to give a trivial gallery via a composition of contractions of type (i).
%\end{remark}

%\begin{remark} \label{remark:homotopyofsubgalleries}
%	If we have some homotopy altering $\gb$ to give $\hat{\gb}$, then this homotopy can be applied to any gallery of the form $\gb' \gb \gb''$, to give $\gb' \hat{\gb} \gb''$. Thus, if galleries differ only by homotopic subgalleries, then they're homotopic. 
%\end{remark}

We now define certain homotopies which preserves both length and $W$-length. Let $\rho(s,t)$ be a maximal $(s,t)$-geodesic, and let $\rho(t,s)$ be a maximal $(t,s)$-geodesic such that $\rho(s,t)\sim \rho(t,s)$. An \emph{elementary strict homotopy} is an alteration from a gallery of the form $\gb\rho(s,t)  \gb'$ to the gallery $\gb\rho(t,s)  \gb'$. Since galleries which differ only by homotopic subgalleries are necessarily homotopic, it follows that elementary strict homotopies are homotopies. Elementary strict homotopies change the type of a gallery by an elementary strict homotopy of words.

A \emph{strict homotopy} is an alteration of a gallery which is a composition of elementary strict homotopies. If a gallery $\gb$ can be altered via a strict homotopy to give the gallery $\hat{\gb}$, we say $\gb$ is \emph{strictly homotopic} to $\hat{\gb}$, and write $\gb \simeq \hat{\gb}$.  Then `$\simeq$' is an equivalence relation on galleries. We denote by $[\gb]_{\simeq}$ the strict homotopy equivalence class of the gallery $\gb$. If two galleries are strictly homotopic, then they are homotopic. Strict homotopies change the type of a gallery by a strict homotopy of words.

\begin{remark} \label{rem:stricthomopresgeo}
	Strict homotopies preserve both the length and the $W$-length of a gallery, and therefore the property of being a minimal gallery or a geodesic. Thus, if $\gamma$ is geodesic, then so is every gallery in $[\gamma]_{\simeq}$.  
\end{remark}

We define the \emph{$W$-length} of a strict homotopy class of galleries $[\gb]_{\simeq}$ (but usually geodesics) to be $\gb_W$. For $\gamma$ a geodesic in $\cW$, let the \emph{$\gamma$-gallery map} $F_{\gamma}$ be the function given by
\[       F_{\gamma}: [\gamma]_{\simeq}\to M(S),\footnote{ $M(S)$ denotes the set of words over $S$ (the free monoid on $S$)}  \qquad  \hat{\gamma} \mapsto \hat{\gamma}_S   .     \]
\noindent By \autoref{rem:stricthomopresgeo}, each $\hat{\gamma}\in [\gamma]_{\simeq}$ is a geodesic, and so the image of $F_{\gamma}$ is a subset of the words which are reduced decompositions of $\gamma_W$. 

Let an \emph{$(s,t)$-suite}, or just \emph{suite}, be an $(s,t)$-cycle which is null-homotopic. In particular, defining suites are suites. An $(s,t)$-cycle $\theta(s,t)$ can be presented as the concatenation of a maximal $(s,t)$-geodesic $\rho(s,t)$ with the inverse of a maximal $(t,s)$-geodesic $\rho(t,s)$,
\[  \theta(s,t)=\rho(s,t) \rho(t,s)^{-1}.\]
\noindent The geodesics $\rho(s,t)$ and $\rho(t,s)$ are uniquely determined. %Notice that if $m_{st}$ is even, then $\rho(t,s)^{-1}$ is a maximal $(s,t)$-geodesic, and if $m_{st}$ is odd, then $\rho(t,s)^{-1}$ is a maximal $(t,s)$-geodesic. 
We have that $\theta(s,t)$ is a suite of $\cW$ if and only if $\rho(s,t)\sim \rho(t,s)$, since 
\begin{align*}
&\theta(s,t)\  \text{is a suite} \\
\iff &\rho(s,t) \rho(t,s)^{-1}\ \text{is null-homotopic} \\
\iff &\rho(s,t) \rho(t,s)^{-1} \rho(t,s)\sim \rho(t,s)\\
\iff &\rho(s,t)\sim \rho(t,s).
\end{align*} 
It is straightforward to check that if $\theta$ is a suite of $\cW$, then so is $\theta^{-1}$, and so is any cyclic permutation of $\theta$.   

\subsection{Morphisms of Weyl Data}
%%%%%%%%%%%%%%%%%%%%%%%%%%%%%%%%%%%%%%%%%%%%%%%%%%%%%%%%%%%%%%%%%%%%%%%%

A \emph{morphism} $\go:\cW\to \cW'$ of Weyl data of type $M$ is a morphism of the underlying generalized chamber systems (as defined in \autoref{section:Generalized Chamber Systems and Weyl Data}) which additionally satisfies the following property,
\begin{enumerate} [label=(\roman*),start=3]
	\item
	for each pair $(s,t)\in S\times S$ with $s\neq t$ and $m_{st}< \infty$, and each defining suite $\theta(s,t)\in \cW(s,t)$, we have that $\go\circ \theta(s,t)$ is a suite of $\cW'$. 
\end{enumerate}

\begin{lem}  \label{lemma:decentofhomotopy}
	Let $\go:\cW \to\cW'$ be a morphism of Weyl data. If $\gb$ and $\hat{\gb}$ are homotopic galleries in $\cW$, then $\go \circ \gb$ and  $\go \circ \hat{\gb}$ are homotopic galleries in $\cW'$.   
\end{lem}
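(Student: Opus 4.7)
The plan is to reduce to the case where $\gb$ and $\hat\gb$ differ by a single elementary homotopy. Since, by definition, a homotopy is a finite composition of elementary homotopies, a straightforward induction on the length of this composition shows it suffices to handle the elementary case: given one elementary homotopy taking $\gb$ to $\hat\gb$, produce a homotopy in $\cW'$ taking $\go\circ\gb$ to $\go\circ\hat\gb$. Then I would split into the three types of elementary homotopy listed in \autoref{sec:homoofgal}.

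For type (i), write $\gb=\gb_1 ii^{-1}\gb_2$ and $\hat\gb=\gb_1\gb_2$. Property (i) of a morphism of generalized chamber systems gives $\go(i^{-1})=\go(i)^{-1}$, so $\go(i)\go(i)^{-1}$ is a backtrack in $\cW'$ and $\go\circ\gb$ differs from $\go\circ\hat\gb$ by a type (i) elementary homotopy. For type (ii), write $\gb=\gb_1 jj'\gb_2$ and $\hat\gb=\gb_1 k\gb_2$ with $k=j;j'$. \autoref{remark:preservationofss} ensures that $\go(j)\go(j')$ is still a detour in $\cW'$ (this uses both properties of the morphism), and property (ii) says $\go(k)=\go(j;j')=\go(j);\go(j')$. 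Hence $\go\circ\gb$ differs from $\go\circ\hat\gb$ by a type (ii) elementary homotopy.

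The main obstacle is type (iii). Here $\gb=\gb_1\theta(s,t)\gb_2$ and $\hat\gb=\gb_1\gb_2$, where $\theta(s,t)\in\cW(s,t)$ is a defining suite of $\cW$. There is no reason to expect $\go\circ\theta(s,t)$ to be a defining suite of $\cW'$; the best we can say is what is asserted by property (iii) of a morphism of Weyl data, namely that $\go\circ\theta(s,t)$ is a suite of $\cW'$, i.e.\ null-homotopic. But this is exactly enough: by definition of null-homotopic, there is a homotopy altering $\go\circ\theta(s,t)$ to a trivial gallery, and by \autoref{remark:homotopyofsubgalleries} this homotopy may be performed inside the larger concatenation, producing a homotopy from $\go\circ\gb=(\go\circ\gb_1)(\go\circ\theta(s,t))(\go\circ\gb_2)$ to $(\go\circ\gb_1)(\go\circ\gb_2)=\go\circ\hat\gb$.

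Combining the three cases with the initial induction finishes the proof. The key conceptual point is that the axioms defining a morphism of Weyl data were tailored precisely so that each of the three elementary homotopy moves is sent to a (possibly longer) homotopy in the target: properties (i) and (ii) handle the $1$-elementary moves literally, while property (iii) handles $2$-elementary moves only up to null-homotopy, which is the reason the conclusion is stated for homotopy rather than a single elementary step.
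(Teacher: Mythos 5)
Your proposal is correct and follows essentially the same route as the paper's proof: reduce to a single elementary homotopy, handle the $1$-elementary cases via the morphism axioms together with \autoref{remark:preservationofss}, and handle the $2$-elementary case by the fact that morphisms send defining suites to suites, invoking \autoref{remark:homotopyofsubgalleries} to perform the null-homotopy inside the larger concatenation. You simply spell out the details that the paper leaves implicit.
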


\begin{proof}
	Suppose that $\gb \sim \hat{\gb}$ via an elementary homotopy. If it is a $1$-elementary homotopy, then $\go \circ \gb \sim \go \circ \hat{\gb}$ by \autoref{remark:preservationofss}. If it is a $2$-elementary homotopy, then $\go \circ \gb \sim \go \circ \hat{\gb}$ by the fact that morphisms send defining suites to suites. The result then follows, since a homotopy is a composition of elementary homotopies.
\end{proof}

\begin{cor} \label{cor:decentofhomotopy} 
	Let $\go:\cW\to \cW'$ be a morphism of Weyl data and let $\theta$ be a suite of $\cW$, then $\go \circ \theta$ is a suite of $\cW'$.
\end{cor}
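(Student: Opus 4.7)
The plan is to unpack the definition of a suite: an $(s,t)$-suite is an $(s,t)$-cycle that is null-homotopic. So to show $\go \circ \theta$ is a suite of $\cW'$, I must verify two things: that $\go \circ \theta$ is an $(s,t)$-cycle in $\cW'$, and that it is null-homotopic. The first is essentially formal from the definition of a morphism, and the second is an immediate application of the preceding lemma.

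First I would verify that $\go \circ \theta$ is an $(s,t)$-cycle. Since $\theta$ is an $(s,t)$-cycle, its type is the word $p_{2m_{st}}(s,t)$ and $\iota(\theta)=\tau(\theta)$. Because $\go$ is a graph morphism labeled over $S$, it preserves extremities and types of edges, so $\go \circ \theta$ has type $p_{2m_{st}}(s,t)$ and $\iota(\go\circ \theta)=\go(\iota(\theta))=\go(\tau(\theta))=\tau(\go\circ \theta)$. Hence $\go \circ \theta$ is an $(s,t)$-cycle in $\cW'$.

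Next I would invoke the hypothesis that $\theta$ is a suite, meaning $\theta \sim \gb_0$ for some trivial gallery $\gb_0$ in $\cW$. By \autoref{lemma:decentofhomotopy}, $\go \circ \theta \sim \go \circ \gb_0$ in $\cW'$. Since a morphism sends a trivial gallery (of length $0$) to a trivial gallery, $\go \circ \gb_0$ is trivial, and so $\go \circ \theta$ is null-homotopic in $\cW'$.

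Combining the two observations, $\go \circ \theta$ is an $(s,t)$-cycle which is null-homotopic, hence a suite of $\cW'$. There is no real obstacle here — the corollary is essentially a repackaging of \autoref{lemma:decentofhomotopy} once one notes that the class of $(s,t)$-cycles is preserved by morphisms of Weyl data. The mild point to be careful about is that the definition of suite used in the statement is the general one (null-homotopic $(s,t)$-cycle), not the notion of \emph{defining} suite; morphisms are only required by axiom (iii) to carry defining suites to suites, but the corollary upgrades this to all suites using the homotopy-preservation lemma.
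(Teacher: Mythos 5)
Your proposal is correct and follows essentially the same route as the paper: invoke \autoref{lemma:decentofhomotopy} on the null-homotopy $\theta\sim\gb_0$ and note that a trivial gallery maps to a trivial gallery. The only addition is your explicit check that $\go\circ\theta$ is an $(s,t)$-cycle, which the paper leaves implicit since morphisms preserve types and extremities.
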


\begin{proof}
	If $\theta$ is a suite, then $\theta\sim \gb_0$, where $\gb_0$ is a trivial gallery. Then, by \autoref{lemma:decentofhomotopy}, $\go \circ \theta\sim \go \circ \gb_0$. But $\go \circ \gb_0$ is also a trivial gallery. Thus, $\go \circ \theta$ is null-homotopic, and therefore is a suite. 
\end{proof}

The \emph{composition} of morphisms of Weyl data is just their composition as morphisms of generalized chamber systems. By \autoref{cor:decentofhomotopy}, this composition is again a morphism of Weyl data. 

For $\cW$ and $\cW'$ Weyl data of type $M$, an \emph{isomorphism} $\go:\cW\to \cW'$ is a morphism which has an inverse. An \emph{automorphism} of Weyl data $\cW$ is an isomorphism from $\cW$ to itself. 

\begin{prop} \label{prop:charisoofweyl}
	Let $\cW$ and $\cW'$ be Weyl data of type $M$. A morphism $\go:\cW\to \cW'$ is an isomorphism if and only if $\go$ is bijective on chambers and edges, and for all galleries $\gb$ in $\cW$ such that $\go \circ\gb$ is a defining suite of $\cW'$, we have that $\go$ is a suite of $\cW$.
\end{prop}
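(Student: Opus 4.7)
The plan is to prove each direction separately, with the forward direction following immediately from previously established results and the reverse direction requiring us to explicitly construct the inverse morphism.

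For the forward direction, suppose $\go$ is an isomorphism with inverse $\go^{-1}$. Bijectivity on chambers and edges is clear. For the suite condition, if $\gb$ is a gallery in $\cW$ with $\go\circ \gb$ a defining suite of $\cW'$, then $\gb=\go^{-1}\circ(\go\circ\gb)$ is the image of a defining suite under the morphism $\go^{-1}$, and is therefore a suite of $\cW$ by \autoref{cor:decentofhomotopy}.

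For the reverse direction, assume the stated hypotheses. Since $\go$ is bijective on chambers and edges, we obtain set-theoretic inverse functions on $\cW'_0$ and $\cW'_1$; let $\go^{-1}$ denote the corresponding candidate labeled graph morphism $\cW'\map \cW$. Since $\go$ preserves types and extremities, so does $\go^{-1}$. I will then verify the two axioms (i) and (ii) for a morphism of generalized chamber systems. For (i), given $i'\in \cW'$ with $\go^{-1}(i')=i$, apply $\go^{-1}$ to the identity $\go(i^{-1})=\go(i)^{-1}=i'^{-1}$ to get $\go^{-1}(i'^{-1})=i^{-1}=\go^{-1}(i')^{-1}$. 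For (ii), given a detour $j_1'j_2'$ in $\cW'$, set $j_k=\go^{-1}(j_k')$; the fact that $\go$ preserves extremities, types, and inverses ensures $j_1j_2$ is also a detour in $\cW$, so $j_1;j_2$ is a well-defined non-trivial edge of $\cW$, and then $\go(j_1;j_2)=\go(j_1);\go(j_2)=j_1';j_2'$ gives $\go^{-1}(j_1';j_2')=j_1;j_2=\go^{-1}(j_1');\go^{-1}(j_2')$ by injectivity.

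Finally, axiom (iii) for $\go^{-1}$ is exactly the hypothesis: for each defining suite $\theta'(s,t)$ of $\cW'$, the gallery $\gb=\go^{-1}\circ \theta'(s,t)$ satisfies $\go\circ \gb=\theta'(s,t)$, which is a defining suite of $\cW'$, so by assumption $\gb$ is a suite of $\cW$. Hence $\go^{-1}$ is a morphism of Weyl data, and since it is set-theoretically inverse to $\go$ on both chambers and edges, it is a two-sided inverse in the category of Weyl data, so $\go$ is an isomorphism.

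The main obstacle, which is really just bookkeeping, is verifying axiom (ii) for $\go^{-1}$: one must carefully track that the detour condition $j_1\neq j_2^{-1}$ transfers back through the bijection (which follows from $\go$ preserving inverses) so that $j_1;j_2$ is genuinely an edge of the graph $\cW$ and not merely a trivial element of the panel groupoid $\cW_s$. Everything else is a direct unwinding of the definitions.
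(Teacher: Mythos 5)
Your proof is correct and follows essentially the same route as the paper's: the forward direction uses that the inverse morphism sends defining suites to suites, and the reverse direction builds $\go^{-1}$ from the set-theoretic inverses and observes that axiom (iii) for $\go^{-1}$ is precisely the stated hypothesis (reading the statement's ``$\go$ is a suite'' as the evidently intended ``$\gb$ is a suite''). The only difference is that you spell out the verification that $\go^{-1}$ is a morphism of generalized chamber systems, which the paper dismisses as clear.
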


\begin{proof}
	Suppose that $\go:\cW\to \cW'$ is an isomorphism, and let $\go^{-1}$ be the inverse of $\go$. Then $\go_0$ and $\go_1$ must be bijective since they have inverses $\go_0^{-1}$ and $\go_1^{-1}$ respectively as functions of sets. Let $\gb$ be a gallery in $\cW$ such that $\go \circ\gb$ is a defining suite of $\cW'$. But as a morphism, $\go^{-1}$ sends defining suites to suites, and so $\go^{-1} \circ  \go \circ\gb=\gb$ must be a suite.
	
	Now suppose that $\go:\cW\to \cW'$ is a morphism, $\go_0$ and $\go_1$ are bijective, and for all galleries $\gb$ in $\cW$ such that $\go \circ\gb$ is a defining suite of $\cW'$, we have that $\go$ is a suite of $\cW$. Let $\go^{-1}=(\go^{-1}_0,\go^{-1}_1)$. This is clearly a morphism of generalized chamber systems. Moreover, $\go^{-1}$ is a morphism of Weyl data by the hypothesis on $\go$, and so is an inverse for $\go$. 
\end{proof}

%%%%%%%%%%%%%%%%%%%%%%%%%%%%%%%%%%%%%%%%%%%%%%%%%%%%%%%%%%%%%%%%%%%%%%%%
\subsection{Restrictions and Residues} \label{section:subsatarestrictres}
%%%%%%%%%%%%%%%%%%%%%%%%%%%%%%%%%%%%%%%%%%%%%%%%%%%%%%%%%%%%%%%%%%%%%%%%

Let $M$ be a Coxeter matrix on $S$, and let $\cW$ be Weyl data of type $M$. Let $\cC$ be a subset of the chambers of $\cW$. The \emph{full subdata} $\cW_{\cC}$ of $\cW$ on $\cC$ is the Weyl data with chambers $\cC$, and edges,
\[(\cW_{\cC})_1=\big \{i\in \cW_1:e_i\cap \cC=e_i\big \}\subseteq \cW_1.\]
\noindent The extremities and type function of $\cW_{\cC}$ are the restrictions to $(\cW_{\cC})_1$ of the corresponding functions of $\cW$. The panel groupoids are the obvious restrictions of the panel groupoids of $\cW$, and the defining suites are the defining suites of $\cW$ whose images are contained in $\cW_\cC$. This gives $\cW_{\cC}$ the structure of Weyl data of type $M$.

Let $J\subseteq S$. The \emph{$J$-restriction} $\cW_J$ of $\cW$ is the Weyl data of type $M_J$ whose underlying graph of type $M_J$ is the $J$-restriction of $\cW$ (in the sense of \autoref{section:Graphs of Type $M$}), whose panel groupoid of type $s$, for $s\in J$, is the panel groupoid of type $s$ of $\cW$, and whose set of defining $(s,t)$-suites, for $(s,t)\in J\times J$, is the set of defining $(s,t)$-suites $\cW(s,t)$ of $\cW$.  

For $J\subseteq J'\subseteq S$, there is a natural embedding $\varepsilon_{JJ'}:\cW_J\hookrightarrow \cW_{J'}$ over the inclusion $J\hookrightarrow J'$, called the \emph{internal embedding} from $J$ to $J'$. We denote $\varepsilon_{JS}$ by $\varepsilon_{J}$.  

In the same way that the data which defines a Coxeter group can be encoded in an edge labeled simplicial graph, Weyl data can be encoded in a vertex and edge labeled simplicial graph, whose flags (adjacent vertex-edges pairs) are associated to embeddings of rank $1$ Weyl data into rank $2$ Weyl data. \index{Weyl data!defining graph}  

Let $\cW$ be Weyl data of type $W$. Let $L$ be the defining graph of $W$. The \emph{defining graph} $\mathcal{L}$ of $\cW$ is the graph $L$ whose vertex $s\in S$ is labeled by $\cW_s$, and whose edge $J=\{s,t\}\in E(L)$ is labeled by $\cW_J$. Each ordered pair, or \emph{flag}, $( s, J  )\in V(L)\times E(L)$ such that $s\in J$, is equipped with the embedding $\varepsilon_{sJ}: \cW_s\hookrightarrow \cW_J$. The defining graph of $\cW$ essentially reconstructs $\cW$ as an amalgam of rank $2$ Weyl data along the panel groupoids of $\cW$.

Let $C\in \cW$ be a chamber. The \emph{$J$-residue} $R_J(C)$ at $C$ is the connected component of $\cW_J$ which contains $C$. Formally, $R_J(C)$ is the full subdata of $\cW_J$ on the subset of $\cW_0$ containing those vertices which are connected by galleries in $\cW_J$ to $C$. Thus, $R_J(C)$ is connected Weyl data of type $M_J$. For $J=\{s\}$, we write $R_s(C)$. If $|J|=n$, then we call $R_J(C)$ an \emph{$n$-residue}. The $1$-resides are the panels, and the $0$-residues are the chambers. A $J$-residue is called \emph{spherical} if $J$ is a spherical subset. 

\begin{lem}\label{lemma:automorphisms}
	Let $\cW$ be Weyl data and let $R$ be a residue of $\cW$. Let $g\in \Aut(\cW)$ be an automorphism of $\cW$. If there exists a chamber $C\in R$ with $g\cdot C\in R$, then $g\cdot R= R$. 
\end{lem}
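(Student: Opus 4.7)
The plan is to use that a $J$-residue is by definition the connected component of $\cW_J$ containing a given chamber, and that automorphisms preserve this structure.

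First I would note that since $g$ is an automorphism of $\cW$, it preserves the type function on edges. In particular, for any $s\in J$, an $s$-edge is sent to an $s$-edge, so $g$ restricts to an automorphism of the underlying graph of $\cW_J$. Consequently $g$ permutes the connected components of $\cW_J$, which are exactly the $J$-residues of $\cW$. Therefore $g\cdot R$ is again a $J$-residue.

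Next, write $R = R_J(C)$, which makes sense since $C\in R$. By hypothesis $g\cdot C\in R$, and clearly $g\cdot C\in g\cdot R$. Thus the two $J$-residues $R$ and $g\cdot R$ share the chamber $g\cdot C$. Since distinct connected components of $\cW_J$ are disjoint, we conclude $g\cdot R = R$.

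There is no serious obstacle here; the only thing to spell out carefully, if desired, is why $g$ sends galleries in $\cW_J$ to galleries in $\cW_J$. Given a gallery $\gb$ in $\cW_J$ from $C$ to any chamber $D\in R$, the image $g\circ\gb$ is a gallery (since $g$ is a labeled graph morphism) whose edges all have type in $J$ (since $g$ preserves types), hence lies in $\cW_J$ and connects $g\cdot C$ to $g\cdot D$. Applying the same reasoning to $g^{-1}$ gives both inclusions $g\cdot R\subseteq R$ and $R\subseteq g\cdot R$, recovering the same conclusion without invoking the disjointness of connected components.
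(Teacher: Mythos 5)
Your proposal is correct and follows essentially the same route as the paper: $g$ preserves edge types, hence restricts to an automorphism of $\cW_J$ and permutes the $J$-residues (the connected components of $\cW_J$), so $g\cdot R$ is a $J$-residue meeting $R$ at $g\cdot C$ and must therefore equal $R$. Your version spells out the type-preservation and disjointness-of-components steps a little more explicitly than the paper does, but the argument is the same.
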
 

\begin{proof}
	Let $J$ be the type of $R$. The automorphism $g$ is an automorphism of $\cW_J$, preserving the connectivity of $J$-residues. Therefore $g\cdot R$ is contained in $R$. But $g\cdot R$ must be a $J$-residue of $\cW$, and therefore $g\cdot R= R$. 
\end{proof}

\begin{remark} \label{rem:actiononres}
	Groups act by automorphisms on Weyl data. By \autoref{lemma:automorphisms}, the action of a group $G$ on Weyl data $\cW$ induces an action of $G$ on the set of $J$-residues of $\cW$, for each $J\subseteq S$. The action on $\emptyset$-residues is the restriction of the action to chambers, and the action on $S$-residues is the action induced on the connected components of $\cW$.
\end{remark}

%%%%%%%%%%%%%%%%%%%%%%%%%%%%%%%%%%%%%%%%%%%%%%%%%%%%%%%%%%%%%%%%%%%%%%%%
\subsection{The Fundamental Groupoid of Weyl Data}
%%%%%%%%%%%%%%%%%%%%%%%%%%%%%%%%%%%%%%%%%%%%%%%%%%%%%%%%%%%%%%%%%%%%%%%%

The \emph{fundamental groupoid} $\overline{\cW}$ of Weyl data $\cW$ is the groupoid whose set of vertices is the set of chambers of $\cW$, and whose set of edges is given by
\[
\overline{\cW}_1:=\big\{ [\gb]: \gb\ \text{is a gallery in}\ \cW  \big\}
.\] 
Recall that $[\gb]$ denotes the set of galleries which are homotopic to $\gb$. The extremities of edges are given by
\[  
\iota([\gb]):=\iota(\gb)\qquad \text{and} \qquad \tau([\gb])=\tau(\gb)
.\]
Let $\id:\cW_0\to \overline{\cW}_1$ be the map which sends a chamber $C\in \cW$ to the class of the trivial gallery at $C$, let $\inv:\overline{\cW}_1\to \overline{\cW}_1$ be the map $[\gb]\mapsto [\gb^{-1}]$, and let the composition be 
\begin{equation} \label{eq:1}      \tag{$\varheart$}
[\gb];[\gb']:=[\gb \gb'].
\end{equation}
\noindent First, we need to check that these functions are well-defined.

\begin{prop} \label{proposition:binaryoperation}
	Let $\cW$ be Weyl data. The extremities, inverses, and composition of $\overline{\cW}$ are well-defined.   
\end{prop}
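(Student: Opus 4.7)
The plan is to check each of the three claims by examining the behaviour of elementary homotopies, since homotopy is defined as a composition of elementary ones and so it suffices to verify invariance under a single elementary homotopy.

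For the extremities, I would argue that each of the three kinds of elementary homotopy (deleting/inserting a backtrack, taking a shortcut, deleting/inserting a defining suite) leaves the initial and terminal chambers of the ambient gallery unchanged. For contractions of type (i), the backtrack $i i^{-1}$ starts and ends at the same chamber, so excising it does not affect the endpoints of $\gb ii^{-1} \gb'$. For contractions of type (ii), the detour $jj'$ and the single edge $k = j;j'$ agree on extremities by the definition of composition in a panel groupoid, so replacing one with the other again preserves endpoints. For contractions of type (iii), a defining suite $\theta(s,t)$ is a cycle, so deleting it between $\gb$ and $\gb'$ leaves $\iota(\gb)$ and $\tau(\gb')$ unchanged. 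Expansions are then handled by symmetry. Hence homotopic galleries share extremities, and $\iota$ and $\tau$ descend to $\overline{\cW}_1$.

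For the inverse map, I would show that if $\gb \sim \hat{\gb}$ via a single elementary homotopy, then $\gb^{-1} \sim \hat{\gb}^{-1}$ via a corresponding elementary homotopy. For type (i), the inverse of a backtrack $ii^{-1}$ is $(i^{-1})^{-1} i^{-1} = i i^{-1}$, still a backtrack. For type (ii), the inverse of a detour $jj'$ is a detour $(j')^{-1} j^{-1}$ whose panel-groupoid composition equals $(j;j')^{-1} = k^{-1}$, so the alteration corresponds to another type (ii) contraction/expansion on the inverse. For type (iii), \autoref{lemma:invofsuite} ensures that $\theta(s,t)^{-1}$ is again a suite (in fact every cyclic permutation of it is), so the corresponding alteration on the inverse gallery is a $2$-elementary homotopy. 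Composing these gives that $\gb^{-1} \sim \hat{\gb}^{-1}$ whenever $\gb \sim \hat{\gb}$, so $\inv$ is well defined.

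For the composition (\ref{eq:1}), I would use \autoref{remark:homotopyofsubgalleries}: if $\gb \sim \hat{\gb}$, then $\gb \gb' \sim \hat{\gb}\gb'$ by applying the same sequence of elementary homotopies inside the $\gb$-subgallery (noting first that the extremity result ensures $\tau(\hat{\gb}) = \tau(\gb) = \iota(\gb')$, so the concatenation is defined). Similarly, $\hat{\gb}\gb' \sim \hat{\gb}\hat{\gb}'$ when $\gb' \sim \hat{\gb}'$. Transitivity of $\sim$ then yields $\gb\gb' \sim \hat{\gb}\hat{\gb}'$, showing that the composition descends to homotopy classes.

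The only mild subtlety is the inverse case for type (ii), where one must verify that $((j;j'))^{-1}$ equals $(j')^{-1}; j^{-1}$ in the panel groupoid — but this is just the standard inversion identity in a groupoid, so no obstacle arises. The whole argument is essentially a checklist over the three elementary homotopy types, and the only nontrivial input is \autoref{lemma:invofsuite}.
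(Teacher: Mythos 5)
Your proposal is correct and follows essentially the same route as the paper's own proof: checking invariance of extremities under each elementary homotopy type, handling inverses case by case with \autoref{lemma:invofsuite} supplying the $2$-elementary case, and invoking \autoref{remark:homotopyofsubgalleries} for the composition. The extra detail you supply (e.g.\ verifying $(j;j')^{-1}=j'^{-1};j^{-1}$) is consistent with, and merely expands on, what the paper leaves implicit.
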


\begin{proof}
	For the extremities, just notice that an elementary homotopy of a gallery preserve its extremities. For the inverses, suppose that $\gb\sim \hat{\gb}$ via an elementary homotopy, then we need to show that $\gb^{-1}\sim \hat{\gb}^{-1}$. In the case of a $1$-elementary homotopy of type (i), just notice that the inverse of a backtrack is a backtrack. In the case of a $1$-elementary homotopy of type (ii), notice that if $jj'$ is a detour with $k=j;j'$, then $j'^{-1}j^{-1}$ is a detour with $k^{-1}=j'^{-1};j^{-1}$. The case of a $2$-elementary homotopy is clear. For the composition, if $\gb \sim \hat{\gb}$, then clearly $\gb \gb' \sim \hat{\gb}\gb'$. Similarly, if $\gb \sim \hat{\gb}$, then $\gb' \gb  \sim \gb' \hat{\gb}$. 
\end{proof}

It is now clear that the fundamental groupoid $\overline{\cW}$ of $\cW$ is indeed a groupoid. Notice that $\overline{\cW}$ is connected if and only if $\cW$ is connected. Weyl data $\cW$ is called \emph{simply connected} if $\overline{\cW}$ is a setoid. The (basepoint-free) \emph{fundamental group} $\pi_1(\cW)$ of connected Weyl data $\cW$ is the (basepoint-free) fundamental group of $\overline{\cW}$ (see \autoref{fun group}). Thus, connected Weyl data is simply connected if and only if its fundamental group is trivial.  

Given a morphism of Weyl data $\go:\cW\to \cW'$, let $\bar{\go}$ denote the following homomorphism of groupoids,
\[
\bar{\go}:\overline{\cW} \to \overline{\cW}',\qquad [\gb] \mapsto [\go \circ \gb]
.\] 
This is well-defined by \autoref{lemma:decentofhomotopy}. To see that $\bar{\go}$ is a groupoid homomorphism, we have 
\[
[\gb][\gb']=[\gb\gb']\mapsto [\go\circ \gb\gb']=[\go\circ \gb] [   \go\circ \gb']
.\] 
We call $\bar{\go}$ the groupoid homomorphism \emph{induced} by $\go$. The map $\go\mapsto \bar{\go}$ is functorial; for morphisms $\go:\cW\to \cW'$ and $\go':\cW'\to \cW''$, putting $\go''=\go'\circ \go$, we have $\bar{\go}''=\bar{\go}' \circ \bar{\go}$. This follows directly from associativity $(\go'\circ \go) \circ \gb=\go'\circ (\go \circ \gb)$. 

We call the fundamental groupoid $\overline{\cW}_J$ of $\cW_J$ the \emph{$J$-groupoid} of $\cW$. For $J\subseteq J'\subseteq S$, we have a homomorphism $\overline{\varepsilon}_{JJ'}:  \overline{\cW}_J\to \overline{\cW}_{J'}$, called the \emph{internal homomorphism} from $J$ to $J'$. Later on, we will see that $\overline{\varepsilon}_{JJ'}$ is an embedding of groupoids when $\cW$ is the quotient of a building.

%%%%%%%%%%%%%%%%%%%%%%%%%%%%%%%%%%%%%%%%%%%%%%%%%%%%%%%%%%%%%%%%%%%%%%%%
\section{$2$-Weyl Graphs} \label{sec:2weylgraphs} 
%%%%%%%%%%%%%%%%%%%%%%%%%%%%%%%%%%%%%%%%%%%%%%%%%%%%%%%%%%%%%%%%%%%%%%%%

We introduce $2$-Weyl graphs and collect some of their basic properties. We will see that $2$-Weyl graphs are exactly the quotients of chamber systems of type $M$ by chamber-free group actions.

\subsection{Pre-Weyl Graphs and $2$-Weyl Graphs} \label{section:preweyl} 
Let $M$ be a Coxeter matrix. A \emph{pre-Weyl graph} $\cW$ of type $M$ is Weyl data of type $M$ which satisfies the following two properties, 
	\begin{enumerate}  [itemindent=0cm, leftmargin=1.9cm]
		\item [\textbf{(PW0)}]
		no panel is isomorphic to the trivial groupoid\footnote{ i.e. the unique groupoid with one vertex and one edge} (equivalently, for any chamber $C\in \cW$, and any word $f$ over $S$, there exists a gallery $\gb$ in $\cW$ with $\iota(\gb)=C$ and $\gb_S=f$)
		\item [\textbf{(PW1)}]
		each maximal $(s,t)$-geodesic is homotopic to a maximal $(t,s)$-geodesic.
	\end{enumerate}
	If in addition $\cW$ satisfies the following property
	
	\begin{enumerate}  [itemindent=0cm, leftmargin=1.9cm]
		\item [\textbf{($2$W)}]
		homotopic alternating geodesics have the same $W$-length
	\end{enumerate}
	then $\cW$ is called a \emph{$2$-Weyl graph}, or we say that $\cW$ is $2$-Weyl. A \emph{morphism} of pre-Weyl or $2$-Weyl graphs is a morphism of the underlying Weyl data (as defined in \autoref{sec:homoofgal}). These axioms should be compared with those in \cite[Section 3.2]{tits81local}. The axiom playing the role of ($2$W) is denoted (CS$_M$2) by Tits.

Since a gallery whose type is a one letter word is an alternating geodesic, property ($2$W) implies that
\[
\overline{\varepsilon}_s:\cW_s\to \overline{\cW}, \qquad  i\mapsto [i]     
\] 
is injective for each $s\in S$. For $2$-Weyl graphs, we make the convention of identifying $\cW_s$ with the subgroupoid $\overline{\varepsilon}_s(\cW_s)\leq \overline{\cW}$. Thus, we think of the edges of a $2$-Weyl graph $\cW$ as also being edges of its fundamental groupoid $\overline{\cW}$.

\begin{prop} \label{PW2}
	Let $\cW$ be Weyl data. Property (PW$1$) is equivalent to the property that for all geodesics $\gamma$ in $\cW$, the $\gamma$-gallery map $F_{\gamma}$ is surjective into the words which are reduced decompositions of $\gamma_W$.
\end{prop}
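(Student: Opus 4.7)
The plan is to prove both implications by lifting statements about words to statements about galleries using \autoref{thm:funthmofcox} (MT2).

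For the forward direction, assume (PW$1$) and let $\gga$ be a geodesic with $\gga_S=f$ and $\gga_W=w$. Given any reduced decomposition $f'$ of $w$, MT$2$ gives a strict homotopy of words $f=f_0\to f_1 \to \cdots \to f_n = f'$, where each step replaces a subword $p(s,t)$ at a specified position by $p(t,s)$. I would construct inductively galleries $\gga=\gga_0,\gga_1,\dots,\gga_n$ with $\gga_i\in [\gga]_{\simeq}$ and $(\gga_i)_S=f_i$ as follows. The subgallery of $\gga_{i-1}$ occupying the positions of the distinguished subword is a subgallery of a geodesic (see \autoref{rem:stricthomopresgeo}), hence itself a geodesic, with type $p(s,t)$; thus it is a maximal $(s,t)$-geodesic $\rho(s,t)$. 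By (PW$1$), $\rho(s,t)\sim \rho(t,s)$ for some maximal $(t,s)$-geodesic $\rho(t,s)$, which automatically shares endpoints with $\rho(s,t)$ since homotopy preserves extremities (see \autoref{proposition:binaryoperation}). Replacing $\rho(s,t)$ by $\rho(t,s)$ inside $\gga_{i-1}$ is precisely an elementary strict homotopy of galleries and yields $\gga_i$ with $\gga_i\simeq \gga_{i-1}$ and $(\gga_i)_S=f_i$. Then $\gga_n\in [\gga]_{\simeq}$ satisfies $F_{\gga}(\gga_n)=f'$, giving surjectivity.

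For the reverse direction, assume $F_{\gga}$ is surjective onto the reduced decompositions of $\gga_W$ for every geodesic $\gga$. Let $\rho(s,t)$ be any maximal $(s,t)$-geodesic. Its $W$-length is the longest element of the standard subgroup $W_{\{s,t\}}$, which by the definition of $p(s,t)$ and $p(t,s)$ admits $p(t,s)$ as a second reduced decomposition. Applying surjectivity to the geodesic $\rho(s,t)$ with the decomposition $p(t,s)$, there exists $\rho(t,s)\in [\rho(s,t)]_{\simeq}$ with type $p(t,s)$, i.e.\ a maximal $(t,s)$-geodesic strictly homotopic, and hence homotopic, to $\rho(s,t)$. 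This is exactly (PW$1$).

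The only delicate point is the lifting step in the forward direction: one must observe that the chosen subgallery is itself a geodesic (so that (PW$1$) applies) and that the matching-endpoints condition in the definition of an elementary strict homotopy of galleries is automatic from \autoref{proposition:binaryoperation}. Once these two facts are invoked, the argument is a direct translation of the word-level statement MT$2$ into the gallery-level statement.
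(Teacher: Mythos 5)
Your proof is correct and follows essentially the same route as the paper's: use (MT2) to write the target reduced decomposition as a composition of elementary strict homotopies of words, lift each one to the gallery level via (PW$1$), and observe for the converse that $p(t,s)$ is a second reduced decomposition of the $W$-length of any maximal $(s,t)$-geodesic. You have merely filled in the details that the paper leaves implicit.
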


\begin{proof}
	By (MT2), every reduced decomposition of $\gamma_W$ is strictly homotopic to $\gamma_S$. By definition, this strict homotopy of words is a composition of elementary strict homotopies, which can be done at the gallery level by (PW$1$). The converse is clear. 
\end{proof}

\begin{prop}  \label{allgalgeo}
	Let $\cW$ be a pre-Weyl graph. Then every gallery $\gb$ of $\cW$ is homotopic to a geodesic. Moreover, this homotopy can be chosen to be a composition of $1$-elementary contractions and strict homotopies.
\end{prop}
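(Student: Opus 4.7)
The plan is to argue by strong induction on the length $|\gb|$, with the central mechanism being that (MT$1$) converts ``not reduced'' into ``not $M$-reduced'', which is exactly the hypothesis needed to expose an $ss$-subword via a strict homotopy. The base case $|\gb|=0$ is trivial, since the empty word is reduced and so $\gb$ is already a geodesic. For the inductive step, if $\gb_S$ is reduced then $\gb$ is a geodesic and there is nothing to do; otherwise (MT$1$) supplies a strict homotopy of words from $\gb_S$ to a word of the form $f'ssf''$, for some $s\in S$ and words $f',f''$ over $S$.

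Next I would lift this word-level strict homotopy to the gallery level. A strict homotopy of words is a composition of elementary strict homotopies $p(s,t)\leftrightarrow p(t,s)$, so it suffices to lift each elementary step. At each stage, the relevant subgallery of the current gallery whose type is $p(s,t)$ is automatically a maximal $(s,t)$-geodesic (since $p(s,t)$ is reduced), and by (PW$1$) it is homotopic to a maximal $(t,s)$-geodesic. This is precisely the data required by the definition of elementary strict homotopy of galleries in \autoref{sec:homoofgal}, so the lift goes through. Composing yields a gallery $\gb''$ with $\gb\simeq \gb''$ and $\gb''_S = f'ssf''$. In $\gb''$ the two consecutive edges $i,i'$ of type $s$ forming the $ss$-subword are either a backtrack ($i'=i^{-1}$) or a detour ($i'\neq i^{-1}$); in the first case I apply a $1$-elementary contraction of type (i) to delete the backtrack, and in the second case a $1$-elementary contraction of type (ii) to take the shortcut $k=i;i'$. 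Either way I obtain a gallery $\gb'''$ with $|\gb'''|<|\gb|$, reached from $\gb$ by a strict homotopy followed by a single $1$-elementary contraction. Applying the inductive hypothesis to $\gb'''$ and concatenating finishes the argument.

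The only step requiring any care is the lifting from word-level to gallery-level strict homotopies, and the hard part is really just verifying that (PW$1$) suffices for this: each subword $p(s,t)$ encountered along the word-level homotopy corresponds to a subgallery that is a maximal $(s,t)$-geodesic (immediate from the definition), so (PW$1$) applies and provides the required homotopic $(t,s)$-geodesic. No further properties of $\cW$ are used; in particular, axiom ($2$W) is not needed, consistent with the statement being about pre-Weyl graphs rather than $2$-Weyl graphs.
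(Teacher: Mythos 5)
Your argument is correct and is essentially the paper's own proof: apply (MT$1$) to expose an $ss$-subword, lift the word-level strict homotopy to the gallery level via (PW$1$), remove the resulting backtrack or detour with a $1$-elementary contraction, and repeat until the type is reduced. You merely make explicit the induction on length and the verification that each elementary strict homotopy of words lifts to one of galleries, both of which the paper leaves implicit.
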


\begin{proof}
	If $\gb_S$ is reduced, we are done. If not, then by (MT1), $\gb_S$ is strictly homotopic to a word which repeats a letter. Therefore, by property (PW$1$), $\gb$ is strictly homotopic to a gallery $\gb'$ which contains either a backtrack or a detour. Then, $\gb'$ is homotopic via a $1$-elementary contraction to a shorter gallery. We can keep carrying out this process of applying strict homotopies and then $1$-elementary contractions until we obtain a geodesic, which will be homotopic to $\gb$.   
\end{proof}

It follows that if $\cW$ is a pre-Weyl graph, then the edges of its fundamental groupoid are homotopy classes of \emph{geodesics},
\[\overline{\cW}_1=  \underbrace{ \big\{ [\gb]: \gb\ \text{is a gallery in}\ \cW  \big\}=\big\{ [\gamma]: \gamma\ \text{is a geodesic in}\ \cW  \big\}}_{\text{\autoref{allgalgeo}}}.\] 

\begin{cor}
	Let $\cW$ be a pre-Weyl graph. Then the minimal galleries of $\cW$ are geodesics. 
\end{cor}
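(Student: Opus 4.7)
The plan is to combine Proposition \ref{allgalgeo} with the length-preservation properties of the different kinds of homotopies, and derive a contradiction from any length decrease.

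First I would apply Proposition \ref{allgalgeo} to the minimal gallery $\gb$: this yields a geodesic $\gga$ together with a homotopy $\gb \sim \gga$ realised as a finite composition of $1$-elementary contractions and strict homotopies. The goal reduces to showing that no $1$-elementary contraction actually occurs in this composition.

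Next I would recall the length behaviour of each kind of move. A $1$-elementary contraction of type (i) shortens the gallery by $2$ (deleting a backtrack), and a $1$-elementary contraction of type (ii) shortens it by $1$ (replacing $jj'$ by the single edge $j;j'$); thus every $1$-elementary contraction strictly decreases length. By contrast, strict homotopies preserve length by Remark \ref{rem:stricthomopresgeo}. Consequently, if even a single $1$-elementary contraction appeared, the final geodesic $\gga$ would satisfy $|\gga|<|\gb|$, with $\iota(\gga)=\iota(\gb)$ and $\tau(\gga)=\tau(\gb)$ (homotopies preserve extremities). This would contradict the minimality of $\gb$.

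Therefore the homotopy from $\gb$ to $\gga$ consists entirely of strict homotopies, i.e.\ $\gb \simeq \gga$. Since $\gga$ is a geodesic and strict homotopies preserve the property of being a geodesic (again Remark \ref{rem:stricthomopresgeo}), I conclude that $\gb$ itself is a geodesic. The only subtle step is the legitimate invocation of Proposition \ref{allgalgeo} in the form that the homotopy can be chosen to consist only of these two kinds of moves; that choice is exactly what makes the length bookkeeping possible, and everything else is bookkeeping.
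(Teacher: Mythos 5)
Your proof is correct and follows essentially the same route as the paper: the paper's own proof likewise derives a contradiction with minimality by producing a strictly shorter homotopic gallery (via a strict homotopy followed by a $1$-elementary contraction) whenever $\gb_S$ fails to be reduced. Your version merely packages the same length bookkeeping through the full statement of \autoref{allgalgeo} rather than stopping at the first contraction, which is a harmless reorganization.
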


\begin{proof}
	Let $\gb$ be a minimal gallery. If $\gb_S$ is not reduced, then we can obtain a gallery from $\gb$ via a strict homotopy and a $1$-elementary contraction which is homotopic to $\gb$, and yet is shorter than $\gb$. This contradicts the minimality of $\gb$.  
\end{proof} 

%In buildings the converse holds; minimal galleries and geodesics coincide.

%%%%%%%%%%%%%%%%%%%%%%%%%%%%%%%%%%%%%%%%%%%%%%%%%%%%%%%%%%%%%%%%%%%%%%%%
\subsection{The $2$-Weyl Properties} \label{sec:2weyl}
%%%%%%%%%%%%%%%%%%%%%%%%%%%%%%%%%%%%%%%%%%%%%%%%%%%%%%%%%%%%%%%%%%%%%%%%

Let us introduce three more properties which arbitrary Weyl data may satisfy. These properties, together with ($2$W), are the rank $2$ cases of what we will call the Weyl properties (see \autoref{section:weylprop}).

\begin{enumerate}  [itemindent=0cm, leftmargin=1.9cm] \index{$2$-Weyl graph!$2$-Weyl properties}
	\item [\textbf{($2$C)}]
	homotopic alternating geodesics are strictly homotopic
	\item [\textbf{($2$SH)}]
	strictly homotopic alternating geodesics of the same type are equal (equivalently $F_{\gamma}$ is injective for all alternating geodesics $\gamma$)
	\item [\textbf{($2$H)}]
	homotopic alternating geodesics of the same type are equal. 
\end{enumerate}

\noindent We now show that for pre-Weyl graphs $\cW$, we have the following implications,
\[  (2\text{C})\iff(2\text{W}) \implies (2\text{H})\iff (2\text{SH}) .  \]
We begin with a simple observation.

\begin{lem}  \label{prop:two(st)stricthomo}
	Let $\cW$ be a pre-Weyl graph. If two homotopic maximal alternating geodesics of $\cW$ have the same type, then they are strictly homotopic.
\end{lem}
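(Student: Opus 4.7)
The plan is to insert a bridging maximal geodesic of the \emph{opposite} starting letter, using (PW$1$), and then observe that the very definition of an elementary strict homotopy converts homotopy into strict homotopy across the two types.

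Concretely, suppose $\rho_1$ and $\rho_2$ are homotopic maximal alternating geodesics of the same type, say both maximal $(s,t)$-geodesics (type $p(s,t)$); by symmetry the case of two $(t,s)$-type geodesics is identical. Since $\rho_1 \sim \rho_2$ they share endpoints. By (PW$1$) applied to $\rho_2$, there exists a maximal $(t,s)$-geodesic $\rho$ (necessarily with the same endpoints) such that $\rho_2 \sim \rho$. Transitivity of $\sim$ gives $\rho_1 \sim \rho$ as well.

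The key observation is now built into the definition of elementary strict homotopy (\autoref{sec:homoofgal}): whenever one has a maximal $(s,t)$-geodesic and a maximal $(t,s)$-geodesic that are merely homotopic, the replacement of one by the other \emph{is} an elementary strict homotopy (take $\gb$ and $\gb'$ trivial). Applying this to the pair $(\rho_1,\rho)$ we obtain $\rho_1 \simeq \rho$, and applying it to $(\rho_2,\rho)$ we obtain $\rho_2 \simeq \rho$. Composing these two elementary strict homotopies yields $\rho_1 \simeq \rho_2$, as required.

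There is really no obstacle of substance here; the only point to be careful about is recognizing that the definition of elementary strict homotopy upgrades ``$\sim$'' to ``$\simeq$'' automatically as soon as the two galleries involved are maximal alternating of the two opposite starting letters. The role of (PW$1$) is exactly to provide such an opposite-type bridge, and without it one cannot move between two $p(s,t)$-type geodesics by a single elementary strict homotopy.
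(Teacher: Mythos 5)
Your proof is correct and is essentially identical to the paper's: both use (PW$1$) to produce a maximal $(t,s)$-geodesic homotopic to one of the two given $(s,t)$-geodesics, note that by transitivity it is homotopic to the other as well, and then invoke the definition of elementary strict homotopy twice to pass through this opposite-type bridge. The only cosmetic difference is which of the two geodesics you apply (PW$1$) to.
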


\begin{proof}
	Let $\rho(s,t)$ and $\rho'(s,t)$ be two homotopic maximal $(s,t)$-geodesics of $\cW$. By (PW$1$), $\rho(s,t)$ is homotopic to a maximal $(t,s)$-geodesic $\rho(t,s)$. Then $\rho'(s,t)\sim \rho(s,t)\simeq \rho(t,s)$, thus $\rho'(s,t)\simeq \rho(t,s)$. Then $\rho(s,t)\simeq  \rho(t,s)   \simeq \rho'(s,t)$, and so $\rho(s,t)\simeq \rho'(s,t)$.   
\end{proof}

We now prove $(2\text{H})\iff (2\text{SH})$.

\begin{prop} \label{prop:quiv2weyl}
	Let $\cW$ be a pre-Weyl graph. Then the following are equivalent,
	
	\begin{enumerate}[label=(\roman*)]
		\item
		$\cW$ has property ($2$SH)
		\item
		$\cW$ has property ($2$H)
		\item
		each maximal $(s,t)$-geodesic is homotopic to at most one maximal $(t,s)$-geodesic.
	\end{enumerate}
\end{prop}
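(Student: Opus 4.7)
The plan is to prove the cycle of implications $(ii) \Rightarrow (i) \Rightarrow (iii) \Rightarrow (ii)$, which yields all three equivalences.

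The implication $(ii) \Rightarrow (i)$ is immediate, since strict homotopy is a special case of homotopy. For $(i) \Rightarrow (iii)$, I would suppose a maximal $(s,t)$-geodesic $\rho$ is homotopic to two maximal $(t,s)$-geodesics $\rho_1$ and $\rho_2$, observe that then $\rho_1 \sim \rho_2$, invoke \autoref{prop:two(st)stricthomo} to upgrade this to $\rho_1 \simeq \rho_2$, and then apply ($2$SH) to conclude $\rho_1 = \rho_2$.

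The substantive implication is $(iii) \Rightarrow (ii)$. I would take homotopic alternating geodesics $\gga, \gga'$ of common type $p_m(s,t)$, and split on whether $m = m_{st}$. In the maximal case, (PW$1$) produces a maximal $(t,s)$-geodesic $\rho$ homotopic to $\gga$ (and hence to $\gga'$), and (iii) applied with the roles of $s$ and $t$ swapped (legitimate because (iii) is quantified over all ordered pairs) says $\rho$ is homotopic to at most one maximal $(s,t)$-geodesic, forcing $\gga = \gga'$. For the non-maximal case $m < m_{st}$, the plan is to reduce to the maximal case by an extension trick: using (PW$0$) via \autoref{prop:lotsofgal}, choose a gallery $\gb$ starting at the common chamber $\tau(\gga) = \tau(\gga')$ whose type continues the alternating pattern, so that $\gga\gb$ and $\gga'\gb$ both have type $p(s,t)$ and are therefore maximal $(s,t)$-geodesics. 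Then $\gga \sim \gga'$ gives $\gga\gb \sim \gga'\gb$, the previous case yields $\gga\gb = \gga'\gb$ as galleries, and $\gga = \gga'$ follows by matching the first $m$ edges.

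The step I expect to be the main obstacle is the non-maximal subcase of $(iii) \Rightarrow (ii)$, since both (iii) and ($2$SH) speak only about maximal alternating geodesics whereas ($2$H) quantifies over alternating geodesics of every length. The extension trick via (PW$0$) is the bridge that makes the reduction work, and it is also the only place where the hypothesis of being a pre-Weyl graph (rather than arbitrary Weyl data) is essentially used.
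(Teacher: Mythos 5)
Your proof is correct and uses exactly the same toolkit as the paper's: the upgrade from homotopy to strict homotopy for homotopic maximal alternating geodesics of the same type (\autoref{prop:two(st)stricthomo}), property (PW$1$) to produce an opposite-type partner, and the (PW$0$) extension trick (via \autoref{prop:lotsofgal}) to reduce non-maximal alternating geodesics to maximal ones. The only real difference is organizational: you close a single cycle (ii)$\implies$(i)$\implies$(iii)$\implies$(ii), whereas the paper proves the two bi-implications (i)$\iff$(ii) and (i)$\iff$(iii) separately. Concretely, in the maximal case of your (iii)$\implies$(ii) you extract equality directly from the uniqueness clause of (iii) after invoking (PW$1$), where the paper instead gets ($2$SH)$\implies$($2$H) by applying the lemma and then ($2$SH); your non-maximal reduction by appending a common gallery $\gb$ is precisely the paper's, and it is legitimate for the reason you implicitly use, namely that homotopies preserve extremities so $\tau(\gga)=\tau(\gga')$ and the same $\gb$ can be appended to both. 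The cyclic arrangement buys you one fewer substantive implication to check (you never need the paper's contrapositive argument for (iii)$\implies$($2$SH)), at the cost of concentrating all the work in (iii)$\implies$(ii); the paper's arrangement keeps each implication shorter.
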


\begin{proof}
	We have ($2$SH)$\implies$($2$H) since two unequal homotopic alternating geodesics of the same type can be extended by property (PW$0$) to give two unequal homotopic maximal alternating geodesics of the same type, which will be strictly homotopic by \autoref{prop:two(st)stricthomo}. The converse ($2$H)$\implies$($2$SH) is clear. 
	
	To see ($2$SH)$\implies$(iii), let $\rho(s,t)$ be a maximal $(s,t)$-geodesic which is homotopic to maximal $(t,s)$-geodesics $\rho$ and $\rho'$. Then $\rho$ and $\rho'$ are strictly homotopic by \autoref{prop:two(st)stricthomo}, and so $\rho=\rho'$ by ($2$SH). For the converse (iii)$\implies$($2$SH), suppose that $\cW$ does not have property ($2$SH). Then two unequal maximal $(t,s)$-geodesics $\rho$ and $\rho'$ are strictly homotopic.  By (PW$1$), $\rho$ is homotopic to a maximal $(s,t)$-geodesic $\rho(s,t)$, which by transitivity will also be homotopic to $\rho'$. Thus, $\cW$ does not have property (iii).
\end{proof}

We now prove $(2\text{W}) \implies (2\text{H})$.

\begin{prop}  \label{prop:WimpliesH}
	Let $\cW$ be a $2$-Weyl graph. Then $\cW$ has property ($2$H).  
\end{prop}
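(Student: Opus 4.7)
The plan is to induct on the common length $n$ of the alternating geodesics $\gga$ and $\gga'$, which I may assume both have type $p_n(s,t)$ for some fixed $s,t \in S$. The base case $n=0$ is trivial, since trivial galleries are determined by their initial chamber. For the inductive step I write $\gga = i\gga_1$ and $\gga' = i'\gga_1'$, where $i,i'$ are the initial edges (both of type $s$) and $\gga_1, \gga_1'$ are alternating geodesics of type $p_{n-1}(t,s)$, and then split into cases based on whether $i=i'$.

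If $i=i'$, I pass to the fundamental groupoid $\overline{\cW}$ and left-multiply $[\gga]=[\gga']$ by $[i]^{-1}$ to get $[\gga_1]=[\gga_1']$, i.e.\ $\gga_1 \sim \gga_1'$. Since these are homotopic alternating geodesics of length $n-1$ and common type, the inductive hypothesis delivers $\gga_1 = \gga_1'$, whence $\gga = \gga'$.

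If $i \neq i'$, I intend to rule this case out using (2W). Because $i$ and $i'$ share their initial vertex and both have type $s$, the concatenation $i^{-1}i'$ is a detour, so a type (ii) contraction replaces it by the non-trivial edge $j := i^{-1};i' \in \cW_s$ from $\tau(i)$ to $\tau(i')$. Left-multiplying $[\gga]=[\gga']$ by $[i]^{-1}$ now yields $[\gga_1] = [j][\gga_1']$, so $\gga_1 \sim j\gga_1'$ as galleries in $\cW$. The crucial observation is that $j\gga_1'$ has type $s\cdot p_{n-1}(t,s) = p_n(s,t)$, which is reduced (since $n \le m_{st}$), and so is itself an alternating geodesic --- of length $n$, whereas $\gga_1$ has length $n-1$. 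Applying (2W) to the homotopic pair $\gga_1$ and $j\gga_1'$ forces $p_{n-1}(t,s) = p_n(s,t)$ as elements of $W$. But any two reduced decompositions of a common element of $W$ have the same word length, and $n-1 \neq n$, giving the desired contradiction. Thus $i \neq i'$ cannot occur, and we are reduced to the easy case.

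The main obstacle is that (2W) does not apply to $\gga$ and $\gga'$ directly: alternating geodesics of the same type automatically share a $W$-length, so (2W) is vacuous for them. The key idea is to use the shortcut edge $j$ to rebalance the comparison; prepending $j$ shifts the parity of the alternating word and produces a pair of homotopic alternating geodesics whose types differ in length by one, which is precisely the configuration that (2W) forbids.
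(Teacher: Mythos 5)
Your proof is correct and is essentially the paper's own argument, just run from the front of the geodesics rather than the back: the paper peels off the last edges $i,i'$, forms the shortcut $j=i';i^{-1}$ when $i\neq i'$, and derives the same contradiction with (2W) from homotopic alternating geodesics whose lengths differ by one, then recurses edge by edge. Your explicit justification that $j\gga_1'$ is still an alternating geodesic (since $n\le m_{st}$ forces $p_n(s,t)$ to be reduced) is a welcome detail the paper leaves implicit.
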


\begin{proof}
	Let $\rho$ and $\rho'$ be homotopic alternating geodesics in $\cW$ with the same type. Let $i$ be the last edge of $\rho$, and let $i'$ be the last edge of $\rho'$. Towards a contradiction, suppose that $i\neq i'$. Let $j=i'i^{-1}$. Let $\ga$ and $\ga'$ be the subgalleries such that $\ga i=\rho$ and $\ga 'i'=\rho'$. Then $\ga$ and $\ga'j$ are homotopic alternating geodesics with different $W$-lengths, a contradiction. Thus, $i=i'$. Then $\ga$ and $\ga'$ are also homotopic alternating geodesics, and so can apply the same argument to the penultimate edges of $\rho$ and $\rho'$. Therefore, by induction, we may conclude that $\rho=\rho'$.  
\end{proof}

\begin{remark}  \label{rem:wordhomouniquegalhomo}
	In particular, $2$-Weyl graphs have property (iii)  of \autoref{prop:quiv2weyl}. In the presence of (PW$1$), property (iii) implies that each maximal $(s,t)$-geodesic is homotopic to exactly one maximal $(t,s)$-geodesic. This just says that every maximal alternating geodesic is contained within exactly one suite. Therefore in $2$-Weyl graphs, elementary strict homotopies of words induce unique elementary strict homotopies of galleries. 
\end{remark} 

Finally, we prove $(2\text{C})\iff(2\text{W})$.

\begin{prop}  \label{prop:equiv2wand2c}
	Let $\cW$ be a pre-Weyl graph. Then the following are equivalent,
	
	\begin{enumerate}[label=(\roman*)]
		\item
		$\cW$ is $2$-Weyl
		\item
		$\cW$ has property ($2$C).
	\end{enumerate}
\end{prop}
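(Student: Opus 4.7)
The plan is to prove the two directions separately. For (ii) $\implies$ (i), suppose $\cW$ has property ($2$C) and let $\rho$, $\rho'$ be homotopic alternating geodesics. By ($2$C), $\rho \simeq \rho'$. Strict homotopies preserve $W$-length (they change the type only by an elementary strict homotopy of words, which preserves the element represented), so $\rho_W = \rho'_W$. This gives ($2$W).

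The substantive direction is (i) $\implies$ (ii). Let $\rho$ and $\rho'$ be homotopic alternating geodesics. By ($2$W), $\rho_W = \rho'_W$, so $\rho_S$ and $\rho'_S$ are reduced alternating decompositions of the same element $w \in W$. The first key claim is that this forces either $\rho_S = \rho'_S$, or $\{\rho_S,\rho'_S\} = \{p(s,t), p(t,s)\}$ for some $s,t \in S$ with $m_{st} < \infty$. This follows from a standard Coxeter-theoretic fact: two reduced alternating words with the same support $\{s,t\}$ and the same length either coincide, or are $p_m(s,t)$ and $p_m(t,s)$, which represent the same element only when $m = m_{st}$ (the longest element of $W_{\{s,t\}}$). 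Equality of support for reduced decompositions of the same element is a basic invariant, handling the case where $\rho$ and $\rho'$ might involve different generators, while length mismatches are ruled out directly since the $W$-length of an alternating geodesic equals its ordinary length.

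Now split into cases. If $\rho_S = \rho'_S$, then by \autoref{prop:WimpliesH} $\cW$ has property ($2$H), so $\rho = \rho'$, which is trivially strictly homotopic to itself. Otherwise, $\rho$ is a maximal $(s,t)$-geodesic and $\rho'$ is a maximal $(t,s)$-geodesic for some pair $s \neq t$ with $m_{st} < \infty$. By property (PW$1$), there exists a maximal $(t,s)$-geodesic $\rho''$ with $\rho \simeq \rho''$. Then $\rho'' \sim \rho \sim \rho'$, and $\rho''$ and $\rho'$ are homotopic alternating geodesics of the same type, so ($2$H) (via \autoref{prop:WimpliesH}) yields $\rho'' = \rho'$. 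Combining gives $\rho \simeq \rho'' = \rho'$, establishing ($2$C).

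The main obstacle is the case analysis on types in step two: one must justify that homotopic alternating geodesics in a $2$-Weyl graph have types which either agree or are the two maximal alternating decompositions of the longest element of a dihedral standard subgroup. Once this Coxeter-group input is in place, the rest is a straightforward application of (PW$1$) and (\autoref{prop:WimpliesH}) to promote the word-level elementary strict homotopy $p(s,t) \simeq p(t,s)$ to the gallery level via the uniqueness observed in \autoref{rem:wordhomouniquegalhomo}.
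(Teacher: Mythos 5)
Your proof is correct and takes essentially the same route as the paper: both reduce (2W)$\implies$(2C) to property ($2$H) supplied by \autoref{prop:WimpliesH}, after producing a geodesic strictly homotopic to $\rho$ whose type equals that of $\rho'$, and both treat (2C)$\implies$(2W) as immediate from \autoref{rem:stricthomopresgeo}. The only (cosmetic) difference is that you obtain the same-type comparison by classifying reduced alternating decompositions in the dihedral group $W_{\{s,t\}}$ by hand, whereas the paper gets it at once from the surjectivity of the gallery map (\autoref{PW2}).
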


\begin{proof}
	To see that ($2$W)$\implies$($2$C), just notice that if ($2$C) does not hold, then we must have distinct homotopic alternating geodesics of the same type, which contradicts ($2$H). The converse ($2$C)$\implies$($2$W) is clear.
\end{proof}

%%%%%%%%%%%%%%%%%%%%%%%%%%%%%%%%%%%%%%%%%%%%%%%%%%%%%%%%%%%%%%%%%%%%%%%%
%%%%%%%%%%%%%%%%%%%%%%%%%%%%%%%%%%%%%%%%%%%%%%%%%%%%%%%%%%%%%%%%%%%%%%%%
\section{Covering Theory of $2$-Weyl Graphs} \label{sec:coverofweyl}
%%%%%%%%%%%%%%%%%%%%%%%%%%%%%%%%%%%%%%%%%%%%%%%%%%%%%%%%%%%%%%%%%%%%%%%%
%%%%%%%%%%%%%%%%%%%%%%%%%%%%%%%%%%%%%%%%%%%%%%%%%%%%%%%%%%%%%%%%%%%%%%%%

We now develop covering theory of $2$-Weyl graphs. We show that coverings of $2$-Weyl graphs can be modeled with coverings of groupoids, which in turn shows that the (isomorphism classes of) coverings of a connected $2$-Weyl graph $\cW$ are naturally in bijection with the conjugacy classes of subgroups of the fundamental group of $\cW$. For details on coverings of groupoids, see \autoref{covgroup}.

%%%%%%%%%%%%%%%%%%%%%%%%%%%%%%%%%%%%%%%%%%%%%%%%%%%%%%%%%%%%%%%%%%%%%%%%
\subsection{Coverings of Weyl Data} 
%%%%%%%%%%%%%%%%%%%%%%%%%%%%%%%%%%%%%%%%%%%%%%%%%%%%%%%%%%%%%%%%%%%%%%%%

Given a chamber $C\in \cW$, let $\cW(C,-)$ denote the set of edges $i\in \cW$ such that $\iota(i)=C$. A morphism $\go:\wt{\cW}\to \cW$ of generalized chamber systems of type $M$ is called \emph{\'etale} if for each chamber $C\in \wt{\cW}$, the restriction $\go|_{\wt{\cW}(C,-)}$ of $\go$ to $\wt{\cW}(C,-)$ is a bijection into $\cW(\go(C),-)$. If $\go$ is additionally surjective on chambers, then $\go$ is called \emph{surjective-\'etale}. 

Given a morphism of generalized chamber systems $\go:\wt{\cW}\to \cW$, let $\go_s: \wt{\cW}_s\to \cW_s$ denote the induced homomorphism of panel groupoids of type $s$ (see \autoref{section:Generalized Chamber Systems and Weyl Data}). Notice that $\go$ is surjective-\'etale if and only if $\go_s$ is a covering of groupoids for each $s\in S$. We say a gallery $\gb$ in $\cW$ \emph{lifts} with respect to $\go$ to the gallery $\gb'$ if $\gb=\go \circ \gb'$.    

\begin{prop} \label{prop:Etale=Unique Gallery Lifting}
	Let $\go:\wt{\cW} \to \cW$ be an \'etale morphism of generalized chamber systems. Then for each chamber $\wt{C} \in \wt{\cW}$, every gallery $\gb$ in $\cW$ which issues from $\go(\wt{C})$ has a unique lifting with respect to $\go$ to a gallery $\til{\gb}$ which issues from $\wt{C}$. 
\end{prop}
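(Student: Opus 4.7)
The plan is to prove existence and uniqueness of the lift simultaneously by induction on the length $|\gb|$ of the gallery $\gb$.

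For the base case $|\gb|=0$, the gallery $\gb$ is the trivial gallery at $\go(\wt{C})$, so the unique lift is the trivial gallery at $\wt{C}$ (uniqueness here uses that $\go$ is a function on chambers, so any gallery mapping to a length $0$ gallery at $\go(\wt{C})$ must itself have length $0$ and sit at a chamber in $\go^{-1}(\go(\wt{C}))$; since the lift must issue from $\wt{C}$, it is forced).

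For the inductive step, suppose the result holds for galleries of length $n-1$, and let $\gb$ have length $n$. Decompose $\gb = \gb' i$, where $\gb'$ is the initial subgallery of length $n-1$ and $i\in \cW$ is the last edge. By the inductive hypothesis, $\gb'$ lifts uniquely to a gallery $\wt{\gb}'$ issuing from $\wt{C}$. Put $\wt{D} = \tau(\wt{\gb}')$, so that $\go(\wt{D}) = \tau(\gb') = \iota(i)$. Since $\go$ is étale, the restriction $\go \upharpoonright_{\wt{\cW}(\wt{D},-)}$ is a bijection onto $\cW(\go(\wt{D}),-)$, and hence there is a unique edge $\wt{i}\in \wt{\cW}(\wt{D},-)$ with $\go(\wt{i}) = i$. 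Setting $\wt{\gb} = \wt{\gb}'\wt{i}$ produces a lift of $\gb$ issuing from $\wt{C}$.

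For uniqueness at the inductive step, suppose $\wt{\gb}_1$ and $\wt{\gb}_2$ are two lifts of $\gb$ issuing from $\wt{C}$. Their initial subgalleries of length $n-1$ are both lifts of $\gb'$ issuing from $\wt{C}$, and so coincide by the inductive hypothesis; in particular, their last chambers both equal $\wt{D}$. Their final edges are then both elements of $\wt{\cW}(\wt{D},-)$ mapping to $i$ under $\go$, and so agree by the étale bijection. Thus $\wt{\gb}_1 = \wt{\gb}_2$.

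The only subtlety is the clean bookkeeping that a gallery is determined by its initial vertex together with its sequence of edges, which is exactly how galleries were modeled in \autoref{sec:defofgraph}; once this is noted, the argument is a straightforward induction with no real obstacle, since the étale condition provides exactly the local bijection needed to produce and pin down the next edge at each step.
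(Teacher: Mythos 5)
Your proof is correct and takes essentially the same approach as the paper's: both iterate the local étale bijection along the sequence of edges to produce and pin down each successive lifted edge. You merely package the "and so on along the sequence" step as a formal induction on gallery length, which is a cosmetic difference.
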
 

\begin{proof}
	Let $C=\go(\wt{C})$. First we prove existence. Let $i_1,\dots,i_n$ be the sequence of edges of $\gb$. For $k\in \{1,\dots,n\}$, let $\tilde{i}_k\in \wt{\cW}$ be an edge with $\go(\tilde{i}_k)=i_k$, $\iota(\tilde{i}_1)=C$, and $\tau(\tilde{i}_k)=\iota(\tilde{i}_{k+1})$. Notice that such $\tilde{i}_k$ exist by the fact that $\gb$ is \'etale. Then, letting $\til{\gb}$ be the gallery whose sequence of edges is $\tilde{i}_1,\dots,\tilde{i}_n$, we have $\gb=\go \circ \til{\gb}$ as required. For uniqueness, let $\til{\gb}=\tilde{i}_1,\dots ,\tilde{i}_n$ and $\til{\gb}'=\tilde{i}'_1,\dots ,\tilde{i}'_n$ be galleries in $\wt{\cW}$ issuing from $\wt{C}$, with $\go \circ \til{\gb}=\go \circ \til{\gb}'=\gb$. In particular $\go(\tilde{i}_1)=\go(\tilde{i}'_1)$, with $\tilde{i}_1$ and $\tilde{i}'_1$ issuing from the same chamber $\wt{C}$. Thus, $\tilde{i}_1=\tilde{i}'_1$ since $\go$ is \'etale. The same argument shows that $\tilde{i}_2=\tilde{i}'_2$, and so on along the sequences of edges. We conclude that $\tilde{i}_1\dots \tilde{i}_n=\tilde{i}'_1\dots \tilde{i}'_n$.  
\end{proof} 

\begin{prop} \label{propsemietiset}
	Let $\go:\wt{\cW} \to \cW$ be an \'etale morphism of generalized chamber systems. If $\cW$ is connected, then $\go$ is surjective-\'etale.
\end{prop}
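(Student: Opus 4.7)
The plan is straightforward: combine the unique gallery lifting property established in \autoref{prop:Etale=Unique Gallery Lifting} with the hypothesis that $\cW$ is connected. Since $\go$ is already \'etale by assumption, it suffices to show surjectivity on chambers.

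First I would fix any chamber $\wt{C}_0 \in \wt{\cW}$ and set $C_0 = \go(\wt{C}_0)$. Such a $\wt{C}_0$ exists because each panel groupoid of $\wt{\cW}$ is by convention a non-empty graph whose vertex set is $\wt{\cW}_0$, so $\wt{\cW}_0$ is non-empty (we may assume $S\neq \emptyset$, otherwise the claim is vacuous). Now let $C \in \cW$ be an arbitrary chamber. Because $\cW$ is connected, there is a gallery $\gb$ in $\cW$ from $C_0$ to $C$. Applying \autoref{prop:Etale=Unique Gallery Lifting} to $\gb$ and the basepoint $\wt{C}_0$ yields a lift $\til{\gb}$ in $\wt{\cW}$ with $\iota(\til{\gb}) = \wt{C}_0$ and $\go \circ \til{\gb} = \gb$. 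Setting $\wt{C} = \tau(\til{\gb})$, we obtain $\go(\wt{C}) = \tau(\go \circ \til{\gb}) = \tau(\gb) = C$, so $C$ lies in the image of $\go$. Since $C$ was arbitrary, $\go$ is surjective on chambers and hence surjective-\'etale.

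There is no substantive obstacle here: the content of the proposition has been packaged into \autoref{prop:Etale=Unique Gallery Lifting}, and the present statement is essentially a standard path-lifting argument transported from covering space theory to the combinatorial setting of generalized chamber systems. The only minor point to verify is non-emptiness of $\wt{\cW}$, which follows from the convention that panel groupoids are groupoids (hence non-empty).
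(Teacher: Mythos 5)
Your proof is correct and follows essentially the same route as the paper: pick a basepoint in $\wt{\cW}$, connect its image to an arbitrary chamber $C$ of $\cW$ by a gallery using connectedness, lift that gallery via \autoref{prop:Etale=Unique Gallery Lifting}, and read off a preimage of $C$ from the terminal chamber of the lift. The extra remark on non-emptiness of $\wt{\cW}$ is a harmless addition the paper leaves implicit.
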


\begin{proof}
	Suppose that $\cW$ is connected. To see that $\go$ is surjective on chambers, let $C\in \cW$ be any chamber. Pick a chamber $D\in \wt{\cW}$, and let $\gb$ be a gallery in $\cW$ which goes from $\go(D)$ to $C$. Lift $\gb$ to a gallery $\tilde{\gb}$ of $\wt{\cW}$ which issues from $D$. Then we have $\go(\tau(\tilde{\gb}))=C$.
\end{proof}

A \emph{pre-covering} $p:\wt{\cW} \to \cW$ from a generalized chamber system $\wt{\cW}$ to Weyl data $\cW$ is a surjective-\'etale morphism such that for all galleries $\gb$ in $\wt{\cW}$, if $p\circ \gb$ is a defining suite of $\cW$, then $\gb$ is a cycle. A \emph{covering} $p:\wt{\cW} \to \cW$ of Weyl data is a surjective-\'etale morphism such that for all galleries $\gb$ in $\wt{\cW}$, if $p\circ \gb$ is a defining suite of $\cW$, then $\gb$ is a suite of $\wt{\cW}$. A covering $p:\wt{\cW} \to \cW$ is called \emph{connected} if $\wt{\cW}$ is connected.  

Let $p:\wt{\cW} \to \cW$ be a pre-covering. The \emph{completion} of $\wt{\cW}$ with respect to $p$ is the Weyl data whose underlying generalized chamber system is $\wt{\cW}$, and whose suites are defined as follows; for any gallery $\gb$ of $\wt{\cW}$, if $p \circ \gb$ is a defining suite of $\cW$, then let $\gb$ be a defining suite of $\wt{\cW}$. With $\wt{\cW}$ redefined to be its completion, then $p:\wt{\cW} \to \cW$ is a covering of Weyl data. 

\begin{lem}  \label{lemma:homotopylift}
	Let $p:\wt{\cW} \to \cW$ be a covering of Weyl data. Let $\gb$ and $\gb'$ be galleries in $\cW$ with $\gb\sim \gb'$. Let $C=\iota(\gb)=\iota(\gb')$, and pick any $\wt{C}\in \wt{\cW}$ such that $p(\wt{C})=C$. Let $\til{\gb}$ and $\til{\gb}'$ be the unique lifts of $\gb$ and $\gb'$, respectively, issuing from $\wt{C}$. Then $\til{\gb}\sim \til{\gb}'$. 
\end{lem}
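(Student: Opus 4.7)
The plan is to reduce to the case of a single elementary homotopy by induction on the length of the chain of elementary homotopies witnessing $\gb \sim \gb'$; the base case $\gb = \gb'$ is immediate from uniqueness of lifts (\autoref{prop:Etale=Unique Gallery Lifting}). By inverting, it suffices to handle a single elementary contraction, so write $\gb = \ga \eta \ga'$ and $\gb' = \ga \ga''$ where $\eta$ is either a backtrack, a detour being replaced by its shortcut, or a defining suite. Let $\til{\ga}$ be the unique lift of $\ga$ issuing from $\wt{C}$, and set $\wt{D} = \tau(\til{\ga})$.

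In the backtrack case $\eta = i i^{-1}$, lift $\eta$ from $\wt{D}$ to obtain $\til{i}\,\til{j}$. Since $p$ is a morphism of generalized chamber systems, $p(\til{i}^{-1}) = p(\til{i})^{-1} = i^{-1} = p(\til{j})$, and both $\til{i}^{-1}$ and $\til{j}$ issue from $\tau(\til{i})$; the \'etale property forces $\til{j} = \til{i}^{-1}$, so the lift is a backtrack and ends at $\wt{D}$. In the detour case $\eta = jj'$ with $k = j;j'$, lift $\eta$ to $\til{j}\til{j}'$; if $\til{j}' = \til{j}^{-1}$ then applying $p$ would contradict $j' \ne j^{-1}$, so $\til{j}\til{j}'$ is a detour, and its composition $\til{k} = \til{j};\til{j}'$ in $\wt{\cW}_s$ satisfies $p(\til{k}) = k$ by the definition of a morphism. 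In the suite case $\eta = \theta(s,t) \in \cW(s,t)$, let $\til{\eta}$ be the unique lift from $\wt{D}$; by the definition of a covering (not merely a pre-covering), $\til{\eta}$ is a suite of $\wt{\cW}$, so in particular a null-homotopic cycle based at $\wt{D}$.

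In every case the lift $\til{\eta}$ is a cycle at $\wt{D}$, so we may lift $\ga''$ from $\wt{D}$ to a gallery $\til{\ga}''$, and uniqueness of lifts (\autoref{prop:Etale=Unique Gallery Lifting}) then identifies $\til{\gb} = \til{\ga}\,\til{\eta}\,\til{\ga}''$ and $\til{\gb}' = \til{\ga}\,\til{\ga}''$. In cases (i) and (ii) the two galleries differ by a $1$-elementary contraction of the same type as the original, while in case (iii) we invoke \autoref{remark:homotopyofsubgalleries} together with the fact that $\til{\eta}$ is null-homotopic to conclude $\til{\gb} \sim \til{\gb}'$.

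The only nontrivial obstacle is case (iii), which is precisely the place where the covering axiom (as opposed to pre-covering) is essential: lifting a defining suite to a suite of $\wt{\cW}$ is exactly what guarantees that the lift is a null-homotopic cycle rather than an arbitrary gallery between two distinct chambers of $\wt{\cW}$. The other two cases are bookkeeping with the \'etale property and the preservation of composition and inverses by morphisms.
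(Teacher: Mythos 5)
Your proof is correct and follows essentially the same route as the paper's: reduce to a single elementary homotopy and check that backtracks lift to backtracks, detours lift to detours with the composition preserved, and defining suites lift to suites (the covering axiom), then conclude by transitivity. The only blemish is the phrase ``in every case the lift $\til{\eta}$ is a cycle at $\wt{D}$,'' which is false for a detour; what you actually need there, and have already established, is that $\til{j}\til{j}'$ and $\til{k}=\til{j};\til{j}'$ share the same terminal chamber, so the lifted tails coincide by uniqueness of lifts.
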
 

\begin{proof}
	By hypothesis, there exists a sequence of galleries
	\[\gb=\gb_1,\dots,\gb_n=\gb'\] 
	such that consecutive galleries differ only by an elementary homotopy. Let
	\[\til{\gb}=\til{\gb}_1,\dots,\til{\gb}_n=\til{\gb}'\] 
	be the sequence of galleries obtained by lifting each $\gb_m$, for $m\in \{1,\dots,n\}$, to a gallery issuing from $\wt{C}$. Suppose that $\gb_m$ and $\gb_{m+1}$ differ by a $1$-elementary homotopy of type (i). Since $p$ is surjective-\'etale, for edges $i,i'\in \wt{\cW}$ with $\upsilon(i)=\upsilon(i')$, if $p(i;i')=p(i);p(i')=1$ then $i;i'=1$. Thus, backtracks lift to backtracks, and so $\til{\gb}_m\sim \til{\gb}_{m+1}$. Suppose that $\gb_m$ and $\gb_{m+1}$ differ by a $1$-elementary homotopy of type (ii). But for edges $j,j',k\in \wt{\cW}$ with $\upsilon(j)=\upsilon(j')=\upsilon(k)$, if $p(j);p(j')=p(k)$ then $j;j'=k$ by the fact that $p$ is surjective-\'etale. Thus, detours lift to detours, and so $\til{\gb}_m\sim \til{\gb}_{m+1}$. If $\gb_m$ and $\gb_{m+1}$ differ by a $2$-elementary homotopy, then $\til{\gb}_m\sim \til{\gb}_{m+1}$ by the fact that $p$ lifts defining suites to suites. Therefore, by transitivity, we have $\til{\gb}\sim \til{\gb}'$.   
\end{proof}   

\begin{cor}  \label{corlift}
	Let $p:\wt{\cW} \to \cW$ be a covering of Weyl data. Then for all galleries $\gb$ in $\wt{\cW}$, if $p\circ \gb$ is a suite of $\cW$, then $\gb$ is a suite of $\wt{\cW}$.
\end{cor}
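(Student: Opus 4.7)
The plan is to derive this as a near-immediate consequence of the preceding \autoref{lemma:homotopylift}, together with the unique lifting property for étale morphisms (\autoref{prop:Etale=Unique Gallery Lifting}).

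First I would unpack what it means for $p\circ \gb$ to be a suite of $\cW$: by definition it is an $(s,t)$-cycle (for some $s,t\in S$) which is null-homotopic in $\cW$. Since $p$ preserves types of edges, $\gb$ has the same type $p_{2m_{st}}(s,t)$ as $p\circ \gb$, so the only thing preventing $\gb$ from being an $(s,t)$-cycle is the possibility that $\iota(\gb)\neq \tau(\gb)$. I would defer this and attack null-homotopy first.

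Let $\wt{C}=\iota(\gb)$ and $C=p(\wt{C})$. Since $p\circ \gb$ is null-homotopic, there is a trivial gallery $\gb_0$ at $C$ with $p\circ \gb \sim \gb_0$. I would apply \autoref{lemma:homotopylift} to this homotopy, choosing $\wt{C}$ as the basepoint for lifting. The unique lift of $p\circ \gb$ issuing from $\wt{C}$ is $\gb$ itself (by \autoref{prop:Etale=Unique Gallery Lifting}, since $\gb$ already issues from $\wt{C}$ and covers $p\circ \gb$). The unique lift of the trivial gallery $\gb_0$ issuing from $\wt{C}$ is obviously the trivial gallery at $\wt{C}$. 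Therefore $\gb$ is homotopic to a trivial gallery, i.e.\ $\gb$ is null-homotopic.

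Finally, since elementary homotopies preserve the extremities of galleries (\autoref{proposition:binaryoperation}), null-homotopy of $\gb$ forces $\iota(\gb)=\tau(\gb)$, so $\gb$ is a cycle. Combined with its type $p_{2m_{st}}(s,t)$, $\gb$ is an $(s,t)$-cycle, and hence a suite of $\wt{\cW}$. There is essentially no obstacle here — the whole content sits in \autoref{lemma:homotopylift}, and the corollary is just the observation that trivial galleries lift to trivial galleries.
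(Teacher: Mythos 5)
Your proof is correct and follows essentially the same route as the paper: both lift the null-homotopy $p\circ\gb\sim\gb_0$ via \autoref{lemma:homotopylift}, observe that the lift of $p\circ\gb$ from $\iota(\gb)$ is $\gb$ itself and the lift of the trivial gallery is trivial, and conclude $\gb$ is null-homotopic. Your extra remarks on type preservation and on extremities forcing $\gb$ to be an $(s,t)$-cycle merely make explicit what the paper leaves implicit.
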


\begin{proof}
	We have $p\circ \gb\sim \gb_0$, where $\gb_0$ is a trivial gallery. Then, by \autoref{lemma:homotopylift}, $\gb$ is homotopic to the lifting of $\gb_0$, which is trivial. Thus, $\gb$ is null-homotopic. 
\end{proof}

We have the following characterization of isomorphisms amongst coverings.

\begin{prop} \label{prop:charofisoaomgcovweyl}
	Let $p:\wt{\cW} \to \cW$ be a covering of Weyl data. Then $p$ is an isomorphism if and only if $p$ is injective on chambers.
\end{prop}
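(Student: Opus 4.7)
The forward direction is immediate: if $p$ is an isomorphism of Weyl data, then in particular it is bijective on chambers and hence injective on chambers. So the content lies in the converse, and my plan is to reduce it to the characterization of isomorphisms of Weyl data given in \autoref{prop:charisoofweyl}. That proposition tells me I need three things: (a) $p$ is bijective on chambers, (b) $p$ is bijective on edges, and (c) every gallery $\gb$ in $\wt{\cW}$ with $p\circ\gb$ a defining suite of $\cW$ is itself a suite of $\wt{\cW}$. Condition (c) is literally part of the definition of a covering of Weyl data, so it is automatic.

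For (a), surjectivity on chambers is built into the definition of covering (surjective-\'etale implies surjective on chambers), and injectivity on chambers is the hypothesis. This gives bijectivity on chambers for free.

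For (b), I would argue both directions using the \'etale property $p\upharpoonright_{\wt{\cW}(\wt{C},-)}:\wt{\cW}(\wt{C},-)\map \cW(p(\wt{C}),-)$ is a bijection for each $\wt{C}$. Surjectivity on edges: given an edge $i\in \cW$ with $\iota(i)=C$, use surjectivity on chambers to pick $\wt{C}$ with $p(\wt{C})=C$, then the \'etale bijection from $\wt{\cW}(\wt{C},-)$ produces an edge $\tilde{\imath}$ with $p(\tilde{\imath})=i$. Injectivity on edges: suppose $p(\tilde{\imath})=p(\tilde{\imath}')$ in $\wt{\cW}$. The initial chambers $\iota(\tilde{\imath})$ and $\iota(\tilde{\imath}')$ both project to $\iota(p(\tilde{\imath}))$, so by injectivity on chambers they coincide, say to a common chamber $\wt{C}$. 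Now $\tilde{\imath}$ and $\tilde{\imath}'$ both lie in $\wt{\cW}(\wt{C},-)$ with equal $p$-image, and the \'etale restriction at $\wt{C}$ is injective, forcing $\tilde{\imath}=\tilde{\imath}'$.

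Combining (a), (b), and (c) with \autoref{prop:charisoofweyl} yields the converse, completing the equivalence. There is no real obstacle here: the proof is essentially a bookkeeping exercise that unpacks the covering/\'etale definitions against the chambers-plus-edges characterization of Weyl-data isomorphisms, mirroring the analogous argument for groupoids (\autoref{prop:injecvert}).
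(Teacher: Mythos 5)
Your proof is correct and follows essentially the same route as the paper: both directions reduce to \autoref{prop:charisoofweyl}, with the converse obtained by upgrading injectivity on chambers to bijectivity on chambers and edges via the \'etale condition. The only cosmetic difference is that the paper packages your hands-on edge argument as an appeal to \autoref{prop:injecvert} applied to each panel-groupoid covering $p_s$; your explicit handling of the defining-suite condition (c) is a welcome extra precision that the paper leaves implicit.
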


\begin{proof}
	Clearly, if $p$ is an isomorphism then $p$ is injective on chambers. Conversely, suppose that $p$ is injective on chambers. By \autoref{prop:charisoofweyl}, it suffices to show that $p$ is bijective on chambers and edges. This follows from that fact that each $p_s$ is an isomorphism by \autoref{prop:injecvert}. 
\end{proof}

We now show that coverings preserve and reflect the property of being $2$-Weyl amongst Weyl data.

\begin{lem}
	Let $p:\wt{\cW}\to \cW$ be a surjective-\'etale morphism of Weyl data. Then $\wt{\cW}$ has property (PW$0$) if and only if $\cW$ has property (PW$0$).
\end{lem}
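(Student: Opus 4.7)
The plan is to reduce both directions to the reformulation of (PW$0$) given by \autoref{prop:lotsofgal}: a Weyl datum has (PW$0$) if and only if from every chamber, every finite word over $S$ can be realized as the type of some issuing gallery. With this reformulation in hand, the two directions become short lifting/pushforward arguments using that $p$ is surjective-\'etale.

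For the direction $\cW \Rightarrow \wt{\cW}$, I would take a chamber $\wt{C} \in \wt{\cW}$ and an arbitrary word $f$ over $S$. By \autoref{prop:lotsofgal} applied to $\cW$, there exists a gallery $\gb$ in $\cW$ with $\iota(\gb) = p(\wt{C})$ and $\gb_S = f$. Since $p$ is \'etale, \autoref{prop:Etale=Unique Gallery Lifting} produces a unique lift $\til{\gb}$ issuing from $\wt{C}$ with $p\circ\til{\gb} = \gb$. Because $p$ preserves types, $\til{\gb}_S = f$. Applying \autoref{prop:lotsofgal} again (in reverse) gives (PW$0$) for $\wt{\cW}$.

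For the direction $\wt{\cW} \Rightarrow \cW$, I would take a chamber $C \in \cW$ and a word $f$. Since $p$ is surjective-\'etale, it is surjective on chambers, so pick $\wt{C} \in \wt{\cW}$ with $p(\wt{C}) = C$. By \autoref{prop:lotsofgal} applied to $\wt{\cW}$, there is a gallery $\til{\gb}$ in $\wt{\cW}$ issuing from $\wt{C}$ with $\til{\gb}_S = f$. Then $p\circ\til{\gb}$ is a gallery in $\cW$ issuing from $C$ with the same type $f$ (morphisms of generalized chamber systems preserve extremities and types). Another application of \autoref{prop:lotsofgal} yields (PW$0$) for $\cW$.

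There is no real obstacle here; the only thing to be slightly careful about is bookkeeping the distinction between edges of the underlying graph (which exclude identities) and edges of the panel groupoids (which include them), but this is automatic because \autoref{prop:lotsofgal} and the gallery-lifting lemma are stated at the graph level, where the panel-identity edges play no role.
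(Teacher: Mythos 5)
Your argument is correct, but it takes a different route from the paper's. The paper disposes of this lemma in one sentence: since $p$ is surjective-\'etale, each induced homomorphism $p_s:\wt{\cW}_s\map \cW_s$ of panel groupoids is a covering of groupoids, and a groupoid covering restricts to a bijection $\wt{\cW}_s(\wt{C},-)\map \cW_s(p(\wt{C}),-)$ at every chamber while being surjective on chambers; hence a panel of $\wt{\cW}$ degenerates to the trivial one-edge groupoid exactly when its image panel does, which is the statement of (PW$0$) read off directly at the panel level. You instead pass through the global reformulation of \autoref{prop:lotsofgal} (existence of galleries of every type from every chamber) and invoke the full gallery-lifting machinery of \autoref{prop:Etale=Unique Gallery Lifting}. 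This works --- the lift preserves types, the pushforward preserves types, and surjectivity on chambers covers the downward direction --- but it is heavier than necessary: only the length-one case of gallery lifting (i.e.\ the bijection on issuing edges, which is the definition of \'etale) is actually doing any work, and the detour through words of arbitrary length obscures that the statement is purely local to each panel. What your approach buys is uniformity: the same template (reformulate, lift, push forward) is exactly what the paper uses for the analogous preservation/reflection lemmas for (PW$1$) and ($2$W), so your proof slots into that family even though the paper chose the shorter panel-groupoid argument here.
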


\begin{proof}
	This follows from the fact that each $s$-homomorphism $p_s:\wt{\cW}_s\to \cW_s$ is a covering of groupoids.
\end{proof}

\begin{lem}
	Let $p:\wt{\cW}\to \cW$ be a surjective-\'etale morphism of Weyl data. If $\wt{\cW}$ has property (PW$1$), then $\cW$ has property (PW$1$).
\end{lem}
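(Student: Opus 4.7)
The plan is to lift a given maximal $(s,t)$-geodesic of $\cW$ to $\wt{\cW}$, apply (PW$1$) upstairs, and then push the resulting homotopy back down via $p$. Since $p$ is a morphism of Weyl data it preserves homotopy by \autoref{lemma:decentofhomotopy}, and since $p$ is \'etale it affords the gallery lifting guaranteed by \autoref{prop:Etale=Unique Gallery Lifting}; these are the only ingredients I expect to need.

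First I would pick an arbitrary maximal $(s,t)$-geodesic $\rho(s,t)$ in $\cW$ and set $C = \iota(\rho(s,t))$. Since $p$ is surjective on chambers, choose $\wt{C} \in \wt{\cW}$ with $p(\wt{C}) = C$. By \autoref{prop:Etale=Unique Gallery Lifting} there is a unique lift $\tilde{\rho}$ of $\rho(s,t)$ issuing from $\wt{C}$, and because $p$ is a labeled graph morphism we have $\tilde{\rho}_S = \rho(s,t)_S = p(s,t)$. Hence $\tilde{\rho}$ is itself a maximal $(s,t)$-geodesic in $\wt{\cW}$ (its type is an alternating word of maximal length, which is automatically reduced).

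Next, apply property (PW$1$) in $\wt{\cW}$: there exists a maximal $(t,s)$-geodesic $\tilde{\rho}'$ in $\wt{\cW}$ with $\tilde{\rho} \sim \tilde{\rho}'$. Setting $\rho(t,s) := p \circ \tilde{\rho}'$, we get a gallery in $\cW$ whose type is $p(t,s)$, so $\rho(t,s)$ is a maximal $(t,s)$-geodesic. Finally, by \autoref{lemma:decentofhomotopy} applied to the morphism $p$, the homotopy $\tilde{\rho} \sim \tilde{\rho}'$ descends to $p \circ \tilde{\rho} \sim p \circ \tilde{\rho}'$, which reads $\rho(s,t) \sim \rho(t,s)$. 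This is exactly (PW$1$) for $\cW$.

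I don't foresee any real obstacle here: the only thing to double-check is that a gallery in any Weyl data whose type is the alternating word $p(s,t)$ is indeed a maximal $(s,t)$-geodesic (so that (PW$1$) applies upstairs and so that the pushforward is of the correct form downstairs), but this is immediate from the definitions in \autoref{section:Graphs of Type $M$} together with the fact that $p(s,t)$ is reduced in $W$.
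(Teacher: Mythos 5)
Your proof is correct and follows exactly the same route as the paper's: lift the maximal $(s,t)$-geodesic via the \'etale property, apply (PW$1$) in $\wt{\cW}$, and push the homotopy back down through the morphism $p$ using \autoref{lemma:decentofhomotopy}. The paper's version is terser but relies on precisely the same ingredients, so your additional bookkeeping (surjectivity on chambers, type preservation under labeled graph morphisms) is just a more explicit rendering of the same argument.
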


\begin{proof}
	Let $\rho$ be a maximal $(s,t)$-geodesic of $\cW$. Lift $\rho$ to a maximal $(s,t)$-geodesic $\til{\rho}$ of $\wt{\cW}$. Let $\til{\rho}'$ be a maximal $(t,s)$-geodesic which is homotopic to $\til{\rho}$. Then $p\circ \til{\rho}'$ is a maximal $(t,s)$-geodesic which is homotopic to $\rho$.
\end{proof}

\begin{lem}
	Let $p:\wt{\cW}\to \cW$ be a covering of Weyl data. If $\cW$ has property (PW$1$), then $\wt{\cW}$ has property (PW$1$).
\end{lem}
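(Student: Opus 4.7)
The plan is to lift a witness of (PW$1$) from $\cW$ to $\wt{\cW}$ using unique gallery lifting together with the homotopy-lifting property of coverings.

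First I would take an arbitrary maximal $(s,t)$-geodesic $\til{\rho}$ in $\wt{\cW}$ and push it down by setting $\rho = p\circ \til{\rho}$. Since $p$ is a morphism of graphs of type $M$, it preserves edge types, so $\rho$ is a maximal $(s,t)$-geodesic in $\cW$. Applying (PW$1$) in $\cW$, there is a maximal $(t,s)$-geodesic $\rho'$ in $\cW$ with $\rho \sim \rho'$. Because homotopies preserve extremities (see \autoref{proposition:binaryoperation}), we have $\iota(\rho') = \iota(\rho) = p(\iota(\til{\rho}))$.

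Next, setting $\wt{C} := \iota(\til{\rho})$, I would invoke \autoref{prop:Etale=Unique Gallery Lifting} to lift $\rho'$ uniquely to a gallery $\til{\rho}'$ in $\wt{\cW}$ issuing from $\wt{C}$. Since $p$ preserves types edge-by-edge and $\rho'$ has type $p(t,s)$, the lift $\til{\rho}'$ is also a maximal $(t,s)$-geodesic. Note that $\til{\rho}$ is itself the unique lift of $\rho$ issuing from $\wt{C}$, by the uniqueness clause of \autoref{prop:Etale=Unique Gallery Lifting}.

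Finally, I would apply \autoref{lemma:homotopylift} to the homotopy $\rho \sim \rho'$ based at the chamber $p(\wt{C})$: this yields $\til{\rho}\sim \til{\rho}'$ in $\wt{\cW}$, exhibiting the desired $(t,s)$-geodesic homotopic to $\til{\rho}$. This establishes (PW$1$) for $\wt{\cW}$. I do not anticipate any serious obstacle here: the argument is a direct combination of unique gallery lifting with the homotopy-lifting lemma, and the one subtle point, namely that the lift of a $p(t,s)$-typed gallery is again a maximal $(t,s)$-geodesic, is immediate from the fact that morphisms of generalized chamber systems preserve the type function.
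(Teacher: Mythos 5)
Your argument is correct and is essentially the same as the paper's proof: push $\til{\rho}$ down, apply (PW$1$) in $\cW$, lift the resulting $(t,s)$-geodesic back up from $\iota(\til{\rho})$, and conclude via the homotopy-lifting lemma (\autoref{lemma:homotopylift}). You simply make explicit the details (type preservation, uniqueness of lifts) that the paper leaves implicit.
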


\begin{proof}
	Let $\tilde{\rho}$ be a maximal $(s,t)$-geodesic of $\wt{\cW}$. Let $\rho=p \circ \tilde{\rho}$, and let $\rho'$ be a maximal $(t,s)$-geodesic which is homotopic to $\rho$. Lift $\rho'$ to the gallery $\tilde{\rho}'$ which issues from the same chamber as $\tilde{\rho}$. Then, since homotopies lift with respect to coverings, $\tilde{\rho}$ and $\tilde{\rho}'$ are homotopic.
\end{proof}

\begin{lem}
	Let $p:\wt{\cW}\to \cW$ be a morphism of Weyl data. If $\cW$ has property ($2$W), then $\wt{\cW}$ has property ($2$W).
\end{lem}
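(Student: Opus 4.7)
The plan is to push the property down to $\cW$ along $p$ and use that types (and hence $W$-lengths) are preserved under morphisms. Let $\tilde{\rho}$ and $\tilde{\rho}'$ be homotopic alternating geodesics in $\wt{\cW}$. Since $p$ is a morphism of generalized chamber systems, it is in particular a morphism of labeled graphs, so $p\circ\tilde{\rho}$ and $p\circ\tilde{\rho}'$ are galleries in $\cW$ with the same types as $\tilde{\rho}$ and $\tilde{\rho}'$, respectively. Thus $p\circ\tilde{\rho}$ and $p\circ\tilde{\rho}'$ are themselves alternating geodesics in $\cW$.

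By \autoref{lemma:decentofhomotopy}, homotopies descend along morphisms of Weyl data, so $\tilde{\rho}\sim\tilde{\rho}'$ implies $p\circ\tilde{\rho}\sim p\circ\tilde{\rho}'$. Property ($2$W) of $\cW$ then guarantees that $(p\circ\tilde{\rho})_W=(p\circ\tilde{\rho}')_W$. But the $W$-length of a gallery is determined solely by its type, and $p$ preserves types, so
\[\tilde{\rho}_W=w(\tilde{\rho}_S)=w((p\circ\tilde{\rho})_S)=(p\circ\tilde{\rho})_W\]
and similarly $\tilde{\rho}'_W=(p\circ\tilde{\rho}')_W$. Combining these gives $\tilde{\rho}_W=\tilde{\rho}'_W$, which is ($2$W) for $\wt{\cW}$.

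There is essentially no obstacle here: the statement does not require $p$ to be surjective-\'etale or a covering, only a morphism, and all the work is packaged into \autoref{lemma:decentofhomotopy} together with the type-preserving nature of labeled graph morphisms. The only thing worth noting is that one does not need any lifting argument (unlike the preceding two lemmas about (PW$1$)), because one is deducing a property of $\wt{\cW}$ from a property of $\cW$ by simply inspecting images in $\cW$.
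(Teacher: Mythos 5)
Your proof is correct and follows essentially the same route as the paper: push the two homotopic alternating geodesics down to $\cW$ via $p$ (homotopy descends by \autoref{lemma:decentofhomotopy}, and types, hence $W$-lengths, are preserved), then apply ($2$W) in $\cW$. The paper's proof is just a terser version of exactly this argument.
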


\begin{proof}
	Let $\til{\gamma}$ and $\til{\gamma}'$ be two homotopic alternating geodesics in $\wt{\cW}$. Let $\gamma=  p  \circ   \til{\gamma}$ and $\gamma'=  p  \circ   \til{\gamma}'$. Then $\gamma$ and $\gamma'$ are homotopic alternating geodesics in $\cW$, and $\til{\gamma}_W=\gamma_W=\gamma'_W=\til{\gamma}'_W$, since $\cW$ has property ($2$W).  
\end{proof}

\begin{lem}
	Let $p:\wt{\cW}\to \cW$ be an covering of Weyl data. If $\wt{\cW}$ has property ($2$W), then $\cW$ has property ($2$W).
\end{lem}

\begin{proof}
	Let $\gamma$ and $\gamma'$ be two homotopic alternating geodesics in $\cW$. Lift these geodesics to homotopic alternating geodesics $\til{\gamma}$ and $\til{\gamma}'$ in $\wt{\cW}$. Then $\gamma_W=\til{\gamma}_W=\til{\gamma}'_W=\gamma'_W$, since $\wt{\cW}$ has property ($2$W). 
\end{proof} 

Together these lemmas prove the following.

\begin{thm} \label{prop:presandreflect}
	Let $p:\wt{\cW}\to \cW$ be a covering of Weyl data. Then $\wt{\cW}$ is $2$-Weyl if and only if $\cW$ is $2$-Weyl.
\end{thm}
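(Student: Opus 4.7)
The plan is to observe that this theorem is essentially an immediate packaging of the five preceding lemmas, since by definition Weyl data is $2$-Weyl exactly when it satisfies (PW$0$), (PW$1$), and ($2$W). So the proof amounts to decomposing the biconditional into its three component biconditionals and citing the appropriate lemmas for each direction.

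Concretely, I would proceed as follows. First, suppose $\wt{\cW}$ is $2$-Weyl. A covering is in particular surjective-\'etale, so the first lemma gives (PW$0$) for $\cW$, the second (``if $\wt{\cW}$ has (PW$1$), then $\cW$ has (PW$1$)'') gives (PW$1$) for $\cW$, and the fifth lemma (reflection of ($2$W) by coverings) gives ($2$W) for $\cW$. Hence $\cW$ is $2$-Weyl. Conversely, suppose $\cW$ is $2$-Weyl. The first lemma again gives (PW$0$) for $\wt{\cW}$, the third lemma (``if $\cW$ has (PW$1$), then $\wt{\cW}$ has (PW$1$)'') gives (PW$1$) for $\wt{\cW}$, and the fourth lemma (preservation of ($2$W) by morphisms) gives ($2$W) for $\wt{\cW}$. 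Hence $\wt{\cW}$ is $2$-Weyl.

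There is no real obstacle here: each axiom has already been handled by a dedicated lemma in both directions, and the only subtlety is that some lemmas require the full strength of a covering (in particular, lifting of homotopies and of defining suites to suites, used in the (PW$1$) reflection and implicitly whenever one lifts cycles), while others need only the surjective-\'etale structure. Since coverings are in particular surjective-\'etale, the hypothesis of the theorem is strong enough in all six invocations. The proof is therefore purely bookkeeping — a one-line appeal to the five preceding lemmas split by direction — and requires no new argument.
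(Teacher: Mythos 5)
Your proposal is correct and matches the paper exactly: the paper states the five lemmas and then asserts ``Together these lemmas prove the following,'' which is precisely the bookkeeping you describe. Your attribution of each axiom to its lemma, and your observation that a covering is in particular surjective-\'etale (and a morphism), so every hypothesis is met, is exactly the intended argument.
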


We now show that a covering of Weyl data induces local coverings of the residues.

\begin{prop}
	Let $p:\wt{\cW}\to \cW$ be a covering of Weyl data. Let $\wt{R}$ be a $J$-residue of $\wt{\cW}$, and let $R$ be the $J$-residue of $\cW$ which contains the $p$-image of $\wt{R}$. Then the restriction of $p$ to $\wt{R}$ is a covering of $R$.    
\end{prop}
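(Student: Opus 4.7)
The plan is to verify that $p' := p\upharpoonright_{\wt R} \colon \wt R \to R$ satisfies the three defining properties of a covering of Weyl data: surjective-\'etaleness, being a morphism of Weyl data, and lifting of defining suites of $R$ to suites of $\wt R$.

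I would first dispatch surjective-\'etaleness, which follows from type-preservation. At any $\wt C \in \wt R$ the edges of $\wt R$ issuing from $\wt C$ are precisely the $J$-typed edges of $\wt\cW$ issuing from $\wt C$, and similarly for $R$ at $p(\wt C)$; the \'etale bijection of $p$ at $\wt C$ therefore restricts to a bijection $\wt R(\wt C,-) \to R(p(\wt C),-)$. For chamber-surjectivity, pick any $\wt C \in \wt R$; given $D \in R$, choose a $J$-gallery $\gb$ in $R$ from $p(\wt C)$ to $D$ and use \autoref{prop:Etale=Unique Gallery Lifting} to lift it uniquely to $\til\gb$ starting at $\wt C$. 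Since $\til\gb$ is a $J$-gallery beginning in the $J$-residue $\wt R$, it lies entirely in $\wt R$, and $\tau(\til\gb) \in \wt R$ maps to $D$.

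For the suite-lifting property, let $\gb$ be a gallery in $\wt R$ with $p \circ \gb$ a defining $(s,t)$-suite of $R$. By the full-subdata definition of defining suites, $p \circ \gb$ is also a defining $(s,t)$-suite of $\cW$ with $s,t \in J$, so the covering hypothesis on $p$ gives that $\gb$ is a suite of $\wt\cW$. To strengthen this to $\gb$ being a suite of $\wt R$ I would apply \autoref{prop:charsuite} in $R$: write $p\circ\gb = \rho(s,t)\rho(t,s)^{-1}$ and observe that $p \circ \gb$, being a defining suite of $R$, is in particular a suite of $R$, so $\rho(s,t) \sim \rho(t,s)$ via a homotopy within $R$. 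Decomposing $\gb = \til\rho(s,t)\,\til\rho(t,s)^{-1}$ as the corresponding lifts and lifting the downstairs $R$-homotopy by \autoref{lemma:homotopylift} from the common initial chamber, every intermediate gallery is a $J$-gallery beginning in $\wt R$ and therefore stays in $\wt R$. Applying \autoref{prop:charsuite} in $\wt R$ then gives that $\gb$ is a suite of $\wt R$. The morphism-of-Weyl-data property is inherited from $p$ on the underlying generalized chamber systems, with the additional condition that defining suites of $\wt R$ map to suites of $R$ being proved by the symmetric \autoref{prop:charsuite} argument in the reverse direction.

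The main obstacle will be the $2$-elementary moves in the lifted homotopy: the downstairs homotopy may use defining suites of $R$, whose lifts are $J$-cycles in $\wt R$ that are only a priori suites of $\wt\cW$, which is essentially the conclusion being proved. I would handle this by induction on the length of the downstairs homotopy, since each $2$-elementary move inserts or deletes a defining suite of $R$ whose lift is to be shown a suite of $\wt R$ by the same covering argument, and the induction terminates at the base case of $1$-elementary homotopies, which transfer to elementary homotopies in $\wt R$ automatically for the type reasons already noted.
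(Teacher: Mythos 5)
Your surjective-\'etale half is correct and is essentially the paper's argument: the paper observes that the restriction $p_{\wt{R}}:\wt{R}\map R$ is \'etale and then invokes \autoref{propsemietiset}, whose proof is precisely your gallery-lifting argument for chamber-surjectivity. The paper then disposes of the suite condition in a single sentence: a defining suite of $R$ is, by the full-subdata and $J$-restriction definitions, a defining suite of $\cW$, so the covering hypothesis on $p$ applies verbatim and $\gb$ is a suite; no distinction is drawn there between a suite of $\wt{\cW}$ whose image happens to lie in $\wt{R}$ and a suite of $\wt{R}$.

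You are right that this distinction is the only nontrivial content of the statement, but your proposed repair is circular, and the induction you invoke has no decreasing quantity. The witness that $p\circ\gb$ is null-homotopic in $R$ is a single $2$-elementary contraction deleting the defining suite $p\circ\gb$ itself (equivalently, the homotopy $\rho(s,t)\sim\rho(t,s)$ in $R$ is obtained by inserting $p\circ\gb$ and cancelling a backtrack); lifting that one move into $\wt{R}$ requires knowing that its lift based at $\iota(\gb)$ --- which is $\gb$ --- is already a suite of $\wt{R}$, i.e.\ exactly the conclusion. Recursing ``by the same covering argument'' therefore reproduces the original problem verbatim rather than reducing it, and the base case of purely $1$-elementary homotopies is never reached. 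The ways to close this honestly are either to read the suite condition ambiently (which is what the paper's one-line proof implicitly does, and is all that is used of local coverings later), or to observe that when $\wt{\cW}$ is the completion with respect to $p$ the gallery $\gb$ is by construction a \emph{defining} suite of $\wt{\cW}$ contained in $\wt{R}$, hence a defining suite of $\wt{R}$ by the definition of full subdata, so the claim is immediate; for a covering that is not a completion, your argument as written does not close.
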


\begin{proof}
	Let $p_{\wt{R}}:\wt{R} \to R$ be the morphism of Weyl data which is the restriction of $p$ to $\wt{R}$. Clearly $p_{\wt{R}}:\wt{R} \to R$ is \'etale, and so is surjective-\'etale by \autoref{propsemietiset}. The fact that $p_{\wt{R}}$ is a covering then follows directly from the fact that $p:\wt{\cW}\to \cW$ is a covering.    
\end{proof} 

We call $p_{\wt{R}}:\wt{R} \to R$ the \emph{local covering at $\wt{R}$}. We finish this section with a result which shows that, as long as one establishes the $2$-Weyl property, surjective-\'etale morphisms suffice when it comes to coverings.

\begin{prop} \label{lemma:2weyletale}
	Let $\cW$ be a $2$-Weyl graph, and let $\wt{\cW}$ be a pre-Weyl graph. Then a surjective-\'etale morphism $\go:\wt{\cW}\to \cW$ is a covering.  
\end{prop}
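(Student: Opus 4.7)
The plan is to verify the single non-trivial condition in the definition of covering of Weyl data: given a gallery $\gb$ in $\wt{\cW}$ with $\go\circ\gb$ a defining $(s,t)$-suite of $\cW$, show that $\gb$ itself is a suite of $\wt{\cW}$, i.e. null-homotopic.

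First I will decompose $\gb$. Since $\go$ preserves types, $\gb$ has type $p_{2m_{st}}(s,t)$, and splitting it after the first $m_{st}$ edges yields $\gb = \til{\rho}_1\,\gb_2$, where $\til{\rho}_1$ has type $p(s,t)$ (a reduced word, hence $\til{\rho}_1$ is a maximal $(s,t)$-geodesic of $\wt{\cW}$) and $\gb_2$ has type $p(t,s)^{-1}$. The $\go$-images of the two pieces are exactly the maximal alternating halves of the canonical decomposition $\go\circ\gb = \rho_1\,\rho_2^{-1}$ of the defining suite. Next I invoke property (PW$1$) upstairs, available because $\wt{\cW}$ is pre-Weyl, to produce a maximal $(t,s)$-geodesic $\til{\rho}'$ in $\wt{\cW}$ with $\til{\rho}_1 \sim \til{\rho}'$. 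Because homotopy preserves endpoints, $\til{\rho}'$ ends at $\tau(\til{\rho}_1) = \iota(\gb_2)$. Pushing this homotopy through $\go$ gives $\rho_1 \sim \go\circ\til{\rho}'$ in $\cW$, while \autoref{prop:charsuite}, applied to the defining suite $\go\circ\gb = \rho_1\rho_2^{-1}$, yields $\rho_1 \sim \rho_2$. Transitivity then shows $\rho_2$ and $\go\circ\til{\rho}'$ are homotopic maximal $(t,s)$-geodesics in $\cW$.

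The crucial step is now to descend and use property ($2$H) in the base. Since $\cW$ is $2$-Weyl, \autoref{prop:WimpliesH} gives ($2$H), which forces $\rho_2 = \go\circ\til{\rho}'$. Therefore $\til{\rho}'^{-1}$ and $\gb_2$ are both lifts of the single gallery $\rho_2^{-1}$ issuing from the common chamber $\iota(\gb_2) = \tau(\til{\rho}_1)$, and unique gallery lifting (\autoref{prop:Etale=Unique Gallery Lifting}) forces $\gb_2 = \til{\rho}'^{-1}$. In particular, $\tau(\gb) = \iota(\gb)$, so $\gb$ is automatically a cycle. Finally, $\gb = \til{\rho}_1\,\til{\rho}'^{-1} \sim \til{\rho}'\,\til{\rho}'^{-1}$ using $\til{\rho}_1 \sim \til{\rho}'$, and the right-hand side is null-homotopic by \autoref{lemma:invofgal}; so $\gb$ is a suite of $\wt{\cW}$, as required.

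I expect the main conceptual obstacle to be orchestrating the matching of $\til{\rho}'$ (produced upstairs via (PW$1$)) with the prescribed second half $\gb_2$: the matching is achieved by squeezing ($2$H) down in $\cW$ and then pulling the resulting equality of images back up through unique gallery lifting, exactly where the hypothesis that $\cW$ is $2$-Weyl (giving ($2$H)) and the hypothesis that $\go$ is surjective-\'etale (giving unique lifting) cooperate. Everything else is routine bookkeeping with the canonical decomposition of alternating cycles.
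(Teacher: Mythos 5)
Your proof is correct and uses essentially the same ingredients, in the same roles, as the paper's own argument: property (PW$1$) upstairs in $\wt{\cW}$, property ($2$H) of the $2$-Weyl base via \autoref{prop:WimpliesH}, and uniqueness of lifts coming from the surjective-\'etale hypothesis. The only difference is organizational — you run the argument directly, pinning down $\gb_2$ as the lift of $\rho_2^{-1}$ and exhibiting the null-homotopy, where the paper supposes $\gb$ is not a suite and derives a contradiction with ($2$H) — so the two proofs are essentially the same.
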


\begin{proof}
	Let $\gb$ be a gallery in $\wt{\cW}$ which descends to a defining suite. Towards a contradiction, suppose that $\gb$ is not a suite. Since $\gb$ has the type of a suite, we may write $\gb=\rho(s,t) \rho(t,s)^{-1}$. Let $\hat{\rho}$ be a gallery which is strictly homotopic to $\rho(s,t)$, which must exist by (PW$1$).  We have $\hat{\rho}\neq \rho(t,s)$ by hypothesis. Since $\go$ is surjective-\'etale, $\go$ preserves the distinction of galleries, thus $\go \circ \hat{\rho}\neq \go \circ \rho(t,s)$. But $\go \circ \hat{\rho} \sim \go \circ  \rho(s,t)$ since $\hat{\rho} \sim  \rho(s,t)$ by our choice of $\hat{\rho}$, and homotopies descend through morphisms. Also
	\[\go \circ \rho(s,t) \sim \go \circ \rho(t,s)\] 
	\noindent since $\go \circ \gb$ is a suite, and
	\[(\go\circ \rho(s,t))^{-1} (\go\circ \rho(t,s))=\go\circ\gb.\] 
	\noindent Therefore $\go\circ \hat{\rho}\sim \go \circ \rho(t,s)$ by transitivity. This is a contradiction of the fact that $\cW$ is $2$-Weyl, since both $\go \circ \hat{\rho}$ and $\go \circ \rho(t,s) $ have the same type (see \autoref{prop:WimpliesH}).  
\end{proof}

\begin{cor} 
	A surjective-\'etale morphism between $2$-Weyl graphs is a covering. 
\end{cor}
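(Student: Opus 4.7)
The plan is to derive this immediately from the preceding \autoref{lemma:2weyletale}. Recall from \autoref{section:preweyl} that a $2$-Weyl graph is by definition a pre-Weyl graph which additionally satisfies property ($2$W); in particular, every $2$-Weyl graph is a pre-Weyl graph.

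Given a surjective-\'etale morphism $\go:\wt{\cW}\map \cW$ between $2$-Weyl graphs, I would observe that $\wt{\cW}$ is in particular a pre-Weyl graph and $\cW$ is in particular a $2$-Weyl graph. This places us exactly in the hypotheses of \autoref{lemma:2weyletale}, which then concludes that $\go$ is a covering. No further argument is needed.

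There is essentially no obstacle here; the corollary is a verbatim specialization of \autoref{lemma:2weyletale} obtained by strengthening the hypothesis on the source from pre-Weyl to $2$-Weyl. The substantive content of the statement lies entirely in the preceding proposition, and in particular in the observation that a strict homotopy of words in $\cW$ lifts uniquely to a strict homotopy of galleries through a surjective-\'etale morphism when the target is $2$-Weyl (via \autoref{prop:WimpliesH} / \autoref{rem:wordhomouniquegalhomo}).
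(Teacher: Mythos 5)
Your proposal is correct and is exactly the argument the paper intends: the corollary is the special case of \autoref{lemma:2weyletale} obtained by noting that a $2$-Weyl graph is in particular a pre-Weyl graph, which is why the paper states it without a separate proof. No gap.
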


%%%%%%%%%%%%%%%%%%%%%%%%%%%%%%%%%%%%%%%%%%%%%%%%%%%%%%%%%%%%%%%%%%%%%%%%
\subsection{Classification of Coverings of $2$-Weyl Graphs}
%%%%%%%%%%%%%%%%%%%%%%%%%%%%%%%%%%%%%%%%%%%%%%%%%%%%%%%%%%%%%%%%%%%%%%%%

We now move to the setting where Weyl data $\cW$ is $2$-Weyl. Recall that in this setting, the panel groupoids of $\cW$ are naturally subgroupoids of the fundamental groupoid of $\cW$. We construct a bijective correspondence between connected coverings of $2$-Weyl graphs and conjugacy classes of subgroups of their fundamental groups.

\begin{prop} \label{lem:detbys}
	Let $\cW$ be a $2$-Weyl graph. Then the edges of the panel groupoids of $\cW$ generate $\overline{\cW}$.
\end{prop}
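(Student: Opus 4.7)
The plan is to reduce an arbitrary edge $[\gb] \in \overline{\cW}$ to a product of panel-groupoid edges using \autoref{allgalgeo}, which guarantees that every gallery in a pre-Weyl graph is homotopic to a geodesic.

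First, I would take an arbitrary edge $[\gb] \in \overline{\cW}$. If $|\gb|=0$, then $[\gb]$ is an identity edge at some chamber $C$, which is an (identity/trivial) edge of every panel groupoid $\cW_s$, so there is nothing to prove. Otherwise, by \autoref{allgalgeo}, there is a geodesic $\gga$ with $\gb \sim \gga$, hence $[\gb] = [\gga]$ in $\overline{\cW}$. Writing the sequence of edges of $\gga$ as $i_1, \dots, i_n$, we have $\gga = i_1 \cdots i_n$ as a concatenation, so by the defining equation $(\varheart)$ of composition in $\overline{\cW}$,
\[
[\gb] = [\gga] = [i_1][i_2]\cdots[i_n].
\]

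Each factor $[i_k]$ lies in the image of a panel groupoid: if $\upsilon(i_k) = s$, then under the identification of $\cW_s$ with the subgroupoid $\overline{\varepsilon}_s(\cW_s) \leq \overline{\cW}$ (valid because $\cW$ is $2$-Weyl, as noted after the definition of $2$-Weyl graphs), we have $[i_k] = \overline{\varepsilon}_s(i_k)$. Hence $[\gb]$ is a product of edges drawn from the panel groupoids of $\cW$, which by definition means the union of the panel groupoid edges generates $\overline{\cW}$.

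The argument has no real obstacle; the two substantive inputs are (i) \autoref{allgalgeo}, which lets us replace any gallery by a geodesic and is itself powered by (MT1) and (PW$1$), and (ii) the fact that the $2$-Weyl property ($2$W) makes $\overline{\varepsilon}_s$ injective, so that the panel edges genuinely embed in $\overline{\cW}$ and the phrase ``edges of the panel groupoids of $\cW$'' makes sense as a subset of $\overline{\cW}_1$. Note also that closure under inverses is automatic, since $i^{-1}$ is again a panel-groupoid edge, so no separate discussion of $\cS \cup \cS^{-1}$ versus $\cS$ is needed.
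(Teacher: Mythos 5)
Your proof is correct and uses the same core idea as the paper: decompose a gallery into its sequence of edges $i_1,\dots,i_n$ and apply the composition rule $(\varheart)$ to get $[\gb]=[i_1]\cdots[i_n]$, each factor lying in a panel groupoid. The preliminary reduction to a geodesic via \autoref{allgalgeo} is harmless but unnecessary --- the edge decomposition works for an arbitrary gallery, and the paper applies it directly to $\gb$.
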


\begin{proof}
	Let $[\gb] \in \overline{\cW}$ be a homotopy class of galleries, and let $i_1,\dots,i_n$ be the sequence of edges of $\gb$. Then in $\overline{\cW}$, we have, 
	\[[\gb]=[i_1\dots i_n]=   i_1;\dots ;i_n. \qedhere  \]
\end{proof}

\begin{prop} \label{prop:rightdefofcov}
	Let $p:\wt{\cW} \to \cW$ be a covering of $2$-Weyl graphs. Then the induced homomorphism $\bar{p}$ of the fundamental groupoids is a covering of groupoids.
\end{prop}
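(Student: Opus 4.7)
My plan is to verify the two requirements in the definition of a covering of groupoids directly, using the two gallery-theoretic results already established for coverings of Weyl data: the unique gallery lifting property (\autoref{prop:Etale=Unique Gallery Lifting}), which gives existence of lifts since $p$ is in particular \'etale, and the homotopy lifting lemma (\autoref{lemma:homotopylift}), which says that homotopic galleries in $\cW$ lift to homotopic galleries in $\wt{\cW}$. Throughout I will use that the edges of $\overline{\wt\cW}$ and $\overline{\cW}$ are homotopy classes of galleries, and that $\bar{p}[\tilde\gb]=[p\circ\tilde\gb]$.

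First I would fix a chamber $\tilde C\in\wt\cW$ and set $C=p(\tilde C)$, and show that the restriction
\[ \bar p\upharpoonright_{\overline{\wt\cW}(\tilde C,-)}:\overline{\wt\cW}(\tilde C,-)\map \overline{\cW}(C,-)\]
is a bijection. For surjectivity, given any $[\gb]\in\overline{\cW}(C,-)$, apply \autoref{prop:Etale=Unique Gallery Lifting} to lift $\gb$ to a gallery $\tilde\gb$ in $\wt\cW$ issuing from $\tilde C$; then $\bar p[\tilde\gb]=[p\circ\tilde\gb]=[\gb]$. For injectivity, suppose $\tilde\gb,\tilde\gb'$ both issue from $\tilde C$ and $\bar p[\tilde\gb]=\bar p[\tilde\gb']$, i.e.\ $p\circ\tilde\gb\sim p\circ\tilde\gb'$ in $\cW$. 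By uniqueness of lifts, $\tilde\gb$ and $\tilde\gb'$ are precisely the lifts at $\tilde C$ of $p\circ\tilde\gb$ and $p\circ\tilde\gb'$ respectively, so \autoref{lemma:homotopylift} yields $\tilde\gb\sim\tilde\gb'$, giving $[\tilde\gb]=[\tilde\gb']$.

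Finally, I would establish surjectivity of $\bar p$ itself: given any edge $[\gb]\in\overline{\cW}$ with $\iota(\gb)=C$, surjectivity of $p$ on chambers (which follows from surjective-\'etale) provides $\tilde C$ with $p(\tilde C)=C$, and the previous argument supplies $[\tilde\gb]\in\overline{\wt\cW}(\tilde C,-)$ with $\bar p[\tilde\gb]=[\gb]$. Surjectivity on vertices is immediate from surjectivity of $p$ on chambers. Combined with the local bijection established above, this shows $\bar p$ satisfies the definition of a covering of groupoids.

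There is no real obstacle here: once one identifies that the two required ingredients (path lifting and homotopy lifting) are already packaged in \autoref{prop:Etale=Unique Gallery Lifting} and \autoref{lemma:homotopylift}, the verification is a direct translation between the two notions of covering. The only point to take care of is bookkeeping basepoints correctly so that the unique lift matches the gallery we started with, which is exactly what allows \autoref{lemma:homotopylift} to be invoked for the injectivity step.
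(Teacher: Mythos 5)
Your proposal is correct and follows essentially the same route as the paper's own proof: surjectivity of the restriction via unique gallery lifting, injectivity via the homotopy lifting lemma (\autoref{lemma:homotopylift}), and surjectivity on vertices from surjectivity of $p$ on chambers. The only difference is that you spell out the injectivity step slightly more explicitly, where the paper phrases it as a one-line contradiction.
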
 

\begin{proof}
	Firstly, $\bar{p}$ is surjective on vertices since $p$ is surjective on chambers. Let $\wt{C}\in \wt{\cW}$ be a chamber and let $C=p(\wt{C})$. Let $\gb$ be a gallery of $\cW$ which issues from $C$, and let $\tilde{\gb}$ be the lifting of $\gb$ to a gallery which issues from $\wt{C}$. Then
	\[\bar{p} \big  ([\tilde{\gb}]  \big )=  [p\circ \til{\gb}]  =  [\gb].\] 
	Therefore the restriction of $\bar{p}$ to the homotopy classes which issue from $\wt{C}$ is surjective into the homotopy classes which issue from $C$. Finally, towards a contradiction, suppose that the restriction of $\bar{p}$ to the homotopy classes which issue from $\wt{C}$ is not injective. This implies that there exist non-homotopic galleries in $\wt{\cW}$ issuing from $\wt{C}$ whose $p$-images are homotopic. This contradicts \autoref{lemma:homotopylift}.  
\end{proof} 

Let $p:\wt{\cW}\to \cW$ and $p':\wt{\cW}'\to \cW$ be coverings of a $2$-Weyl graph $\cW$. A \emph{morphism} of coverings $\mu:p\to p'$ is a morphism $\mu:\wt{\cW}\to \wt{\cW}'$ such that $p=p'\circ \mu$. We call two coverings \emph{isomorphic} if there exists
a morphism between them which is an isomorphism of $2$-Weyl graphs. The \emph{composition} of morphisms of coverings is just their composition as morphisms of $2$-Weyl graphs. 

\begin{prop} \label{prop:importantresult}
	Let $p:\wt{\cW}\to \cW$, $p':\wt{\cW}'\to \cW$, and $\mu:\wt{\cW}\to \wt{\cW}'$ be morphisms of connected $2$-Weyl graphs with $p = p' \circ \mu$. If $p$ and $p'$ are coverings, then $\mu$ is a covering, and if $p$ and $\mu$ are coverings, then $p'$ is a covering.
\end{prop}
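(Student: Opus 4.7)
The plan is to mirror the groupoid analog \autoref{prop:groupoidcover}, using the factorization of the étale condition together with the suite-lifting property of coverings established in \autoref{corlift}.

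First I would verify the surjective-étale condition. Fix a chamber $\wt{C} \in \wt{\cW}$ and set $\wt{C}' = \mu(\wt{C})$. From $p = p' \circ \mu$ we have the factorization
\[p\upharpoonright_{\wt{\cW}(\wt{C},-)}\ =\ p'\upharpoonright_{\wt{\cW}'(\wt{C}',-)}\ \circ\ \mu\upharpoonright_{\wt{\cW}(\wt{C},-)}.\]
If $p$ and $p'$ are coverings, the outer map and the right factor are bijections, so the left factor $\mu\upharpoonright_{\wt{\cW}(\wt{C},-)}$ is a bijection and $\mu$ is étale; if $p$ and $\mu$ are coverings, a symmetric deduction shows $p'$ is étale at $\wt{C}'$. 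In either case \autoref{propsemietiset} upgrades the étale morphism to surjective-étale, using connectedness of the target ($\wt{\cW}'$ in the first case, $\cW$ in the second).

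Next I would verify the suite-lifting condition. Suppose first that $p$ and $p'$ are coverings, and let $\tilde{\gb}$ be a gallery in $\wt{\cW}$ such that $\mu \circ \tilde{\gb}$ is a defining suite of $\wt{\cW}'$. Since $p'$ is a morphism of Weyl data, \autoref{cor:decentofhomotopy} yields that $p \circ \tilde{\gb} = p' \circ \mu \circ \tilde{\gb}$ is a suite of $\cW$, and then $\tilde{\gb}$ is a suite of $\wt{\cW}$ by \autoref{corlift} applied to the covering $p$. Suppose instead that $p$ and $\mu$ are coverings, and let $\gb'$ be a gallery in $\wt{\cW}'$ such that $p' \circ \gb'$ is a defining suite of $\cW$. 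Using surjectivity of $\mu$ on chambers and \autoref{prop:Etale=Unique Gallery Lifting}, lift $\gb'$ along $\mu$ to a gallery $\tilde{\gb}$ in $\wt{\cW}$; then $p \circ \tilde{\gb} = p' \circ \gb'$ is a defining suite of $\cW$, so $\tilde{\gb}$ is a suite of $\wt{\cW}$ by the covering property of $p$. Applying $\mu$ and invoking \autoref{cor:decentofhomotopy} once more gives that $\gb' = \mu \circ \tilde{\gb}$ is a suite of $\wt{\cW}'$, completing the verification.

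The main obstacle is a mild bookkeeping subtlety: the two halves of the statement require the "suite" condition to be moved in opposite directions, downward via morphism-of-Weyl-data preservation in one case, and upward via the covering property together with gallery lifting in the other. Once the factorization of the étale map is in place and one tracks which of $p$, $p'$, $\mu$ is used to push suites and which to pull them back, the argument is formally parallel to the groupoid case.
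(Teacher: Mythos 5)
Your proposal is correct and follows essentially the same route as the paper's proof: factor the restriction maps $p\upharpoonright_{\wt{\cW}(C,-)}=p'\upharpoonright\circ\,\mu\upharpoonright$ to transfer the \'etale property (upgrading via \autoref{propsemietiset} and connectedness), then handle suites by pushing them forward along morphisms (\autoref{cor:decentofhomotopy}) in one direction and lifting them along the covering $p$ (\autoref{corlift}, \autoref{prop:Etale=Unique Gallery Lifting}) in the other. The only point to make fully explicit is that in the second case the \'etale property of $p'$ is established at \emph{every} chamber of $\wt{\cW}'$ only because $\mu$, being a covering, is surjective on chambers --- a fact you invoke later for the lift but which is already needed for the \'etale step.
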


\begin{proof}
	Suppose that $p$ and $p'$ are coverings. For any chamber $C\in \wt{\cW}$, we have, 
	\[p |_{\cW(C,-)}=p' |_{\cW(\mu(C),-)}  \circ\  \mu |_{\cW(C,-)} .\]
	Then, since $p |_{\cW(C,-)}$ and $p' |_{\cW(\mu(C),-)}$ are bijections, we must have that $\mu |_{\cW(C,-)}$ is a bijection for all  $C\in \wt{\cW}$. Therefore $\mu$ is \'etale, and so is surjective-\'etale because $\wt{\cW}'$ is connected. To see that $\mu$ is a covering, let $\gb$ be a gallery in $\wt{\cW}$ such that $\mu\circ \gb$ is a defining suite of $\wt{\cW}'$. Therefore $p' \circ(\mu\circ \gb)$ is a suite of $\cW$, since $p'$ is a morphism. But $p \circ \gb =p' \circ\mu\circ \gb$, and so $\gb$ is a suite since $p$ is a covering and coverings lift suites to suites by \autoref{corlift}. 
	
	Now suppose that $p$ and $\mu$ are coverings. Let $D\in \wt{\cW}'$ be any chamber. Pick a chamber $C\in \wt{\cW}$ such that $\mu(C)=D$ (here we use the fact that $\mu$ is surjective on chambers). Then we have, 
	\[p |_{\cW(C,-)}=p' |_{\cW(D,-)}  \circ\  \mu |_{\cW(C,-)} .\]
	Then, since $p |_{\cW(C,-)}$ and $\mu |_{\cW(C,-)}$ are bijections, we must have that $p' |_{\cW(D,-)}$ is a bijection. Therefore $p'$ is \'etale, and so is surjective-\'etale because $\cW$ is connected. To see that $p'$ is a covering,  let $\gb$ be a gallery in $\wt{\cW}'$ such that $p'\circ \gb$ is a defining suite of $\cW$. Lift $\gb$ with respect to $\mu$ to a gallery $\tilde{\gb}$ in $\wt{\cW}$. Then $p \circ \tilde{\gb}=p' \circ \gb$. Since $p$ is a covering, $p \circ \tilde{\gb}$ must be a suite of $\cW$. Then, since $\mu$ is a morphism, $\gb$ must be a suite of $\wt{\cW}'$.     
\end{proof}

\begin{prop} \label{prop:faithful}
	Let $\go,\go':\cW\to \cW'$ be two morphisms between the same $2$-Weyl graphs. If $\bar{\go}=\bar{\go}'$, then $\go=\go'$.
\end{prop}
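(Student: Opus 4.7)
The plan is to reduce the equality of morphisms to the equality of their actions on chambers and on edges, using the fact that in a $2$-Weyl graph the embeddings $\overline{\varepsilon}_s: \cW'_s \hookrightarrow \overline{\cW'}$ are injective (this is the key consequence of property ($2$W) noted just after the definition of a $2$-Weyl graph).

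First, I would show agreement on chambers. Since $\bar{\go}$ and $\bar{\go}'$ are homomorphisms of groupoids with the same underlying vertex set $\cW_0$, and they agree by hypothesis, they in particular agree on vertices. But the vertex map of $\bar{\go}$ is precisely the chamber map of $\go$ (since the vertex set of $\overline{\cW}$ is $\cW_0$ and $\bar{\go}$ sends a chamber $C$ to $\go(C)$). Hence $\go(C) = \go'(C)$ for every chamber $C \in \cW$.

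Next, I would deduce agreement on edges. Let $i \in \cW$ be any edge, and put $s = \upsilon(i)$. Morphisms of Weyl data preserve types, so $\go(i)$ and $\go'(i)$ are both edges of $\cW'$ of type $s$. Under the identification of $\cW'_s$ with a subgroupoid of $\overline{\cW'}$, we have
\[
\bar{\go}([i]) = [\go \circ i] = \go(i), \qquad \bar{\go}'([i]) = [\go' \circ i] = \go'(i),
\]
where on the right-hand sides $\go(i), \go'(i)$ are regarded as edges of $\overline{\cW'}$ via $\overline{\varepsilon}_s$. The assumption $\bar{\go} = \bar{\go}'$ then gives $[\go(i)] = [\go'(i)]$ in $\overline{\cW'}$. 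Since $\cW'$ is $2$-Weyl, the map $\overline{\varepsilon}_s: \cW'_s \to \overline{\cW'}$ is injective, and both $\go(i)$ and $\go'(i)$ lie in $\cW'_s$. Therefore $\go(i) = \go'(i)$ as edges of $\cW'$.

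Combined, these two steps show $\go = \go'$. There isn't really a serious obstacle here; the only thing to be careful about is correctly invoking the injectivity of $\overline{\varepsilon}_s$ in the target $\cW'$ (rather than the domain), and making sure that the identification of edges with their homotopy classes is being used consistently when reading off $\bar{\go}$ and $\bar{\go}'$ on single-edge galleries.
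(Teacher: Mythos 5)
Your proof is correct and is essentially the paper's argument: the paper also reduces everything to the injectivity of $\cW'_s\leq \overline{\cW}'$ (the consequence of ($2$W)) applied to a single edge, just phrased as a proof by contradiction rather than directly. Your explicit handling of the chamber maps via the vertex maps of $\bar{\go},\bar{\go}'$ is a minor point the paper leaves implicit, but the substance is identical.
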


\begin{proof}
	Towards a contradiction, suppose that $\go\neq\go'$. Then there exists an edge $i\in \cW$ with $\go(i)\neq\go'(i)$. We cannot have $\go(i)\sim\go'(i)$ as galleries because $\cW'_{\upsilon(i)}\leq \overline{\cW}'$. Therefore
	\[\bar{\go}([i])=     [\go(i)]\neq     [\go'(i)]         =\bar{\go}'([i])\] 
	and so $\bar{\go}\neq \bar{\go}'$, a contradiction.
\end{proof}

\begin{prop} \label{prop:genliftweyl}
	Let $p:\wt{\cW}\to \cW$ and $p':\wt{\cW}'\to \cW$ be coverings of a $2$-Weyl graph $\cW$. Let $\lambda:\bar{p}\to \bar{p}'$ be a covering morphism of the groupoid coverings $\bar{p}$ and $\bar{p}'$. Then there exists a unique covering morphism $\mu:p\to p'$ such that $\bar{\mu}=\lambda$.  
\end{prop}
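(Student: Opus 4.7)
The plan is to construct $\mu$ explicitly on the chambers and edges of $\wt{\cW}$ using $\lambda$ together with the étale property of $p'$, verify it is a morphism of Weyl data with $\bar{\mu} = \lambda$ and $p = p' \circ \mu$, and then deduce uniqueness from the faithfulness result \autoref{prop:faithful}. Note that $\wt{\cW}$ and $\wt{\cW}'$ are themselves $2$-Weyl by \autoref{prop:presandreflect}, so the structural results we need are available.

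For the construction, set $\mu(\wt{C}) := \lambda(\wt{C})$ on chambers (recalling that $\lambda$ is a groupoid homomorphism and so acts on vertices); this satisfies $p'(\mu(\wt{C})) = p(\wt{C})$ since $\bar{p}' \circ \lambda = \bar{p}$ on vertices. For an edge $i \in \wt{\cW}$ issuing from $\wt{C}$, define $\mu(i)$ to be the unique edge of $\wt{\cW}'$ issuing from $\mu(\wt{C})$ whose $p'$-image is $p(i)$; existence and uniqueness come from $p'$ being surjective-\'etale, applied via \autoref{prop:Etale=Unique Gallery Lifting} to the single-edge gallery $p(i)$. Type preservation is immediate: $\upsilon(\mu(i)) = \upsilon(p(i)) = \upsilon(i)$.

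The crucial verification identifies $\bar\mu$ with $\lambda$ and, simultaneously, pins down the terminal chamber of $\mu(i)$. Both $[\mu(i)]$ and $\lambda([i])$ are edges of $\overline{\wt{\cW}}'$ issuing from $\mu(\wt{C})$ whose $\bar{p}'$-image is $[p(i)]$; since $\bar{p}'$ is a covering of groupoids by \autoref{prop:rightdefofcov}, such lifts are unique, so $[\mu(i)] = \lambda([i])$. Reading off terminal vertices gives $\tau(\mu(i)) = \mu(\tau(i))$, and since $\bar{\mu}$ agrees with $\lambda$ on panel edges — which generate $\overline{\wt{\cW}}$ by \autoref{lem:detbys} — the two homomorphisms coincide. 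The remaining morphism axioms then follow by routine étale-uniqueness arguments: $\mu(i^{-1})$ and $\mu(i)^{-1}$ both issue from $\mu(\tau(i))$ and project to $p(i^{-1})$, so they agree; for a detour $ii'$ the gallery $p \circ (ii')$ is again a detour by \autoref{remark:preservationofss}, hence $\mu(i)\mu(i')$ is a detour and $\mu(i;i') = \mu(i);\mu(i')$ by the same uniqueness principle; and for any defining suite $\theta$, the projection $p' \circ \mu \circ \theta = p \circ \theta$ is a suite in $\cW$ (as $p$ is a morphism), whence $\mu \circ \theta$ is a suite in $\wt{\cW}'$ by \autoref{corlift}.

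Thus $\mu$ is a morphism of $2$-Weyl graphs with $p = p' \circ \mu$ and $\bar\mu = \lambda$, i.e., a morphism of coverings with the required property (and itself a covering by \autoref{prop:importantresult} if this is needed). Uniqueness is immediate from \autoref{prop:faithful}: any other $\mu'$ with $\bar{\mu}' = \lambda$ forces $\bar{\mu}' = \bar{\mu}$, hence $\mu' = \mu$. I expect the main obstacle to be the identification of $[\mu(i)]$ with $\lambda([i])$: one must recognize that the edge-level étale lift used to define $\mu(i)$ coincides with the groupoid-level lift provided by $\lambda$. Once this is phrased as a uniqueness-of-lifting statement in the groupoid covering $\bar{p}'$, all remaining verifications reduce to mechanical transfer of properties from $p$ to $\mu$ via the étale character of $p'$.
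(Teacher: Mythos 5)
Your proof is correct and follows essentially the same route as the paper: the paper defines $\mu$ on each panel groupoid directly as the restriction of $\lambda$ to the subgroupoid $\wt{\cW}_s\leq\overline{\wt{\cW}}$, which is exactly the map you obtain by \'etale lifting through $p'$ and then identifying with $\lambda$ via uniqueness of lifts in the groupoid covering $\bar{p}'$, and both arguments conclude with generation (\autoref{lem:detbys}) and faithfulness (\autoref{prop:faithful}). The only cosmetic difference is in verifying that $\mu$ sends defining suites to suites: the paper observes that a suite has trivial composition in $\overline{\wt{\cW}}$ and hence trivial $\lambda$-image, whereas you route through the fact that $p$ is a morphism together with \autoref{corlift}; both are valid.
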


\begin{proof}
	Let $\mu:\wt{\cW}\to \wt{\cW}'$ be the generalized chamber system morphism whose $s$-homomorphism $\mu_s$ is the restriction of $\lambda$ to $\wt{\cW}_s$. This is well-defined because the restriction of $\lambda$ preserves types by the fact its a covering morphism. Then $\mu$ is a morphism of Weyl data since a suite, as a sequence of edges whose composition in the fundamental groupoid of $\wt{\cW}$ is trivial, must get mapped by $\lambda$ to a sequence of edges whose composition is also trivial. Then $\bar{\mu}=\lambda$ since they agree on a generating set (see \autoref{lem:detbys}). Finally, $\mu$ is unique by \autoref{prop:faithful}.
\end{proof}

\begin{cor} \label{cor:isocovers}
	Let $\lambda:\bar{p}\to \bar{p}'$ be an isomorphism, and let $\mu:p\to p'$ be the unique morphism such that $\bar{\mu}=\lambda$. Then $\mu$ is an isomorphism. Thus, if two coverings of a $2$-Weyl graph induce isomorphic groupoid coverings, then they are isomorphic. 
\end{cor}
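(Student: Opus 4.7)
The plan is to assemble the corollary from the structural results already established: Proposition \ref{prop:importantresult} (which tells us when a factor of a covering is itself a covering), Proposition \ref{prop:charofisoaomgcovweyl} (which characterizes isomorphisms among coverings as the injective-on-chambers ones), and the fact that a groupoid isomorphism is in particular a bijection on vertices. Everything has been set up so that the corollary falls out with no genuine new content.

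First I would invoke Proposition \ref{prop:genliftweyl} to produce the morphism $\mu:p\map p'$ with $\bar{\mu}=\lambda$, which is already given in the statement. Since $p=p'\circ \mu$ and both $p$ and $p'$ are coverings of connected $2$-Weyl graphs, Proposition \ref{prop:importantresult} (the ``two-out-of-three'' for coverings of $2$-Weyl graphs) immediately gives that $\mu$ itself is a covering of $2$-Weyl graphs. This reduces the question to showing that the covering $\mu$ is in fact an isomorphism.

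By Proposition \ref{prop:charofisoaomgcovweyl}, to upgrade $\mu$ to an isomorphism it suffices to check that $\mu$ is injective on chambers. But by construction $\bar{\mu}=\lambda$, and $\lambda$ is an isomorphism of groupoids by hypothesis, hence in particular a bijection on vertices. Since the vertices of $\overline{\wt{\cW}}$ and $\overline{\wt{\cW}'}$ are the chambers of $\wt{\cW}$ and $\wt{\cW}'$ respectively, and $\bar{\mu}$ acts on vertices by the same function that $\mu$ acts on chambers, we conclude that $\mu$ is bijective on chambers, and therefore $\mu$ is an isomorphism.

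For the second sentence, suppose $\bar{p}$ and $\bar{p}'$ are isomorphic as groupoid coverings, so there is an isomorphism $\lambda:\bar{p}\map \bar{p}'$ (i.e.\ a covering morphism in the groupoid sense which is also an isomorphism of groupoids). Proposition \ref{prop:genliftweyl} yields the unique $\mu:p\map p'$ with $\bar{\mu}=\lambda$, and the first part shows $\mu$ is an isomorphism, so $p$ and $p'$ are isomorphic coverings. Honestly there is no real obstacle here; the only point to keep an eye on is that the hypotheses of Proposition \ref{prop:importantresult} require connectedness of $\wt{\cW}$, $\wt{\cW}'$ and $\cW$, which is the standing assumption for the coverings under consideration in this subsection.
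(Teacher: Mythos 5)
Your proof is correct, but it takes a genuinely different route from the paper. The paper's proof never passes through the covering machinery at all: it applies \autoref{prop:genliftweyl} a second time to $\lambda^{-1}$ to obtain a morphism $\mu':p'\map p$ with $\bar{\mu}'=\lambda^{-1}$, and then uses the faithfulness of $\go\mapsto\bar{\go}$ (\autoref{prop:faithful}) to conclude that $\bar{\mu}'\circ\bar{\mu}=1$ forces $\mu'\circ\mu=1$, and symmetrically $\mu\circ\mu'=1$; this is the standard argument that a faithful ``functor'' reflects isomorphisms once lifts exist, and it constructs the inverse of $\mu$ explicitly. You instead upgrade $\mu$ to a covering via the two-out-of-three property (\autoref{prop:importantresult}) and then apply the injective-on-chambers criterion (\autoref{prop:charofisoaomgcovweyl}), reading off bijectivity on chambers from the fact that $\lambda$ is bijective on vertices and that $\bar{\mu}$ acts on vertices exactly as $\mu$ acts on chambers. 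Both arguments are sound. The trade-off is that your route leans on the connectedness hypotheses of \autoref{prop:importantresult}, which the corollary does not explicitly impose (though you flag this, and one could either reduce to connected components or note that surjectivity of $\mu$ on chambers already follows from the bijectivity of $\lambda$ on vertices, making the appeal to connectedness avoidable); the paper's argument is hypothesis-free in this respect and also produces the inverse morphism of coverings directly rather than inferring its existence.
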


\begin{proof}
	Let $\mu':p'\to p$ be the unique morphism with $\bar{\mu}'=\lambda^{-1}$. Put $\mu''=\mu'\circ \mu$. Then $\bar{\mu}''=\lambda^{-1} \circ\lambda=1$. Thus, $\mu''=1$ by \autoref{prop:faithful}. By a symmetric argument, $\mu\circ \mu'=1$, and so $\mu$ is an isomorphism $\mu:p\to p'$.   
\end{proof}

This shows that coverings of $2$-Weyl graphs can be modeled in a non-forgetful way by coverings of groupoids. We now show that the injective correspondence we've just established is bijective.

\begin{thm} \label{existenceofweylcover}
	Let $\cW$ be a $2$-Weyl graph. For every groupoid covering $\gf:\cG\to\overline{\cW}$, there exists a covering $p:\wt{\cW}\to\cW$ with $\bar{p}\cong \gf$.
\end{thm}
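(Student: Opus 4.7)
The plan is to construct $\wt{\cW}$ by pulling back the panel groupoid structure of $\cW$ along $\gf$, complete the resulting generalized chamber system so that the natural map becomes a covering of Weyl data, and then identify the induced groupoid covering with $\gf$.

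Concretely, I would take the chambers of $\wt{\cW}$ to be the vertices of $\cG$, and for each $s\in S$ define the panel groupoid $\wt{\cW}_s$ to have vertex set $\cG_0$ and edges $\{g\in\cG_1:\gf(g)\in\cW_s\}$ (using the identification of $\cW_s$ with a subgroupoid of $\overline{\cW}$ made available by ($2$W)). This gives $\wt{\cW}$ the structure of a generalized chamber system of type $M$, and restricting $\gf$ yields a morphism $p:\wt{\cW}\to\cW$ whose $s$-homomorphism $p_s:\wt{\cW}_s\to\cW_s$ is the pullback of $\gf$ along $\cW_s\hookrightarrow\overline{\cW}$; in particular each $p_s$ is a covering of groupoids, so $p$ is surjective-\'etale.

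Next I would show $p$ is a pre-covering and pass to the completion. If $\gb=g_1\cdots g_n$ is a gallery in $\wt{\cW}$, viewed also as a composable sequence of edges in $\cG$, such that $p\circ\gb$ is a defining suite of $\cW$, then $[p\circ\gb]=1$ in $\overline{\cW}$, so $\gf(g_1\cdots g_n)$ is the identity edge at $p(\iota(\gb))$. Since $\gf$ is a covering, this identity edge has unique preimage $1_{\iota(\gb)}$ among edges of $\cG$ issuing from $\iota(\gb)$, so $g_1\cdots g_n=1_{\iota(\gb)}$, and in particular $\gb$ is a cycle. Replacing $\wt{\cW}$ by its completion with respect to $p$, the map $p:\wt{\cW}\to\cW$ becomes a covering of Weyl data, and \autoref{prop:presandreflect} promotes $\wt{\cW}$ to a $2$-Weyl graph.

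Finally, I would construct an explicit isomorphism of coverings $\psi:\overline{\wt{\cW}}\to\cG$ defined as the identity on chambers and on edges by $\psi([\gb])=g_1\cdots g_n\in\cG_1$, where $g_1,\dots,g_n$ are the edges of $\gb$ interpreted in $\cG$. Well-definedness under $1$-elementary homotopies is automatic because each $\wt{\cW}_s$ is a subgroupoid of $\cG$, and under $2$-elementary homotopies it follows from the same faithfulness argument above applied to defining suites of the completion. Surjectivity follows from \autoref{lem:detbys} and the fact that a covering lifts generators to generators; injectivity follows from \autoref{lemma:homotopylift}, since $\psi([\gb])=\psi([\gb'])$ implies $[p\circ\gb]=[p\circ\gb']$ and unique lifting from the common initial chamber then gives $\gb\sim\gb'$. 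The relation $\gf\circ\psi=\bar{p}$ is immediate. The main obstacle is the pre-covering verification: one must show that every gallery in $\wt{\cW}$ whose projection is a defining suite of $\cW$ closes up, and this crucially uses the faithfulness of $\gf$; once past this point, the completion turns $p$ into a covering and the classification results from \autoref{sec:coverofweyl} handle the rest.
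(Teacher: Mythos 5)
Your construction is the same as the paper's: take the chambers of $\wt{\cW}$ to be the vertices of $\cG$, let $\wt{\cW}_s$ be the $\gf$-preimage of $\cW_s\leq\overline{\cW}$, restrict $\gf$ to get a surjective-\'etale morphism, verify the pre-covering condition using the fact that a defining suite composes to an identity in $\overline{\cW}$ and that $\gf$ is a covering, and then pass to the completion. Your explicit isomorphism $\psi:\overline{\wt{\cW}}\map\cG$ just spells out in more detail the identification $\bar{p}\cong\gf$ that the paper dispatches by appealing to \autoref{lem:detbys}, so the proposal is correct and essentially identical to the paper's proof.
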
 

\begin{proof}
	Let the chambers of $\wt{\cW}$ be the vertices of $\cG$. Let the panel groupoid $\wt{\cW}_s$ be the subgroupoid $\cG_s\leq \cG$ which is the $\gf$-preimage of $\cW_s\leq \overline{\cW}$. This gives $\wt{\cW}$ the structure of a generalized chamber system. Let $p:\wt{\cW}\to \cW$ be the generalized chamber system morphism such that the $s$-homomorphism $p_s$ is the restriction of $\gf$ to $\cG_s$. The map $p:\wt{\cW}\to \cW$ is surjective-\'etale because $\gf$ is a groupoid covering. It is a pre-covering since a suite in $\cW$ lifts to a loop in $\cG$ because the composition of the sequence of edges of a suite is trivial. Finally, redefine $\wt{\cW}$ to be its completion with respect to $p$. Thus, we obtain a covering $p:\wt{\cW}\to \cW$. We have $\bar{p}\cong \gf$ by \autoref{lem:detbys}.%Let $[K]$ be any conjugacy class of subgroups of $\pi_1(\cW)$. Pick a subgroup $H$ of $\overline{\cW}$ whose image in $\pi_1(\cW)=\pi_1(\overline{\cW})$ is $[K]$. Then the covering $p:\wt{\cW}^H\to \cW$ based at $H$ is the required covering.  
\end{proof}

Let $p:\wt{\cW}\to \cW$ be a connected covering of $2$-Weyl graphs. Then $p$ is called a \emph{universal cover} if for any connected covering $p':\wt{\cW}'\to \cW$, there exists a covering morphism $\mu:p\to p'$. 

\begin{prop} \label{prop:charuni}
	A connected covering $p:\wt{\cW}\to \cW$ of $2$-Weyl graphs is a universal cover if and only if $\wt{\cW}$ is simply connected. 
\end{prop}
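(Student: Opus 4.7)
The plan is to leverage the correspondence between coverings of $2$-Weyl graphs and groupoid coverings of their fundamental groupoids, which was the content of Propositions~\ref{prop:rightdefofcov}, \ref{prop:importantresult}, and~\ref{prop:genliftweyl}, together with the existence result Theorem~\ref{existenceofweylcover}. On the groupoid side, we already know that a connected covering $\bar{p}$ is universal precisely when its domain is a connected setoid, i.e.\ has trivial fundamental group. So the strategy is to transfer this characterization across.

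For the direction ($\Leftarrow$), suppose $\wt{\cW}$ is simply connected, so $\overline{\wt{\cW}}$ is a connected setoid. Let $p':\wt{\cW}'\map \cW$ be any connected covering. By \autoref{prop:rightdefofcov}, both $\bar{p}$ and $\bar{p}'$ are coverings of connected groupoids, and since $\overline{\wt{\cW}}$ is a connected setoid, $\bar{p}$ is a universal cover in the groupoid sense. Thus there exists a covering morphism $\lambda:\bar{p}\map \bar{p}'$ of groupoid coverings. Invoking \autoref{prop:genliftweyl}, this lifts uniquely to a covering morphism $\mu:p\map p'$ with $\bar{\mu}=\lambda$. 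Since $p'$ was arbitrary, $p$ is a universal cover.

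For the direction ($\Rightarrow$), suppose $p$ is universal. Apply \autoref{existenceofweylcover} to the universal cover $\gf$ of the connected groupoid $\overline{\cW}$, obtaining a covering $p_0:\wt{\cW}_0\map \cW$ with $\bar{p}_0\cong \gf$; in particular $\overline{\wt{\cW}_0}$ is a connected setoid, so $\wt{\cW}_0$ is simply connected. In particular $p_0$ is connected, so by universality of $p$ there is a covering morphism $\mu:p\map p_0$. By \autoref{prop:importantresult}, $\mu$ is itself a covering of $2$-Weyl graphs, and by \autoref{prop:rightdefofcov} the induced $\bar{\mu}:\overline{\wt{\cW}}\map \overline{\wt{\cW}_0}$ is a covering of groupoids. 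Since the target is a setoid, $\pi_1(\bar{\mu})$ is an outer embedding into a trivial group, which forces $\pi_1(\overline{\wt{\cW}})$ to be trivial; hence $\overline{\wt{\cW}}$ is a connected setoid and $\wt{\cW}$ is simply connected.

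The main obstacle is really bookkeeping rather than a conceptual one: one must be careful that the construction of $p_0$ via \autoref{existenceofweylcover} actually produces a \emph{connected} covering whose fundamental groupoid is a setoid (which follows because $\bar{p}_0\cong \gf$ as groupoid coverings, so the covering groupoid is isomorphic to the universal cover of $\overline{\cW}$), and that the lifting results apply without restriction on $p'$ other than connectedness.
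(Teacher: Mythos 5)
Your proof is correct and follows essentially the same route as the paper: both directions are transferred across the correspondence between coverings of $2$-Weyl graphs and coverings of groupoids, with \autoref{prop:genliftweyl} giving sufficiency and \autoref{existenceofweylcover} giving necessity. The only difference is that you spell out the forward direction in detail (via the auxiliary cover $p_0$ of the universal groupoid cover), whereas the paper compresses this into the single assertion that $\bar{p}$ is a universal cover of groupoids.
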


\begin{proof}
	Suppose that $p:\wt{\cW}\to \cW$ is a universal cover. Then $\bar{p}$ is a universal cover of groupoids. Therefore, the universal groupoid of $\wt{\cW}$ is a setoid, and so $\wt{\cW}$ is simply connected. Conversely, suppose that $\wt{\cW}$ is simply connected. Again, this means that $\bar{p}$ is a universal cover of groupoids. Then $p$ is a universal cover by \autoref{prop:genliftweyl}. 
\end{proof}

We often denote simply connected Weyl data by $\Delta$.

\begin{cor}
	Universal covers are unique up to isomorphism, and every connected $2$-Weyl graph $\cW$ has a universal cover $p:\Delta\to \cW$.
\end{cor}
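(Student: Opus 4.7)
The proof plan is to reduce both claims to the corresponding statements for groupoid coverings, which have already been established in the preliminary section, using the correspondence between coverings of $2$-Weyl graphs and coverings of their fundamental groupoids set up by \autoref{prop:rightdefofcov}, \autoref{cor:isocovers}, and \autoref{existenceofweylcover}.

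For uniqueness, suppose $p:\Delta\map \cW$ and $p':\Delta'\map \cW$ are two universal covers. By \autoref{prop:charuni}, both $\Delta$ and $\Delta'$ are simply connected, so their fundamental groupoids $\overline{\Delta}$ and $\overline{\Delta'}$ are connected setoids. Thus the induced groupoid coverings $\bar{p}$ and $\bar{p}'$ are both universal covers of the connected groupoid $\overline{\cW}$. Since the universal cover of a connected groupoid is unique up to isomorphism (as noted in the groupoid section, following the Galois correspondence), there is an isomorphism $\bar{p}\cong \bar{p}'$. Applying \autoref{cor:isocovers} then yields an isomorphism $p\cong p'$.

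For existence, let $\gf:\cG\map \overline{\cW}$ be a universal cover of the connected groupoid $\overline{\cW}$, which exists by the Galois correspondence for groupoids. Apply \autoref{existenceofweylcover} to produce a covering $p:\wt{\cW}\map \cW$ together with an isomorphism $\bar{p}\cong \gf$. Since $\cG$ is a connected setoid, so is $\overline{\wt{\cW}}$; in particular $\wt{\cW}$ is connected (as $\overline{\wt{\cW}}$ is connected and has the same chambers) and simply connected. By \autoref{prop:presandreflect}, $\wt{\cW}$ is a $2$-Weyl graph (since $\cW$ is), so writing $\Delta=\wt{\cW}$ we obtain a connected covering of $2$-Weyl graphs with $\Delta$ simply connected. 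By \autoref{prop:charuni}, $p:\Delta\map \cW$ is a universal cover.

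The only subtle point is verifying that the $\wt{\cW}$ produced by \autoref{existenceofweylcover} is connected and simply connected when the input groupoid covering is the universal one; but this is immediate because its fundamental groupoid is (isomorphic to) $\cG$, which is a connected setoid by construction. Everything else is a formal transport along the functor $\cW\mapsto \overline{\cW}$, $p\mapsto \bar{p}$, so I do not anticipate any serious obstacle beyond invoking the appropriate results above in the correct order.
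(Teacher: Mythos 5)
Your proposal is correct and follows essentially the same route as the paper: both reduce uniqueness to the uniqueness of universal groupoid covers via \autoref{prop:charuni} together with the correspondence of \autoref{cor:isocovers}, and both obtain existence by feeding the universal groupoid cover of $\overline{\cW}$ into \autoref{existenceofweylcover} and checking simple connectivity. Your write-up merely spells out the details that the paper's two-sentence proof leaves implicit.
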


\begin{proof}
	Universal covers are unique up to isomorphism by \autoref{prop:charuni} and the 1-1 correspondence between $2$-Weyl graph coverings and groupoid coverings. The existence of universal covers follows from \autoref{prop:charuni} and \autoref{existenceofweylcover}.
\end{proof}

\begin{cor} \label{cor:isoofunicovers}
	Let $p:\Delta\to \cW$ and $p':\Delta'\to \cW$ be universal covers of a connected $2$-Weyl graph $\cW$, and let $\mu:p\to p'$ be a covering morphism. Then $\mu$ is an isomorphism. 
\end{cor}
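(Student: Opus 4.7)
The plan is to reduce the statement to the corresponding fact for groupoid coverings via the functor $\cW\mapsto \overline{\cW}$, and then exploit that the fundamental groupoids $\overline{\Delta}$ and $\overline{\Delta'}$ are connected setoids.

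First, since $p$ and $p'$ are coverings of connected $2$-Weyl graphs with $p=p'\circ \mu$, \autoref{prop:importantresult} immediately gives that $\mu:\Delta\map\Delta'$ is itself a covering of $2$-Weyl graphs. By \autoref{prop:rightdefofcov}, the induced homomorphism $\bar{\mu}:\overline{\Delta}\map \overline{\Delta'}$ is then a covering of groupoids.

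Next, I would observe that because $\Delta$ and $\Delta'$ are simply connected by hypothesis, both $\overline{\Delta}$ and $\overline{\Delta'}$ are connected setoids, so their fundamental groups are trivial. Hence $\pi_1(\bar{\mu})$ is vacuously an outer isomorphism, and \autoref{prop:conservative} yields that $\bar{\mu}$ is an isomorphism of groupoids. (Alternatively, one can see this directly: since both groupoids are connected setoids, the restriction of $\bar{\mu}$ to $\overline{\Delta}(\tilde{x},-)$ is a bijection into $\overline{\Delta'}(\bar{\mu}(\tilde{x}),-)$, and both of these sets are in canonical bijection with the respective vertex sets, so $\bar{\mu}$ is bijective on vertices; then apply \autoref{prop:injecvert}.)

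Finally, I would apply \autoref{cor:isocovers}: the isomorphism of groupoid coverings $\bar{\mu}:\bar{p}\map \bar{p}'$ gives a unique covering morphism between $p$ and $p'$ lifting $\bar{\mu}$, and that lift is an isomorphism of $2$-Weyl graphs. By the uniqueness in \autoref{prop:genliftweyl} (or \autoref{prop:faithful}) this lift is necessarily $\mu$, so $\mu$ itself is an isomorphism. I don't expect any genuine obstacle: the whole argument is a transport of the groupoid-level Galois correspondence across the faithful functor $\cW\mapsto \overline{\cW}$, and every piece of machinery has already been assembled; the only thing to be a little careful about is invoking \autoref{cor:isocovers} with the correct identification $\bar{\mu}=\lambda$.
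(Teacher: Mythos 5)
Your argument is correct and follows essentially the same route as the paper: the paper's proof likewise observes that $\pi_1(\bar{\mu})$ is an outer isomorphism between trivial groups, deduces that $\bar{\mu}$ is an isomorphism via \autoref{prop:conservative}, and concludes via \autoref{cor:isocovers}. Your version merely makes explicit the intermediate steps (that $\mu$ is a covering by \autoref{prop:importantresult} and that $\bar{\mu}$ is a groupoid covering by \autoref{prop:rightdefofcov}) that the paper leaves implicit.
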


\begin{proof}
	Notice that $\pi_1(\bar{\mu})$ is an outer isomorphism between trivial groups. Then $\bar{\mu}$ is an isomorphism by \autoref{prop:conservative}, and $\mu$ is an isomorphism by \autoref{cor:isocovers}.
\end{proof}

%%%%%%%%%%%%%%%%%%%%%%%%%%%%%%%%%%%%%%%%%%%%%%%%%%%%%%%%%%%%%%%%%%%%%%%%
\subsection{Coverings and Group Actions}
%%%%%%%%%%%%%%%%%%%%%%%%%%%%%%%%%%%%%%%%%%%%%%%%%%%%%%%%%%%%%%%%%%%%%%%%

We now describe the relationship between groups acting on $2$-Weyl graphs and coverings. Groups act on $2$-Weyl graphs by automorphisms. We say a group $G$ acts \emph{chamber-freely} on a $2$-Weyl graph $\cW$ if the restriction of the action of $G$ to the set of chambers of $\cW$ is free. 

Let $p:\wt{\cW}\to \cW$ be a covering of $2$-Weyl graphs. The \emph{deck transformation group} $\Aut(p)$ of $p$ is the group of automorphisms of $\wt{\cW}$ which commute with $p$. Since $\Aut(p)\leq \Aut(\wt{\cW})$, a covering $p:\wt{\cW}\to \cW$ determines a faithful action of $\Aut(p)$ on the left of $\wt{\cW}$. Notice that the homomorphism $\Aut(p)\to \Aut(\bar{p})$, $g\mapsto \bar{g}$, is injective by \autoref{prop:faithful}, and surjective by \autoref{prop:genliftweyl}. 

\begin{prop} 
	Let $p:\wt{\cW}\to \cW$ be a connected covering. Then $\Aut(p)$ acts chamber-freely on $\wt{\cW}$.   
\end{prop}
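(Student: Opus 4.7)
The plan is to reduce to the groupoid case already established in Proposition on free actions of deck transformations on groupoid coverings. The setup is almost ready: the excerpt just noted that the map $\Aut(p)\map \Aut(\bar{p})$, $g\mapsto \bar{g}$, is injective (by \autoref{prop:faithful}) and surjective (by \autoref{prop:genliftweyl}), so it is an isomorphism of groups. Moreover, by \autoref{prop:rightdefofcov}, $\bar{p}:\overline{\wt{\cW}}\map \overline{\cW}$ is a groupoid covering, and since $\wt{\cW}$ is connected so is $\overline{\wt{\cW}}$.

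First I would observe that under the identification $\Aut(p)\cong \Aut(\bar{p})$, the natural action of $\Aut(p)$ on the chambers of $\wt{\cW}$ coincides with the action of $\Aut(\bar{p})$ on the vertices of $\overline{\wt{\cW}}$; this is immediate because a morphism $g:\wt{\cW}\map \wt{\cW}$ and its induced groupoid homomorphism $\bar{g}$ have, by construction, the same underlying function on chambers/vertices. Next, I would apply \autoref{prop:freeaction} to the connected groupoid covering $\bar{p}$ to conclude that $\Aut(\bar{p})$ acts freely on the vertices of $\overline{\wt{\cW}}$. Transporting this across the identification, $\Aut(p)$ acts freely on the chambers of $\wt{\cW}$, which is exactly the chamber-free property.

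There is no real obstacle; the only thing to check is that the bijection $\Aut(p)\cong \Aut(\bar{p})$ is equivariant for the actions on chambers and vertices respectively, which is tautological from the definition of $\bar{g}$. One could alternatively give a direct argument mirroring the proof of \autoref{prop:freeaction}: if $g\in \Aut(p)$ fixes a chamber $C$, then since $p\upharpoonright_{\wt{\cW}(C,-)}$ is a bijection into $\cW(p(C),-)$ and $p\circ g=p$, we get $g\cdot i=i$ for every edge $i$ issuing from $C$; connectedness of $\wt{\cW}$ then propagates the fixity to all chambers and edges, and by \autoref{prop:faithful} (or directly, since $g$ fixes every edge) $g=1$. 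Either route is short, and the groupoid reduction is the cleanest.
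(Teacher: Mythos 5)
Your proposal is correct and follows essentially the same route as the paper: the paper's proof simply invokes the natural isomorphism $\Aut(p)\cong\Aut(\bar{p})$ and then applies \autoref{prop:freeaction} for groupoid coverings. Your additional check of equivariance on chambers/vertices and your sketched direct alternative are fine but not needed beyond what the paper records.
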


\begin{proof}
	Given the natural isomorphism $\Aut(p)\to \Aut(\bar{p})$, $g\mapsto \bar{g}$, this follows from \autoref{prop:freeaction}.
\end{proof}

We will see that conversely, if a group acts chamber-freely on a connected $2$-Weyl graph $\cW'$, then it is naturally the deck transformation group of a covering $\cW'\to \cW$. A connected covering $p:\wt{\cW}\to \cW$ of $2$-Weyl graphs is called \emph{regular} if the induced groupoid covering $\bar{p}$ is regular. Thus, if $p$ is regular, then we can identify $\pi_1(\wt{\cW})$ with its $\pi_1(\bar{p})$-image in $\pi_1(\cW)$. 

\begin{prop} \label{prop:trans}
	Let $p:\wt{\cW}\to \cW$ be a regular covering of $2$-Weyl graphs. Then the action of $\Aut(p)$ restricted to the $p$-preimage of a chamber or an edge is regular.
\end{prop}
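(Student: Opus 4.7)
The plan is to reduce the statement to its groupoid analog (\autoref{prop:fib}) via the natural isomorphism between $\Aut(p)$ and $\Aut(\bar p)$ that was established just before the statement. Freeness of the action on chambers, and hence on edges (an automorphism fixing an edge fixes both its endpoints), is already recorded in the preceding proposition, so only transitivity needs to be addressed.

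First I would note that since $p$ is a regular covering of $2$-Weyl graphs, the induced groupoid covering $\bar p : \overline{\wt\cW}\map\overline\cW$ is regular by definition. By \autoref{prop:fib} applied to $\bar p$, the group $\Aut(\bar p)$ acts transitively on the $\bar p$-preimage of any vertex and any edge of $\overline\cW$. Combined with the natural isomorphism $\Aut(p)\map \Aut(\bar p)$, $g\mapsto \bar g$ (injective by \autoref{prop:faithful} and surjective by \autoref{prop:genliftweyl}), this gives what we want provided we match up the two notions of preimage.

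For chambers, this matching is immediate: chambers of $\wt\cW$ are by definition the vertices of $\overline{\wt\cW}$, the two actions coincide under the identification $g\mapsto \bar g$, and $p$-preimage of a chamber $C\in\cW$ is literally the $\bar p$-preimage of the vertex $C\in\overline\cW$. So the chamber case follows directly.

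For edges, the key is the identification, available because $\cW$ is $2$-Weyl, of the panel groupoid $\cW_s$ with the subgroupoid $\overline{\varepsilon}_s(\cW_s)\leq \overline{\cW}$ (and likewise for $\wt\cW$). Under this identification, each edge $i\in\cW$ of type $s$ becomes the homotopy class $[i]\in\overline{\cW}$, and its $p$-preimage in $\wt{\cW}$ is in canonical bijection with the $\bar p$-preimage of $[i]$ in $\overline{\wt\cW}$. The action of $g\in\Aut(p)$ on edges of $\wt\cW$ agrees with the action of $\bar g\in\Aut(\bar p)$ on these homotopy classes, since $\bar g([\tilde i])=[g\circ \tilde i]=[g(\tilde i)]$. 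So transitivity of $\Aut(\bar p)$ on the $\bar p$-preimage of $[i]$, furnished by \autoref{prop:fib}, transports to transitivity of $\Aut(p)$ on the $p$-preimage of $i$, completing the proof. I do not expect a real obstacle here: the only thing to be careful about is recording the compatibility of the two actions under the identification of $\cW_s$ with a subgroupoid of $\overline\cW$, which is straightforward since morphisms of $2$-Weyl graphs induce groupoid homomorphisms that send an edge $i$ to the class $[i]$ of its image.
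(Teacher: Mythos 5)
Your proposal is correct. The chamber case is handled exactly as in the paper: transport \autoref{prop:fib} through the natural isomorphism $\Aut(p)\map\Aut(\bar p)$. Where you diverge is the edge case. The paper does not return to the fundamental groupoid at all: it picks, by the chamber case, some $g\in\Aut(p)$ with $g\cdot\iota(\tilde i)=\iota(\tilde i')$ and then concludes $g\cdot\tilde i=\tilde i'$ immediately from the \'etale property of $p$ (two edges over the same edge of $\cW$ issuing from the same chamber coincide) --- which is in fact the same trick used inside the proof of \autoref{prop:fib} itself. You instead invoke the edge case of \autoref{prop:fib} directly and transport it through the identification $\cW_s\leq\overline{\cW}$. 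This works, but note that your claim of a \emph{canonical bijection} between the $p$-preimage of $i$ and the $\bar p$-preimage of $[i]$ is slightly more than you verify: surjectivity of $\tilde i\mapsto[\tilde i]$ onto the $\bar p$-preimage requires knowing that every class over $[i]$ contains an edge, which follows from homotopy lifting (\autoref{lemma:homotopylift}). Fortunately only injectivity is needed for transitivity (given $\bar g\cdot[\tilde i]=[\tilde i']$ one concludes $g\cdot\tilde i=\tilde i'$ because $\wt{\cW}$ is $2$-Weyl, by \autoref{prop:presandreflect}, so distinct edges of the same type represent distinct homotopy classes), so the gap is cosmetic. The paper's edge argument is shorter and avoids this bookkeeping; yours has the mild advantage of making the reduction to the groupoid statement uniform across both cases.
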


\begin{proof}
	Given the natural isomorphism $\Aut(p)\to \Aut(\bar{p})$, $g\mapsto \bar{g}$, the case of a chamber follows from \autoref{prop:fib}. In the case of an edge, let $i,i'\in \wt{\cW}$ be edges with $p(g)=p(g')$. Let $g\in \Aut(p)$ such that $g \cdot \iota(i)=\iota(i')$. Then $g \cdot i= i'$, since $p$ is a covering.
\end{proof}

\begin{prop}
	Let $p:\wt{\cW}\to \cW$ be a regular covering of $2$-Weyl graphs. Then there exists a natural outer isomorphism, 
	\[  \pi_1(\cW)/\pi_1(\wt{\cW}) \to \Aut(p).\] 
\end{prop}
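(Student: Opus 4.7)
The plan is to reduce the claim to the groupoid-theoretic analogue, namely \autoref{prop:natouteriso}, which produces exactly this kind of outer isomorphism for a regular covering of connected groupoids. All the machinery needed to pass between a covering $p$ of $2$-Weyl graphs and the induced covering $\bar{p}$ of fundamental groupoids has already been set up.

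First I would invoke \autoref{prop:rightdefofcov} to get that $\bar{p}:\overline{\wt{\cW}}\map\overline{\cW}$ is a covering of connected groupoids. By the definition of a regular covering of $2$-Weyl graphs (stated in the paragraph preceding the proposition), $\bar{p}$ is a regular covering of groupoids. Hence \autoref{prop:natouteriso} applies and yields a natural outer isomorphism
\[
\Psi_{\mathrm{grp}}:\pi_1(\overline{\cW})/\pi_1(\overline{\wt{\cW}}) \map \Aut(\bar{p}).
\]
Since by definition $\pi_1(\cW)=\pi_1(\overline{\cW})$ and $\pi_1(\wt{\cW})=\pi_1(\overline{\wt{\cW}})$, and since regularity of $p$ lets us identify $\pi_1(\wt{\cW})$ with its $\pi_1(\bar{p})$-image in $\pi_1(\cW)$, the domain of $\Psi_{\mathrm{grp}}$ is canonically $\pi_1(\cW)/\pi_1(\wt{\cW})$.

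Next I would compose with the natural isomorphism $\Aut(\bar{p})\cong \Aut(p)$. This isomorphism is already in place in the paper: the homomorphism $\Aut(p)\map \Aut(\bar{p})$, $g\mapsto \bar{g}$, was shown to be injective by \autoref{prop:faithful} and surjective by \autoref{prop:genliftweyl}, so inverting it gives a canonical isomorphism $\Aut(\bar{p})\map \Aut(p)$. Composing with $\Psi_{\mathrm{grp}}$ produces the desired outer isomorphism
\[
\Psi:\pi_1(\cW)/\pi_1(\wt{\cW}) \map \Aut(p).
\]

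Naturality is assembled from naturality at each stage: the groupoid-level $\Psi_{\mathrm{grp}}$ was already shown to be independent of the choice of basepoints used in its construction (inside the proof of \autoref{prop:natouteriso}), the identifications $\pi_1(\cW)=\pi_1(\overline{\cW})$ and $\pi_1(\wt{\cW})=\pi_1(\overline{\wt{\cW}})$ are definitional, and the bijection $\Aut(p)\leftrightarrow \Aut(\bar{p})$ is canonical (just the functor $\go\mapsto \bar{\go}$ restricted to deck transformations). I do not expect a genuine obstacle here: every nontrivial ingredient — lifting of galleries and homotopies, regularity of the induced groupoid covering, the bijection on deck groups — has already been proved, so the proof is essentially a short bookkeeping argument packaging these facts together.
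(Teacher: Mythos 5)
Your proposal is correct and follows essentially the same route as the paper, whose proof is the one-line remark that the claim follows from \autoref{prop:natouteriso} via the natural isomorphism $\Aut(p)\map\Aut(\bar{p})$, $g\mapsto\bar{g}$. You have merely spelled out the ingredients (regularity of $\bar{p}$, the definitional identifications of fundamental groups, and the injectivity/surjectivity of $g\mapsto\bar{g}$ from \autoref{prop:faithful} and \autoref{prop:genliftweyl}) that the paper leaves implicit.
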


\begin{proof}
	Given the natural isomorphism $\Aut(p)\to \Aut(\bar{p})$, $g\mapsto \bar{g}$, this follows from \autoref{prop:natouteriso}.
\end{proof}

We now show that, conversely, if a group $G$ acts chamber-freely on a connected $2$-Weyl graph $\cW$, then there exists a $2$-Weyl graph $\cW'$ and a regular covering $\cW \to \cW'$ of which $G$ is naturally the automorphism group. We associate to the chamber-free action of a group $G$ on a connected $2$-Weyl graph $\cW$ the \emph{quotient} $2$-Weyl graph $G\backslash \cW$, which is the $2$-Weyl graph defined as follows. First, we define a generalized chamber system $G\backslash \cW$ by letting the set of chambers be the set of orbits $G\backslash \cW_0$, and letting the panel groupoid of type $s$ be the quotient groupoid $G\backslash \cW_s$. The \emph{quotient map} 
\[
\pi:\cW\to G\backslash \cW
\] 
is the morphism of generalized chamber systems such that the $s$-homomorphism $\pi_s$ is the groupoid quotient map $\pi_s:\cW_s\to G\backslash \cW_s$. Notice that $\pi:\cW\to G\backslash \cW$ is surjective-\'etale since the $\pi_s$ are coverings by \autoref{prop:quot}.  

We give $G\backslash \cW$ the structure of Weyl data by letting the suites be the $\pi$-images of suites in $\cW$, i.e. $\theta$ is a defining suite of $G\backslash \cW$ if there exists a defining suite $\theta'$ of $\cW$ with $\theta=\pi\circ\theta'$. This ensures that $\pi$ is a morphism of Weyl data.  

\begin{prop}
	If a group $G$ acts chamber-freely on a $2$-Weyl graph $\cW$, then $G\backslash \cW$ is a $2$-Weyl graph, and the quotient map $\pi:\cW\to G\backslash \cW$ is a covering.    
\end{prop}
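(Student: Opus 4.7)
My plan is to first establish that $\pi$ is a covering of Weyl data and then obtain that $G\backslash\cW$ is $2$-Weyl as an immediate consequence of \autoref{prop:presandreflect}.

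To check that $\pi$ is surjective-\'etale, observe that each $s$-homomorphism $\pi_s\colon\cW_s\to G\backslash\cW_s$ is a groupoid covering by \autoref{prop:quot}, and a generalized chamber system morphism is surjective-\'etale precisely when each of its panel-groupoid homomorphisms is a groupoid covering. To upgrade $\pi$ from a morphism of generalized chamber systems to a morphism of Weyl data, I need to show that $\pi\circ\theta'$ is a suite of $G\backslash\cW$ for every defining suite $\theta'$ of $\cW$. This holds because $\pi\circ\theta'$ is an $(s,t)$-cycle (types and extremities are preserved) and is null-homotopic by \autoref{lemma:decentofhomotopy} applied to the null-homotopy of $\theta'$; in fact $\pi\circ\theta'$ is by construction a defining suite of $G\backslash\cW$.

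The main step is to verify the suite-lifting property in the definition of a covering: if $\gb$ is a gallery of $\cW$ such that $\pi\circ\gb$ is a defining suite of $G\backslash\cW$, then $\gb$ must be a suite of $\cW$. By the construction of the defining suites of $G\backslash\cW$, there exists a defining suite $\theta'$ of $\cW$ with $\pi\circ\theta'=\pi\circ\gb$. In particular $\pi(\iota(\gb))=\pi(\iota(\theta'))$, so I may choose $g\in G$ with $g\cdot\iota(\theta')=\iota(\gb)$. The translated gallery $g\cdot\theta'$ issues from $\iota(\gb)$ and satisfies $\pi\circ(g\cdot\theta')=\pi\circ\theta'=\pi\circ\gb$ by the $G$-invariance of $\pi$; hence by the unique lifting property of \'etale morphisms (\autoref{prop:Etale=Unique Gallery Lifting}) I conclude $\gb=g\cdot\theta'$. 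Since $G$ acts on $\cW$ by automorphisms and automorphisms send defining suites to suites by \autoref{cor:decentofhomotopy}, the gallery $g\cdot\theta'$ is a suite of $\cW$, and hence so is $\gb$.

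The only subtle point I anticipate is precisely this last step: a priori the lift $\gb$ of a defining suite of $G\backslash\cW$ need not itself be a defining suite of $\cW$, only a suite, and the chamber-freeness of the $G$-action combined with \'etale uniqueness is what lets one select the correct $G$-translate of a fixed defining-suite preimage. Once $\pi$ is known to be a covering of Weyl data, \autoref{prop:presandreflect} gives immediately that $G\backslash\cW$ is $2$-Weyl because $\cW$ is, finishing the proof.
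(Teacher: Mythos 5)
Your proof is correct and follows the same route as the paper, which disposes of the whole statement in two sentences by observing that every defining suite of $G\backslash\cW$ is the $\pi$-image of a defining suite of $\cW$ and then invoking \autoref{prop:presandreflect}. Your argument simply makes explicit the details the paper leaves implicit — in particular the verification of the suite-lifting property via a $G$-translate of a defining-suite preimage together with unique gallery lifting — so there is nothing further to add.
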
 

\begin{proof}
	Notice that $\pi:\cW\to G\backslash \cW$ is a covering since all the defining suites of $G\backslash \cW$ are $\pi$-images of defining suites of $\cW$. Then $G\backslash \cW$ is $2$-Weyl by \autoref{prop:presandreflect}. 
\end{proof}

If a group $G$ acts chamber-freely on a $2$-Weyl graph $\cW$, then this induces a free action of $G$ on $\overline{\cW}$. The following result shows that our $2$-Weyl graph quotient construction is compatible with the groupoid quotient construction.

\begin{prop}\label{prop:compat}
	Let $G$ be a group which acts chamber-freely on a $2$-Weyl graph $\cW$ with quotient map $\pi: \cW\to G\backslash \cW$. Let $\gf:\overline{\cW}\to G\backslash\overline{\cW}$ be the quotient map of the associated action of $G$ on $\overline{\cW}$.  Then there exists a unique isomorphism $\psi:\overline{G\backslash \cW}\to G\backslash \overline{\cW}$ such that $\gf=\psi\circ \bar{\pi}$.
\end{prop}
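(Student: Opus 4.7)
The plan is to construct $\psi$ directly by lifting galleries through $\pi$, and then verify it is a covering which is bijective on vertices, hence an isomorphism. Uniqueness is immediate: since $\pi$ is a covering of $2$-Weyl graphs, the induced $\bar{\pi}$ is a groupoid covering by \autoref{prop:rightdefofcov}, hence surjective, so $\gf = \psi \circ \bar{\pi}$ determines $\psi$ on both vertices and edges.

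For existence, on vertices I set $\psi([C]) := [C]$, since both $\overline{G\backslash\cW}$ and $G\backslash\overline{\cW}$ have $G\backslash\cW_0$ as their set of vertices. For an edge $[\gb] \in \overline{G\backslash\cW}$, I pick a chamber $C \in \cW$ with $\pi(C) = \iota(\gb)$, use \autoref{prop:Etale=Unique Gallery Lifting} to lift $\gb$ to the unique gallery $\tilde{\gb}$ in $\cW$ starting at $C$, and set $\psi([\gb]) := [[\tilde{\gb}]] \in G\backslash\overline{\cW}$. Well-definedness on the representative of the class $[\gb]$ uses \autoref{lemma:homotopylift}: homotopies lift through the covering $\pi$ to homotopies. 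Well-definedness on the choice of $C$ uses that any other chamber in the $\pi$-fibre is $g \cdot C$ for a unique $g \in G$ (fibres of $\pi$ on chambers are $G$-orbits); by uniqueness of lifting the corresponding lift is $g \cdot \tilde{\gb}$, giving the same $G$-orbit $[[\tilde{\gb}]]$. That $\psi$ is a homomorphism follows from the fact that the lift of a concatenation is the concatenation of lifts. Finally, $\gf = \psi \circ \bar{\pi}$ holds because for any gallery $\gb$ in $\cW$, $\gb$ is itself the canonical lift of $\pi \circ \gb$ starting at $\iota(\gb)$, so $\psi(\bar{\pi}([\gb])) = \psi([\pi \circ \gb]) = [[\gb]] = \gf([\gb])$.

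It remains to show $\psi$ is an isomorphism. The induced action of $G$ on $\overline{\cW}$ is free: if $g \cdot [\gb] = [\gb]$, then in particular $g \cdot \iota(\gb) = \iota(\gb)$, forcing $g = 1$ by chamber-freeness. Hence by \autoref{prop:quot}, $\gf$ is a groupoid covering. Applying the second half of \autoref{prop:groupoidcover} to the factorisation $\gf = \psi \circ \bar{\pi}$, where both $\bar{\pi}$ and $\gf$ are coverings, we conclude that $\psi$ is a covering. Being bijective on vertices, $\psi$ is then an isomorphism by \autoref{prop:injecvert}.

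The main obstacle is the well-definedness of $\psi$, which has two distinct aspects: that the chosen lift does not depend on the representative of the homotopy class, and that different choices of starting chamber in a given $\pi$-fibre produce lifts lying in the same $G$-orbit. Both are consequences of the unique gallery lifting property of coverings of Weyl data, combined with the free and transitive action of $G$ on fibres of $\pi$.
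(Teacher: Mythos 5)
Your proposal is correct and follows essentially the same route as the paper: both define the same map $\psi$ (forced by $\gf=\psi\circ\bar{\pi}$ and the surjectivity of $\bar{\pi}$), deduce that $\psi$ is a covering from the two-out-of-three property of groupoid coverings applied to $\gf=\psi\circ\bar{\pi}$, and conclude via injectivity on vertices. The only difference is cosmetic: you build $\psi$ bottom-up by unique gallery lifting, which makes explicit the well-definedness that the paper dispatches in one line by noting that $\gf$ is constant on $G$-orbits (implicitly using that the $\bar{\pi}$-fibres are exactly the $G$-orbits).
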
 

\begin{proof}
	Since $\gf=\psi\circ \bar{\pi}$, then $\psi:\overline{G\backslash \cW}\to G\backslash \overline{\cW}$ must be the homomorphism whose map on edges is 
	\[\bar{\pi}(g)\mapsto \gf(g), \qquad  \text{for } g\in \overline{\cW}_1 .\] 
	This is well-defined because $\gf$ is constant on $G$-orbits. Checking that $\psi$ is a homomorphism, we have
	\[\bar{\pi}(g)\bar{\pi}(g')=\bar{\pi}(gg')\mapsto \gf(gg') =\gf(g)\gf(g')  .    \]
	\noindent Then $\psi$ is a covering by \autoref{prop:groupoidcover}. Notice that $\psi$ is injective on chambers since both $\gf$ and $\bar{\pi}$ identify chambers if and only if they are in the same $G$-orbit. Therefore $\psi$ is an isomorphism by \autoref{prop:injecvert}.
\end{proof}

\begin{cor}
	Let $G$ be a group which acts chamber-freely on a connected $2$-Weyl graph $\cW$, and let $\pi:\cW\to G\backslash \cW$ be the associated quotient map. Then $G$ is naturally $\Aut(\pi)$. 
\end{cor}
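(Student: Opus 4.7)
The plan is to reduce the statement to the groupoid case, \autoref{prop:quot}, by passing through the induced action on the fundamental groupoid and exploiting the compatibility established in \autoref{prop:compat}. First, since $G$ acts on $\cW$ by automorphisms which commute with $\pi$ by construction of the quotient, there is a natural homomorphism $\eta:G\map \Aut(\pi)$, and this is injective because the action of $G$ on chambers is free.

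Next, I would invoke the sentence (stated just before \autoref{prop:trans}) that $g\mapsto \bar g$ gives a natural isomorphism $\Aut(\pi)\map \Aut(\bar\pi)$; injectivity is \autoref{prop:faithful}, and surjectivity is \autoref{prop:genliftweyl}. Combining with $\eta$, we have a natural injection $G\hookrightarrow \Aut(\bar\pi)$ sending $g$ to the deck transformation induced on $\overline{\cW}$ by the $G$-action.

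Now apply \autoref{prop:compat}: the groupoid quotient map $\gf:\overline{\cW}\map G\backslash\overline{\cW}$ factors as $\gf=\psi\circ\bar\pi$ with $\psi$ a groupoid isomorphism. Post-composition by $\psi$ induces a group isomorphism $\Aut(\bar\pi)\map \Aut(\gf)$, $a\mapsto \psi\circ a\circ\psi^{-1}$ (well defined because $\psi$ is an isomorphism and conjugation by $\psi$ sends automorphisms commuting with $\bar\pi$ to automorphisms commuting with $\gf$). Under this identification the image of $G$ in $\Aut(\bar\pi)$ corresponds exactly to the image of $G$ in $\Aut(\gf)$ under the analogous groupoid construction, since the $G$-action on $\overline{\cW}$ is the one induced by the chamber-free action on $\cW$.

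Finally, \autoref{prop:quot} tells us that, because $\overline{\cW}$ is connected (as $\cW$ is) and $G$ acts freely on $\overline{\cW}$, the natural map $G\map \Aut(\gf)$ is an isomorphism. Chasing the identifications backward gives that $\eta:G\map \Aut(\pi)$ is surjective, hence an isomorphism, which is the desired naturality statement. The only point needing care is checking that the $G$-action on $\overline{\cW}$ used in \autoref{prop:quot} really is free: this follows because a deck transformation acting trivially on a chamber acts trivially on every gallery starting there, and hence trivially on $\overline{\cW}$, which by chamber-freeness of the action on $\cW$ forces the group element to be trivial. I do not anticipate a serious obstacle; the argument is essentially bookkeeping, with the content concentrated in \autoref{prop:compat} and \autoref{prop:quot}.
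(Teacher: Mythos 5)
Your proposal is correct and follows the paper's own argument: the paper likewise takes the natural embedding $G\hookrightarrow\Aut(\pi)$, composes with the isomorphism $\Aut(\pi)\map\Aut(\bar\pi)$, and concludes via \autoref{prop:compat} and \autoref{prop:quot} that this composite (and hence the embedding itself) is an isomorphism. Your write-up just makes explicit the conjugation-by-$\psi$ identification and the freeness of the induced action on $\overline{\cW}$, which the paper leaves implicit.
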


\begin{proof}
	We have a natural embedding $G\hookrightarrow \Aut(\pi)$. The composition of this embedding with the isomorphism $\Aut(\pi)\to\Aut(\bar{\pi})$ is an isomorphism by \autoref{prop:quot} and \autoref{prop:compat}. Therefore $G\hookrightarrow \Aut(\pi)$ is an isomorphism.
\end{proof}

\begin{prop} \label{prop:campatcoverings}
	Let $p:\wt{\cW}\to \cW$ be a regular covering, and let $\pi:\wt{\cW}\to \Aut(p)\backslash \wt{\cW}$ be the quotient associated to the action of $\Aut(p)$. Then there exists a unique isomorphism $\go: \Aut(p)\backslash \wt{\cW}\to \cW$ such that $p = \go \circ \pi$.
\end{prop}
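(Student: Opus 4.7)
The plan is to mimic the groupoid-level argument from the analogous statement about groupoid quotients earlier in the section, now lifting it to the Weyl graph level. Since we want $p=\go\circ\pi$, we have no choice about the definition of $\go$: on chambers it must send the orbit $[C]$ to $p(C)$, and on edges it must send the orbit $[i]$ to $p(i)$, for $C\in\wt{\cW}_0$ and $i\in\wt{\cW}_1$. Both assignments are well-defined because $p$ is $\Aut(p)$-invariant by definition of the deck transformation group, and the resulting pair $(\go_0,\go_1)$ clearly preserves extremities and types.

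Next I would verify that $\go$ is a morphism of Weyl data. Preservation of inverses and compositions (the two conditions of a morphism of generalized chamber systems) comes directly from the same properties for $p$ and from the definition of the panel groupoids of $\Aut(p)\backslash \wt{\cW}$ as the groupoid quotients $\Aut(p)\backslash \wt{\cW}_s$. For preservation of defining suites: a defining suite of $\Aut(p)\backslash\wt{\cW}$ is by construction of the form $\pi\circ\theta'$ for some defining suite $\theta'$ of $\wt{\cW}$, so $\go\circ\pi\circ\theta'=p\circ\theta'$, which is a suite of $\cW$ because $p$ is a morphism of Weyl data.

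To see that $\go$ is an isomorphism, I would appeal to \autoref{prop:charofisoaomgcovweyl}, which reduces the problem to showing that $\go$ is a covering which is injective on chambers. For injectivity on chambers: if $p(C)=p(D)$ for chambers $C,D\in\wt{\cW}$, then by regularity of $p$ and \autoref{prop:trans} there exists $\gga\in\Aut(p)$ with $\gga\cdot C=D$, hence $[C]=[D]$ in $\Aut(p)\backslash\wt{\cW}$. For the covering property, I would apply \autoref{prop:importantresult}: we have $p=\go\circ\pi$ with $p$ a covering and $\pi$ a covering (by the previous proposition), so $\go$ is a covering; alternatively one can directly observe that $\go$ is surjective-\'etale (each $\go_s$ is the unique homomorphism $\Aut(p)\backslash\wt{\cW}_s\to\cW_s$ factoring $p_s$, which is an isomorphism on stars by the corresponding groupoid fact) and that suites lift correctly by construction, then invoke \autoref{lemma:2weyletale}.

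The main obstacle, and really the only delicate point, is the verification that $\go$ is a covering in the Weyl-data sense, i.e.\ that galleries in $\Aut(p)\backslash \wt{\cW}$ which descend to defining suites of $\cW$ are already suites. I would handle this by first lifting such a gallery $\gb$ along the covering $\pi$ to a gallery $\tilde\gb$ in $\wt{\cW}$, observing that $p\circ\tilde\gb=\go\circ\gb$ is a suite of $\cW$, applying \autoref{corlift} to conclude that $\tilde\gb$ is a suite of $\wt{\cW}$, and then pushing forward by the morphism $\pi$ via \autoref{cor:decentofhomotopy} to conclude that $\gb=\pi\circ\tilde\gb$ is a suite of $\Aut(p)\backslash\wt{\cW}$. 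Uniqueness of $\go$ is immediate from the fact that $\pi$ is surjective on both chambers and edges, which forces the assignment above.
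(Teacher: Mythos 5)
Your proposal is correct and follows essentially the same route as the paper: the forced definition of $\go$ on orbits, well-definedness from $\Aut(p)$-invariance of $p$, the suite verification by lifting along $\pi$ and pushing through $p$, bijectivity on chambers via \autoref{prop:trans}, and the covering/isomorphism conclusion via \autoref{prop:importantresult}. The only cosmetic difference is that the paper exhibits explicit inverses $C\mapsto\pi(p^{-1}(C))$, $i\mapsto\pi(p^{-1}(i))$ rather than citing \autoref{prop:charofisoaomgcovweyl}, but both hinge on the same regularity fact.
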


\begin{proof}
	Since $p = \go \circ \pi$, then $\go:\Aut(p)\backslash\wt{\cW}\rightarrow  \cW$ must be the map such that for chambers, 
	\[\pi(C)\mapsto p(C), \qquad   \text{for } C\in \wt{\cW}_0\] 
	and for edges, 
	\[\pi(i)\mapsto p(i), \qquad \text{for } i\in \wt{\cW}_1.\] 
	This is well-defined since $p$ is constant on $\Aut(p)$-orbits. Checking that $\go$ is a graph morphism, for the extremities we have 
	\[          \iota (\pi(i))=\pi(\iota(i)) \mapsto  p(\iota(i))=\iota(p(i))  ,\qquad \tau (\pi(i))=\pi(\tau(i)) \mapsto  p(\tau(i))=\tau(p(i))         \]
	and for the type function we have
	\[          \upsilon (\pi(i))=\upsilon(i) =  \upsilon (p(i)) . \]
	Checking that $\go$ is a morphism of generalized chamber systems, we have
	\[             \pi(i^{-1})\mapsto p(i^{-1})=p(i)^{-1}             ,\qquad  \pi(i);\pi(i')=\pi(i;i')\mapsto   p(i;i')=p(i);p(i') .          \]
	Then $\go$ is a bijection on chambers since it has the inverse $C\mapsto \pi(p^{-1}(C))$, for $C\in \cW_0$, and $\go$ is a bijection on edges since it has the inverse $i\mapsto \pi(p^{-1}(i))$, for $i\in \cW_1$. Notice that these inverses are well-defined by \autoref{prop:trans}. Therefore $\go$ is an isomorphism of generalized chamber systems. 
	
	Let $\theta$ be a suite of $\Aut(p)\backslash\wt{\cW}$. Lift $\theta$ to a gallery $\tilde{\theta}$ in $\wt{\cW}$. Then $\tilde{\theta}$ is a suite since $\pi$ is a covering. Then $p \circ \tilde{\theta}=\theta$ is also a suite since $p$ is a morphism of Weyl data. Therefore $\go$ is an morphism of Weyl data. Then $\go$ is a covering by \autoref{prop:importantresult}, and so $\go$ is an isomorphism of Weyl data.   
\end{proof}

Let $G$ be a group which acts chamber-freely on a $2$-Weyl graph $\cW$. Let $R$ be a residue of $\cW$. Then the \emph{isotropy} $H_R$ of $R$ is the subgroup,
\[H_R=\{  g\in G:g\cdot R=R    \}\leq G.\]
We now show that the local covering at $R$ of the quotient map $\cW\to G\backslash \cW$ is naturally the quotient map $R\to H_R\backslash R$.

\begin{prop}\label{quoofres}
	Let $G$ be a group which acts chamber-freely on a $2$-Weyl graph $\cW$, and let $\pi: \cW\to G\backslash \cW$ be the associated quotient. Let $R$ be a residue of $\cW$, and let $\pi_R$ denote the local covering at $R$. Let $H=H_R\leq G$ be the isotropy of $R$. Let $\pi_H: R\to H\backslash R$ be the quotient map associated to the action of $H$ on $R$. Then there exists a unique isomorphism $\go$ such that $\pi_R=\go \circ \pi_H$. 
\end{prop}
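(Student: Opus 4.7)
The plan is to mimic the proof of Proposition \ref{prop:campatcoverings} almost verbatim, with Lemma \ref{lemma:automorphisms} replacing the free action of $\Aut(p)$ as the tool that guarantees injectivity on chambers. Let $R'$ denote the residue of $G\backslash \cW$ containing $\pi(R)$, so that by definition $\pi_R:R\map R'$ is the local covering at $R$. Since $G$ acts chamber-freely on $\cW$, the restricted action of $H$ on $R$ is also chamber-free, so $\pi_H:R\map H\backslash R$ is a covering. The condition $\pi_R=\go \circ \pi_H$ together with the surjectivity of $\pi_H$ on chambers and edges forces $\go$ to be the unique map defined on chambers by $\pi_H(C)\mapsto \pi_R(C)$ and on edges by $\pi_H(i)\mapsto \pi_R(i)$. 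This is well defined because $H\leq G$, so $\pi_R$ is constant on $H$-orbits.

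Next I would verify that $\go$ is a morphism of Weyl data. That $\go$ is a morphism of generalized chamber systems is a direct computation exactly as in Proposition \ref{prop:campatcoverings}: the extremities, type function, inverses, and detour-compositions all pass through because the corresponding identities hold for $\pi_R$. For the suite condition, any defining suite $\theta$ of $H\backslash R$ is by construction of the quotient Weyl structure the $\pi_H$-image of a defining suite $\tilde \theta$ of $R$; then $\go\circ \theta = \pi_R\circ \tilde\theta$ is a suite of $R'$ because $\pi_R$ (as a restriction of the covering $\pi$) is a morphism of Weyl data. With $\pi_R=\go\circ \pi_H$ and both $\pi_R$ and $\pi_H$ coverings, Proposition \ref{prop:importantresult} then gives that $\go$ is itself a covering.

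The main step, and the only one which is not purely formal bookkeeping, is to show $\go$ is injective on chambers; by Proposition \ref{prop:charofisoaomgcovweyl} this will finish the proof. Suppose $C,C'\in R$ satisfy $\pi_R(C)=\pi_R(C')$. Then there exists $g\in G$ with $g\cdot C=C'\in R$. Since $g\cdot C\in R$ and $C\in R$, Lemma \ref{lemma:automorphisms} forces $g\cdot R=R$, i.e.\ $g\in H$. Hence $\pi_H(C)=\pi_H(C')$, so $\go$ separates chambers and is therefore an isomorphism of $2$-Weyl graphs. The content of the proposition is concentrated in this last argument: the isotropy $H$ is precisely the subgroup of $G$ whose cosets detect when two chambers of $R$ have the same $\pi$-image, and Lemma \ref{lemma:automorphisms} is the tool that makes this rigorous.
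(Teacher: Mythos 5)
Your proof is correct and follows essentially the same route as the paper: force the definition of $\go$ from $\pi_R=\go\circ\pi_H$, check well-definedness via $H\leq G$, and reduce the isomorphism claim to the arguments of Proposition~\ref{prop:campatcoverings}. The only difference is that you spell out the injectivity-on-chambers step explicitly via Lemma~\ref{lemma:automorphisms}, which the paper compresses into the phrase ``by the definition of $H$''; this is a welcome clarification rather than a new approach.
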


\begin{proof}
	Since $\pi_R=\go \circ \pi_H$, then $\go$ must be the map such that for chambers, 
	\[\pi_H(C)\mapsto \pi_R(C),\qquad   \text{for } C\in R_0\]
	and for edges,
	\[       \pi_H(i)\mapsto \pi_R(i),\qquad  \text{for } i\in R_1.         \]
	This is well-defined because $\pi_R$ is constant on $H$-orbits. Also, $\go$ is bijective on chambers and edges by the definition of $H$. Then $\go$ is an isomorphism of Weyl data by the same arguments as those in the proof of \autoref{prop:campatcoverings}.   
\end{proof}

%%%%%%%%%%%%%%%%%%%%%%%%%%%%%%%%%%%%%%%%%%%%%%%%%%%%%%%%%%%%%%%%%%%%%%%%
%%%%%%%%%%%%%%%%%%%%%%%%%%%%%%%%%%%%%%%%%%%%%%%%%%%%%%%%%%%%%%%%%%%%%%%%
\section{Weyl Graphs} \label{sec:weylgraphs} 
%%%%%%%%%%%%%%%%%%%%%%%%%%%%%%%%%%%%%%%%%%%%%%%%%%%%%%%%%%%%%%%%%%%%%%%%
%%%%%%%%%%%%%%%%%%%%%%%%%%%%%%%%%%%%%%%%%%%%%%%%%%%%%%%%%%%%%%%%%%%%%%%%

%%%%%%%%%%%%%%%%%%%%%%%%%%%%%%%%%%%%%%%%%%%%%%%%%%%%%%%%%%%%%%%%%%%%%%%%
\subsection{The Weyl Properties}  \label{section:weylprop}  
%%%%%%%%%%%%%%%%%%%%%%%%%%%%%%%%%%%%%%%%%%%%%%%%%%%%%%%%%%%%%%%%%%%%%%%%

We have the following stronger versions of the $2$-Weyl properties from \autoref{sec:2weyl},\footnote{ the $2$-Weyl properties are just the restriction of the Weyl properties to $2$-residues}
\begin{enumerate}  [itemindent=0cm, leftmargin=1.9cm]
	\item [\textbf{(W)}]
	homotopic geodesics have the same $W$-length
	\item [\textbf{(C)}]
	homotopic geodesics are strictly homotopic
	\item [\textbf{(SH)}]
	strictly homotopic geodesics of the same type are equal
	\item [\textbf{(H)}]
	homotopic geodesics of the same type are equal.
\end{enumerate}
We call these four properties the \emph{Weyl properties}. Properties similar to these feature in \cite{tits81local}, \cite{ronanlectures}. %Notice that (SH) says that the $\gamma$-gallery map $F_{\gamma}$ is injective for all geodesics $\gamma$.   
We will show that for $2$-Weyl graphs, we have the following implications,
\[\text{(C)} \implies \text{(W)} \implies \text{(H)}  \implies \text{(SH)} \implies \text{(C)}.\] 
In fact, the only implication which is not straightforward is $\text{(SH)} \implies \text{(C)}$. Finally, we have a property which will characterize buildings amongst connected $2$-Weyl graphs,  
\begin{enumerate}  [itemindent=0cm, leftmargin=1.9cm]
	\item [\textbf{(B)}]
	geodesics with the same extremities have the same $W$-length.
\end{enumerate}
If Weyl data has property (B), then geodesics can be used to give a well-defined notion of `distance' between chambers, whose value is an element of $W$.  

%%%%%%%%%%%%%%%%%%%%%%%%%%%%%%%%%%%%%%%%%%%%%%%%%%%%%%%%%%%%%%%%%%%%%%%%
\subsection{Universal Cover of a $2$-Weyl Graph with Property (SH)} \label{constructingunicov}
%%%%%%%%%%%%%%%%%%%%%%%%%%%%%%%%%%%%%%%%%%%%%%%%%%%%%%%%%%%%%%%%%%%%%%%%

In this section, we show that the universal cover of a connected $2$-Weyl graph $\cW$ with property (SH) can be constructed by representing the chambers as strict homotopy classes of geodesics issuing from a fixed chamber $C\in \cW$. By using a method similar to the proof of \cite[Proposition 4.8]{ronanlectures}, this construction will be used to prove $\text{(SH)} \implies \text{(C)}$ in the setting of $2$-Weyl graphs. We begin with two important consequences of property (SH).

\begin{figure}[t] 
	\centering
	\includegraphics[scale=1]{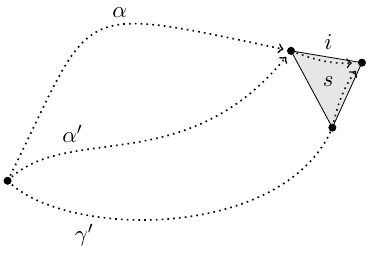}
	\caption{\autoref{endonsameedge}}
	\label{fig:SHLemma}
\end{figure} 

\begin{lem} \label{endonsameedge}
	Let $\cW$ be a pre-Weyl graph with property (SH), and let $\gamma$, $\gamma'$ be strictly homotopic geodesics in $\cW$ whose types end (begin) with the same letter $s\in S$. Then the final (initial) edges of $\gamma$ and $\gamma'$ must be equal.   
\end{lem}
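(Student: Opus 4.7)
The plan is to reduce the claim to a single application of property (SH) by strictly homotoping $\gga$ to a geodesic of the same type as $\gga'$ via a homotopy that does not touch the final edge. I will handle the case where $\gga_S$ and $\gga'_S$ both end with $s$; the initial case follows by applying the argument to the inverse galleries $\gga^{-1}$ and $\gga'^{-1}$, using that strict homotopy commutes with inversion (a short case split on the parity of $m_{s't'}$ shows that inverting $\rho(s',t')\sim\rho(t',s')$ yields another pair of homotopic maximal alternating geodesics, so every elementary strict homotopy inverts to an elementary strict homotopy).

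Write $\gga=\ga\cdot i$ and $\gga'=\ga'\cdot i'$, where $i$ and $i'$ are the final edges (both of type $s$). Since $\gga$ and $\gga'$ are geodesics, their types $\gga_S=gs$ and $\gga'_S=g's$ are reduced; the prefixes $g$ and $g'$ are then also reduced (any shortening of $g$ would shorten $gs$), so $\ga$ and $\ga'$ are themselves geodesics. Strict homotopy preserves $W$-length, so $w(gs)=w(g's)$; as $gs$ and $g's$ are both reduced, (MT$2$) yields $gs\simeq g's$ as words. Applying \autoref{cor:cancalation} to cancel the common suffix $s$ gives the word-level strict homotopy $g\simeq g'$.

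By \autoref{PW2}, this word-level strict homotopy lifts to a gallery-level strict homotopy from $\ga$ to a geodesic $\tilde\ga$ with $\tilde\ga_S=g'$. Strict homotopies preserve extremities, so $\tau(\tilde\ga)=\tau(\ga)=\iota(i)$, and the concatenation $\tilde\gga:=\tilde\ga\cdot i$ is a well-defined gallery. Each elementary strict homotopy $\beta\rho\beta'\to\beta\rho'\beta'$ in the sequence $\ga\simeq\tilde\ga$ extends verbatim to an elementary strict homotopy $\beta\rho(\beta' i)\to\beta\rho'(\beta' i)$ in the extended galleries, because the alternating subgallery being replaced lies entirely in the prefix and the trailing piece $\beta'$ can simply be enlarged by $i$. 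Hence $\gga\simeq\tilde\gga$, and transitivity gives $\tilde\gga\simeq\gga'$. Since $\tilde\gga_S=g's=\gga'_S$, property (SH) forces $\tilde\gga=\gga'$ as galleries, and comparing their final edges yields $i=i'$.

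The only nontrivial ingredient is \autoref{PW2}; everything else is formal word manipulation and reassembly of galleries. The main conceptual point to get right is the extension step: that the strict homotopy from $\ga$ to $\tilde\ga$ really does extend, without modification, to one from $\gga$ to $\tilde\gga$ via appending $i$. This is immediate from the structural form of an elementary strict homotopy of galleries, but it is what makes the whole argument work, and it is the reason the proof collapses after the cancellation step.
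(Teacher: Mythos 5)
Your proof is correct and follows essentially the same route as the paper's: cancel the common final letter at the word level via \autoref{cor:cancalation}, lift the resulting strict homotopy of words to a strict homotopy of the prefix galleries using (PW1), reattach the final edge, and invoke (SH). The only differences are presentational — you make explicit the extension step that the paper leaves implicit, and you treat the ``begins with'' case by inverting galleries rather than by the paper's ``symmetric argument.''
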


\begin{proof}
	Let $\gamma_S=fs$, and $\gamma'_S=f's$. Let $i$ be the final edge of $\gamma$, and let $\ga$ be the subgallery such that $\gamma=\alpha i$. In particular, $\alpha_S=f$. Since $\gamma\simeq \gamma'$ as galleries, we have $fs\simeq f's$ as words, and so $f\simeq f'$ by \autoref{cor:cancalation}. Therefore there exists a gallery $\alpha'$ with $\alpha'_S=f'$ and $\ga\simeq \alpha'$. By transitivity, $\alpha' i$ is strictly homotopic to $\gamma'$, but they also have the same type. Thus, $\alpha' i=\gamma'$ by (SH), and so the final edge of $\gamma'$ is also $i$. The case of beginning with the same letter follows by a symmetric argument. 
\end{proof}

\begin{lem}  \label{subgalleriesaresh}
	Let $\cW$ be a pre-Weyl graph with property (SH), and let $\gamma$, $\gamma'$, $\gamma''$ be geodesics in $\cW$. If $\gamma' \gamma \simeq \gamma'' \gamma$ (or $\gamma \gamma' \simeq \gamma  \gamma''$), then $\gamma' \simeq \gamma''$.
\end{lem}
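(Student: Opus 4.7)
The plan is to translate the problem to the word level via \autoref{cor:cancalation}, lift back to a gallery of the target type using the reformulation of (PW1) in \autoref{PW2}, and then invoke (SH) to pin things down. Starting from $\gga'\gga \simeq \gga''\gga$, the induced strict homotopy of types gives $\gga'_S\gga_S \sim \gga''_S\gga_S$ as words; since $\gga'$ and $\gga''$ are geodesics, both $\gga'_S$ and $\gga''_S$ are reduced, so \autoref{cor:cancalation} yields $\gga'_S \simeq \gga''_S$, making $\gga''_S$ another reduced decomposition of $\gga'_W$.

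Next, by the equivalent formulation of (PW1) in \autoref{PW2}, I will produce a geodesic $\gga'''$ with $\gga''' \simeq \gga'$ and $\gga'''_S = \gga''_S$. Strict homotopy preserves extremities, so the concatenation $\gga'''\gga$ makes sense and $\gga'''\gga \simeq \gga'\gga \simeq \gga''\gga$, while $(\gga'''\gga)_S = (\gga''\gga)_S$. Assuming $\gga'\gga$ is a geodesic (the principal case), \autoref{rem:stricthomopresgeo} makes $\gga'''\gga$ and $\gga''\gga$ geodesics of identical type, so (SH) forces $\gga'''\gga = \gga''\gga$; cancelling the common suffix $\gga$ at the level of edge sequences gives $\gga''' = \gga''$ as galleries, and hence $\gga' \simeq \gga''' = \gga''$. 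The alternative hypothesis $\gga\gga' \simeq \gga\gga''$ is handled identically, using the second half of \autoref{cor:cancalation} to cancel the common prefix and extending $\gga''' \simeq \gga'$ by $\gga$ on the left.

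The main obstacle is that (SH) as stated applies only to geodesics, so when $\gga'\gga$ is not a geodesic both $\gga'''\gga$ and $\gga''\gga$ become non-geodesics of the same type and (SH) cannot be applied directly. To close this gap I would exploit property (iii) of \autoref{prop:quiv2weyl} --- available under (SH) --- which makes each elementary strict homotopy of a gallery uniquely determined by the position of the swap; iterating uniqueness shows that any word-level strict homotopy lifts uniquely once its starting gallery is fixed, and therefore that strictly homotopic galleries of the same type must coincide in general, recovering the final step of the argument above without any geodesic hypothesis on $\gga'\gga$.
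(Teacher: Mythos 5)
Your core argument is the paper's: cancel at the word level with \autoref{cor:cancalation}, use \autoref{PW2} to produce a geodesic $\gga'''\simeq\gga'$ of type $\gga''_S$, concatenate with $\gga$, and invoke (SH) to force $\gga'''\gga=\gga''\gga$ and hence $\gga'''=\gga''$. That part is correct and matches the paper's proof step for step (the paper's $\hat{\gga}$ is your $\gga'''$).

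The problem is your final paragraph. You are right to worry that (SH) as stated only covers geodesics and that $\gga'\gga$ need not be one; the paper's own proof applies (SH) to the concatenations without comment, and in the one place the lemma is actually invoked (the well-definedness of $\tau([\gga],i)$ in \autoref{constructingunicov}) the concatenations are geodesics, so only the case you call ``principal'' is ever needed. But your proposed repair does not close the gap. Property (iii) of \autoref{prop:quiv2weyl} gives uniqueness of the lift of each \emph{individual} elementary strict homotopy once the starting gallery is fixed; it does not follow that a word-level strict homotopy from $f$ back to $f$ must lift to a \emph{closed} loop of galleries. ``Strictly homotopic galleries of the same type are equal'' is exactly the assertion that the monodromy of this lifting is trivial --- which is the content of (SH) itself in the reduced case and would require a separate argument for non-reduced types; unique path-lifting never by itself implies trivial monodromy (compare coverings). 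So either restrict your proof (and the statement) to the case where the concatenations are geodesics, which suffices for the paper's purposes, or supply a genuine proof of the non-reduced monodromy statement rather than deducing it from uniqueness of lifts.
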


\begin{proof}
	Suppose that $\gamma' \gamma \simeq \gamma'' \gamma$. Since $\gamma'_S \gamma_S \simeq \gamma''_S \gamma_S$, we have $\gamma'_S\simeq \gamma''_S$ by \autoref{cor:cancalation}. Therefore there exists a gallery $\hat{\gamma}$ with $\hat{\gamma}_S=\gamma''_S$ and $\hat{\gamma}\simeq \gamma'$. Then $\hat{\gamma} \gamma \simeq \gamma'' \gamma$, and so $\hat{\gamma}=\gamma''$ by (SH). Thus $\gamma' \simeq \hat{\gamma}=  \gamma''$ as required. The case where $\gamma \gamma' \simeq \gamma  \gamma''$ follows by a symmetric argument.
\end{proof}

\begin{center}
	\emph{For the remainder of this section, $\cW$ is a connected $2$-Weyl graph with property (SH), $C\in \cW$ is a fixed chamber, and for geodesics $\gamma$ in $\cW$ we let $[\gamma]$ denote the strict homotopy class $[\gamma]_{\simeq}$.}
\end{center}

We now describe a certain construction $\wt{\cW}^C\to \cW$ of the universal cover of $\cW$. Let the chambers of $\wt{\cW}^C$ be the strict homotopy classes of the geodesics which issue from $C$. Thus
\[     \wt{\cW}_0^C=\big\{ [\gamma] : \gamma\ \text{is geodesic in}\ \cW\ \text{such that}\ \iota(\gamma)=C  \big  \} .   \]
We denote the class of the trivial geodesic at $C$ by $\wt{C}$. We let the edges be pairs $([\gamma],i)$, where $i$ is an edge of $\cW$ which issues from the chamber at which $[\gamma]$ terminates. Thus
\[    
\wt{\cW}_1^C=\big\{ ([\gamma],i) :   [\gamma]\in \wt{\cW}_0^C,\   \iota(i)=\tau(\gamma)           \big  \} 
.\]   
We now define the extremities of $\wt{\cW}^C$. Fix an edge $([\gamma],i)$. We always have
\[\iota([\gamma],i)=[\gamma]  .\]
\noindent Let $s=\upsilon(i)$ and $w=\gamma_W$. If $ws>w$ (in the Bruhat order), then $\gamma i$ is a geodesic, and we put
\[\tau([\gamma ],i)=[\gamma i].  \]

\begin{figure}[t] 
	\centering
	\includegraphics[scale=1]{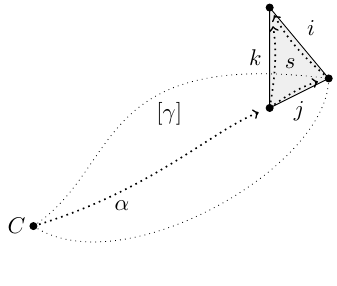}
	\caption{Defining $\tau([\gamma],i)$}
	\label{fig:SH}
\end{figure}

\noindent This is clearly well-defined. If $w s<w$, then $\gamma i$ is not a geodesic, however by the exchange condition, there exist geodesics in $[\gamma]$ whose types end with $s$. By \autoref{endonsameedge}, all these geodesics end with same $s$-labeled edge. Call this edge $j$, and let $k=j;i$. We may have $j=i^{-1}$, and $k=1$, in which case we let $k$ as a gallery denote the corresponding trivial gallery. Pick any geodesic $\alpha$ such that $\alpha j\in[\gamma]$ (see \autoref{fig:SH}). We put  
\[\tau([\gamma],i)=[\alpha k].  \]
\noindent To see that this is well-defined, suppose that $\alpha'$ is another geodesic used instead of $\alpha$. Then, by \autoref{subgalleriesaresh}, we have $\alpha\simeq \alpha'$, thus $[\alpha k]=[\alpha' k]$. Finally, we put $\upsilon ([\gamma],i)=\upsilon(i)$. This gives $\wt{\cW}^C$ the structure of a graph of type $M$.

We now show that $\wt{\cW}^C$ is naturally a chamber system. Let $s$, $w$, $j$, $k$, and $\ga$ be as before for a fixed edge $([\gamma],i)$. Firstly, notice that $\wt{\cW}^C$ does not have loops, for this would imply either:
\begin{itemize}
	\item
	$\gamma\simeq \gamma i$ in the case where $w s<w$, which is not possible since these galleries have different lengths
	\item
	$\gamma \simeq \alpha k$ in the case where $w s>w$, which also is not possible; if $k$ is trivial, then these galleries have different lengths, and if $k$ is not trivial, then $k=j$ by \autoref{endonsameedge}, which contradicts the fact that $k=j;i$. 
\end{itemize}
\noindent Secondly, edges of $\wt{\cW}^C$ with the same type and the same extremities are equal. To see this, suppose that $([\gamma],i)$ and $([\gamma],i')$ are edges which issue from $[\gamma]$ and terminate at the same chamber $D$, with $\upsilon(i)=\upsilon(i')$. If $ws>w$, then $[\gamma i]=[\gamma i']=D$. But by \autoref{endonsameedge}, we must have $i=i'$, and so $([\gamma],i)=([\gamma],i')$. If $ws<w$, then $[\ga k]=[\ga k']=D$, where $k'=j;i'$, and so:
\begin{itemize}
	\item
	if both $k$ and $k'$ are trivial, then $i=i'=j^{-1}$, and $([\gamma],i)=([\gamma],i')$
	\item
	if exactly one of $\{k,k'\}$ is trivial, then $\ga k$ cannot be strictly homotopic to $\ga k'$ since they have different lengths, a contradiction
	\item
	if neither are trivial, then $k=k'$ by \autoref{endonsameedge}, so again we must have $i=i'$, and so $([\gamma],i)=([\gamma],i')$.
\end{itemize}

We now claim that if $[\gamma] \xrightarrow{ ([\gamma],i)}  [\gamma']$ and $[\gamma'] \xrightarrow{ ([\gamma'],i')}  D$ are two edges with $\upsilon(i)=\upsilon(i')$ and $i \neq i'$, then $([\gamma],i;i')$ is an edge which terminates at $D$. If $ws>w$, then $[\gamma']=[\gamma i]$. Therefore $D=[\gamma i'']$, where $i''=i; i'$. The result follows. If $ws<w$, then $([\gamma],i;i')$ terminates at $[\ga i''']$, where $i'''=j;i;i'$. But $[\gamma']=[\ga i'']$, and so $D= [\ga i''']$. Therefore, for edges $i,i'\in \cW$ with $\upsilon(i)=\upsilon(i')$ and $i \neq i'$, we can define the composition,
\[([\gamma],i);([\gamma'],i')=([\gamma],i;i') .            \]
\noindent If $ws>w$, the inverse of an edge $([\gamma],i)$ is $([\gamma i],i^{-1})$. If $ws<w$, the inverse of an edge $([\gamma],i)$ is $([\ga k],i^{-1})$. This gives $\wt{\cW}^C$ the structure of a chamber system.

Define a map $p:\wt{\cW}^C\to \cW$ by putting $p([\gamma] )=\tau (\gamma )$ for chambers, and $p([\gamma],i)=i$ for edges. In particular we have $p(\wt{C})= C$. This map preserves extremities. For $\iota$, this follows from the fact that $\tau(\gamma)=\iota(i)$, and for $\tau$, this follows from the fact that $p([\gamma i])=\tau(i)$ if $ws>w$, and $p([\alpha k])=\tau(k)=\tau(i)$ if $ws<w$. Moreover, $p$ is an \'etale morphism of generalized chamber systems. This follows directly from the definition of composition in $\wt{\cW}^C$ and the definition of edges in $\wt{\cW}^C$. Then $p$ is surjective-\'etale since $\cW$ is connected. We now show that $p:\wt{\cW}^C\to \cW$ is also a pre-covering. First, we need the following observation.

\begin{lem} \label{lem:byconstruction}
	Let $\gb$ be a gallery in $\cW$ which issues from $C$, and let $\til{\gb}$ be the lifting of $\gb$ to a gallery which issues from $\wt{C}$. Then $\gb$ is homotopic to the geodesics in $\tau(\til{\gb})$.
\end{lem}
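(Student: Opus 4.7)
The plan is to induct on the length $n=|\gb|$. If $n=0$, then $\til{\gb}$ is the trivial gallery at $\wt{C}$, so $\tau(\til{\gb})=\wt{C}=[1_C]$, where $1_C$ is the trivial geodesic at $C$, and the trivial gallery $\gb$ is trivially homotopic to it. This handles the base case.

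For the inductive step, write $\gb=\gb' i$, where $\gb'$ has length $n-1$. Since $p$ is \'etale, the unique lift has the form $\til{\gb}=\til{\gb}'\til{i}$, where $\til{\gb}'$ is the lift of $\gb'$ issuing from $\wt{C}$, and $\til{i}=([\gga'],i)$ for any geodesic $\gga'$ with $[\gga']=\tau(\til{\gb}')$. By the inductive hypothesis, $\gb'\sim \gga'$. Put $s=\upsilon(i)$ and $w=\gga'_W$; we now split into the two cases from the definition of the terminal map.

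If $ws>w$, then $\tau(\til{\gb})=[\gga' i]$, and the concatenation $\gga' i$ is already a geodesic. Since homotopy is compatible with concatenation (see \autoref{remark:homotopyofsubgalleries}), we get $\gb=\gb' i\sim \gga' i$, as required. If $ws<w$, then by construction $\tau(\til{\gb})=[\ga k]$ where $\ga$ is a geodesic with $\ga j\in[\gga']$ for some $s$-labeled edge $j$, and $k=j;i$ (with $k$ interpreted as a trivial gallery in case $i=j^{-1}$). From $\gb'\sim \gga'$ and $\gga'\simeq \ga j$, we have $\gb'\sim \ga j$ by transitivity, and hence $\gb=\gb' i\sim \ga j i$. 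If $i\neq j^{-1}$, then $ji$ is a detour with $j;i=k$, so a $1$-elementary contraction of type (ii) gives $\ga j i\sim \ga k$. If $i=j^{-1}$, then $ji$ is a backtrack, so a $1$-elementary contraction of type (i) gives $\ga j i\sim \ga$, which matches $[\ga k]$ in this case. Either way, $\gb$ is homotopic to the representatives of $\tau(\til{\gb})$.

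The main conceptual point is the case split on the Bruhat order, which is what the construction of $\tau$ was designed to handle; beyond that, the proof is just the observation that the two possible behaviors of $\tau$ correspond exactly to the two kinds of $1$-elementary homotopies available in $\cW$. No deep input (such as property (SH) itself) is needed here, only the very definitions of the chambers, edges, and terminal map of $\wt{\cW}^C$.
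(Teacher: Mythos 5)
Your proof is correct and follows essentially the same route as the paper's: induction on the length of $\gb$, followed by the case analysis on whether $\gga'_W s$ goes up or down in the Bruhat order, matching each case to a $1$-elementary contraction. The paper compresses your two cases into the single observation that $\ga k\sim \gga i$, but the underlying argument is identical.
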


\begin{proof}
	We prove by induction on the length of $\gb$. If $\gb$ consists of one edge, then the result is trivial since $\tau(\til{\gb})$ is the set containing $\gb$. Suppose that the result holds for galleries of length $n$, and that $|\gb|=n+1$. Let $i$ be the last edge of $\gb$, and let $\gb'$ be the subgallery such that $\gb'i=\gb$. Let $\til{\gb}'$ be the lifting of $\gb'$ to a gallery which issues from $\wt{C}$, and let $\gamma\in \tau(\til{\gb}')$. Notice that $\gb'\sim \gamma$ by the induction hypothesis. Now, either $\tau(\til{\gb})=[\gamma i]$, or $\tau(\til{\gb})=[\ga k]$. In either case, $\gamma i$ is homotopic to the geodesics in $\tau(\til{\gb})$ since $\ga k\sim \gamma i$. Then $\gb=\gb'i\sim \gamma i$ by the induction hypothesis, and so $\gb$ is homotopic to the geodesics in $\tau(\til{\gb})$.
\end{proof}

\begin{lem}
	The surjective-\'etale morphism $p:\wt{\cW}^C\to \cW$ is a pre-covering.
\end{lem}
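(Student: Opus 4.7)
The plan is to show that any gallery $\til{\gb}$ in $\wt{\cW}^C$ whose image $p\circ \til{\gb}$ is a defining suite of $\cW$ is a cycle. Using \autoref{prop:charsuite}, write $p\circ \til{\gb} = \rho(s,t)\rho(t,s)^{-1}$; here the two halves satisfy $\rho(s,t) \simeq \rho(t,s)$ at the gallery level, since by definition an elementary strict homotopy of galleries is exactly the swap between a pair of homotopic maximal alternating geodesics of opposite types. Split $\til{\gb} = \til{\rho}_1 \til{\rho}_2$ accordingly, set $[\alpha] := \iota(\til{\gb})$, and let $\til{\sigma}$ denote the unique lift of $\rho(t,s)$ starting at $[\alpha]$, guaranteed by \autoref{prop:Etale=Unique Gallery Lifting} since $p$ is étale. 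It suffices to prove $\tau(\til{\rho}_1) = \tau(\til{\sigma})$: for then $\til{\rho}_2$, being the unique lift of $\rho(t,s)^{-1}$ from this common endpoint, must coincide with $\til{\sigma}^{-1}$, so $\til{\gb}$ terminates at $[\alpha]$.

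To establish $\tau(\til{\rho}_1) = \tau(\til{\sigma})$, I would fix a representative geodesic $\alpha \in [\alpha]$ and proceed by cases on the relative position of $\alpha_W$ in its $W_{\{s,t\}}$-coset. In the easy case, where $\alpha_W$ is the minimal-length representative of $\alpha_W \cdot W_{\{s,t\}}$, both $\alpha \cdot \rho(s,t)$ and $\alpha \cdot \rho(t,s)$ are themselves geodesics of the same $W$-length, each step of the two lifts falls into the extension branch of the $\tau$-definition, and the endpoints are the classes $[\alpha \cdot \rho(s,t)]$ and $[\alpha \cdot \rho(t,s)]$, which coincide because concatenation preserves strict homotopy and $\rho(s,t) \simeq \rho(t,s)$. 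In the general case, one or more steps fall into the shortcut branch, and the two lifts must be tracked in tandem through the $\tau$-iteration, using \autoref{endonsameedge} and \autoref{subgalleriesaresh} to identify the common terminal edges in the evolving intermediate classes and invoking the remark following \autoref{prop:WimpliesH}—that in a $2$-Weyl graph each maximal alternating geodesic sits in a unique suite—to keep the paired suite data $(\rho(s,t),\rho(t,s))$ aligned on both sides of the argument.

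The main obstacle is precisely this general (cancellation) case. A direct appeal to \autoref{lem:byconstruction} only shows that representatives of $\tau(\til{\rho}_1)$ and $\tau(\til{\sigma})$ are homotopic via the null-homotopy of $\theta$, i.e., $\sim$-related, whereas the identification of strict homotopy classes requires $\simeq$. Upgrading homotopy to strict homotopy for geodesics is exactly the content of property (C), which is not yet available: the present construction is the tool that will be used later to prove (SH) $\implies$ (C). The proof must therefore stay intrinsic to the $\tau$-iteration and exploit the rank-two rigidity of $2$-Weyl graphs together with (SH), rather than invoking any global homotopy-to-strict-homotopy upgrade.
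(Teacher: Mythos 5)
Your setup matches the paper's: write the descended defining suite as $\rho(s,t)\rho(t,s)^{-1}$, note $\rho(s,t)\simeq\rho(t,s)$ via a single elementary strict homotopy, and reduce to showing that the two halves of the lifted gallery terminate at the same chamber of $\wt{\cW}^C$. The easy case (where $\gga_W$ is the minimal length representative of its $W_{\{s,t\}}$-coset) you dispose of correctly. But the general case is exactly where your proof stops: you name the obstacle --- that one cannot upgrade $\sim$ to $\simeq$ globally because property (C) is not yet available --- and then only gesture at ``tracking the two lifts in tandem through the $\tau$-iteration.'' That is not an argument, and the lemmas you cite (\autoref{endonsameedge}, \autoref{subgalleriesaresh}) take strictly homotopic geodesics as input, so they cannot by themselves convert the homotopy $\gga\,\rho(s,t)\sim\gga\,\rho(t,s)$ into an identification of strict homotopy classes.

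The missing idea in the paper is a coset reduction that confines the entire cancellation to a single rank-$2$ residue, where the upgrade from homotopy to strict homotopy is available as property ($2$C). Writing $[\gga]$ for the initial chamber of the lifted gallery, $w=\gga_W$ and $J=\{s,t\}$, let $w'$ be the minimal length representative of the coset $wW_J$ and factor $\gga=\gga'\gga_J$ with $\gga'_W=w'$ and $(\gga_J)_W\in W_J$. The galleries $\gga_J\,\rho(s,t)$ and $\gga_J\,\rho(t,s)$ lie in the $J$-residue at $\tau(\gga')$; reducing them there (\autoref{allgalgeo}) yields homotopic geodesics $\rho'\sim\hat{\rho}'$ which are alternating, hence strictly homotopic by ($2$C), so $\gga'\rho'\simeq\gga'\hat{\rho}'$ (these concatenations are geodesics precisely because $w'$ is the minimal coset representative). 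Combined with \autoref{lem:byconstruction}, this identifies the terminal chambers of the two lifted halves as the common strict homotopy class $[\gga'\rho']=[\gga'\hat{\rho}']$. Without this reduction your general case has no mechanism for producing the required strict homotopy, so the proposal has a genuine gap at precisely the point you flag as the main obstacle.
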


\begin{proof}
	Let $\gb$ be a gallery in $\wt{\cW}^C$ such that $p \circ \gb$ is a suite of $\cW$. Let $\rho$ and $\hat{\rho}$ be a maximal $(s,t)$-geodesic and a maximal $(t,s)$-geodesic respectively such that $\gb=\rho^{-1} \hat{\rho}$. We now prove that $\tau(\rho)=\tau(\hat{\rho})$, which implies that $\gb$ is a cycle. Let $[\gamma]=\iota(\rho)=\iota(\hat{\rho})$, and put $w=\gamma_W$. Let $\rho_{\cW}=p\circ \rho$, and  $\hat{\rho}_{\cW}=p\circ \hat{\rho}$. 
	
	Let $J=\{s,t\}$, and let $w'$ be the unique representative of the coset $wW_J$ with minimal word length. Then $w=w' w_J$, for some $w_J\in W_J$. We may assume that $\gamma$ is of the form $\gamma=\gamma'\gamma_J$, where $\gamma'_W=w'$ and ${\gamma_J}_W=w_J$. Let $\rho'$ and $\hat{\rho}'$ be geodesics which are homotopic to $\gamma_J\rho_{\cW}$ and $\gamma_J\hat{\rho}_{\cW}$ respectively. It follows that $\gamma' \rho'$ and $\gamma'\hat{\rho}'$ are geodesics. Since $p \circ \gb$ is a suite of $\cW$, we have $ \rho_{\cW} \sim \hat{\rho}_{\cW}$, and so, 
	\[\rho'\sim \gamma_J\rho_{\cW}\sim \gamma_J\hat{\rho}_{\cW}\sim \hat{\rho}'.     \]
	Then $\rho' \simeq \hat{\rho}'$ since $\cW$ has property ($2$C), and so $\gamma' \rho'\simeq\gamma'\hat{\rho}'$. The geodesics in $\tau(\rho)$ are homotopic to $\gamma \rho_{\cW}$, and therefore to $\gamma' \rho'$, by \autoref{lem:byconstruction}. Similarly, the geodesics in $\tau(\hat{\rho})$ are homotopic to $\gamma \hat{\rho}_{\cW}$, and therefore to $\gamma'\hat{\rho}'$. Thus, $\gamma' \rho' \in \tau(\rho)$ and $\gamma'\hat{\rho}'\in\tau(\hat{\rho})$, and so $\tau(\rho)= \tau(\hat{\rho})$ as required. 
\end{proof}

Redefine $\wt{\cW}^C$ to be its completion with respect to $p$. Thus, we obtain a covering $p:\wt{\cW}^C\to \cW$.

\begin{thm} \label{thm:stricthomosunicov}
	The covering $p:\wt{\cW}^C\to \cW$ is the universal cover of $\cW$.
\end{thm}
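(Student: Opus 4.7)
The plan is to invoke Proposition \ref{prop:charuni}: since $p:\wt{\cW}^C\map \cW$ has already been shown to be a covering, it suffices to verify that $\wt{\cW}^C$ is connected and simply connected. Connectedness will fall out of a short induction on gallery length, and simple connectedness will follow by combining Lemma \ref{lem:byconstruction} with the homotopy lifting property of coverings (Lemma \ref{lemma:homotopylift}).

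First I would establish the following observation, which I will use for both connectedness and simple connectedness: for every geodesic $\gga$ in $\cW$ issuing from $C$, the unique lift $\wt{\gga}$ of $\gga$ starting at $\wt{C}$ terminates at the chamber $[\gga]\in \wt{\cW}^C$. The proof is by induction on $|\gga|$; the base case is trivial since $\wt{C}$ is by definition the class of the trivial gallery at $C$. For the inductive step, write $\gga = \gga' i$ with $s=\upsilon(i)$; since $\gga$ is geodesic, $\gga'_W s > \gga'_W$ in the Bruhat order, so by the very definition of $\tau$ in $\wt{\cW}^C$ we have $\tau([\gga'],i) = [\gga' i] = [\gga]$. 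Because every chamber of $\wt{\cW}^C$ is by construction a class $[\gga]$ of some geodesic issuing from $C$, this immediately yields that $\wt{\cW}^C$ is connected.

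For simple connectedness, since $\wt{\cW}^C$ is connected, it suffices to show that every cycle $\wt{\gb}$ in $\wt{\cW}^C$ based at $\wt{C}$ is null-homotopic. Put $\gb = p\circ \wt{\gb}$, so that by uniqueness of lifting $\wt{\gb}$ is exactly the lift of $\gb$ starting at $\wt{C}$. Lemma \ref{lem:byconstruction} says that $\gb$ is homotopic in $\cW$ to each geodesic in the terminal chamber $\tau(\wt{\gb}) = \wt{C}$; since $\wt{C}$ is the class of the trivial gallery at $C$, this means $\gb$ is null-homotopic in $\cW$. Applying Lemma \ref{lemma:homotopylift} to the covering $p$ and the homotopy from $\gb$ to the trivial gallery at $C$, we lift to a homotopy in $\wt{\cW}^C$ between the lift $\wt{\gb}$ and the lift of the trivial gallery, which is just the trivial gallery at $\wt{C}$. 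Hence $\wt{\gb}$ is null-homotopic. Proposition \ref{prop:charuni} then concludes the proof.

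I do not anticipate any substantial obstacle: all the heavy lifting has already been absorbed into the construction of $\wt{\cW}^C$ and into Lemma \ref{lem:byconstruction}, which encodes the essential compatibility between the concrete realization and strict-homotopy classes. The only minor care needed is in the inductive step of the auxiliary claim and in the identification of $\wt{\gb}$ and the trivial gallery at $\wt{C}$ as the unique lifts of $\gb$ and of the trivial gallery at $C$.
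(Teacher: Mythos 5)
Your proof is correct, but it takes a genuinely different route from the paper's on the central point. Both arguments handle connectedness identically, by lifting each geodesic $\gga$ from $\wt{C}$ and noting that the lift terminates at $[\gga]$; your explicit induction on $|\gga|$, using that $\gga'_W s>\gga'_W$ forces $\tau([\gga'],i)=[\gga' i]$, in fact makes precise a step the paper leaves implicit. For simple connectedness, however, the paper does not argue directly: it invokes the already-established existence of a universal cover $p':\wt{\cW}\map\cW$ (from the groupoid classification of coverings), constructs an explicit covering morphism $\mu:p\map p'$ given on chambers by $[\gga]\mapsto\tau(\tilde{\gga})$, and concludes that $\wt{\cW}^C$ inherits simple connectedness from $\wt{\cW}$. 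You instead verify simple connectedness intrinsically: a cycle $\wt{\gb}$ at $\wt{C}$ descends to a cycle $\gb$ which, by \autoref{lem:byconstruction}, is homotopic to the unique geodesic in the class $\wt{C}$, namely the trivial gallery at $C$; homotopy lifting (\autoref{lemma:homotopylift}) then shows $\wt{\gb}$ is null-homotopic, and \autoref{prop:charuni} finishes. Your route is more self-contained, since it does not presuppose that a universal cover exists. What the paper's route buys is the explicit morphism $\mu$, which is reused verbatim in the proof of \autoref{thm:eqivofweylprop} to establish (SH)$\implies$(C); with your argument one would construct $\mu$ there separately, which is easy once universality is known, via \autoref{cor:isoofunicovers}.
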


\begin{proof}
	First, notice that $p$ is a connected covering since for chambers $[\gamma],[\gamma'] \in \wt{\cW}^C$, the lifting of $\gamma$ and $\gamma'$ to galleries in $\wt{\cW}^C$ which issue from the class of the trivial gallery connect $[\gamma]$ and $[\gamma']$ respectively to the class of the trivial gallery.  
	
	Let $p':\wt{\cW}\to \cW$ be the universal cover of $\cW$. We now construct a morphism of coverings $\mu:p\to p'$, which proves that $\wt{\cW}^C$ is also simply connected. Pick a chamber $\wt{C}\in \wt{\cW}$ such that $p'(\wt{C})=C$. Let $\mu:\wt{\cW}^C\to \wt{\cW}$ be the morphism whose map on chambers is $[\gamma]\mapsto \tau(\tilde{\gamma})$, where $\tilde{\gamma}$ is the lifting of $\gamma$ to a gallery which issues from $\wt{C}$. To see that this is a morphism of chamber systems, let $[\gamma] \xrightarrow{ ([\gamma],i)}  [\gamma']$ be an edge of $\wt{\cW}^C$, and let $\tilde{i}$ be the unique edge of $\wt{\cW}$ such that $p'(\tilde{i})=i$ and $\iota(\tilde{i})=\tau({\tilde{\gamma}})$. Then $\tau(\tilde{i})=\tau({\tilde{\gamma}'})$, and so $[\gamma] \xrightarrow{ ([\gamma],i)}  [\gamma']$ is mapped to $\tilde{i}$. We have $p=p' \circ \mu$ since $p'(\tau(\tilde{\gamma}))=\tau(\gamma)$. Finally, to see that $\mu$ is a morphism of Weyl data, let $\theta$ be a suite of $\wt{\cW}^C$. Then $p\circ \theta$ must be a suite of $\wt{\cW}$ because $\wt{\cW}$ is simply connected. 
\end{proof}

%%%%%%%%%%%%%%%%%%%%%%%%%%%%%%%%%%%%%%%%%%%%%%%%%%%%%%%%%%%%%%%%%%%%%%%%
\subsection{Weyl Graphs and Buildings}
%%%%%%%%%%%%%%%%%%%%%%%%%%%%%%%%%%%%%%%%%%%%%%%%%%%%%%%%%%%%%%%%%%%%%%%%

We now define Weyl graphs and collect some of their basic properties. We show that the notion of a building is equivalent to that of the universal cover of a connected Weyl graph. We also prove the equivalence of the Weyl properties for $2$-Weyl graphs, which will give us several characterizations of Weyl graphs.  

Let $M$ be a Coxeter matrix. Let a \emph{Weyl graph} of type $M$ be a pre-Weyl graph of type $M$ which additionally satisfies the following Weyl property,
\begin{enumerate}  [itemindent=0cm, leftmargin=1.9cm]
	\item [\textbf{(W)}]
	homotopic geodesics have the same $W$-length.
\end{enumerate}
A Weyl graph $\cW$ is called a \emph{building} if $\cW$ is connected and simply connected. For all $s,t\in S$ with $s\neq t$ and $m_{st}<\infty$, every $(s,t)$-cycle in a building is a suite. Therefore the Weyl data of a building is just the simple Weyl data induced by its chamber system. Notice that $2$-Weyl graphs are exactly pre-Weyl graphs whose $2$-residues are Weyl graphs. Notice also that the residues of Weyl graphs are again Weyl graphs, thus Weyl graphs are $2$-Weyl. See \autoref{rem:weylisbuilding} for a demonstration that our chosen definition of a building is equivalent to a classical definition which appears in \cite{ronanlectures}. We will denote Weyl graphs which are buildings by $\Delta$. 

Property (W) allows us to define the \emph{$W$-length} ${[\gamma]}_W$ of a homotopy class of galleries $[\gamma]$ in a Weyl graph to be the $W$-length of the geodesics which it contains. Thus, ${[\gamma]}_W=\gamma_W$. The function
\[    
\overline{\cW}_1\to W
,\qquad 
[\gamma]\mapsto  {[\gamma]}_W 
\]
is called the \emph{metrization} of the fundamental groupoid $\overline{\cW}$ of $\cW$.   

\begin{prop} \label{prop:weylisbuilding}
	A connected pre-Weyl graph $\cW$ is a building if and only if $\cW$ satisfies property (B). 
\end{prop}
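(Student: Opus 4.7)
The plan is to handle the two implications separately; both directions are short once one exploits the right earlier results.

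For the forward direction, suppose $\cW$ is a building, i.e., a connected simply connected Weyl graph. I pick two geodesics $\gga$ and $\gga'$ with $\iota(\gga)=\iota(\gga')$ and $\tau(\gga)=\tau(\gga')$. Simple connectedness means $\overline{\cW}$ is a setoid, so these geodesics represent the same edge of $\overline{\cW}$; that is, $[\gga]=[\gga']$. Property (W), which holds since $\cW$ is a Weyl graph, then immediately gives $\gga_W=\gga'_W$. This is exactly (B).

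For the converse, I assume $\cW$ is a connected pre-Weyl graph satisfying (B) and I must establish both (W) and simple connectedness. The first is free: homotopic galleries share their initial and terminal chambers (this is part of what \autoref{proposition:binaryoperation} verifies), so (B) applied to homotopic geodesics yields (W). Thus $\cW$ is already a Weyl graph.

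It remains to prove simple connectedness, for which it suffices to show that every cycle in $\cW$ is null-homotopic. Given a cycle $\gb$ based at a chamber $C\in \cW$, \autoref{allgalgeo} produces a geodesic $\gga$ homotopic to $\gb$, and since homotopy preserves extremities, $\gga$ is again a cycle at $C$. The trivial gallery at $C$ is itself a geodesic with the same extremities and has $W$-length $1\in W$, so (B) forces $\gga_W=1$. Since the empty word is the unique reduced decomposition of $1$, the geodesic $\gga$ has length $0$ and is trivial, so $\gb$ is null-homotopic. Hence $\overline{\cW}$ is a setoid and $\cW$ is a building.

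There is no real obstacle; the only conceptual point worth flagging is that one does not need to secure (W) first and then attack simple connectedness by some separate argument. Property (B) is strong enough to do both jobs at once, because \autoref{allgalgeo} lets us replace any cycle by a geodesic cycle, and (B) then collapses the $W$-length of any such cycle to the identity and hence its underlying gallery to a point.
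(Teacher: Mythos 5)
Your proof is correct and follows essentially the same route as the paper: the forward direction uses simple connectedness to make geodesics with common extremities homotopic and then applies (W), while the converse reduces an arbitrary cycle to a geodesic cycle via \autoref{allgalgeo} and uses (B) against the trivial gallery to force $W$-length $1$ and hence triviality. You are slightly more careful than the paper in explicitly noting that (B) implies (W) (so that $\cW$ is genuinely a Weyl graph, not just pre-Weyl), a step the paper's proof leaves implicit.
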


\begin{proof}
	If a Weyl graph is simply connected, then all the geodesics with the same extremities will be homotopic, and therefore have the same $W$-length. Conversely, suppose that a connected pre-Weyl graph $\cW$ satisfies property (B). Let $\gamma$ be a geodesic in $\cW$ with $\iota(\gamma)=\tau(\gamma)=C$. 
	Then (B) implies that $\gamma_W=1$, since the trivial gallery at $C$ is a geodesic. Thus, $\gamma$ is trivial, and so there is only one homotopy class of loops at $C$. Therefore $\cW$ is simply connected, and so $\cW$ is a building.  
\end{proof}

\begin{remark} \label{rem:weylisbuilding}
	In \cite{ronanlectures}, a building of type $M$ is defined to be a weak chamber system $\cW$ of type $M$ which admits a function $\cW_0\times \cW_0\to W$ such that if $f$ is a reduced word over $S$, then $(C,D)\mapsto w(f)$ if and only if there exists a geodesic $\gamma$ which travels from $C$ to $D$ with $\gamma_S=f$. Clearly, the underlying chamber system of a connected and simply connected Weyl graph admits such a function. Conversely, given a building $\cW$ in the sense of \cite{ronanlectures}, the simple Weyl data associated to the chamber system $\cW$ will be a connected pre-Weyl graph with property (B), and so is a connected and simply connected Weyl graph by \autoref{prop:weylisbuilding}.
\end{remark}

We now show that the image of a Weyl graph under a covering is a Weyl graph, and that a covering of a Weyl graph is again a Weyl graph.

	\begin{lem}
		Let $\go:\cW'\to \cW$ be a morphism of Weyl data. If $\cW$ has property (W), then $\cW'$ has property (W).
	\end{lem}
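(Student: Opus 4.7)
The plan is to pull the hypothetical failure of (W) in $\cW'$ back through $\omega$ to a failure of (W) in $\cW$, using the fact that morphisms preserve types of edges and descend homotopies.

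Let $\gga'$ and $\gga''$ be homotopic geodesics in $\cW'$. First I would observe that $\omega\circ\gga'$ and $\omega\circ\gga''$ are galleries in $\cW$. Since $\omega$ preserves the type function on edges, we have $(\omega\circ\gga')_S=\gga'_S$ and $(\omega\circ\gga'')_S=\gga''_S$. Because $\gga'$ and $\gga''$ are geodesics, their types are reduced words, so $\omega\circ\gga'$ and $\omega\circ\gga''$ are again geodesics in $\cW$.

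Next, by \autoref{lemma:decentofhomotopy}, the homotopy $\gga'\sim\gga''$ in $\cW'$ descends to a homotopy $\omega\circ\gga'\sim\omega\circ\gga''$ in $\cW$. Applying property (W) of $\cW$ to these two homotopic geodesics yields $(\omega\circ\gga')_W=(\omega\circ\gga'')_W$. Since $W$-length is determined by the type of the gallery, and types are preserved by $\omega$, we get
\[\gga'_W=(\omega\circ\gga')_W=(\omega\circ\gga'')_W=\gga''_W\]
as required. There is no real obstacle here; the proof is just a routine application of the two facts that morphisms preserve types and descend homotopies.
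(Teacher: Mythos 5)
Your proof is correct and follows essentially the same route as the paper: push the two homotopic geodesics forward through $\go$, note that the images are homotopic geodesics in $\cW$ (since morphisms preserve types and descend homotopies via \autoref{lemma:decentofhomotopy}), and then apply property (W) in $\cW$ to conclude the $W$-lengths agree. You simply make explicit the justifications that the paper leaves implicit.
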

	
	\begin{proof}
		Let $\til{\gamma}$ and $\til{\gamma}'$ be two homotopic geodesics in $\cW'$. Let $\gamma=  \go  \circ   \til{\gamma}$ and $\gamma'=  \go  \circ   \til{\gamma}'$. Then $\gamma$ and $\gamma'$ are homotopic geodesics in $\cW$, and so $\til{\gamma}_W=\gamma_W=\gamma'_W=\til{\gamma}'_W$ since $\cW$ has property (W).  
	\end{proof}
	
	\begin{lem}
		Let $p:\wt{\cW}\to \cW$ be an covering of Weyl data. If $\wt{\cW}$ has property (W), then $\cW$ has property (W).
	\end{lem}
	
	\begin{proof}
		Let $\gamma$ and $\gamma'$ be two homotopic geodesics in $\cW$. Lift these geodesics to homotopic geodesics $\til{\gamma}$ and $\til{\gamma}'$ in $\wt{\cW}$. Then $\gamma_W=\til{\gamma}_W=\til{\gamma}'_W=\gamma'_W$ since $\wt{\cW}$ has property (W). 
	\end{proof} 
	
\begin{thm} \label{prop:presreflectW}
	Let $p:\wt{\cW}\to \cW$ be a covering of Weyl data. Then $\wt{\cW}$ is a Weyl graph if and only if $\cW$ is a Weyl graph.
\end{thm}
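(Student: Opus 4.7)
The plan is to observe that being a Weyl graph is, by definition, the conjunction of three properties: (PW$0$), (PW$1$), and (W). Each of these three properties has already been shown in the excerpt (or just above the statement) to be both preserved and reflected under coverings of Weyl data, so the theorem is essentially an assembly of these transfer lemmas.

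First I would unfold the definition: a Weyl graph is a pre-Weyl graph satisfying (W), and a pre-Weyl graph is Weyl data satisfying (PW$0$) and (PW$1$). Since $p:\wt{\cW}\map \cW$ is a covering of Weyl data, both $\wt{\cW}$ and $\cW$ are already Weyl data, so the only task is to transfer the three axioms. For (PW$0$), the earlier lemma stating that a surjective-\'etale morphism preserves and reflects (PW$0$) (which relied on the fact that each $p_s:\wt{\cW}_s\map \cW_s$ is a groupoid covering) immediately gives the equivalence in both directions. For (PW$1$), the two earlier lemmas give the forward direction (from $\wt{\cW}$ to $\cW$, using that $p$ is surjective-\'etale and that one can push a maximal alternating geodesic and its strict-homotopy partner down through $p$) and the reverse direction (from $\cW$ to $\wt{\cW}$, using unique gallery lifting from \autoref{prop:Etale=Unique Gallery Lifting} together with \autoref{lemma:homotopylift} to lift the connecting homotopy).

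Finally, the two lemmas stated immediately before the theorem handle (W) in both directions: any morphism of Weyl data pulls (W) back by pushing two homotopic geodesics down and invoking (W) on the target, and any covering pushes (W) forward by lifting the pair of homotopic geodesics, applying (W) upstairs, and then noting that $W$-length is preserved by $p$. Combining these three equivalences, $\wt{\cW}$ satisfies all of (PW$0$), (PW$1$), (W) if and only if $\cW$ does, which is exactly the desired biconditional.

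There is no real obstacle here; the proof is a one-line appeal to the three pairs of transfer lemmas already in place. The only thing to be slightly careful about is to cite the right direction for each lemma (surjective-\'etale suffices for some implications, whereas others genuinely use that $p$ lifts defining suites to suites, i.e., that $p$ is a covering and not merely surjective-\'etale). Once that bookkeeping is done, the statement follows immediately.
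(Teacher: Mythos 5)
Your proposal is correct and follows the paper's own proof essentially verbatim: the paper likewise reduces the theorem to the transfer of (PW$0$) and (PW$1$) established in \autoref{prop:presandreflect} together with the two lemmas on (W) stated immediately before the theorem. No further comment is needed.
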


\begin{proof}
	Recall from \autoref{prop:presandreflect} that coverings preserve and reflect the properties (PW$0$) and (PW$1$). For (W), we have the following two lemmas, whose proofs are copies of those from \autoref{prop:presandreflect}, but without the assumption that geodesics are alternating. The result then follows from the previous lemmas.
\end{proof}

\begin{cor}
	The universal cover of a connected Weyl graph is a building, and the image of a building under a covering is a connected Weyl graph.
\end{cor}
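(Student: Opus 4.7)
The plan is to deduce this corollary almost entirely from \autoref{prop:presreflectW} together with the definition of a building (a connected, simply connected Weyl graph) and the existence of universal covers of $2$-Weyl graphs established earlier. I would handle the two implications separately, noting first that every Weyl graph is in particular $2$-Weyl (as the excerpt remarks before defining the metrization), so the entire covering theory of $2$-Weyl graphs is applicable.

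For the first assertion, I would start with a connected Weyl graph $\cW$ and invoke the corollary after \autoref{prop:charuni} to produce a universal cover $p:\Delta \map \cW$. By construction $\Delta$ is connected and simply connected. Since $p$ is a covering of Weyl data and $\cW$ is a Weyl graph, \autoref{prop:presreflectW} gives that $\Delta$ is also a Weyl graph. Being connected, simply connected, and a Weyl graph, $\Delta$ is by definition a building.

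For the second assertion, I would take a covering $p:\Delta \map \cW$ with $\Delta$ a building, hence in particular a Weyl graph. Then \autoref{prop:presreflectW} applied in the other direction yields that $\cW$ is a Weyl graph. For connectedness of $\cW$, I would observe that every covering of Weyl data is surjective on chambers by definition, and the continuous (gallery-preserving) image of a connected object under a surjection is connected; since $\Delta$ is connected (buildings are connected by definition), so is $\cW$.

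The only potential subtlety, and hence the step I would check most carefully, is the bookkeeping around the existence statement: one needs to know that universal covers of connected Weyl graphs exist, which requires that Weyl graphs are $2$-Weyl so that the results of \autoref{sec:coverofweyl} apply. This is immediate from the definitions ((W) implies (2W) by restriction to alternating geodesics, and (PW0)/(PW1) are part of being a pre-Weyl graph), so no real obstacle arises; the corollary is essentially a repackaging of \autoref{prop:presreflectW} in the special case where one side is simply connected.
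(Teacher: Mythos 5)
Your proof is correct and is essentially the argument the paper intends: the corollary is stated without proof precisely because it follows immediately from \autoref{prop:presreflectW} together with the existence of universal covers of connected $2$-Weyl graphs (and the fact, noted in the paper, that Weyl graphs are $2$-Weyl). Your extra care about connectedness of the image and the applicability of the $2$-Weyl covering theory is sound and matches the paper's setup.
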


Therefore connected Weyl graphs are exactly the quotients of buildings by chamber-free group actions, and buildings are exactly the universal covers of connected Weyl graphs.

\begin{thm} \label{thm:eqivofweylprop}
	Let $\cW$ be a $2$-Weyl graph. If $\cW$ has any of the Weyl properties, then $\cW$ is a Weyl graph. Conversely, if $\cW$ is a Weyl graph, then $\cW$ satisfies all the Weyl properties. 
\end{thm}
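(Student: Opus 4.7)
The plan is to close a cycle of implications
\[\text{(C)} \implies \text{(W)} \implies \text{(H)} \implies \text{(SH)} \implies \text{(C)}\]
among the Weyl properties. Once this cycle is complete, each of (C), (H), (SH) forces (W) and hence makes $\cW$ a Weyl graph, while conversely a Weyl graph has (W) by definition and therefore, traversing the cycle from (W), satisfies all four. Since every Weyl property only involves galleries inside a single connected component of $\cW$, I may assume throughout that $\cW$ is connected.

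Two of the four links are essentially tautological. For (C) $\implies$ (W), strict homotopies preserve $W$-length (as recorded at \autoref{rem:stricthomopresgeo}), so strictly homotopic geodesics agree on $W$-length automatically; and (H) $\implies$ (SH) is immediate because strict homotopy is a special form of homotopy. For (W) $\implies$ (H), I would copy the final-edge induction used in \autoref{prop:WimpliesH}, now without restricting to alternating types. Concretely, given homotopic geodesics $\gga = \ga i$ and $\gga' = \ga' i'$ of common reduced type $fs$, if $i \neq i'$ the element $j = i'; i^{-1}$ of the panel groupoid $\cW_s$ is a nontrivial edge, and concatenating $i^{-1}$ on the right of the homotopy $\gga \sim \gga'$ produces two homotopic geodesics $\ga$ and $\ga' j$ of reduced types $f$ and $fs$ respectively, whose $W$-lengths differ, contradicting (W); hence $i = i'$, and the argument iterates down to the trivial gallery.

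The main obstacle is (SH) $\implies$ (C). For this I would use the explicit model of the universal cover built in \autoref{constructingunicov}: under hypothesis (SH), that section produces a simply connected covering $p \colon \wt{\cW}^C \map \cW$ (\autoref{thm:stricthomosunicov}) whose chambers are the strict homotopy classes of geodesics issuing from a chosen base chamber $C$, and whose step-by-step construction guarantees that the unique lift $\til\gga$ of any geodesic $\gga$ issuing from $C$ terminates at the chamber $[\gga]_{\simeq}$. Given homotopic geodesics $\gga \sim \gga'$, choose $C = \iota(\gga) = \iota(\gga')$ and lift both to galleries issuing from $\wt C$; by \autoref{lemma:homotopylift} the lifts remain homotopic in $\wt{\cW}^C$, and by simple connectivity they must share a common terminal chamber. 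Reading off that chamber on each side gives $[\gga]_{\simeq} = [\gga']_{\simeq}$, which is precisely $\gga \simeq \gga'$, so (C) holds. The real work of the argument is therefore already discharged by having the \autoref{constructingunicov} construction available under the mere hypothesis (SH); everything else amounts to bookkeeping.
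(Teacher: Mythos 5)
Your proposal is correct and follows essentially the same route as the paper: the identical cycle $\text{(C)} \implies \text{(W)} \implies \text{(H)} \implies \text{(SH)} \implies \text{(C)}$, the same final-edge induction (copied from \autoref{prop:WimpliesH}) for $\text{(W)} \implies \text{(H)}$, and the same reliance on the universal cover $\wt{\cW}^C$ of \autoref{constructingunicov} for the hard step $\text{(SH)} \implies \text{(C)}$. The only cosmetic differences are that you read off $[\gga]_{\simeq}$ directly as the terminus of the lift into $\wt{\cW}^C$, where the paper instead passes through the chamber-bijective morphism $\mu$ into the abstract universal cover, and that the lifted galleries sharing a terminal chamber already follows from homotopies preserving extremities rather than from simple connectivity.
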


\begin{proof}
The implications which we will prove are
\[
\text{(C)} \implies \text{(W)} \implies \text{(H)}  \implies \text{(SH)} \implies \text{(C)}
.\] 
The implications (H)$\implies$(SH) and (C)$\implies$ (W) are clear. The proof of (W)$\implies$ (H) is essentially the same as the proof of \autoref{prop:WimpliesH}, which was the rank $2$ case. (Let $\rho$ and $\rho'$ be homotopic geodesics in $\cW$ with the same type. Let $i$ be the last edge of $\rho$, and let $i'$ be the last edge of $\rho'$. Towards a contradiction, suppose that $i\neq i'$. Let $j=i'i^{-1}$. Let $\ga$ and $\ga'$ be the subgalleries such that $\ga i=\rho$ and $\ga 'i'=\rho'$. Then $\ga$ and $\ga'j$ are homotopic geodesics with different $W$-lengths, a contradiction. Thus, $i=i'$. Then $\ga$ and $\ga'$ are also homotopic geodesics, and so can apply the same argument to the penultimate edges of $\rho$ and $\rho'$. Therefore, by induction, we may conclude that $\rho=\rho'$.)
Finally, we prove (SH)$\implies$ (C) using the construction of \autoref{constructingunicov}. Let $\gamma$ and $\gamma'$ be homotopic geodesics in $\cW$. Put $C=\iota(\gamma)=\iota(\gamma')$, and let $p:\wt{\cW}_C\to \cW$ be the covering constructed in \autoref{constructingunicov}. The map $\mu$ constructed in the proof of \autoref{thm:stricthomosunicov} is bijective on chambers by \autoref{cor:isoofunicovers}. Since $\gamma\sim \gamma'$, we have $\mu([\gamma])=\mu([\gamma'])$, thus $[\gamma]=[\gamma']$, and so $\gamma\simeq \gamma'$.  
\end{proof}

\begin{cor} \label{prop:embedgroupoid}
	Let $\cW$ be a Weyl graph which is labeled over $S$. For each $J\subseteq S$, the groupoid homomorphism, 
	\[\overline{\varepsilon}_J: \overline{\cW}_J  \to   \overline{\cW}   \] 
	is an embedding. Thus, $\overline{\cW}_J$ is naturally a subgroupoid of $\overline{\cW}$.
\end{cor}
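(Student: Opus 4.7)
The map $\overline{\varepsilon}_J$ is injective on vertices because $\cW_J$ and $\cW$ share the same set of chambers, and a homomorphism of groupoids is an embedding iff it is injective on edges (recall groupoid edges with different endpoints are automatically distinct). So the task reduces to showing that if $\gb, \gb'$ are galleries in $\cW_J$ with common endpoints and $\gb \sim \gb'$ in $\cW$, then $\gb \sim \gb'$ in $\cW_J$. Any such pair must lie in a single $J$-residue $R$ of $\cW$, since the common endpoints belong to a unique $J$-residue; this $R$ is itself a Weyl graph (residues of Weyl graphs are Weyl graphs). Thus it suffices to prove: galleries in the Weyl graph $R$ that are $\cW$-homotopic are also $R$-homotopic.

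First I would apply \autoref{allgalgeo} inside $R$ to reduce $\gb, \gb'$ to $R$-geodesics $\gga, \gga'$, which remain geodesics of $\cW$ because reduced words over $J$ are reduced over $S$ by the embedding $W(M_J) \hookrightarrow W$ of \autoref{prop:subgroupofcox}. Since $\cW$ is a Weyl graph, property (C) (\autoref{thm:eqivofweylprop}) yields a strict homotopy $\gga \simeq \gga'$ in $\cW$, realized as a sequence of elementary strict homotopies. At the word level all intermediate gallery types remain over $J$: the endpoints $\gga_S, \gga'_S$ are words over $J$, and each swap $p(s,t) \leftrightarrow p(t,s)$ only involves letters $s,t$ already present in the subword, so stays within $J$.

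The key step is to show that each elementary strict homotopy in $\cW$ is realized inside $R$. Such a step replaces a subgallery $\rho(s,t)$, a maximal $(s,t)$-geodesic of $\cW$ sitting inside a gallery in $R$, by a maximal $(t,s)$-geodesic $\rho(t,s)$ with the same endpoints; both have letters from $\{s,t\}\subseteq J$ and endpoints in $R$, so both are maximal alternating geodesics of $R$ as well. By property (PW$1$) inside $R$, there exists a maximal $(t,s)$-geodesic $\rho_R$ in $R$ with $\rho_R \sim \rho(s,t)$ in $R$, hence in $\cW$. Then $\rho_R$ and $\rho(t,s)$ are maximal $(t,s)$-geodesics of $\cW$ sharing endpoints and both $\cW$-homotopic to $\rho(s,t)$, so property (H) of $\cW$ (again \autoref{thm:eqivofweylprop}) forces $\rho_R = \rho(t,s)$; therefore the elementary strict homotopy occurs inside $R$. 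Chaining these realizations yields $\gga \simeq \gga'$ in $R$, whence $\gb \sim \gb'$ in $R$. The main obstacle is this last identification step, which is why the full Weyl hypothesis (rather than merely $2$-Weyl) is needed: without (H) for higher-rank geodesics there is no a priori reason that the $R$-level and $\cW$-level intermediate geodesics should agree.
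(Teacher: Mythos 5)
Your proof is correct and follows essentially the same route as the paper's: reduce to homotopic geodesics contained in the $J$-residue $R$, invoke property (C) to upgrade the homotopy to a strict homotopy in $\cW$, and argue that this strict homotopy takes place within $R$. Your ``key step'' --- using (PW1) inside $R$ to produce $\rho_R$ and then property (H) of $\cW$ to force $\rho_R=\rho(t,s)$ --- supplies a careful justification of the one point the paper's terse proof leaves implicit, namely that each elementary strict homotopy of $\cW$ between alternating geodesics lying in $R$ is already an elementary strict homotopy of $\cW_J$.
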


\begin{proof}
	Let $\gamma$ and $\gamma'$ be homotopic geodesics of $\cW$ which are contained in a $J$-residue $R$. Then $\gamma$ and $\gamma'$ are strictly homotopic, and so a homotopy from $\gamma$ to $\gamma'$ can take place within $R$. Thus, $\gamma$ and $\gamma'$ are also homotopic in $\cW_J$.  
\end{proof}

%%%%%%%%%%%%%%%%%%%%%%%%%%%%%%%%%%%%%%%%%%%%%%%%%%%%%%%%%%%%%%%%%%%%%%%%
\subsection{Tits's Local-to-Global Results}
%%%%%%%%%%%%%%%%%%%%%%%%%%%%%%%%%%%%%%%%%%%%%%%%%%%%%%%%%%%%%%%%%%%%%%%%

We now show that $2$-Weyl graphs are often Weyl graphs. In particular, the only thing which stops a $2$-Weyl graph from being a Weyl graph are the spherical $3$-residues of type $C_3$ and $H_3$. Given the preservation and reflection of property (W) under covering maps, and the fact that buildings are exactly the universal covers of connected Weyl graphs, this is an easy consequence of results in \cite{tits81local}. However, we provide a more direct proof of the first part of Tits's result, along the lines of \cite[Theorem 4.9]{ronanlectures}.  

\begin{lem} \label{lem1}  \index{Weyl graph!local to global}
	Let $\cW$ be a $2$-Weyl graph. Then $\cW$ is a Weyl graph if (and only if) the spherical $3$-residues of $\cW$ are Weyl graphs. 
\end{lem}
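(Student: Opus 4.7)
The ``only if'' direction is immediate, since residues of Weyl graphs are themselves Weyl graphs. For ``if'', assume every spherical $3$-residue of $\cW$ is a Weyl graph. By the equivalence of the Weyl properties for $2$-Weyl graphs (\autoref{thm:eqivofweylprop}), it suffices to establish property (SH) for $\cW$: strictly homotopic geodesics of the same type must coincide. So fix such geodesics $\gamma \simeq \gamma'$ of common type $f$, and choose a sequence $\gamma = \gamma_0 \simeq \gamma_1 \simeq \cdots \simeq \gamma_n = \gamma'$ in which consecutive galleries differ by a single elementary strict homotopy. At the level of types this produces a closed loop $f_0, f_1, \dots, f_n = f_0$ in the graph whose vertices are the reduced decompositions of $w = \gamma_W$ and whose edges are elementary strict homotopies of words. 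The plan is to lift this word-loop to a gallery-loop based at $\iota(\gamma)$ and show it closes up, forcing $\gamma = \gamma'$.

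The key combinatorial input is a theorem of Tits, implicit in \cite{tits81local} and made explicit in \cite[Theorem 4.9]{ronanlectures}: the $2$-cells of this graph are generated by two types of local relations, namely (i) commutation of two elementary strict homotopies acting on disjoint subwords, and (ii) closed cycles taking place entirely within a rank $3$ spherical standard subgroup $W_J$. $2$-Cells of type (i) correspond to pairs of elementary strict homotopies of galleries whose moving maximal alternating subgalleries are disjoint, and hence commute at the gallery level by direct inspection. $2$-Cells of type (ii) correspond to cycles of elementary strict homotopies of galleries taking place inside a single $J$-residue $R$ of $\cW$; by hypothesis $R$ is a Weyl graph, and \autoref{thm:eqivofweylprop} applied to $R$ gives property (SH) for $R$, so each such cycle closes up at the gallery level.

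Decomposing the original word-loop into $2$-cells of types (i) and (ii), and lifting each $2$-cell individually to a closed loop of galleries via the preceding paragraph, the full lift must return to $\gamma_0$; this gives $\gamma = \gamma'$, so $\cW$ has (SH) and is a Weyl graph. The main obstacle is justifying the $2$-cell decomposition at the word level. If one prefers to avoid importing it as a black box, one can instead follow \cite[Theorem 4.9]{ronanlectures} directly by inducting on $|w|$: at each step one strips the final edge of the two geodesics, using \autoref{endonsameedge} applied inside an appropriate $2$- or $3$-residue to force these final edges to coincide before descending to the shorter pair. This hands-on variant localizes the full argument to spherical $3$-residues without invoking the combinatorial decomposition as such.
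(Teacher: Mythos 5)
Your argument is correct and follows the paper's proof essentially verbatim: both reduce to property (SH) via \autoref{thm:eqivofweylprop}, invoke Tits' decomposition of a self strict homotopy of a reduced word (Ronan, Theorem 2.17) into inessential pieces and pieces confined to a spherical rank-$3$ subset $J$, and dispatch the latter using the hypothesis that the $J$-residues are Weyl graphs. The only cosmetic differences are your ``$2$-cell'' phrasing of that word-level decomposition and your implicit reliance on the fact (\autoref{rem:wordhomouniquegalhomo}) that word-level elementary strict homotopies lift uniquely to gallery-level ones, which the paper uses when asserting that inessential self strict homotopies cannot change a gallery.
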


\begin{proof}
	We claim that $\cW$ has property (SH), which suffices by \autoref{thm:eqivofweylprop}. To see this, let $\gamma$ and $\gamma'$ be strictly homotopic geodesics of $\cW$ with the same type. A strict homotopy between $\gamma$ and $\gamma'$ induces a strict homotopy of words, which, by \cite[Theorem 2.17]{ronanlectures}, decomposes into self strict homotopies which are either inessential, or else only alter a subword over $J$, where $J$ is a $3$-element spherical subset of $S$. A strict homotopy of galleries which induces an inessential self strict homotopy of words cannot change a gallery. It then follows that $\gamma=\gamma'$ by the hypothesis on $\cW$. Thus, $\cW$ has property (SH), and so is a Weyl graph. 
\end{proof}

\begin{lem} \label{lem2} 
	If a $2$-Weyl graph $\cW$ of type $M$ and of rank $3$ is not a Weyl graph, then either $M = C_3$ or $M=H_3$.
\end{lem}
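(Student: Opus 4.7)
The plan is to reduce, via \autoref{lem1}, to the case where $S$ itself is spherical, classify the finite rank-$3$ Coxeter matrices, and then rule out every type other than $C_3$ and $H_3$ by verifying property (SH) directly and applying \autoref{thm:eqivofweylprop}. The last step will be the main obstacle.

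Applying \autoref{lem1} to $\cW$, any spherical $3$-residue of $\cW$ is automatically a connected component (since $|S| = 3$), so the hypothesis that $\cW$ is not a Weyl graph forces $S$ to be spherical, i.e.\ $W(M)$ to be finite. The finite rank-$3$ Coxeter matrices are the irreducible types $A_3$, $C_3$, $H_3$ together with the reducible types, for which there is a partition $S = S_1 \sqcup S_2$ with every generator in $S_1$ commuting with every generator in $S_2$ and each $|S_i| \leq 2$. It thus suffices to show that every $2$-Weyl graph of type $A_3$, and of every reducible rank-$3$ type, is already a Weyl graph.

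For this I verify property (SH) and apply \autoref{thm:eqivofweylprop}. Let $\gga \simeq \gga'$ be strictly homotopic geodesics with $\gga_S = \gga'_S$. The accompanying self strict homotopy of the word $\gga_S$ decomposes, as in the proof of \autoref{lem1} via \cite[Theorem 2.17]{ronanlectures}, into elementary self strict homotopies each of which is either inessential or supported on a spherical subset $J \subseteq S$ of size $3$; in our setting the only possibility for such a $J$ is $J = S$. The decisive combinatorial input is that for $M = A_3$ and for $M$ reducible of rank $3$, every essential rank-$3$ strict self-homotopy of a reduced word further decomposes into elementary strict homotopies supported on subsets of size at most $2$. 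For reducible $M$ this is immediate from the commuting block structure, and for $M = A_3$ it is a finite, well-known check on the graph of reduced decompositions of elements of the symmetric group $S_4$, whose $2$-cells are all hexagonal rank-$2$ braid moves.

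Having reduced to rank-$\leq 2$ moves at the word level, I lift to galleries using the $2$-Weyl hypothesis: by \autoref{rem:wordhomouniquegalhomo}, each such elementary word move lifts to a unique elementary strict homotopy of galleries, yielding a sequence of geodesics from $\gga$ to $\gga'$ all of type $\gga_S$. Applying property ($2$H) (\autoref{prop:WimpliesH}) to each consecutive pair forces them to coincide, hence $\gga = \gga'$. This establishes (SH), so $\cW$ is a Weyl graph by \autoref{thm:eqivofweylprop}, contradicting the hypothesis. The central difficulty is the combinatorial claim that for $A_3$ and the reducible rank-$3$ Coxeter matrices every essential rank-$3$ strict self-homotopy of a reduced word can be decomposed into rank-$\leq 2$ moves; it is precisely this property that fails for $C_3$ and $H_3$ and thereby singles them out.
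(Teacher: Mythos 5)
Your opening reductions are sound, and they take a genuinely different route from the paper (which simply passes to the universal cover via \autoref{prop:presreflectW} and quotes \cite[Theorem 1]{tits81local}): using \autoref{lem1} to force $W(M)$ to be finite, and then listing the finite rank-$3$ types, correctly reduces the lemma to the claim that every $2$-Weyl graph of type $A_3$, or of reducible rank-$3$ type, is already a Weyl graph. The problem is your ``decisive combinatorial input,'' which is where the entire content of the lemma lives, and which is not correct. As literally stated it is vacuous --- every elementary strict homotopy replaces $p(s,t)$ by $p(t,s)$ and so is already ``supported on a subset of size at most $2$'' --- so what you actually need is that every essential rank-$3$ self strict homotopy of a reduced word in these types is a product of conjugates of \emph{inessential} ones. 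That is false. Already for $M=A_1\times A_1\times A_1$ the cyclic self-homotopy $stu\to tsu\to tus\to uts\to ust\to sut\to stu$ is essential (no two of its moves act on disjoint subwords, so no inessential commuting square is available to fill it), and for $M=A_3$ the reduced decompositions of the longest element of $S_4$ fall into commutation classes arranged in an essential octagonal cycle of long braid moves. Essential rank-$3$ self-homotopies of words therefore exist for \emph{all} spherical rank-$3$ types, including $A_3$ and the reducible ones; they are exactly the self-homotopies that \cite[Theorem 2.17]{ronanlectures} isolates, and they do not disappear for word-combinatorial reasons. (The complex you gesture at --- the Cayley graph of $S_4$ with hexagonal and square $2$-cells from the rank-$2$ residues --- is the Coxeter complex, a $2$-sphere; its simple connectivity makes the \emph{thin} chamber system a building but says nothing about self-homotopies acting on an arbitrary $2$-Weyl graph of type $A_3$.)

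What actually separates $A_3$ and the reducible types from $C_3$ and $H_3$ is geometric, not word-theoretic: one must show that when such an essential self-homotopy of words is lifted to galleries (uniquely, by \autoref{rem:wordhomouniquegalhomo}), the lift returns to the gallery it started from in any $2$-Weyl graph of that type. For $C_3$ and $H_3$ this genuinely fails (witness the $A_7$-geometry of type $C_3$), while for $A_3$ it holds by an argument about the incidence geometry of the projective-plane and digon residues --- precisely the content of \cite[Theorem 1]{tits81local} that the paper's proof invokes after passing to the universal cover. As written, the step you yourself flag as the central difficulty is an unproved, and in fact false, assertion, so the argument does not close; to repair it you would either have to supply the geometric analysis of the $A_3$ and reducible cases or fall back on citing Tits, as the paper does.
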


\begin{proof}
	Since $\cW$ is not a Weyl graph, it must have a connected component which is not a Weyl graph. Let $\Delta$ be the universal cover of this connected component. By \autoref{prop:presreflectW}, $\Delta$ is a connected simply connected chamber system of type $M$ which is not a building. Therefore $M = C_3$ or $M=H_3$ by the discussion in \cite[Section 2.2]{tits81local} and \cite[Theorem 1]{tits81local}.
\end{proof}

Then, combining \autoref{lem1} and \autoref{lem2} gives the following.

\begin{thm}  \label{thm:noh3orc3}
	Let $\cW$ be a $2$-Weyl graph. Then $\cW$ is a Weyl graph if (and only if) the residues of $\cW$ of type $C_3$ and $H_3$ are Weyl graphs.
\end{thm}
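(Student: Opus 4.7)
The approach is to combine Lemmas \ref{lem1} and \ref{lem2}, which together already do essentially all the work. The parenthetical ``only if'' direction is immediate: residues of Weyl graphs are Weyl graphs, as explicitly noted in the paragraph following the definition of a Weyl graph; in particular the type $C_3$ and type $H_3$ residues of a Weyl graph are Weyl graphs.

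For the ``if'' direction, suppose $\cW$ is a $2$-Weyl graph whose residues of type $C_3$ and $H_3$ are Weyl graphs. By Lemma \ref{lem1}, it suffices to show that \emph{every} spherical $3$-residue of $\cW$ is a Weyl graph. Let $R$ be a spherical $3$-residue of type $M_J$ with $|J|=3$. First, $R$ is a $2$-Weyl graph of rank $3$: the axioms (PW$0$), (PW$1$), and ($2$W) only concern panels, maximal alternating geodesics, and homotopies thereof, all of which are supported by edges of types in $J$ and hence live within $R$; these properties are inherited from $\cW$.

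Now there are two cases. If $M_J = C_3$ or $M_J = H_3$, then $R$ is a Weyl graph by hypothesis. Otherwise $M_J \notin \{C_3,H_3\}$, and then the contrapositive of Lemma \ref{lem2} (applied to $R$, a rank $3$ $2$-Weyl graph) forces $R$ to be a Weyl graph. In either case $R$ is a Weyl graph, so Lemma \ref{lem1} applies and $\cW$ is a Weyl graph.

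I do not foresee any substantive obstacle; the statement is essentially the composition of the two preceding lemmas. The only minor technical point is confirming that the $2$-Weyl property is inherited by residues, which is routine since each of (PW$0$), (PW$1$), ($2$W) is determined by data local to the $J$-restriction (the panel groupoids indexed by $s\in J$, and the maximal $(s,t)$-geodesics for $\{s,t\}\subseteq J$, all of which coincide in $\cW_J$ and in $\cW$).
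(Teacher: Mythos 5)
Your proposal is correct and is essentially the paper's own argument: the paper proves this theorem simply by "combining" Lemma \ref{lem1} and Lemma \ref{lem2}, which is exactly the case split you spell out (residues of type $C_3$ or $H_3$ are Weyl by hypothesis, all other spherical $3$-residues are Weyl by the contrapositive of Lemma \ref{lem2}, and then Lemma \ref{lem1} applies). Your added remarks on the "only if" direction and on residues of $2$-Weyl graphs being $2$-Weyl just make explicit what the paper leaves implicit.
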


%%%%%%%%%%%%%%%%%%%%%%%%%%%%%%%%%%%%%%%%%%%%%%%%%%%%%%%%%%%%%%%%%%%%%%%%
\subsection{Weyl Graphs and Strict $W$-Groupoids} \label{sec:wgroup}
%%%%%%%%%%%%%%%%%%%%%%%%%%%%%%%%%%%%%%%%%%%%%%%%%%%%%%%%%%%%%%%%%%%%%%%%

Throughout this section, $\cW$ denotes a Weyl graph of type $M$, $\overline{\cW}$ denotes the fundamental groupoid of $\cW$, and $\gd:\overline{\cW}\to W$ denotes the metrization $[\gamma]\mapsto \gamma_W$. 

Let us recall the definition of $W$-groupoids from \cite{nor1}, which are the full stacky generalization of buildings. Given a groupoid $\cG$, a \emph{$W$-groupoid} $\delta:\cG\to W$ on $\cG$ is a function on the edges of $\cG$ into $W$ such that
\begin{enumerate}[itemindent=0cm, leftmargin=1.9cm]
\item [\textbf{(weak)}]
for all chambers (vertices) $C\in \cG$ and all $s\in S$, there exists a edge $g\in \cG$ with $\iota(g)=C$ and $\gd(g)=s$
\item [\textbf{(WG1)}]
for all identity edges $1\in \cG$, we have  $\gd(1)=1$ 
\item [\textbf{(WG2)}]
for all edges $g,h\in \cG$ such that $gh$ is defined in $\cG$, we have (in the Bruhat order)
\[
\delta(gh)\geq \delta(g)\delta(h)
\]
\item [\textbf{(WG3)}]
for all edges $g\in \cG$ and for each $s\in S$ such that $\gd(g)s<\gd(g)$, there exists an edge $h\in \cG$ with $\iota(h)=\iota(g)$ such that
\[
\gd(h^{-1}g)=s\qquad \text{and}\qquad \gd(h)=\gd(g)s
.\]
\end{enumerate} 
If in addition $\gd(g)=1$ implies that $g$ is a trivial edge, then $\delta:\cG\to W$ is called a \hbox{\emph{strict $W$-groupoid}}. We have the following analogies with classical metric spaces in the strict case and when $\cG$ is a setoid (i.e. buildings); (WG1) is identity of indiscernibles, (WG2) is triangle inequality, and (WG3) is `geodesic metric space'. We have the following local version of (WG2),
\begin{enumerate}  [itemindent=0cm, leftmargin=1.9cm] \index{$W$-groupoid!properties!local triangle inequality}
\item [\textbf{(WG2$'$)}]
For all edges $g,h\in \cG$ such that $g^{-1}h$ is defined in $\cG$ and $\gd(g^{-1}h)\in S$, putting $w=\gd(g)$ and $s=\gd(g^{-1}h)$, we have
\begin{enumerate}[label=(\roman*)]
\item
if $ws<w$, then $\gd(h)\in \{w, ws\}$
\item
if $ws>w$, then $\gd(h)=ws$.
\end{enumerate}
\end{enumerate} 
We now show that the metrization of the fundamental groupoid of a Weyl graph is a strict $W$-groupoid. 

	\begin{lem} 
	The metrization $\gd:\overline{\cW}\to W$ satisfies property (WG1). Moreover, for all classes $g\in \overline{\cW}$, if $\gd(g)=1$, then $g$ is trivial.  
\end{lem}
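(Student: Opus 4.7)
The plan is to reduce both claims to the existence of a geodesic representative in every homotopy class — which is \autoref{allgalgeo} — together with property (W), which makes the $W$-length of a class well-defined on $\overline{\cW}_1$.

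For (WG1), the expected compatibility condition from \cite{nor1} between $\gd$ and the panel structure of $\overline{\cW}$, I would take a geodesic representative $\hat\gamma$ of any $g = [\gamma]$ via \autoref{allgalgeo}. Panel edges $i\in\cW_s$ are identified with their classes $[i]\in\overline{\cW}$ (the convention adopted just after (PW$0$)--(PW$1$)), and the length-one gallery $i$ is already a geodesic of type $s$, so $\gd([i]) = s$. For the composition $g\cdot[i]$ with a panel edge of type $s$ in the case $|\gd(g) s| = |\gd(g)| + 1$, the Exchange condition implies that $\hat\gamma i$ is a geodesic of reduced type $\hat\gamma_S\, s$, whence $\gd(g\cdot[i]) = \gd(g)\,s$; in the opposite case $|\gd(g)s|=|\gd(g)|-1$, \autoref{prop:endins} supplies a reduced decomposition of $\gd(g)$ ending in $s$, and combining this with property (W) and the groupoid composition law in $\cW_s$ yields the expected value of $\gd(g\cdot[i])$. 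Whatever precise form (WG1) takes in \cite{nor1}, it decomposes into such checks, each of which is routine Coxeter combinatorics using the results of \autoref{sec:prelim}.

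For the \emph{moreover} statement, suppose $g = [\gamma]$ satisfies $\gd(g) = 1$ and pick a geodesic $\hat\gamma \in [\gamma]$ via \autoref{allgalgeo}. Then $w(\hat\gamma_S) = \hat\gamma_W = \gd(g) = 1$; but $\hat\gamma_S$ is reduced, and the empty word is the unique reduced decomposition of $1 \in W$, so $|\hat\gamma| = 0$. Hence $\hat\gamma$ is the trivial gallery at $\iota(\hat\gamma)$, and $g = [\hat\gamma]$ is the corresponding identity edge of $\overline{\cW}$, which is exactly what it means for $g$ to be trivial.

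The one point requiring care is matching the precise formulation of (WG1) from \cite{nor1} to the ingredients available here; however, since the proof uses only \autoref{allgalgeo}, property (W), the identification $\cW_s \subseteq \overline{\cW}$ from \autoref{section:preweyl}, and the Coxeter-theoretic results of \autoref{sec:prelim} — most notably (MT$1$), (MT$2$), the Exchange condition, and \autoref{prop:endins} — once this alignment is made the argument is essentially bookkeeping.
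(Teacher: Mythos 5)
Your argument for the \emph{moreover} clause is correct and is essentially the paper's: take a geodesic representative via \autoref{allgalgeo}, note its type is a reduced decomposition of $1\in W$, hence empty, hence the gallery and its class are trivial. The issue is with the first clause: you guessed that (WG1) is a compatibility condition between $\gd$ and composition with panel edges, and the bulk of your proposal is devoted to computing $\gd(g\cdot[i])$ in the two cases $|\gd(g)s|=|\gd(g)|\pm 1$. That computation is sound, but it is the content of property (WG2$'$), which the paper proves in the immediately following lemma (\autoref{lemma:localtriinequal}) by exactly the case analysis you sketch. The paper's own proof reveals that (WG1) is much weaker: it is simply the statement that trivial classes have $W$-length $1$, which holds because a trivial class $1\in\overline{\cW}$ contains a trivial gallery and the $W$-length of a class is the $W$-length of the geodesics it contains. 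So your proposal proves true statements, but the portion aimed at (WG1) establishes a different (stronger) property than the one named, and never explicitly records the one-line observation that actually constitutes (WG1). You flagged this risk yourself in your hedge about the precise formulation from \cite{nor1}; the alignment you anticipated needing is indeed where the proposal misses.
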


\begin{proof}
	The metrization has property (WG1) because every trivial class $1\in \overline{\cW}$ contains a geodesic which is a trivial gallery, and classes have the $W$-length of geodesic(s) they contain. For $g\in \overline{\cW}$, if $\gd(g)=1$ then $g$ contains a geodesic which is a trivial gallery, and so $g$ must be trivial.
\end{proof}

\begin{lem} \label{lemma:localtriinequal}
	The metrization $\gd:\overline{\cW}\to W$ satisfies property (WG2$'$).  
\end{lem}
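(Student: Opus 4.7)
The plan is to verify property (WG2$'$) by analyzing how the metrization $\gd:\overline{\cW}\map W$ behaves when a class $g\in \overline{\cW}$ is composed with an edge $[i]$ from a panel groupoid $\cW_s\leq \overline{\cW}$. Since, by \autoref{thm:eqivofweylprop}, the Weyl graph $\cW$ satisfies all the Weyl properties (in particular (W) and (SH)), every class in $\overline{\cW}$ has geodesic representatives that are unique up to strict homotopy, and reduced decompositions of the $W$-length of a class can be realized as the types of such geodesics via (PW$1$). The approach will be to pick a geodesic representative $\gga$ of $g$ with $\gga_W=w$ and to analyze the two cases given by the dichotomy of \autoref{prop:+-1}.

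First, in the increasing case $|ws|=|w|+1$: the concatenation $\gga i$ has type $\gga_S\, s$, which is reduced since it is obtained from a reduced decomposition of $w$ by appending the letter $s$ with $|ws|>|w|$. Hence $\gga i$ is itself a geodesic, and by property (W) applied to any other gallery in its homotopy class, we conclude that $\gd(g\cdot [i]) = (\gga i)_W = ws$. In this case the behaviour is rigid.

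Next, in the decreasing case $|ws|=|w|-1$: by \autoref{prop:endins} there is a reduced decomposition of $w$ ending in $s$, and so by (PW$1$) combined with property (SH) from \autoref{thm:eqivofweylprop} the class $g$ has a geodesic representative of the form $\gga'=\alpha j$ with $j$ an $s$-edge and $\alpha_W=ws$. Then in $\overline{\cW}$ we have $g\cdot [i]=[\alpha][j][i]$, and the composition $[j][i]$ takes place inside the panel groupoid $\cW_s$. If $i=j^{-1}$, a $1$-elementary contraction of type (i) gives $g\cdot [i]=[\alpha]$, so $\gd(g\cdot [i])=ws$. Otherwise $j;i$ is a nontrivial $s$-edge $k$, and $\alpha k$ has type $\alpha_S\,s$, a word of length $|ws|+1=|w|$ that is a decomposition of $w$ and therefore reduced; hence $\alpha k$ is a geodesic and $\gd(g\cdot [i])=w$.

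The main obstacle will be matching the precise formulation of (WG2$'$) used in \cite{nor1}, but the above dichotomy encodes exactly the standard Weyl-distance behaviour under panel composition, and once the cases are laid out, the verification reduces to applying the Weyl properties together with the Bruhat-order characterization in \autoref{remondicot}. A subsidiary point that needs care is independence of the chosen geodesic representative: any other choice in the same class is strictly homotopic by (SH), hence has the same type, so the computation of $(\gga i)_W$ or $(\alpha k)_W$ does not depend on representatives.
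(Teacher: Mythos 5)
Your proposal is correct and follows essentially the same route as the paper: split on the Bruhat dichotomy $ws>w$ versus $ws<w$, appending the $s$-edge directly in the first case, and in the second using a geodesic representative of $g$ ending in an $s$-edge (obtained via the exchange condition together with (PW$1$)) to compose $j;i$ in the panel groupoid, with the subcases $i=j^{-1}$ and $i\neq j^{-1}$ giving $ws$ and $w$ respectively. Your explicit appeal to (PW$1$)/(SH) to realize the reduced decomposition ending in $s$ as a geodesic representative is a slightly more careful rendering of the paper's one-line invocation of the exchange condition, but the argument is the same.
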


\begin{proof}
	Let $g=[\gamma]\in \overline{\cW}$ be a class with $\gd(g)=w$, and let $h=[\gamma']\in \overline{\cW}$ be a class with $\gd(g^{-1} h )=s$. Since $g^{-1}h$ has $W$-length a generator, $g^{-1}h$ will contain a single geodesic which is just an edge $i$ with $\upsilon(i)=s$. We have $h=g (g^{-1}h)=[\gamma][i]=[\gamma i]$. If $ws>w$, then $\gamma i$ is a geodesic with $W$-length $ws$, and so $\gd(h)=ws$. If $ws<w$, then by the exchange condition, there exists a geodesic $\hat{\gamma} \in g$ which ends with an edge $j$ of type $s$. Let $\gamma''$ be the subgallery such that $\hat{\gamma}=\gamma'' j$. Let $k=j;i$. Then $\gamma'' k$ is a geodesic with $\gamma'' k\sim \gamma'$ (if $i=j^{-1}$, then here $k$ denotes the corresponding trivial gallery, and so $\gamma'' k=\gamma''$). Therefore the $W$-length of $h$ is equal to the $W$-length of $\gamma'' k$. If $i=j^{-1}$, then $\gd(h)=ws$, and otherwise $\gd(h)=w$. Thus $\gd(h)\in \{w, ws \}$, as required.  
\end{proof}  

\begin{lem} 
	The metrization $\gd:\overline{\cW}\to W$ satisfies property (WG3).  
\end{lem}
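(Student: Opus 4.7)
The plan is to adapt the strategy used in the proofs of (WG1) and (WG2$'$), exploiting two foundational features of a Weyl graph: property (W), which guarantees that $\gd$ is well-defined on homotopy classes as the $W$-length of any geodesic representative; and property (PW0), which supplies an edge of any prescribed type issuing from any chamber. Since (WG3) is an axiom about how the metrization $\gd$ interacts with either extension by generators or composition of classes, my geodesic representatives will serve as the bridge from the combinatorics of $\overline{\cW}$ to the word combinatorics of $W$.

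First I would fix a geodesic representative $\gga$ of each relevant class; by \autoref{allgalgeo} such a representative exists, and by (W) any two have the same $W$-length, so the choice is harmless for computing $\gd$. Next, to realize a class $h$ achieving a prescribed $W$-displacement $v$ from $g=[\gga]$, I would choose a reduced decomposition of $v$, use (PW0) to build a gallery $\gga'$ of that type issuing from $\tau(\gga)$, and set $h = [\gga\gga']$. The key verification is then that the concatenated gallery has the prescribed $W$-length. When the lengths add (i.e.\ $\gga_S\gga'_S$ is already reduced), this is immediate since a reduced word is its own geodesic type and (W) pins down $\gd(h)$.

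In the delicate case, where $\gga_S\gga'_S$ is not reduced, I would reduce $\gga\gga'$ to a geodesic using \autoref{allgalgeo}; since $1$-elementary contractions preserve $W$-length and strict homotopies preserve type up to strict homotopy of words (see \autoref{remark:nopreservation}), the $W$-length of the resulting geodesic is exactly $w(\gga_S\gga'_S)$, which is the product of $\gd(g)$ and $v$ in $W$. The evolution of the type during this reduction is governed by the Exchange Condition, so tracking it reduces entirely to a computation inside $W$ via \autoref{prop:+-1} and \autoref{remondicot}.

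The main obstacle will be managing the interaction between a reduced expression and an appended edge when the Bruhat order descends rather than ascends — essentially the same issue that drove the case split in the proof of (WG2$'$). This will require the Exchange Condition together with the uniqueness of the final edge of a geodesic within a fixed strict homotopy class, supplied by \autoref{endonsameedge}, which applies since Weyl graphs have property (SH) by \autoref{thm:eqivofweylprop}. Once this case is handled at the level of witness classes, the remaining assertions of (WG3) reduce to routine manipulation of reduced decompositions in $W$.
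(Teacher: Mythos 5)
There is a genuine gap. Property (WG3), as the paper's proof reveals, is a \emph{factorization} statement: given $g\in\overline{\cW}$ and $s\in S$ with $\gd(g)s<\gd(g)$, one must produce $h$ with $\gd(h^{-1}g)=s$ and $\gd(h)=\gd(g)s$. The paper does this by truncation: by \autoref{prop:endins} and the surjectivity of the gallery map (\autoref{PW2}), the class $g$ contains a geodesic $\gga$ ending in an edge $i$ of type $s$; writing $\gga=\gga' i$ and setting $h=[\gga']$ finishes the proof in two lines. Your construction runs in the opposite direction: you build $h=[\gga\gga']$ by \emph{appending} a gallery $\gga'$ of prescribed reduced type chosen via (PW0). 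In the descending case this cannot work as stated, because an arbitrary edge $i$ of type $s$ issuing from $\tau(\gga)$ generally yields $\gd([\gga i])=\gd(g)$ rather than $\gd(g)s$: as the paper's proof of (WG2$'$) shows, one gets $\gd(g)s$ only when $i$ is the inverse of the terminal edge of a geodesic representative of $g$ whose type ends in $s$ --- and the existence of that representative is exactly the content you would need to import from the exchange condition. Your proposal never makes this choice of edge.

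The specific step that fails is the claim that ``the $W$-length of the resulting geodesic is exactly $w(\gga_S\gga'_S)$'' because ``$1$-elementary contractions preserve $W$-length.'' This is contradicted by \autoref{remark:nopreservation}: a $1$-elementary contraction of type (ii) (replacing a detour $jj'$ by the shortcut $j;j'$) does \emph{not} preserve $W$-length, and such contractions are precisely what occur when $\gga_S\gga'_S$ fails to be reduced but no backtrack is present. Consequently $\gd([\gga\gga'])$ is not determined by the types of $\gga$ and $\gga'$ alone; it depends on which edges were chosen, which is the whole difficulty. Your final paragraph does gesture at the right tools (the exchange condition and the behaviour of terminal edges), but the argument as assembled rests on the false length computation, whereas the correct and much shorter route is the paper's: realize a reduced decomposition of $\gd(g)$ ending in $s$ as the type of a geodesic in $g$, and delete its last edge.
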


\begin{proof}
	Let $g\in \overline{\cW}$ be a class, and let $s\in S$ such that $\gd(g)s<\gd(g)$. Then by the exchange condition, there exists a geodesic $\gamma\in g$ which ends with an edge $i$ of type $s$. Let $\gamma'$ be the subgallery such that $\gamma=\gamma' i$. Let $h=[\gamma']$. Notice that $g=h;i$. Then $\gd(h^{-1}g)=\gd(i)=s$, and so $\gd(h)=\gd(g)s$ as required. 
\end{proof}

\begin{thm}
Let $\cW$ be a Weyl graph of type $M$. Then the metrization, 
\[\gd:\overline{\cW}\to W
,\qquad
 [\gamma]\mapsto \gamma_W\] 
is a strict $W$-groupoid of type $M$. 
\end{thm}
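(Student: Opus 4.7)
The plan is purely assembly: the three preceding lemmas already verify each of the defining axioms of a strict $W$-groupoid from \cite{nor1}, so there is very little left to do. First I would note that the map $\gd:\overline{\cW}\map W$ is well-defined: property (W) of Weyl graphs is exactly the statement that homotopic geodesics share the same $W$-length, and since (by \autoref{allgalgeo}) every homotopy class of galleries in a pre-Weyl graph contains a geodesic, the $W$-length of a class is unambiguously defined. Thus $\gd$ is a well-defined function from $\overline{\cW}_1$ into $W$, making $(\overline{\cW},\gd)$ a candidate for a $W$-groupoid.

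Next I would check off the axioms one by one. Axiom (WG1) is immediate from the first of the three preceding lemmas. Axiom (WG2$'$) is the content of \autoref{lemma:localtriinequal}. Axiom (WG3) is the content of the third lemma. This shows $(\overline{\cW},\gd)$ is a $W$-groupoid of type $M$. Finally, the ``strict'' qualifier, which (as the introduction explains) corresponds to chamber-free actions, is characterized in \cite{nor1} by the condition that $\gd(g)=1$ forces $g$ to be trivial. This is precisely the ``moreover'' clause in the first lemma. Assembling everything, $\gd:\overline{\cW}\map W$ is a strict $W$-groupoid of type $M$.

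There is no real obstacle; the only point deserving any care is bookkeeping, namely making explicit which of the three properties corresponds to which axiom and separating off the strictness condition from (WG1) itself. The conceptual content that Weyl graphs yield strict $W$-groupoids was already distributed across the preceding lemmas, each of which exploited a specific feature of Weyl graphs (property (W) for well-definedness and for (WG1), the exchange condition for (WG2$'$) and (WG3)), and the theorem is simply the consolidation of those results.
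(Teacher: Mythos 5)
Your assembly is the same as the paper's: the theorem is indeed just the consolidation of the three preceding lemmas, and your bookkeeping of which lemma supplies which axiom --- (WG1) and strictness from the first, (WG2$'$) from the second, (WG3) from the third --- matches the intended reading. The observation that property (W) makes $\gd$ well defined on homotopy classes (every class contains a geodesic by \autoref{allgalgeo}, and homotopic geodesics share a $W$-length) is also correct and worth making explicit.

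However, there is one axiom you have skipped. A strict $W$-groupoid in \cite{nor1} is in particular required to be \emph{weak}, i.e.\ to satisfy the axiom the paper calls (WG0), and this is \emph{not} covered by any of the three preceding lemmas --- your claim that ``the three preceding lemmas already verify each of the defining axioms'' is therefore not quite right. The paper's proof devotes its only substantive sentence to exactly this point: the metrization is weak because $\cW$ satisfies (PW0) (no panel is the trivial groupoid with one edge), which guarantees that from every chamber and for every $s\in S$ there issues a class of $W$-length $s$. You should add this verification; without it the list of axioms is incomplete. The rest of your argument, including the separation of the strictness condition ($\gd(g)=1$ forces $g$ trivial) from (WG1) proper, is fine.
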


\begin{proof}
The metrization $\gd:\overline{\cW}\to W$ is weak by property (PW0) of Weyl graphs. The result then follows by the previous lemmas and \cite[Proposition 4.3]{nor1}.
\end{proof}

We now show that strict $W$-groupoids are equivalent to Weyl graphs. Let $\gd:\cG\to W$ be a strict $W$-groupoid of type $M$. For $s\in S$, let $\cG_s$ denote the subgroupoid of $\cG$ whose set of edges is 
\[(\cG_s)_1=\big \{ g\in \cG_1 : \gd(g)\in \{ 1,s \}  \big   \}.\] 
This defines a subgroupoid of $\cG$. We call $\cG_s$ the \emph{parabolic groupoid} of type $s$. Then $(\cG_s)_{s\in S}$ is a generalized chamber system of type $M$. We give $(\cG_s)_{s\in S}$ the structure of Weyl data, denoted $\cW(\cG)$, as follows; for each pair $(s,t) \in S\times S$ with $s \neq t$ and $m_{st}<\infty$, let an $(s,t)$-cycle $\theta$ of $(\cG_s)_{s\in S}$ be a defining suite of $\cW(\cG)$ if the sequence of edges of $\theta$ is a decomposition of a trivial class in $\cG$. We call $\cW(\cG)$ the Weyl data \emph{associated} to $\gd:\cG\to W$. 

\begin{lem}
	Let $\gd:\cG\to W$ be a strict $W$-groupoid, and let $\cW(\cG)$ be the Weyl data associated to $\gd:\cG\to W$. Then $\cW(\cG)$ is a Weyl graph.
\end{lem}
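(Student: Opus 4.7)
The plan is to verify the three defining axioms (PW0), (PW1), and (W) of a Weyl graph for $\cW(\cG)$ directly. The key observation underlying the whole argument is that, by the construction of $\cW(\cG)$, a cycle is a defining suite precisely when the composition in $\cG$ of its sequence of edges is trivial. I will exploit this to transfer rigidity from $\gd:\cG\map W$ to homotopy of galleries in $\cW(\cG)$.

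For (PW0), I would appeal to the weakness axiom of a strict $W$-groupoid (its counterpart to (PW0), indicated by the last lemma of the previous block): for every chamber $C\in\cG_{0}$ and every $s\in S$, a non-trivial edge of $\cG_{s}$ issues from $C$. This prevents any panel component of $\cW(\cG)$ from being the trivial one-edge groupoid. For (PW1), fix a chamber $C$ and build a maximal $(s,t)$-geodesic $\rho(s,t)$ and a maximal $(t,s)$-geodesic $\rho(t,s)$ from $C$ by alternately picking non-trivial panel edges, invoking (WG2$'$) and (WG3) to rule out collapses and guarantee that each step strictly lengthens the $\gd$-value. An induction then shows that the $\cG$-composition of each geodesic's edges equals the longest element $w_{0}\in W_{\{s,t\}}$, so both geodesics terminate at the same chamber. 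The cycle $\rho(s,t)\rho(t,s)^{-1}$ therefore has edge-composition $1$ in $\cG$, hence is a defining suite of $\cW(\cG)$; by \autoref{prop:charsuite} we conclude $\rho(s,t)\sim\rho(t,s)$.

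For (W), the cleanest route is to show that the $\cG$-composition of the sequence of edges of a gallery is a homotopy invariant, and that on a geodesic this composition recovers the $W$-length. Invariance is checked on each type of elementary homotopy: a backtrack $ii^{-1}$ composes to $1$ by the groupoid structure of $\cG_{\upsilon(i)}$, a detour shortcut uses the composition $j;j'$ in $\cG_{\upsilon(j)}$ directly, and defining suites compose to $1$ by construction. For a geodesic $\gga$ with edges $g_{1},\dots,g_{n}$ of types $s_{1},\dots,s_{n}$, an induction via (WG2$'$) gives $\gd(g_{1}\cdots g_{n})=s_{1}\cdots s_{n}$: the alternative $\gd(g_{1}\cdots g_{k})=\gd(g_{1}\cdots g_{k-1})$ would, via (WG3), produce an edge of type $s_{k}$ ending $g_{1}\cdots g_{k-1}$, which contradicts reducedness of $s_{1}\cdots s_{k}$. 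Combining the two claims, homotopic geodesics have the same $\cG$-composition and hence the same $W$-length.

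I expect the main obstacle to be the induction in the proof of (W), namely showing that the $\cG$-composition of a geodesic's edges is exactly its $W$-length. The quoted axiom (WG2$'$) only pins the outcome down to the two-element set $\{w,ws\}$, so selecting the correct value at each step forces one to carefully combine (WG2$'$), (WG3), and reducedness, and may require invoking additional or refined axioms of a strict $W$-groupoid from \cite{nor1} that are not spelled out in the present excerpt.
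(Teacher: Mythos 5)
Your treatment of (PW0) and (W) matches the paper's proof in substance: (PW0) comes from the non-degeneracy axiom of the $W$-groupoid, and (W) follows because the $\cG$-composition of a gallery's edge sequence is a homotopy invariant (trivial on backtracks and defining suites, compatible with detour shortcuts) and a geodesic's edge sequence is a reduced decomposition of its composition, so homotopic geodesics are reduced decompositions of the same class and share its $\gd$-value. The fact you flag as a possible obstacle --- that a geodesic of type $s_1\dots s_n$ composes to a class with $\gd$-value $s_1\cdots s_n$ --- is indeed needed, but it is exactly what ``reduced decomposition'' means for a $W$-groupoid; the paper invokes it without further comment, and your (WG2$'$)/(WG3) induction is a reasonable reconstruction of why it holds.

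The genuine gap is in (PW1). You construct a maximal $(s,t)$-geodesic $\rho(s,t)$ and a maximal $(t,s)$-geodesic $\rho(t,s)$ \emph{independently} from the same chamber $C$ and argue that both compositions have $\gd$-value the longest element $w_0\in W_{\{s,t\}}$. There are two problems. First, (PW1) quantifies over \emph{every} maximal $(s,t)$-geodesic, so you must start from an arbitrary one rather than one you build by hand. Second, and more seriously, having the same $\gd$-value does not make the two compositions equal as elements of $\cG$: a strict $W$-groupoid is the quotient of a building by a chamber-free action, and $\cG(C,-)$ typically contains many distinct edges with the same $\gd$-value (already in rank $1$, think of the loops in the quotient of a tree by a free group). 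So your two geodesics need not terminate at the same chamber, and even when they do, the cycle $\rho(s,t)\rho(t,s)^{-1}$ need not compose to $1$ in $\cG$, hence need not be a defining suite. The paper's route avoids this entirely: given an arbitrary maximal $(s,t)$-geodesic $\rho$, let $g\in\cG$ be the composition of its edges; the $W$-groupoid axioms guarantee that this \emph{same} class $g$ admits a reduced decomposition of type $p(t,s)$, and the gallery $\hat{\rho}$ built from that decomposition composes to $g$ as well, so $\rho\hat{\rho}^{-1}$ composes to $1$, is a defining suite, and $\rho\sim\hat{\rho}$ follows. You need that ``same class'' step, which your construction does not supply.
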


\begin{proof}
	The Weyl data $\cW(\cG)$ satisfies property (PW0) because $\gd:\cG\to W$ satisfies property (WG0). For property (PW1), let $\rho$ be a maximal $(s,t)$-geodesic of $\cW(\cG)$. Let $g\in \cG$ be the class for which the sequence of edges of $\rho$ is reduced decomposition. Then $g$ has a reduced decomposition $g=g_1\dots g_{m_{st}}$ of type $p(t,s)$. Let $\hat{\rho}$ be the gallery in $\cW(\cG)$ whose sequence of edges is $g_1,\dots, g_{m_{st}}$. Then $\rho \hat{\rho}^{-1}$ is a defining suite of $\cW(\cG)$, and so $\rho\sim \hat{\rho}$. For property (W), let $\gamma$ and $\gamma'$ be (strictly) homotopic geodesics of $\cW(\cG)$. Then the sequences of edges of $\gamma$ and $\gamma'$ are reduced decompositions of the same class $g\in \cW$. Therefore we have $\gamma_W=\gd(g)=\gamma'_W$. 
\end{proof}

The following shows that the association $\gd:\cG\to W \mapsto \cW(\cG)$ is the inverse of the metrization of the fundamental groupoid of a Weyl graph.

\begin{prop}
	Let $\cW$ be a Weyl graph. Then the Weyl data $\cW(\overline{\cW})$ associated to the metrization $\gd:\overline{\cW}\to W$ of fundamental groupoid of $\cW$ is naturally isomorphic to $\cW$. 
\end{prop}

\begin{proof}
	The parabolic groupoids of $\overline{\cW}$ are just the panel groupoids of $\cW$, and so we have an isomorphism of generalized chamber systems $\go:\cW(\overline{\cW})\to \cW$ such that $\go_s:\cW_s\to \cW_s$ is the identity for all $s\in S$. Let $\theta$ be a defining suite of $\cW(\overline{\cW})$. Then it follows directly from the definitions that $\go\circ \theta$ is a suite of $\cW$. Let $\go^{-1}$ be the inverse of $\go$, and let $\theta$ be a defining suite of $\cW$. Again, it follows directly from the definitions that $\go^{-1}\circ \theta$ is a suite of $\cW(\overline{\cW})$. Therefore $\go$ has an inverse as a morphism of Weyl data, and so is an isomorphism of Weyl data. 
\end{proof}

Finally, we show that the metrization of the fundamental groupoid of a Weyl graph is the inverse of the association $\gd:\cG\to W \mapsto \cW(\cG)$.

\begin{prop}
	Let $\gd:\cG\to W$ be a strict $W$-groupoid, and let $\gd':\overline{\cW}(\cG)\to W$ be the metrization of the fundamental groupoid of $\cW(\cG)$. Then there exists an isomorphism $\psi:\cG\to\overline{\cW}(\cG)$ such that $\gd=\gd' \circ \psi$.
\end{prop}

\begin{proof}
	Let $\psi:\overline{\cW}(\cG)\to  \cG $ be the homomorphism which sends the class of a gallery $[\gb]\in \overline{\cW}(\cG)$ to the class of $\cG$ for which the sequence of edges of $\gb$ is a decomposition. This is well-defined because $1$-elementary homotopies respect the composition of classes in the parabolic groupoids of $\cG$, and the suites of $\overline{\cW}(\cG)$ consist of sequences of edges whose composition in $\cG$ is trivial. To see that $\psi$ is a homomorphism, recall that $[\gb][\gb']=[\gb\gb']$, and if the sequence of edges of $\gb$ is a decomposition for $g$, and the sequence of edges of $\gb'$ is a decomposition for $g'$, then the sequence of edges of $\gb\gb'$ is clearly a decomposition for $gg'$. To see that $\psi$ is an isomorphism, consider its inverse, which sends a class $g\in \cG$ to the class of a geodesic whose sequence of edges is a reduced decomposition of $g$. This is well-defined because there will exist strict homotopies between geodesics obtained from reduced decompositions of $g$. Therefore $\psi$ is bijective, and so $\psi$ is an isomorphism. Finally, the fact that $\gd=\gd' \circ \psi$ follows from the properties of $W$-groupoids.
\end{proof}

%%%%%%%%%%%%%%%%%%%%%%%%%%%%%%%%%%%%%%%%%%%%%%%%%%%%%%%%%%%%%%%%%%%%%%%%
%%%%%%%%%%%%%%%%%%%%%%%%%%%%%%%%%%%%%%%%%%%%%%%%%%%%%%%%%%%%%%%%%%%%%%%%
\section{The Fundamental Group of a Weyl Graph} \label{sec:groupres}
%%%%%%%%%%%%%%%%%%%%%%%%%%%%%%%%%%%%%%%%%%%%%%%%%%%%%%%%%%%%%%%%%%%%%%%%
%%%%%%%%%%%%%%%%%%%%%%%%%%%%%%%%%%%%%%%%%%%%%%%%%%%%%%%%%%%%%%%%%%%%%%%%

We now describe a method for obtaining a group presentation of the fundamental group of a connected Weyl graph. 

%%%%%%%%%%%%%%%%%%%%%%%%%%%%%%%%%%%%%%%%%%%%%%%%%%%%%%%%%%%%%%%%%%%%%%%%
\subsection{Generating Sets}
%%%%%%%%%%%%%%%%%%%%%%%%%%%%%%%%%%%%%%%%%%%%%%%%%%%%%%%%%%%%%%%%%%%%%%%%

Let $M$ be a Coxeter matrix on $S$, and let $\cW$ be a connected Weyl graph of type $M$. For each $s\in S$, pick a base chamber in each connected component of $\cW_s$. Let $B_s=\{B_{s,0},B_{s,1},\dots \}$ be the set of base chambers in $\cW_s$, and let $\cB_s$ be the set of non-trivial loops in $\cW_s$ at the base chambers, thus
\[
\cB_s:=  \big\{ i\in \cW_1: \upsilon(i)=s,\ \iota(i)=\tau(i)\in B_s \big\}
.\]
For each $s\in S$ and for each chamber $C\in \cW_s$ with $C\notin B_s$, pick an edge $i_C\in \cW_s$ with $\iota(i_C)\in B_s$ and $\tau(i_C)=C$. Let
\[
\cI^{+}_s:=\{ i_C : C\in \cW_0\setminus B_s   \}, \qquad \cI^{-}_s:=\{ i_C^{-1} : C\in \cW_0\setminus B_s   \},\qquad  \cI_s:= \cI^{+}_s \sqcup \cI^{-}_s
.\] 
\noindent Also put $\cS_s=\cB_s \sqcup  \cI_s$, and
\[        
\cB:=\bigsqcup_{s\in S} \cB_s ,\qquad              \cI:=\bigsqcup_{s\in S} \cI_s ,\qquad  \cI^{+}:=\bigsqcup_{s\in S} \cI^{+}_s ,\qquad \cI^{-}:=\bigsqcup_{s\in S} \cI^{-}_s ,\qquad      \cS:=  \bigsqcup_{s\in S} \cS_s= \cB\sqcup \cI
.\]
We call $\cS$ a \emph{generating set} of $\cW$. Notice that $\cS_s$ generates $\cW_s$, and $\cS$ generates $\overline{\cW}$ (as groupoids). For each edge $i\in \cW$, putting $s=\upsilon(i)$, then there exists a unique $3$-tuple $(i^-, i_{\cB} ,i^+)$ of edges of $\cW_s$ such that the following hold,
\begin{enumerate}[label=(\roman*)]
	\item
	$i=i^- ; i_{\cB} ; i^+$
	\item
	either $i^- \in  \cI_s^{-}$ or $i^-$ is trivial
	\item
	either $i_{\cB} \in  \cB_s$ or $i_{\cB}$ is trivial
	\item
	either $i^+ \in  \cI_s^{+}$ or $i^+$ is trivial. 
\end{enumerate}
The \emph{expression} of $i$ with respect to $\cS$ is the word obtained from the sequence $i^-, i_{\cB} ,i^+$ by deleting any trivial edges.

%%%%%%%%%%%%%%%%%%%%%%%%%%%%%%%%%%%%%%%%%%%%%%%%%%%%%%%%%%%%%%%%%%%%%%%%
\subsection{The Universal Group}
%%%%%%%%%%%%%%%%%%%%%%%%%%%%%%%%%%%%%%%%%%%%%%%%%%%%%%%%%%%%%%%%%%%%%%%%

Let $\cW$ be a connected Weyl graph and let $\cS$ be a generating set of $\cW$. Let $r=  i_1,\dots,i_n     $ be a sequence of edges of $\cW$. Let the \emph{$\cS$-sequence} of $r$ be the sequence obtained from $r$ by replacing each edge $i_k$, $1 \leq k\leq n$, with the expression of $i_k$ with respect to $\cS$. Notice that if $r$ is the sequence of edges of a gallery $\gb$, then the $\cS$-sequence of $r$ is the sequence of edges of a gallery which can be obtained from $\gb$ by a composition of expansions (defined in \autoref{sec:homoofgal}). Let $\cR$ denote the set of sequences of edges of $\cW$ which are obtained as the $\cS$-sequences of the defining suites of $\cW$. The elements of $\cR$ are called (defining) \emph{$\cS$-suites}, and we think of them as words over $\cS$.

The \emph{universal group} $\FG(\cW)$ of $\cW$ with respect to $\cS$ is the group generated by $\cS$, subject to the relations of the local groups at each base chamber, and $\cR$ (treated as a set of relators). Explicitly, let $\cR_{s,k}$ be a set of defining relations for the local group of $\cW_s$ at $B_{s,k}\in B_s$, and let $\cR_{\cB}=\bigsqcup_{s,k} \cR_{s,k}$. Then $\FG(\cW)$ is the marked group given by
\[       
\FG(\cW):=\big \la  \cS\  |\      \cR,\ \cR_{\cB},\ ij=1 : i\in \cI^+,\ j=i^{-1}\in \cI^-    \big  \ra    
.\] 
These are the relations required to make the natural projection 
\[
\pi:\overline{\cW}\to \FG(\cW)
,\qquad  
i\mapsto i \qquad (\text{for} \ i\in \cS)
\] 
a well-defined homomorphism. To see that $\pi$ is well-defined, let $[\gb]\in \overline{\cW}$. Let $r=i_1,\dots,i_n$ be the sequence of edges of $\gb$. Then $\pi([\gb])$ is the product in $\FG(\cW)$ of the $\cS$-sequence of $r$. Let $\hat{\gb}$ be a gallery obtained from $\gb$ by a $1$-elementary homotopy. Then $\pi([\hat{\gb}])=\pi([\gb])$ by the inclusion of the relations $\cR_{\cB}$, and the relators of the form $ij=1$. Let $\hat{\gb}$ be a gallery obtained from $\gb$ by a $2$-elementary homotopy. Then $\pi([\hat{\gb}])=\pi([\gb])$ by the inclusion of the relators $\cR$. 

%%%%%%%%%%%%%%%%%%%%%%%%%%%%%%%%%%%%%%%%%%%%%%%%%%%%%%%%%%%%%%%%%%%%%%%%
\subsection{The Fundamental Group}
%%%%%%%%%%%%%%%%%%%%%%%%%%%%%%%%%%%%%%%%%%%%%%%%%%%%%%%%%%%%%%%%%%%%%%%%

Continue to let $\cW$ be a connected Weyl graph, and let $\cS$ be a generating set of $\cW$. Let $\gG$ be the following undirected graph,
\begin{enumerate}[label=(\roman*)]
	\item
	the vertices of $\gG$ are the chambers of $\cW$
	\item
	the edges of $\gG$ are sets of the form $\{i, i^{-1}\}$, where $i\in \cI^+$
	\item
	the extremities of the undirected edge $\{i, i^{-1}\}$ are $\iota(i)$ and $\tau(i)$.
\end{enumerate}
Notice that $\gG$ is connected. Let $\gG'$ be a spanning tree of $\gG$, and let $T\subseteq \cI$ be the set of edges which are contained in some edge of $\gG'$. We call $T$ a \emph{spanning tree} of $\cW$. Notice that if a panel groupoid $\cW_s$ is connected, then we can take $T$ to be $\cI_s$. The \emph{fundamental group} $\pi_1(\cW,T)$ of $\cW$ at $T$ is the marked group given by
\[       
\pi_1(\cW,T):=\big \la  \cS\  |\      \cR,\ \cR_{\cB},\ ij=1,\ t=1 : i\in \cI^+,\ j=i^{-1}\in \cI^-,\ t\in T   \big  \ra    
.\] 

\begin{thm} \label{thm:pres} 
	Let $\cW$ be a connected Weyl graph. Let $\cS$ be a generating set of $\cW$ and let $T$ be a spanning tree of $\cW$. Then the local groups of $\overline{\cW}$ are naturally isomorphic to $\pi_1(\cW,T)$.\footnote{ regarding the fact that these isomorphisms are natural, remember that a choice of tree $T$ has been made} 
\end{thm}
\begin{proof}
	Pick a chamber $C\in \cW$, and let $\overline{\cW}_C$ be the local group of $\overline{\cW}$ at $C$. Let
	\[\varphi: \overline{\cW}_C\to \pi_1(\cW,T)\] 
	be the restriction of $\pi:\overline{\cW}\to \FG(\cW)$ to $\overline{\cW}_C$, composed with the quotient map \hbox{$\FG(\cW)\to\pi_1(\cW,T)$}. For each chamber $D\in \cW$, let $g_D\in  \overline{\cW}(C,D)$ be the homotopy class of the gallery corresponding to the unique path in $T$ from $C$ to $D$. Let 
	\[\psi: \pi_1(\cW,T) \to \overline{\cW}_C\] 
	\noindent be the homomorphism mapping a generator $i\in \cS$ to the composition $g_{\iota(i)}; i; g_{\tau(i)}^{-1}\in \overline{\cW}$. This is a well-defined homomorphism because the relations of $\pi_1(\cW,T)$ are satisfied in $\overline{\cW}_C$, since $\overline{\cW}_C$ is a subgroup of $\overline{\cW}$. Notice that $\gf \circ \psi$ is the identity on $\cS$ because $\pi(g_D)$ lies in the kernel of $\FG(\cW)\to\pi_1(\cW,T)$. Also, $\psi \circ \gf$ is the identity because the $g_D$ cancel via contractions, and one recovers the original gallery up to homotopy. Thus, $\varphi$ and $\psi$ are mutually inverse isomorphisms.  
\end{proof}

Therefore we have a natural isomorphism $\overline{\cW}_C \xrightarrow{\sim}\pi_1(\cW,T)$. If we make the choice of a chamber in the universal cover $\Delta$ of $\cW$, we get a well-defined action of $\pi_1(\cW,T)$ on $\Delta$. A method one can employ when calculating the fundamental group of a Weyl graph is to first calculate the universal groups of its $2$-residues, take the union of these presentations, and then quotient out by a spanning tree.   

%%%%%%%%%%%%%%%%%%%%%%%%%%%%%%%%%%%%%%%%%%%%%%%%%%%%%%%%%%%%%%%%%%%%%%%%
\subsection{Petals and Flowers}
%%%%%%%%%%%%%%%%%%%%%%%%%%%%%%%%%%%%%%%%%%%%%%%%%%%%%%%%%%%%%%%%%%%%%%%%

Let us call a rank $2$ (i.e. $S=\{s,t\}$) Weyl graph a \emph{Weyl polygon}, or \emph{Weyl $m_{st}$-gon}. We now show that in order to determine a Weyl polygon, one only has to know the underlying generalized chamber system, and a set of homotopic maximal alternating geodesic pairs, which we call `petals', issuing from a fixed chamber. This will simplify the task of calculating fundamental groups of Weyl graphs in many cases (e.g. see \cite{nor3}).  

Let $C$ be a chamber of a Weyl graph $\cW$. Let $J=\{s,t\}$ be a $2$-element spherical subset of $S$ (so $m_{st}< \infty$). Let the \emph{$J$-flower} $\cF_J(C)$ based at $C$ be the set of maximal $(s,t)$-geodesics and maximal $(t,s)$-geodesics which issue from $C$. Thus, the maximal alternating geodesics of $\cF_J(C)$ are contained in the residue $\cR_J(C)$, which is a Weyl polygon. If $\cW$ is a polygon, so that $J=S$, then we speak simply of the \emph{flower} $\cF(C)$ based at $C$.

A \emph{$J$-petal} is a $2$-element subset of a $J$-flower which contains a maximal $(s,t)$-geodesic $\rho(s,t)$, together with the unique maximal $(t,s)$-geodesic $\rho(t,s)$ such that $\rho(s,t)\sim \rho(t,s)$. A flower naturally induces a set of suites; for each petal $\big\{\rho(s,t), \rho(t,s)\big\}$, take the suite $\rho(s,t)\rho(t,s)^{-1}$.  

\begin{thm} \label{thm:flowerandpetals}
	Let $\cP$ be a Weyl polygon (rank $2$ Weyl graph), and let $C\in\cP$ be a chamber. Let $\cP_C$ denote the Weyl data whose underlying generalized chamber system is that of $\cP$, and whose defining suites are those induced by the flower $\cF(C)$ of $\cP$ at $C$. Then $\cP_C$ is isomorphic to $\cP$. 
\end{thm}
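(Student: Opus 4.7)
The plan is to show that the identity map on underlying chambers and edges is an isomorphism $\id\colon\cP_C\map\cP$. This identity is bijective by construction, and it is already a morphism: each defining suite of $\cP_C$ is by definition a petal $\theta=\rho(s,t)\rho(t,s)^{-1}$ with $\rho(s,t)\sim\rho(t,s)$ in $\cP$, so $\theta$ is null-homotopic in $\cP$ by \autoref{prop:charsuite}. By \autoref{prop:charisoofweyl}, it then suffices to prove that every defining suite $\theta$ of $\cP$ is null-homotopic in $\cP_C$ (reducing, as usual, to the connected component of $C$).

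The key technical step is the claim that every gallery $\gb$ in $\cP_C$ issuing from $C$ is $\cP_C$-homotopic to a geodesic issuing from $C$. The plan to prove this is a left-to-right reduction algorithm on the edges of $\gb$, maintaining a ``current geodesic'' $\gga$ from $C$. When the next edge $i$ of type $s$ is processed, the two cases are $\gga_W s>\gga_W$ (simply concatenate) and $\gga_W s<\gga_W$ (reduce). In the reduce case, \autoref{prop:endins} furnishes a reduced decomposition of $\gga_W$ ending in $s$; one first strict-homotopes $\gga$ so that its type matches that decomposition, then kills the resulting backtrack or detour by a $1$-elementary homotopy. The decisive rank-$2$ observation is that in $W(I_2(m))$ (where $m=m_{st}$), an element of word length $k<m$ has a unique reduced decomposition, while one of length $m$ has exactly the two alternating reduced decompositions $p(s,t)$ and $p(t,s)$. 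Hence if $|\gga_W|<m$ no strict homotopy is needed, and if $|\gga_W|=m$ the required strict homotopy swaps $\gga$ between the two maximal alternating geodesics at $C$ of opposite types that are paired in the flower $\cF(C)$. This swap is realized by a single petal at $C$, which is by construction a defining suite of $\cP_C$, so every step of the algorithm is $\cP_C$-admissible.

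Granted the claim, fix a defining suite $\theta$ of $\cP$ based at a chamber $D$ and pick any gallery $\gb$ from $C$ to $D$ in $\cP$. The cycle $\gb\theta\gb^{-1}$ issues from $C$, so by the claim it is $\cP_C$-homotopic to a geodesic $\gga$ with $\iota(\gga)=\tau(\gga)=C$. But $\cP_C$-homotopy implies $\cP$-homotopy, and in $\cP$ one has $\gb\theta\gb^{-1}\sim \gb\gb^{-1}\sim 1_C$ since $\theta$ is null-homotopic in $\cP$ and $\gb\gb^{-1}$ is null-homotopic by \autoref{lemma:invofgal}. Hence $\gga\sim 1_C$ in $\cP$, which forces $\gga_W=1$ and so $\gga$ is the trivial gallery. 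Therefore $\gb\theta\gb^{-1}$ is null-homotopic in $\cP_C$, and conjugating by $\gb$ (using again \autoref{lemma:invofgal}, which requires only backtrack contractions and so lives in $\cP_C$) yields that $\theta$ is null-homotopic in $\cP_C$, as required.

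The main obstacle is the technical claim itself: one must verify that every strict homotopy demanded by the reduction algorithm can be realized by a petal at $C$, rather than by a suite based at some intermediate chamber. This works cleanly only because, in rank $2$, all exchange ambiguity is concentrated at maximal word length $m$, where it collapses to a swap between the two alternating decompositions; for a geodesic rooted at $C$, that swap is literally a petal of $\cF(C)$. This polygon-specific phenomenon is what permits recovering the full Weyl data of $\cP$ from only the flower data at a single chamber, and is why the statement is restricted to polygons.
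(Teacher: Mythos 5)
Your proof is correct and follows essentially the same route as the paper's: take the identity map, then show each defining suite of $\cP$ becomes null-homotopic in $\cP_C$ by conjugating it back to $C$, reducing the resulting cycle to a geodesic using only homotopies supported on the flower $\cF(C)$, and invoking property (W) to conclude that geodesic is trivial. Your left-to-right reduction algorithm merely makes explicit the step the paper asserts in a single sentence, namely that in rank $2$ the only strict homotopies ever needed are exactly petal swaps based at $C$.
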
 

\begin{proof}
	Let $\go:\cP_C\rightarrow \cP$ be the identity map, which is a morphism of Weyl data since the defining suites of $\cP_C$ are suites of $\cP$. Let $\theta$ be a suite of $\cP$. We claim that $\theta$ is also a suite of $\cP_C$, which proves that $\go:\cP_C\rightarrow \cP$ is an isomorphism by \autoref{prop:charisoofweyl}. 
	
	To see this, let $\gamma$ be a geodesic from $C$ to $\iota(\theta)=\tau(\theta)$. Using $1$-elementary homotopies and strict homotopies which take place in the suites induced by the flower $\cF(C)$ at $C$, we can obtain a geodesic $\gamma'$ which is homotopic to $\gamma \theta \gamma^{-1}$ in both $\cP$ and $\cP_C$. Since $\cP$ has property (W) and $\gamma \theta \gamma^{-1}$ is null-homotopic in $\cP$, any geodesic homotopic to $\gamma \theta \gamma^{-1}$ in $\cP$ must be trivial. Thus, $\gamma'$ is trivial. Therefore $\gamma \theta \gamma^{-1}$ is null-homotopic in $\cP_C$, and so $\theta$ is also null-homotopic in $\cP_C$. 
\end{proof}

Usually, the data of a Weyl polygon in the form of a rank $2$ generalized chamber system equipped with a flower will come from a group acting freely on the chambers (flags) of a generalized polygon. The petals can be determined by inspecting the action.  

\appendix
    
%%%%%%%%%%%%%%%%%%%%%%%%%%%%%%%%%%%%%%%%%%%%%%%%%%%%%%%%%%%%%%%%%%%%%%%%
%%%%%%%%%%%%%%%%%%%%%%%%%%%%%%%%%%%%%%%%%%%%%%%%%%%%%%%%%%%%%%%%%%%%%%%%                                 
\section{Covering Theory of Groupoids} \label{apen}
%%%%%%%%%%%%%%%%%%%%%%%%%%%%%%%%%%%%%%%%%%%%%%%%%%%%%%%%%%%%%%%%%%%%%%%%
%%%%%%%%%%%%%%%%%%%%%%%%%%%%%%%%%%%%%%%%%%%%%%%%%%%%%%%%%%%%%%%%%%%%%%%%

We give a self-contained exposition of covering theory of groupoids. This can be found in e.g. \cite{brown06topology}, however here we adopt a slightly different (basepoint-free) approach to the fundamental group.

%%%%%%%%%%%%%%%%%%%%%%%%%%%%%%%%%%%%%%%%%%%%%%%%%%%%%%%%%%%%%%%%%%%%%%%%
\subsection{Groupoids} \label{groupoids}
%%%%%%%%%%%%%%%%%%%%%%%%%%%%%%%%%%%%%%%%%%%%%%%%%%%%%%%%%%%%%%%%%%%%%%%%

A (small) \emph{groupoid} $\cG=(\cG_0,\cG_1)$ is a non-empty graph (as defined in \autoref{sec:defofgraph}) equipped with a partial function $\cG_1\times \cG_1\to \cG_1$ which assigns to each pair of edges $(g,h)$ such that $\tau(g)=\iota(h)$ their \emph{composition}, which is denoted either by $gh$, or by $g;h$ to avoid ambiguity. We also require the existence of identity edges at each vertex, called \emph{trivial} edges, and for each edge $g$ to have an inverse $g^{-1}$.

For vertices $x,y\in \cG$, let 
\[
\cG(x,y):=  \big \{    i\in \cG_1 :  \iota(i)=x \ \  \text{and}\ \   \tau(i)=y   \big\}
.\]  
The \emph{local group} $\cG_x:=\cG(x,x)$ at $x$ is the group with multiplication the restriction of the composition of $\cG$. Let $\cG(x,-)$ denote the set of edges $i\in \cG$ with $\iota(i)=x$, and let $\cG(-,x)$ denote the set of edges $i\in \cG$ with $\tau(i)=x$. A \emph{setoid} is a groupoid $\cG$ such that $\cG(x,y)$ contains at most one edge for all vertices $x,y\in \cG$. A groupoid $\cG$ is called \emph{connected} if $\cG(x,y)$ is non-empty for all vertices $x,y\in \cG$. A \emph{subgroup} of a groupoid $\cG$ is a subgroup of a local group of $\cG$. For $H\leq \cG_x$ a subgroup of $\cG$, let $H\backslash \cG$ denote the set of \emph{right cosets}, given by
\[
H\backslash \cG:=\big \{ Hg : g\in \cG(x,-)  \big \}
.\]
A \emph{homomorphism} of groupoids is a graph morphism which preserves composition. %In order to define a homomorphism of groupoids $\gf:\cG\to \cG'$, it suffices to define a function on the edges $\gf_1:\cG_1\to \cG'_1$ such that for all edges $g,h\in \cG$ such that $gh$ is defined, $\gf_1(g)\gf_1(h)$ is defined with, 
%\[\gf_1(gh)=\gf_1(g)\gf_1(h).\]
We call a groupoid homomorphism $\gf:\cG\to \cG'$ \emph{faithful} if the restriction of $\gf$ to each local group of $\cG$ is injective. We call a groupoid homomorphism $\gf$ an \emph{embedding}/\emph{surjective}/\emph{isomorphism} if $\gf_1$ is injective/surjective/bijective. 

For vertices $x,x',y,y'\in\cG$ such that $\cG(x,y)$ and $\cG(x',y')$ are non-empty, and for $g\in \cG(x,y)$ and $h\in \cG(x',y')$, we have the bijection
\[
\chi_{gh}:\cG(x,x')\to \cG(y,y')
,\qquad   
k\mapsto g^{-1}kh
.\] 
Let us denote $\chi_{gg}$ by $\chi_{g}$, which is an isomorphism of local groups $\chi_{g}:\cG_x \to \cG_y$.

%%%%%%%%%%%%%%%%%%%%%%%%%%%%%%%%%%%%%%%%%%%%%%%%%%%%%%%%%%%%%%%%%%%%%%%%
\subsection{Outer Homomorphisms} \label{fun group}
%%%%%%%%%%%%%%%%%%%%%%%%%%%%%%%%%%%%%%%%%%%%%%%%%%%%%%%%%%%%%%%%%%%%%%%%

The category of groupoids $\textsf{Grpd}$ (as a subcategory of the \hbox{$2$-category} of categories) is naturally a $2$-category. We now describe structure obtained from $\textsf{Grpd}$ by quotienting out the $2$-morphisms.

For $\gf:G\to H$ a homomorphism of groups and $h\in H$, let $^{h}\gf:G\to H$ denote the conjugate homomorphism $g\mapsto h\gf(g) h^{-1}$. Let an \emph{outer homomorphism} $\Phi:G\to H$ be a conjugacy class of homomorphisms 
\[
\Phi=[\gf]:=\{  ^{h}\gf: h\in H \}
.\] 
For outer homomorphisms $\Phi:G\to H$ and $\Phi':H\to K$, we put 
\[
\Phi' \circ \Phi=  [\varphi']\circ [\gf]:=  [\varphi'\circ \gf]
.\] 
Notice this is well-defined. An outer homomorphism $\Phi$ is called an \emph{outer embedding/outer isomorphism} if one (and therefore every) homomorphism in $\Phi$ is injective/bijective. We say two outer embeddings $\Phi:H\to G$ and $\Phi':K\to G$ are \emph{isomorphic} if there exists an outer isomorphism $\Psi:H\to K$ with $\Phi' \circ \Psi=\Phi$. %One can identify isomorphism classes of outer embeddings into a group $G$ with conjugacy classes of subgroups of $G$.

For $x,y\in \cG$ and $g, g'\in \cG(x,y)$ we have $\chi_{g'}=\ ^{g'^{-1}g } {\chi_{g}}$, and so $[\chi_{g'}]=  [\chi_{g}]$. Therefore, if $\cG(x,y)$ is non-empty, there is a canonical outer isomorphism
\[\Psi_{xy}:\cG_x \to \cG_{y}
,\qquad
\Psi_{xy}:=[\chi_{g}]
\] 
where $g\in \cG(x,y)$. We have
\[
\Psi_{yz} \circ \Psi_{xy}=\Psi_{xz}, \qquad \Psi_{xy}^{-1}=\Psi_{yx},\qquad  \Psi_{xx}=[\text{id}_{\cG_x}]
.\] 
Given a homomorphism of groupoids $\gf:\cG\to \cG'$, let $\gf |_{\cG_x}$ denote the restriction of $\gf$ to the local group $\cG_x$, and let $\Phi_x$ denote the outer homomorphism 
\[
\Phi_x:=[\gf |_{\cG_x}]:\cG_x\to \cG'_{\varphi(x)}
.\] 
It is straightforward to check that
\[
\Phi_y \circ \Psi_{xy}=\Psi_{\varphi(x)\varphi(y)} \circ \Phi_x
.\]
%The internal outer isomorphisms $\Psi_{xy}$ are compatible with the $\Phi_x$ in the following sense.
%\begin{prop} \label{prop:outerpres}
%	Let $\gf:\cG\to \cG'$ be a homomorphism of groupoids. Let $x,y\in \cG$ be vertices, and let $x'=\gf(x)$ and $y'=\gf(y)$. Then, 
%	\[\Phi_y \circ \Psi_{xy}=\Psi_{x'y'} \circ \Phi_x. \]
%\end{prop}
%
%\begin{proof}
%	Let $g\in \cG(x,y)$. Then,
%	\[\Phi_y \circ \Psi_{xy}= [\gf |_{\cG_y}\circ\ \chi_g]= [\chi_{\gf(g)} \circ\gf |_{\cG_x}]=\Psi_{x'y'} \circ \Phi_x. \qedhere
%	\]
%\end{proof}
Let the (basepoint-free) \emph{fundamental group} $\pi_1(\cG)$ of a connected groupoid $\cG$ be the abstract group which is isomorphic to the local groups of $\cG$, and which is equipped with an outer isomorphism $\Psi_x:\pi_1(\cG)\to \cG_x$ to each of the local groups of $\cG$ such that $\Psi_{xy} \circ \Psi_x=\Psi_{y}$ for all vertices $x,y\in \cG$. We associate to a homomorphism of connected groupoids $\gf:\cG \to \cG'$ the outer homomorphism $\pi_1(\gf):\pi_1(\cG)\to \pi_1(\cG')$, called the outer homomorphism \emph{induced} by $\gf$, which is given by
\[
\pi_1(\gf)=\Psi_{\varphi(x)}^{-1} \circ \Phi_{x} \circ \Psi_{x}
.\] 
It is straightforward to check that the definition of $\pi_1(\gf)$ does not depend on the choice of $x$. Given homomorphisms of connected groupoids $\gf:\cG\to \cG'$ and $\gf':\cG'\to \cG''$, we have
\[
\pi_1(\gf' \circ \gf)=\pi_1(\gf') \circ \pi_1(\gf)
.\] 

\begin{prop} \label{prop:whenhomoiso}
	A homomorphism of connected groupoids $\gf:\cG\to \cG'$ is an isomorphism if and only if $\pi_1(\gf)$ is an outer isomorphism and $\gf$ is bijective on vertices.    
\end{prop}

\begin{proof}
	Clearly, if $\gf$ is an isomorphism then $\pi_1(\gf)$ is an outer isomorphism and $\gf$ is bijective on vertices. Conversely, suppose that $\pi_1(\gf)$ is an outer isomorphism and $\gf$ is bijective on vertices. For surjectivity, let $g'\in \cG'(x',y')$ be an edge of $\cG'$, and let $x=\gf^{-1}(x')$ and $y=\gf^{-1}(y')$. Let $g\in \cG(x,y)$ and $h=\gf^{-1}(g'\gf(g)^{-1})$. Then $\gf(hg)=g'$. For injectivity, let $g,h\in \cG$ be edges such that $\gf(g)=\gf(h)$. We must have $\iota(g)=\iota(h)=x$ and $\tau(g)=\tau(h)=y$ because $\gf$ is injective on vertices. Then, $\gf(gh^{-1})=\gf(g)\gf(h)^{-1}= 1$. But $\pi_1(\gf)$ is an outer isomorphism, and so $gh^{-1}=1$. Therefore $g=h$. 
\end{proof}

\subsection{Coverings of Groupoids} \label{covgroup}
%%%%%%%%%%%%%%%%%%%%%%%%%%%%%%%%%%%%%%%%%%%%%%%%%%%%%%%%%%%%%%%%%%%%%%%%

%A reference for covering theory of groupoids, which uses basepoints, is \cite{brown06topology}. Our basepoint-free approach models coverings with outer embeddings of fundamental groups. 

A \emph{covering} of groupoids \index{groupoid!covering} is a surjective groupoid homomorphism 
\[
p:\wt{\cG}\to \cG
\] 
such that for all vertices $\tilde{x}\in \wt{\cG}$, the restriction of $p$ to $\wt{\cG}(\tilde{x},-)$ is a bijection into $\cG(p(\tilde{x}),-)$. Notice that if $\cG$ is connected, then surjectivity automatically follows. We say that a covering $p:\wt{\cG}\to \cG$ is \emph{connected} if $\wt{\cG}$ is connected, which implies that $\cG$ is also connected. We have the following equivalent definition in the case of connected groupoids.

\begin{prop} \label{prop:covchar}
	A covering of connected groupoids is equivalently a groupoid homomorphism $p:\wt{\cG}\to \cG$ such that there exists a vertex $\tilde{x}\in \wt{\cG}$ such that the restriction of $p$ to $\wt{\cG}(\tilde{x},-)$ is a bijection into $\cG(p(\tilde{x}),-)$.    
\end{prop}

\begin{proof}
	Let $\tilde{x},\tilde{y}\in \wt{\cG}_0$, and let $g\in \wt{\cG}(\tilde{x},\tilde{y})$. Let $\gf_g$ be the function,
	\[\gf_g:\cG(\tilde{x},-)\to \cG(\tilde{y},-),\ \  h\mapsto g^{-1}h\]
	\noindent and let $\gf_{p(g)^{-1}}$ be the function,
	\[\gf_{p(g)^{-1}}:\cG(p(\tilde{y}),-)\to \cG(p(\tilde{x}),-),\ \ h\mapsto p(g) h .\]
	\noindent Then,
	\[ p |_{ \wt{\cG}(\tilde{x},-)}=  \gf_{p(g)^{-1}} \circ p | _{\wt{\cG}(\tilde{y},-)} \circ\  \gf_g .\]
	\noindent But $\gf_g$ and $\gf_{p(g)^{-1}}$ are bijections since they have inverses $h\mapsto gh$ and $h\mapsto p(g)^{-1} h$ respectively. Thus, the restriction of $p$ to $\wt{\cG}(\tilde{x},-)$ is a bijection if and only if the restriction of $p$ to $\wt{\cG}(\tilde{y},-)$ is a bijection. The result follows.
\end{proof}

We have the following characterization of isomorphisms amongst coverings.

\begin{prop} \label{prop:injecvert}
	A covering $p:\wt{\cG}\to \cG$ is an isomorphism if and only if $p$ is injective on the vertices of $\wt{\cG}$.     
\end{prop}

\begin{proof}
	Clearly, if $p$ is an isomorphism, then $p$ is injective on vertices. Conversely, suppose that $p$ is injective on vertices, and let $g,g'\in \wt{\cG}$ be edges with $p(g)=p(g')$. Notice that we must have $\iota(g)=\iota(g')$ and $\tau(g)=\tau(g')$ by the fact that $p$ is injective on the vertices of $\wt{\cG}$. Let $x=\iota(g)$ and $y=\tau(g)$. Since $p$ is a covering, its restriction to $\wt{\cG}(x,y)$ is injective, and so we must have $g=g'$. Thus, $p$ is an embedding. Finally, $p$ is surjective by the fact it is a covering.
\end{proof}

Let $p:\wt{\cG}\to \cG$ and $p':\wt{\cG}'\to \cG$ be coverings of a groupoid $\cG$. A \emph{morphism} of coverings $\lambda:p \to p'$ is a groupoid homomorphism $\lambda:\wt{\cG}\to \wt{\cG}'$ such that $p=p' \circ \lambda$. We call two coverings \emph{isomorphic} if there exists a morphism between them which is a groupoid isomorphism. The \emph{composition} of morphisms of coverings is just their composition as groupoid homomorphisms. The following result shows that in particular, a morphism of coverings of connected groupoids is itself a covering.

\begin{prop} \label{prop:groupoidcover}
	Let $p:\wt{\cG}\to \cG$, $p':\wt{\cG}'\to \cG$, and $\lambda:\wt{\cG}\to \wt{\cG}'$ be homomorphisms of connected groupoids with $p=p'\circ \lambda$. If $p$ and $p'$ are coverings, then $\lambda$ is a covering, and if $p$ and $\lambda$ are coverings, then $p'$ is a covering. 
\end{prop}

\begin{proof}
	Pick a vertex $\tilde{x}\in \wt{\cG}$, and let $\tilde{x}'=\lambda(\tilde{x})$, and $x=p(\tilde{x})=p'(\tilde{x}')$. Then,
	\[  p|_{ \wt{\cG}(\tilde{x},-)}=  p'|_{ \wt{\cG'}(\tilde{x}',-)}\circ \  \lambda |_{ \wt{\cG}(\tilde{x},-)}.\] 
	Therefore if $p|_{ \wt{\cG}(\tilde{x},-)}$ and $p'|_{ \wt{\cG'}(\tilde{x}',-)}$ are bijections, then so is $\lambda |_{ \wt{\cG}(\tilde{x},-)}$, and if $p|_{ \wt{\cG}(\tilde{x},-)}$ and $\lambda |_{ \wt{\cG}(\tilde{x},-)}$ are bijections, then so is $p'|_{ \wt{\cG'}(\tilde{x}',-)}$. The result then follows by \autoref{prop:covchar}.
\end{proof}

We now describe the relationship between coverings of connected groupoids and their induced outer homomorphisms. 

\begin{prop}
	If $p:\wt{\cG}\to \cG$ is a covering of connected groupoids, then $\pi_1(p)$ is an outer embedding.
\end{prop}

\begin{proof}
	If $p$ is a covering, then for each vertex $\tilde{x}\in \wt{\cG}$, the restriction $p |_{ \wt{\cG}_{\tilde{x}}}$ is injective, i.e $p$ is faithful. It then follows from the definition that $\pi_1(p)$ is an outer embedding.  
\end{proof}

\begin{prop} \label{prop:conservative}
	A covering $p:\wt{\cG}\to \cG$ of connected groupoids is an isomorphism if and only if $\pi_1(p)$ is an outer isomorphism.
\end{prop}

\begin{proof}
	If $p$ is an isomorphism, then it follows from the definition that $\pi_1(p)$ is an outer isomorphism. Conversely, suppose that $\pi_1(p)$ is an outer isomorphism, and let $\tilde{x},\tilde{y}\in \wt{\cG}$ be vertices such that $p(\tilde{x})=p(\tilde{y})=x$. Then for each $g\in \wt{\cG}(\tilde{x},\tilde{y})$, we have $p(g)\in \cG_x$. But $p |_{ \wt{\cG}_{\tilde{x}}}$ is a bijection into $\cG_x$ because $\pi_1(p)$ is an outer isomorphism. Therefore, since $p$ is a covering, we must have $\tilde{x}=\tilde{y}$, and the result follows by \autoref{prop:injecvert} (or indeed \autoref{prop:whenhomoiso}). 
\end{proof}

In fact, one can recover a covering of connected groupoids (up to isomorphism) from the outer embedding which it induces. 

\begin{thm}[General Lifting] \label{prop:genlift}
	Let $\cG$, $\wt{\cG}$, and $\cG'$ be connected groupoids. Let $p:\wt{\cG}\to \cG$ be a covering, and let $\gf:\cG'\to \cG$ be a homomorphism. Let $\Phi:\pi_1(\cG')\to \pi_1(\wt{\cG})$ be an outer homomorphism with $\pi_1(p) \circ \Phi= \pi_1(\gf)$. Then there exists a homomorphism $\gf':\cG'\to \wt{\cG}$ such that $p\circ \gf'=\gf$ and $\pi_1(\gf')=\Phi$.
\end{thm}

\begin{proof}
	Pick a vertex $x'\in \cG'$ and a generating set $\cG'_{x'}\cup\{g_{y'} :y'\in \cG'_0\}$ based at $x'$. Recall that we can construct $\gf':\cG'\to \wt{\cG}$ by giving $\gf'|_{\cG'_{x'}}$ and the images of the $g_{y'}$.   
	
	Let $x=\gf(x')$, and let $\tilde{x}\in \wt{\cG}$ be any vertex such that $p(\tilde{x})=x$. Let $\gf_{x'}:\cG'_{x'}\to \wt{\cG}_{\tilde{x}}$ denote a homomorphism such that $[\gf_{x'}]=\Psi_{\tilde{x}}\circ \Phi\circ \Psi_{x'}^{-1}$. Then 
	\begin{align*}
	[p|_{\wt{\cG}_{\tilde{x}}}] \circ [\gf_{x'}]&=( \Psi_{x} \circ  \pi_1(p)  \circ \Psi_{\tilde{x}}^{-1} )\circ ( \Psi_{\tilde{x}}\circ \Phi\circ \Psi_{x'}^{-1})\\
	&=\Psi_{x} \circ \pi_1(p) \circ \Phi   \circ \Psi_{x'}^{-1}\\
	&=\Psi_{x} \circ \pi_1(\gf)  \circ \Psi_{x'}^{-1}\\
	&= [\gf|_{\cG'_{x'}}].  
	\end{align*} 
	\noindent So there exists $g\in \cG_x$ with 
	\begin{equation*} \label{eq} \tag{$\spadesuit$}
	\chi_g \circ p|_{\wt{\cG}_{\tilde{x}}} \circ\ \gf_{x'}=\gf|_{\cG'_{x'}}.
	\end{equation*}
	Let $\tilde{g}\in \wt{\cG}(\tilde{x},-)$ be the unique edge such that $p(\tilde{g})=g$.  Let $\tilde{y}=\tau(\tilde{g})$, and begin defining $\gf'$ by putting $\gf'(x')=\tilde{y}$ and $\gf'|_{\cG'_{x'}}= \chi_{\tilde{g}}\circ    \gf_{x'}$. Then
	\begin{align*}
	p|_{\wt{\cG}(\tilde{y})}    \circ\ \gf'|_{\cG'_{x'}}&=  p|_{\wt{\cG}_{\tilde{y}}}    \circ\    \chi_{\tilde{g}}\circ    \gf_{x'}  && \text{by the definition of}\ \gf'|_{\cG'_{x'}}             \\ 
	&= \chi_g \circ p|_{\wt{\cG}_{\tilde{x}}}\circ\   \gf_{x'}  && \text{since}\ p\ \text{is a homomorphism}  \\ 
	&=\gf|_{\cG'_{x'}}  && \text{by (\ref{eq})} 
	\end{align*}
	as required (since we want $p\ \circ\ \gf'=\gf$). We finish defining $\gf'$ by letting $\gf'(g_{y'})$ be the unique edge of $\wt{\cG}(\tilde{y},-)$ such that $p(\gf'(g_{y'}))=\gf(g_{y'})$. Then we have $p \circ \gf'=\gf$ since $p \circ \gf'$ agrees with $\gf$ on the generating set $\cG'_{x'}\cup\{g_{y'} :y'\in \cG'_0\}$. Finally, we have $\pi_1(\gf')=\Phi$ since
	\begin{align*}
	\Phi&=\Psi_{\tilde{x}}^{-1} \circ [\gf_{x'}]\circ \Psi_{x'}  &&      \text{by the definition of}\ \gf_{x'}                     \\
	&=\Psi_{\tilde{y}}^{-1} \circ \Psi_{\til{x} \til{y} } \circ [\gf_{x'}]\circ \Psi_{x'}                       \\
	&=\Psi_{\tilde{y}}^{-1} \circ [\chi_{\tilde{g}}]\circ    [\gf_{x'}]\circ \Psi_{x'}\\
	&=\Psi_{\tilde{y}}^{-1} \circ [\gf'|_{\cG'_{x'}}]\circ \Psi_{x'}  &&      \text{by the definition of}\ \gf'|_{\cG'_{x'}}                          \\
	&=\pi_1(\gf') &&  \text{by the definition of}\ \pi_1(\gf').                                \qedhere
	\end{align*}
\end{proof}

\begin{cor}  \label{cor:genlift}
	Let $\cG$, $\wt{\cG}$ and $\wt{\cG}'$ be connected groupoids. Let $p:\wt{\cG}\to \cG$ and $p':\wt{\cG}'\to \cG$ be coverings. Let $\Phi:\pi_1(\wt{\cG})\to \pi_1(\wt{\cG}')$ be an outer embedding with $\pi_1(p') \circ \Phi= \pi_1(p)$. Then there exists a morphism of coverings $\lambda:p\to p'$ with $\pi_1(\lambda)=\Phi$. Moreover, if $\Phi$ is an outer isomorphism, then any such $\lambda$ is an isomorphism.
\end{cor}

\begin{proof}
	A homomorphism $\lambda:\wt{\cG}\to \wt{\cG}'$ such that $p' \circ  \lambda=p$ and $\pi_1(\lambda)=\Phi$ exists by \autoref{prop:genlift}. Then $\lambda$ is a covering of groupoids by \autoref{prop:groupoidcover}, and so $\lambda$ is an isomorphism if $\Phi$ is an outer isomorphism by \autoref{prop:conservative}.
\end{proof}

\begin{cor}
	Let $\cG$, $\wt{\cG}$, and $\wt{\cG}'$ be connected groupoids. Let $p:\wt{\cG}\to \cG$ and $p':\wt{\cG}'\to \cG$ be coverings. If $\pi_1(p)$ and $\pi_1(p')$ are isomorphic outer embeddings, then $p$ and $p'$ are isomorphic coverings. 
\end{cor}

\begin{proof}
	By hypothesis, there exists an outer isomorphism $\Psi:\pi_1(p)\to \pi_1(p')$ with $\pi_1(p') \circ \Psi=\pi_1(p)$. There exists an isomorphism of coverings $\lambda:p\to p'$ with $\pi_1(\lambda)=\Psi$ by \autoref{cor:genlift}. 
\end{proof}

This shows that the (isomorphism classes of) connected coverings of a connected groupoid $\cG$ naturally inject into the conjugacy classes of subgroups of $\pi_1(\cG)$. We now describe a construction of connected coverings of $\cG$, and then show that this constructs coverings for each conjugacy class of subgroups of $\pi_1(\cG)$.

Let $H\leq \cG_x$ be a subgroup of a connected groupoid $\cG$. For each coset $Hg\subseteq \cG$, pick a representative $g^{*}\in Hg$. We make the convention that $h^{*}=1_x$ for $h\in H$. We construct a connected groupoid, denoted $\wt{\cG}^H$, by letting the vertices of $\wt{\cG}^H$ be the set of cosets $H\backslash\cG$, and letting the edges of $\wt{\cG}^H$ be the set
\[\wt{\cG}^H_1=\big\{ (h,Hg,Hg'): h\in H;\ g,g'\in \cG(x,-)\big \}.\]
\noindent For the extremities, put 
\[\iota(h,Hg,Hg')=Hg \qquad \text{and} \qquad  \tau(h,Hg,Hg')=Hg'\] 
\noindent and for the composition, put
\[(h,Hg,Hg')(h',Hg',Hg'')=(hh',Hg,Hg'').\] 
It is easy to check that this defines a groupoid $\wt{\cG}^H$. Let $p^H:\wt{\cG}^H \to \cG$ be the homomorphism whose map on edges is
\[(h,Hg,Hg')\mapsto (g^{*})^{-1}hg'^{*}.\] 
This implies that for vertices we have $p^H(Hg)=\tau(g)$, and in particular $p^H(H)=x$. Checking that $p^H$ is a homomorphism, we have
\begin{align*}
p^H (hh',Hg,Hg'' )&=  (g^{*})^{-1}hh'g''^{*}\\
&=(g^{*})^{-1}hg'^{*}(g'^{*})^{-1}h'g''^{*}\\
&=p^H(h,Hg,Hg')p^H(h',Hg',Hg'').
\end{align*}

\begin{prop}
	Let $H\leq \cG_x$ be a subgroup of a connected groupoid $\cG$. Then $p^H:\wt{\cG}^H \to \cG$ is a covering of groupoids.
\end{prop}

\begin{proof}
	Consider the restriction of $p^H$ to the edges $(h,H,Hg)$ which issue from the vertex $H\in \wt{\cG}^H$. For injectivity, if $p^H(h,H,Hg)=p^H(h',H,Hg')$, then $hg^{*}=h'g'^{*}$, and so $h=h'$ and $g=g'$. Thus, $(h,H,Hg)=(h',H,Hg')$. For surjectivity, let $g\in \cG(x,-)$, and let $h\in H$ such that $g=hg^{*}$. Then we have $p^{H}(h,H,Hg)=g$. The fact that $p^H$ is a covering then follows by \autoref{prop:covchar}. 
\end{proof}

We call $p^{H}:\wt{\cG}^H \to \cG$ \emph{the covering based at $H$}. Notice that the local group $\wt{\cG}^H_H$ of $\wt{\cG}^H$ at $H$ is naturally isomorphic to $H$.  

\begin{prop}
	Let $\cG$ be a connected groupoid, and let $\Phi:K\to \pi_1(\cG)$ be an outer embedding. Then there exists a covering $p:\wt{\cG}\to \cG$ such that $\pi_1(p)$ is isomorphic to $\Phi$.
\end{prop}

\begin{proof}
	Pick a subgroup $H\leq \cG_x$ of $\cG$ such that $\Psi_x^{-1}|_ H$ is isomorphic to $\Phi$. Put $p=p^{H}:\wt{\cG}^H \to \cG$. Let $\gf_H:\wt{\cG}^H_{H} \to  H\leq \cG_x$ be the identity map and put $\Phi_H=[p|_{\wt{\cG}^H_H}]$. Notice that $\gf_H$ is just the embedding $p|_{\wt{\cG}^H_H}$ restricted to its image. Pick representatives $\psi_x^{-1}\in \Psi_x^{-1}$ and $\psi_H\in\Psi_H$. Then $[\gf_H] \circ \Psi_H$ is an outer isomorphism, and
	\begin{align*}
	\pi_1(p) =\ &             \Psi_x^{-1} \circ \Phi_H \circ \Psi_H      \\
	=\ &	[   \psi_x^{-1} \circ\  p|_{\wt{\cG}^H_H} \circ\ \psi_H  ]			\\
	=\ &  [   \psi_x^{-1}|_ H\circ\  \gf_H \circ \psi_H  ]	\\
	=\ &  [   \psi_x^{-1}|_ H]\circ\  [\gf_H] \circ[\psi_H  ]	\\
	=\ & \Psi_x^{-1}|_ H\circ\  [\gf_H] \circ \Psi_H     . 		  
	\end{align*}
	Therefore $\pi_1(p)$ and $\Psi_x^{-1}|_ H$ are isomorphic via $[\gf_H] \circ \Psi_H$. Then, since $\Psi_x^{-1}|_ H$ is isomorphic to $\Phi$, we have that $\pi_1(p)$ is also isomorphic to $\Phi$.
	%Recall that $\pi_1(p)=\Psi_{x}^{-1}\circ[p|_{\wt{\cG}^H_{H}}] \circ \Psi_{H}$.
\end{proof}

This demonstrates that the connected coverings of a connected groupoid $\cG$ are naturally in bijection with outer embeddings in $\pi_1(\cG)$ (up to isomorphism), which in turn are naturally in bijection with the conjugacy classes of subgroups of $\pi_1(\cG)$. 

Given a covering $p:\wt{\cG}\to \cG$ of connected groupoids, $p$ is called a \emph{universal cover} if for any covering $p':\wt{\cG}' \to \cG$ such that $\wt{\cG}'$ is connected, there exists a covering morphism $\lambda:p\to p'$. Given the 1-1 correspondence between coverings and conjugacy classes of subgroups, we see that a covering $p:\wt{\cG}\to \cG$ is universal if and only if $\wt{\cG}$ is a connected setoid. Thus, each connected groupoid $\cG$ has a unique universal cover (up to isomorphism).  

Groups act by automorphisms on groupoids. We say a group $G$ acts \emph{freely} on a groupoid $\cG$ if the action of $G$ on $\cG_1$ is free. The \emph{deck transformation group} $\Aut(p)$ of a covering $p:\wt{\cG}\to \cG$ is the group of automorphisms of $\wt{\cG}$ which commute with $p$. Since $\Aut(p)\leq \Aut(\wt{\cG})$, a covering $p:\wt{\cG}\to \cG$ determines a faithful action of $\Aut(p)$ on the left of $\wt{\cG}$. 

\begin{prop} \label{prop:freeaction}
	Let $\cG$ and $\wt{\cG}$ be connected groupoids, and let $p:\wt{\cG}\to \cG$ be a covering. Then $\Aut(p)$ acts freely on $\wt{\cG}$.   
\end{prop}

\begin{proof}
	We show that $\Aut(p)$ acts freely on vertices. Let $\gamma\in \Aut(p)$, and suppose there exists $x\in \wt{\cG}_0$ with $\gamma \cdot x=x$. Since $p$ is injective on $\wt{\cG}(x,-)$, we have $\gamma \cdot g=g$ for all $g\in \wt{\cG}(x,-)$. But $\wt{\cG}(x,-)$ generates $\wt{\cG}$, and so $\gamma=1$. 
\end{proof}

A covering $p:\wt{\cG}\to \cG$ of connected groupoids is called \emph{regular} if its associated conjugacy class of subgroups is a single normal subgroup. If $p$ is regular we identify $\pi_1(p)$ with the single embedding it contains, and we identify $\pi_1(\wt{\cG})$ with its $\pi_1(p)$-image in $\pi_1(\cG)$. 

\begin{prop} \label{prop:fib}
	Let $\cG$ and $\wt{\cG}$ be connected groupoids, and let $p:\wt{\cG}\to \cG$ be a regular covering. Then the action of $\Aut(p)$ restricted to the $p$-preimage of a vertex or an edge is regular.
\end{prop}

\begin{proof}
	We know that these actions are free by \autoref{prop:freeaction}. First, we show that the action is transitive in the case of a vertex. Let $\tilde{x},\tilde{y}\in \wt{\cG}$ be vertices with $p(\tilde{x})=p(\tilde{y})$. We construct a deck transformation $\gamma\in \Aut(p)$, with $\gamma\cdot \tilde{x}=\tilde{y}$, by defining $\gamma$ on a generating set $\wt{\cG}_{\tilde{x}}\cup \{ g_y : y\in \wt{\cG}_0  \}$. Let $\gamma|_{\wt{\cG}_{\tilde{x}}}:\wt{\cG}_{\tilde{x}} \to \wt{\cG}_{\tilde{y}}$ be defined by $\gamma\cdot g= g_{\tilde{y}}^{-1}gg_{\tilde{y}}$, and let $\gamma\cdot g_y$ be the unique edge of $\wt{\cG}(\tilde{y},-)$ such that $p(\gamma\cdot g_y)=p(g_y)$. Notice that $\gamma$ is a covering morphism $\gamma:p\to p$ because, for $g\in \wt{\cG}_{\tilde{x}}$, we have $p(\gamma\cdot g)=p( g_{\tilde{y}}^{-1}gg_{\tilde{y}})=p(g)$ since $p$ is regular. Then $\pi_1(\gamma)$ is an outer isomorphism, and so $\gamma$ is an automorphism by \autoref{cor:genlift}.
	
	In the case of an edge, let $g,g'\in \wt{\cG}$ be edges with $p(g)=p(g')$. Then we've just shown that there exists $\gamma\in \Aut(p)$ such that $\gamma \cdot \iota(g)=\iota(g')$. Thus $\gamma\cdot g= g'$, since $p$ is a covering.
\end{proof}

\begin{thm} \label{prop:natouteriso}
	Let $\cG$ and $\wt{\cG}$ be connected groupoids, and let $p:\wt{\cG}\to \cG$ be a regular covering. Then there exists a natural outer isomorphism, 
	\[\Psi: \pi_1(\cG)/\pi_1(\wt{\cG}) \to \Aut(p).\]  
\end{thm}

\begin{proof}
	Pick vertices $x\in \cG$ and $\tilde{x}\in \wt{\cG}$ such that $p(\tilde{x})=x$. For $g\in \cG(x,-)$, let $\tilde{g}\in \cG(\tilde{x},-)$ be the unique edge such that $p(\tilde{g})=g$. Let $\varphi:\cG_x\to \Aut(p)$ be the surjective homomorphism such that $\varphi(g)\cdot \tilde{x}=\tau(\tilde{g})$. This is well-defined by \autoref{prop:fib}. To see that $\varphi$ is a homomorphism, let $g,h\in \cG_x$, and put $k=gh$. Then $\tau(\wt{k})=\varphi(g)\cdot \tau(\tilde{h})$ since $\varphi(g)\cdot \tilde{h}$ must be in the $p$-preimage of $h$. Thus,
	\[    \varphi(gh)\cdot \tilde{x}=\varphi(k)\cdot \tilde{x}=\tau(\wt{k})=\varphi(g)\cdot \tau(\tilde{h})=  \varphi(g)\cdot \varphi(h)\cdot \tilde{x}  .    \] 
	To see that $\gf$ is surjective, let $\gamma\in \Aut(p)$ and pick $g_\gamma\in \wt{\cG}(\tilde{x},\gamma\cdot \tilde{x})$. Then $p(g_\gamma)\in \cG_x$, and $\gf(p(g_\gamma))=\gamma$. 
	
	Let $\Phi:\pi_1(\cG)\to \Aut(p)$ be the outer homomorphism $\Phi=[\gf]\circ \Psi_x$. We now show that $\Phi$ does not depend on the choice of $x$ and $\tilde{x}$. Suppose that we make a different choice of vertices $y\in \cG$ and $\tilde{y}\in \wt{\cG}$ such that $p(\tilde{y})=y$. Let $\varphi':\cG_{y}\to \Aut(p)$ be the new homomorphism. Pick $\tilde{g}'\in \wt{\cG}(\tilde{y},\tilde{x})$ and let $g'=p(\tilde{g}')$. Let $\chi_{g'}:\cG_{y}\to \cG_{x}$ be the usual isomorphism $g\mapsto g'^{-1}gg'$. Then $\varphi'=\varphi  \circ \chi_{g'}$, and so, 
	\[[\gf']\circ \Psi_y=[\varphi  \circ \chi_{g'}]\circ \Psi_y=  [\varphi]  \circ \Psi_{yx}\circ \Psi_y =[\gf]\circ \Psi_x.\]  
	For $g\in \cG_x$, we have $\gf(g)=1$ if and only if $\tilde{g}$ is a loop. Therefore the kernel of $\varphi$ is $p(\cG_{\tilde{x}})\leq \cG_x$, and so the kernel of each group homomorphism in $\Phi$ is $\pi_1(\wt{\cG})$. Let $\Psi$ be the set of isomorphisms obtained by factoring out the kernels of the homomorphisms in $\Phi$. Then $\Psi$ is an outer isomorphism $\Psi:\pi_1(\cG)/\pi_1(\wt{\cG}) \to \Aut(p)$.      
\end{proof}

We now show that if a group $G$ acts freely on a connected groupoid $\cG$, then there exists a groupoid $\cG'$ and a regular covering $\cG \to \cG'$ of which $G$ is naturally the automorphism group. We associate to the free action of a group $G$ on a groupoid $\cG$ the \emph{quotient groupoid} $G\backslash \cG$, which is the groupoid defined as follows; the set of vertices of $G\backslash \cG$ is the set of orbits of vertices $G\backslash \cG_0=\big\{  [x]     : x\in \cG_0  \big   \}$, the set of edges of $G\backslash \cG$ is the set of orbits of edges $G\backslash \cG_1=\big\{  [g]     : g\in \cG_1  \big   \}$, and for the extremities of edges we have $\iota([g])= [\iota(g)]$ and $\tau([g])= [\tau(g)]$. We also put $1_{[x]}= [1_x]$, $[g]^{-1}= [g^{-1}]$, and the composition $[g][g']$ is defined if there exists an edge $g''\in [g']$ such that $gg''$ is defined, in which case we put $[g][g']=     [gg'']$. Checking that this is well-defined groupoid is a tedious exercise. The \emph{quotient map} $\pi:\cG\to G\backslash \cG$ is the homomorphism such that $x\mapsto [x]$ for $x\in \cG_0$, and $g\mapsto [g]$ for $g\in \cG_1$.

\begin{thm}  \label{prop:quot}
	Let $G$ be a group which acts freely on a groupoid $\cG$. Then $\pi:\cG\to G\backslash \cG$ is a covering of groupoids. Moreover, if $\cG$ is connected, then $G$ is naturally isomorphic to $\Aut(\pi)$.       
\end{thm} 

\begin{proof}
Notice that $\pi:\cG\to G\backslash \cG$ is clearly a homomorphism. To see that $\pi$ is a covering, let $x\in \cG$ be a vertex, and let $[g]$ be an edge which issues from $[x]$. Let $\gamma\in G$ be the element such that $\gamma\cdot \iota(g)=x$. Then $\gamma\cdot g$ is an edge which issues from $x$ with $\pi(\gamma\cdot g)=[g]$. Suppose that $g'$ is an edge which also issues from $x$ with $\pi(g')=[g]$. Then there exists $\gamma'\in G$ with $\gamma'\cdot g= g'$. Then $\gamma \gamma'^{-1} \cdot x=x$, and so $\gamma=\gamma'$ since $G$ acts freely on $\cG$. Therefore $g'=\gamma\cdot g$. Finally, $\pi$ is clearly surjective on vertices. This proves that $\pi$ is a covering.
	
	We have a natural embedding $\gf:G\hookrightarrow \Aut(\pi)$. To see that $\gf$ is surjective in the case where $\cG$ is connected, let $a\in \Aut(\pi)$, and for any edge $g\in \cG$, let $\gamma\in G$ such that $\gamma \cdot g= a\cdot g$. Then $\gf(g)=\gamma$ by \autoref{prop:freeaction}. 
\end{proof}

\begin{prop}  
	Let $\cG$ and $\wt{\cG}$ be connected groupoids, let $p:\wt{\cG}\to \cG$ be a regular covering, and let $\pi:\wt{\cG}\rightarrow \Aut(p)\backslash\wt{\cG}$ be the quotient map associated to the action of $\Aut(p)$. Then there exists a unique isomorphism $\psi:\Aut(p)\backslash\wt{\cG}\rightarrow  \cG$ such that $p=\psi\circ\pi$. 
\end{prop}

\begin{proof}
	Since we want $p=\psi\circ\pi$, we have no choice but to let $\psi:\Aut(p)\backslash\wt{\cG}\rightarrow  \cG$ be the homomorphism whose map on edges is, \[[g]\mapsto p(g), \ \ \ \ \text{for } g\in \wt{\cG}          .             \] 
	This is well-defined since for $\gamma\in \Aut(p)$, we have $p(\gamma \cdot g)=p(g)$. Checking that $\psi$ is a homomorphism, we have, 
	\[[g][g']=[g g']\mapsto p(gg')=p(g)p(g').\] 
	The restriction of $\psi$ to edges of $\Aut(p)\backslash\wt{\cG}$ is a bijection because it has the inverse $g\mapsto \pi( p^{-1}(g))$. This inverse is well-defined by \autoref{prop:fib}. Thus, $\psi$ is an isomorphism. 
\end{proof} 

\bibliographystyle{alpha}
\bibliography{sample}

%\nocite{*}
%\printindex

\end{document}